\documentclass[11pt]{book}

\usepackage{authblk}

\usepackage{bookmark}
\usepackage{amsmath}
\usepackage{amssymb}
\usepackage{amsthm}
\usepackage{bbm}
\usepackage{enumerate}
\usepackage{epstopdf}
\usepackage{longtable}
\usepackage{setspace}
\usepackage{graphicx}
\usepackage{subcaption}
\usepackage{nicematrix}
\usepackage[normalem]{ulem}
\usepackage{tikz}
\usepackage[utf8]{inputenc}
\usepackage{hyperref}
\hypersetup{
    colorlinks=false,
    linkcolor=blue,
    filecolor=magenta,      
    urlcolor=cyan,
}
\usepackage{makeidx}

\usetikzlibrary{arrows,decorations.markings}
\usetikzlibrary{arrows.meta}

\newtheorem{theorem}{Theorem}[chapter]

\newtheorem{lemma}[theorem]{Lemma}

\newtheorem{exercise}{Exercise}[chapter]

\theoremstyle{definition}
\newtheorem{problem}{Problem}[chapter]

\theoremstyle{definition}
\newtheorem{definition}{Definition}[chapter]
\newtheorem{example}{Example}[chapter]

\newtheorem{assumption}{Assumption}


\newcommand{\nR}{{\mathbb R}}

\newcommand{\nP}{{\mathbb P}}
\newcommand{\nE}{{\mathbb E}}
\newcommand{\I}{{\mathbbm{1}}}


\newcommand{\Norm}[1]{\ensuremath{\left \Vert  #1 \right \Vert}}

\newcommand{\mb}[1]{\ensuremath{\mathbf{#1}}}
\newcommand{\tr}{\ensuremath{t_{\text{rel}}}}
\newcommand{\tm}{\ensuremath{t_{\text{mix}}}}
\newcommand{\ra}{\rightarrow}



\DeclareMathOperator{\Var}{Var}

\DeclareMathOperator{\Unif}{Unif}
\DeclareMathOperator{\Bin}{Bin}

\newcommand{\eq}{\begin{equation}}
\newcommand{\en}{\end{equation}}
\newcommand{\rr}{\mathbb{R}}
\newcommand{\NN}{\mathbb{N}}

\newcommand{\norm}[1]{\left\lVert #1 \right\rVert}
\newcommand{\abs}[1]{\left\lvert #1 \right\rvert}

\newcommand{\nin}{\noindent}
\newcommand{\tbf}{\textbf}

\newcommand{\E}{\mathrm{E}}

\begingroup
\count0=\time \divide\count0by60 
\count2=\count0 \multiply\count2by-60 \advance\count2by\time
\def\2#1{\ifnum#1<10 0\fi\the#1}
\xdef\isodayandtime%
{\the\year-\2\month-\2\day\space\2{\count0}:\2{\count2}}

\endgroup

\setstretch{1.1}

\renewcommand{\arraystretch}{1}

\allowdisplaybreaks
\title{Finite Markov Chains and Monte-Carlo Methods: \\ An Undergraduate Introduction}

\author[1]{Soumik Pal}
\author[2]{Tim Mesikepp}

\affil[1]{University of Washington \\ \texttt{soumik@uw.edu}}
\affil[2]{\texttt{tmesikepp@gmail.com}}


\makeindex
\begin{document}
\maketitle

\tableofcontents

\chapter*{Preface}
\addcontentsline{toc}{section}{Preface}

Around 2018, a new undergraduate course was introduced at the University of Washington, Seattle. Offered jointly by the Departments of Mathematics and Statistics, this course, titled math/stat 396, was designed to introduce undergraduates to finite state space Markov chains and common Monte Carlo methods. This is one of the most elegant areas of modern probability theory—deep, rich, and full of surprises.

My own introduction to the subject came through William Feller’s legendary textbook, \textit{An Introduction to Probability Theory and Its Applications} \cite{feller1957introduction}, which devotes Chapter XV to Markov chains. Written in the 1950s, Feller’s text presented the nascent theory through an array of dazzling examples. Since then, many excellent books have appeared. Among those that have shaped my understanding are Hoel, Port, and Stone’s \textit{Introduction to Stochastic Processes} \cite{hoel1986introduction}, Aldous and Fill’s classic \textit{Reversible Markov Chains and Random Walks on Graphs} \cite{AldousFillBook}, Persi Diaconis’s monograph \textit{Group Representations in Probability and Statistics} \cite{diaconis1988group}, and Levin and Peres’s modern standard \textit{Markov Chains and Mixing Times} \cite{Peres}. Each of these resources is spectacular, to be read and re-read, for knowledge and for pleasure. 

Yet, when we discussed which textbook to adopt for the new course, we faced a dilemma. Feller and Hoel–Port–Stone are too dated and do not cover Monte Carlo methods adequately. The others are too advanced for most undergraduates. We also needed a text with a large and varied supply of exercises for homework and exams. In the end, I decided to write my own lecture notes and build a problem bank. Tim, then a teaching assistant for the course, joined me in this effort, which eventually led to this book. The theory of this book can be mostly found in the first four chapters of the textbook by Levin and Peres \cite{Peres}. We selected parts that can be easily explained to undergraduates and expanded with our own examples.

Our bigger contribution arguably is the collection of more than a hundred exercises and problems in this textbook. Some are classics, drawn from sources such as Feller, but many are original. Each chapter ends with a problem set; the more challenging problems include hints. Routine exercises are embedded within the text, and suggested homework sets appear in the appendix. Students are strongly encouraged to attempt exercises as they read, both to reinforce understanding and to prepare for deeper problems at the chapter’s end.

For longer or more intricate proofs, we often refer readers to the textbook by Levin and Peres \cite{Peres}, both to keep the material accessible and to encourage engagement with that excellent source. A few proofs, however—such as that of Theorem \ref{Thm:ReturnTimeLite}—are original to this book.  

This book is limited to finite state space Markov chains. Countable state spaces are mentioned but not covered in any detail. Concepts of recurrence and transience are not touched. Measure theory is eschewed. The focus has been on families of concrete Markov chains, such as random walks on cycles, to drive the basic theory. Martingales are introduced, but solely as functions of finite Markov chains. For a one quarter course, as we do in Washington, we recommend chapters 1--4. Completing chapter 5 requires more time. The entirety of this book can be covered within one semester.  

Over the years, many instructors have taught the course (Math/Stat 396) using these notes. We gratefully acknowledge their edits and suggestions, especially our former postdoc Andrea Ottolini. We also owe a great debt to former students Jamie Forschmiedt and Celeste Zeng, who carefully read through the manuscript, corrected errors, and created many of the figures in this book in \LaTeX.

This book is dedicated to all the probabilists whose creativity and insight shaped this beautiful subject.

\begin{flushright}
Soumik Pal \\
Seattle, 2025
\end{flushright}

As Soumik's co-author, several additional prefatory remarks come to mind.  Firstly, the reader who notices a rather distinct change in voice in Chapter \ref{Ch:MC} will not be mistaken.  I would have preferred to have Soumik's beautiful writing for the entirety of the text, but I took the writing lead outside of chapter four, while Soumik did for that chapter.  

Secondly, our approach is to build up the theory from a basic starting point, which we hope is conducive for the reader without an exhaustive background in probability and network theory.  Those starting with more robust knowledge and who find the development too tedious can focus more on the problem sets, or to go deeper with companion resources like the outstanding Levin-Peres text \cite{Peres} mentioned above (as alluded to, our text takes as its basic framework the initial four chapters of that study, expanding as appropriate for our setting).  Those encountering the material for the first time should read the text carefully and do the in-text exercises.  All readers should wrestle through the proofs, explore their own examples, argue with the text, find counter-examples when assumptions are omitted, and ask lots of questions.  The more active you are in studying, after all, the deeper your understanding will be.

I would love to hear your feedback or be alerted to any typos; please email me directly at \texttt{tmesikepp@gmail.com}.  

\begin{flushright}
Tim Mesikepp\\
Westborough, MA, 2025
\end{flushright}






\chapter{Markov Chains}\label{Ch:Markov}



\section{Random walks on graphs}\label{Sec:RWG}

In this chapter we introduce \emph{Markov chains} and their fundamental properties.  Before we give the general definition, however, it will be helpful to begin with a very concrete and important class of examples, \emph{random walks on graphs}. To understand a random walk on a \emph{graph}, though, we need to understand what a graph is, and so we start by reviewing the very basics of graphs.  Afterwards we describe two kinds of random walks on the vertices of graphs, both of which will turn out to be Markov chains.  These starting examples will help us begin to see what Markov chains are all about.

\subsection{What are graphs?}

A \emph{graph} $G=(V,E)$ is just a collection of vertices $V$ and edges $E$.  Both of these are sets, and $E$ is a subset of unordered pairs of vertices. An edge $\{u, v\}\in E$ iff (if and only if) the vertices $u$ and $v$ are ``connected'' in the graph $G$. When $\{u,v\} \in E$, we say that the vertices $u$ and $v$ are \emph{adjacent} or that they are \emph{neighbors}, and write $u \sim v$. A \emph{loop} is an edge $\{u,u\} \in E$ beginning and ending at the same vertex. 



\begin{figure}
    \centering
    \scalebox{0.75}{
    \begin{tikzpicture}
    [node/.style={circle, draw=black, very thick, minimum size=7mm}]
    \node[node] (one) at (0, 0) {1};
    \node[node] (two) at (1, 1) {2};
    \node[node] (three) at (2, 0) {3};
    \node[node] (four) at (3, 1) {4};
    \node[node] (five) at (4, 0) {5};

    \draw[black, very thick] (one) -- (two);
    \draw[black, very thick] (two) -- (three);
    \draw[black, very thick] (two) -- (four);
    \draw[black, very thick] (three) -- (four);
    \draw[black, very thick] (three) -- (five);
    \draw[black, very thick] (four) -- (five);
    \end{tikzpicture}
    }
    \caption{\small A graph with five vertices}
    \label{Fig:FirstGraph1}
\end{figure}
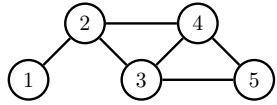

This is all best seen in examples.  Take a look at the graph in Figure \ref{Fig:FirstGraph1}, for instance.  Here $V=\{1,2,3,4,5\}$ and 
\begin{align*}
    E = \{\{1,2\},\{2,3\},\{2,4\},\{3,4\},\{3,5\},\{4,5\} \}.
\end{align*}
Note that, as \emph{sets}, $\{2,3\}=\{3,2\}$, and so we don't have to list this single edge twice. 

Another example is the \textbf{6-cycle} in Figure \ref{Fig:Hexagon}.  Here $V=\{1,2,3,4,5,6\}$ and
\begin{align*}
    E = \{\{1,2\},\{2,3\},\{3,4\},\{4,5\},\{5,6\},\{6,1\} \}.
\end{align*}
In Figure \ref{Fig:Hexagon} we have arranged the vertices in a hexagonal shape, but keep in mind that the layout, or \emph{embedding}, of the graph in Euclidean space doesn't matter in terms of this bare definition.  Writing $G = (V,E)$, after all, pays no attention to how we draw the graph; all that matters is what the vertices are and how they are connected.  So, for instance, the picture of a trip across the US in Figure \ref{Fig:America} gives exactly the same graph as Figure \ref{Fig:Hexagon}, even though the embeddings appear quite different.

\begin{figure}[b]
    \centering
   
   \scalebox{0.75}{
    \begin{tikzpicture}
    \tikzset{
        node/.style={circle, draw=black, very thick, minimum size=7mm},
        line/.style={black, very thick}
    }
    
    \node[node] (one) at (1.732,1) {1};
    \node[node] (two) at (1.732,-1) {2};
    \node[node] (three) at (0,-2) {3};
    \node[node] (four) at (-1.732,-1) {4};
    \node[node] (five) at (-1.732,1) {5};
    \node[node] (six) at (0,2) {6};
    
    \draw[line] (one) -- (two);
    \draw[line] (two) -- (three);
    \draw[line] (three) -- (four);
    \draw[line] (four) -- (five);
    \draw[line] (five) -- (six);
    \draw[line] (six) -- (one);
    \end{tikzpicture}
    }

    \caption{\small The 6-cycle graph.  In general, the \textbf{$n$-cycle} has $n$ vertices, $n$ edges, and each vertex is connected to two neighbors, forming one ``loop'' around the entire graph.}
    \label{Fig:Hexagon}
\end{figure}
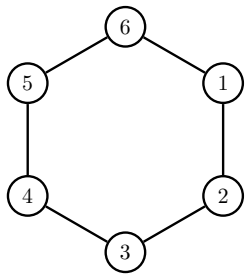

Sometimes our vertex set will be infinite, as with the integer graph $(V,E)$ in Figure \ref{Fig:NumberLine}. Here the vertices are $V = \{\ldots,-1,0,1,2,\ldots\}$ and edges connect adjacent integers, $E= \{\, \{j,j+1\} \; : \; j \in \mathbb{Z}\,\}$.  Often we will restrict ourselves to a finite portion of this graph, like $V_n=\{0,1,\ldots, n\}$ and the associated edges $E_n = \{\, \{ j,j+1 \} \;: \; j \in \{0,1, \ldots, n-1\} \,\}$.

\begin{figure}
    \centering
    \includegraphics[width=10cm]{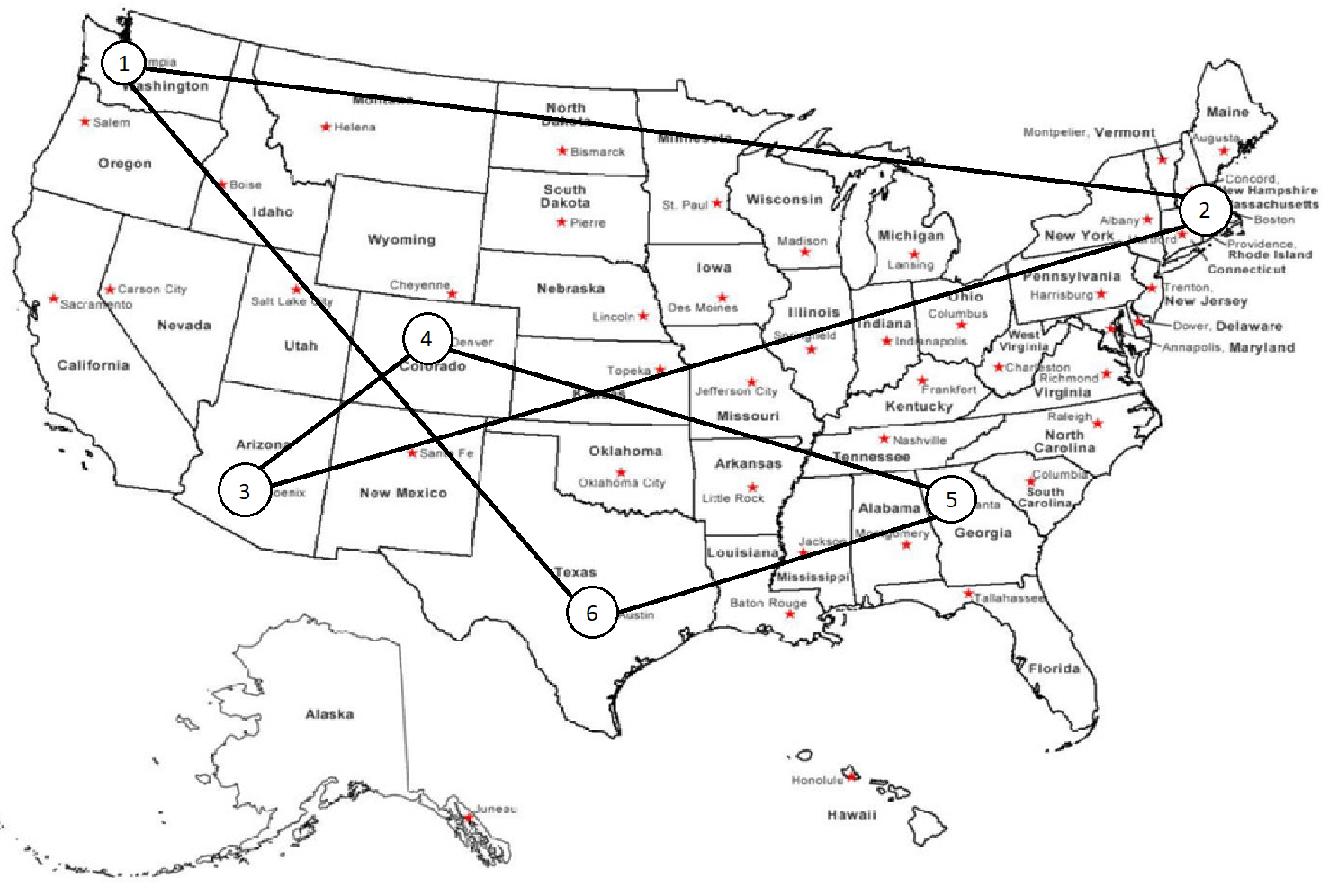}
    \caption{\small Another embedding of the 6-cycle.  As a \emph{graph}, this is identical to the hexagonal embedding of the 6-cycle in Figure \ref{Fig:Hexagon}.}
    \label{Fig:America}
\end{figure}

Graphs can also be created in non-deterministic ways. In Figure \ref{Fig:ErdosRenyi} we have a sample of a \emph{Erd\H{o}s-R\'{e}nyi} \emph{random graph}, which is generated via two parameters: the number of vertices $n$, and a probability $0 \leq p \leq 1$ for edge inclusion.  For each pair $1 \leq i \neq j \leq n$ of the vertices, the edge $\{i,j\}$ is included with probability $p$, independent of all the other edges.  In Figure \ref{Fig:ErdosRenyi}, there are $n=50$ vertices and $p=0.2$.

\begin{figure}[b]
    \centering
    \scalebox{0.75}{
    \begin{tikzpicture}
    \tikzset{
        node/.style={circle, draw=black, very thick},
        arrow/.style={->, black, very thick},
        line/.style={black, very thick}
        }
        
        \node[] (oneminus) at (-5,0) {};
        \node[node, scale=0.85] (one) at (-4,0) {$-2$};
        \node[node, scale=0.85] (two) at (-2,0) {$-1$};
        \node[node] (three) at (0,0) {0};
        \node[node] (four) at (2,0) {1};
        \node[node] (five) at (4,0) {2};
        \node[] (fivep) at (5,0) {}; 
        \node[] (jminus) at (7,0) {};
        \node[node] (j) at (8,0) {$n$};
        \node[] (jplus) at (9,0) {};
    
        \draw[line] (oneminus) -- (one);
        \draw[line] (one) -- (two);
        \draw[line] (two) -- (three);
        \draw[line] (three) -- (four);
        \draw[line] (four) -- (five);
        \draw[line] (five) -- (fivep);
        \draw[line] (jminus) -- (j);    
        \draw[line] (j) -- (jplus);
        
        \path (-6,0) -- node[auto=false]{\ldots} (oneminus);
        \path (fivep) -- node[auto=false]{\ldots} (jminus);
        \path (jplus) -- node[auto=false]{\ldots} (10,0);
        
        
    \end{tikzpicture}
    }
    \caption{\small The integers $\mathbb{Z}$ as an infinite graph.}
    \label{Fig:NumberLine}
\end{figure}
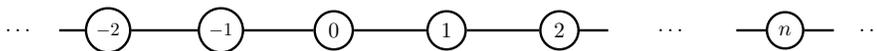

Random graphs are rather ubiquitous in applications, and can be used to analyze communication networks, model groupings of friends on social media, and describe how a respiratory illness spreads through a population, to give several examples.  They also prove to be useful tools in some areas of pure mathematics, such as combinatorics and graph theory.

\begin{figure}
    \centering
    \includegraphics[trim=140 70 100 40, clip, width=0.6\textwidth]{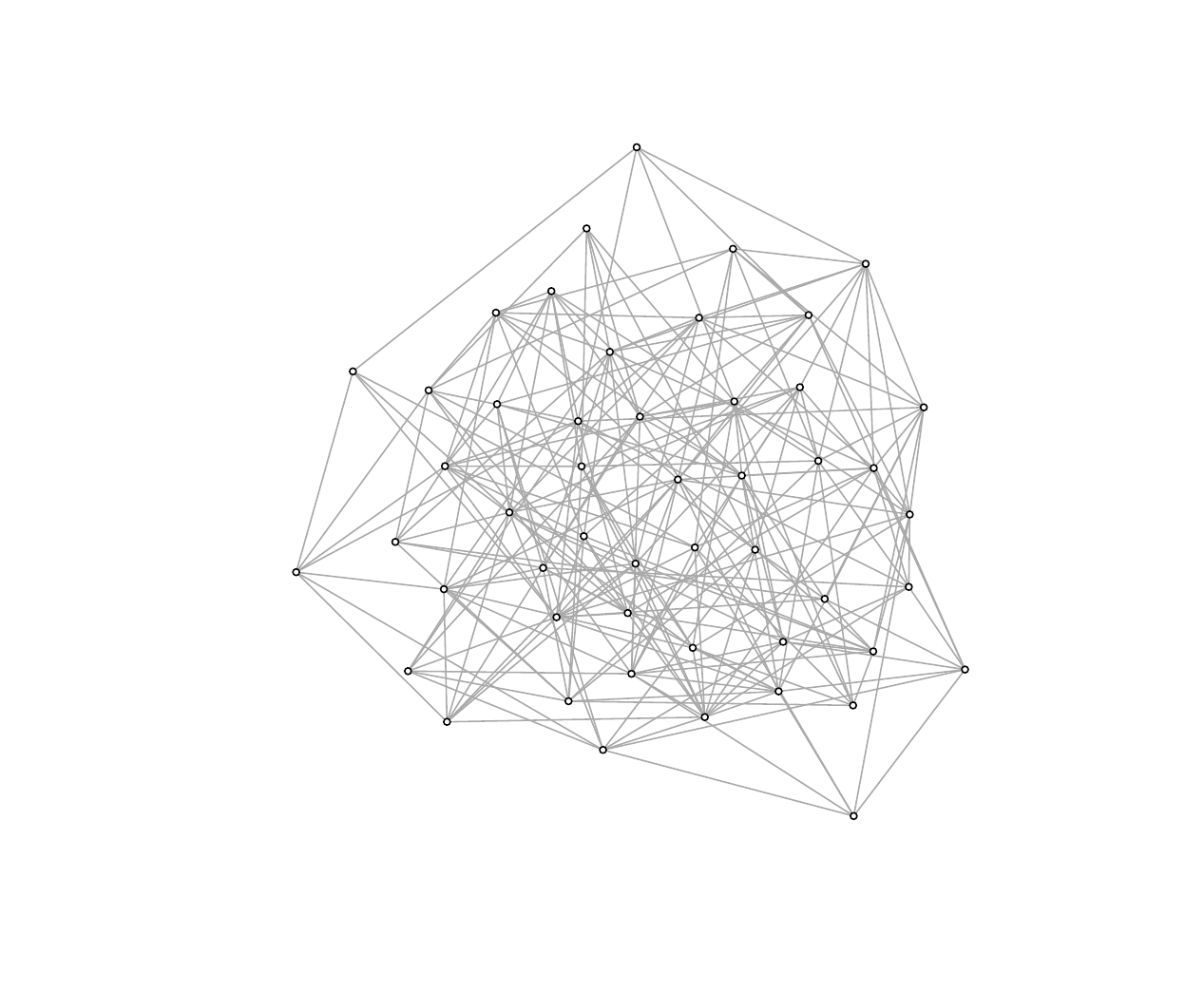}
    \caption{\small An Erd\H{o}s-R\'{e}nyi random graph with $n=50$ vertices and edge inclusion probability $p=0.2$.}
\label{Fig:ErdosRenyi}
\end{figure}

\subsection{Simple symmetric random walks on graphs}\label{SSec:SRW}

We are interested in graphs because we want to move around them \emph{randomly}.  The idea is simple: start at some vertex $X_0 \in V$ at time $t=0$, and at time $t=1$ choose a new vertex $X_1 \in V$, and at $t=2$ a new vertex $X_2 \in V$, and so on, generating a random sequence $X_0, X_1, X_2, \ldots \subset V$.  The key question is how to choose $X_{n+1}$ given that our current vertex is $X_n=v$.  We could just select $X_{n+1}$ uniformly at random among all vertices.  That is not terribly interesting, however, because we are not respecting the geometry of the graph.  So a natural way is to move to any of the \emph{adjacent} vertices of $X_n$, each chosen with equal probability. 

Let us introduce some notation to make things more precise. We say the \textbf{degree}  $\deg(v)$ of a vertex $v$ is the number of \textbf{adjacent vertices} to $v$.  This is the number of edges $\{v,w\} \in E$, or the number of edges coming out of $v$.  In Figure \ref{Fig:FirstGraph1}, $\deg(1)=1, \deg(2)=\deg(3)=\deg(4)=3$, and $\deg(5)=2$.  So if our current position is $X_n =v$, then our next vertex $X_{n+1}$ will be one of the $\deg(v)$ adjacent vertices to $X_n$, each chosen with equal probability.  We thus have the transition probabilities
\begin{align}\label{Eq:SRWProb}
    \nP(X_{n+1} = w \, | \, X_n = v) = \frac{1}{\deg(v)} \qquad \text{for each} \; w \sim v.
\end{align}
Once we select a starting vertex $X_0$, the resulting sequence of random vertices $X_0, X_1, X_2, \ldots$ is a Markov chain which is called the \textbf{simple symmetric random walk} on the graph.  Here \emph{simple} means that we only jump to neighboring vertices, and \emph{symmetric} means that each neighbor is chosen with equal probability. Often in discussion we will be informal and just call this the \textbf{simple random walk}, although for exercises and theorem statements we will use the full name.

If, at each step, we had chosen $X_{n+1}$ uniformly at random in $V$, irrespective of the value of $X_n$, then all the $X_n$'s would be independent of each other.  This is certainly not the case for the simple random walk, though: the next step $X_{n+1}$ heavily depends on our current location $X_n$.

Going back to our examples, suppose $X_n=2$ in Figure \ref{Fig:FirstGraph1}. Then $X_{n+1}$ will be either 1,3 or 4, each chosen with probability $1/3$. If we are currently at vertex 5, we next jump to either 3 or 4, each with probability $1/2$.  If $X_n=1$, then $X_{n+1}=2$ because that is the only option. We observe that, because the vertices have differing degrees,
\begin{align*}
    \nP(X_{n+1} = 2 \, | \, X_n = 1) = 1 \neq 1/3 = \nP(X_{n+1} = 1 \, | \, X_n = 2).
\end{align*}

For the 6-cycle in Figure \ref{Fig:Hexagon}, we flip a fair coin at each step to determine if we move clockwise (CW) or counter-clockwise (CCW).

Note that if we sum the transition probability \eqref{Eq:SRWProb} over \emph{all} $w \sim v$, we have
\begin{align}\label{Eq:SRWTransProb}
    \sum_{w \sim v} \nP(X_{n+1} = w \, | \, X_n = v) = \sum_{w \sim v} \frac{1}{\deg(v)} = \deg(v) \cdot \frac{1}{\deg(v)} = 1,
\end{align}
as we would expect: our probabilities for how we take the next step should add up to 1.  Note also that how we take the next step is determined \emph{entirely} from our current position; the earlier history has no influence.  If we are currently at $v=5$ on the 6-cycle, our position two steps ago, or two thousand steps ago, has no say in how we determine the next step: we move to either 4 or 6 with equal probability. This ``forgetfullness'' is called the \textbf{Markov property} and is why the simple random walk on a graph is a particular case of a Markov chain.\footnote{We'll formally define the Markov property below in Definition \ref{Def:MC}.}

\begin{exercise}
    For an Erd\H{o}s-R\'{e}nyi random graph with $n$ vertices and edge probability $p$, find the expected degree $\nE(\deg(v))$ of a vertex $v$.
\end{exercise}
\begin{exercise}
    \begin{enumerate}[$(i)$]
        \item For the simple symmetric random walk on the graph in Figure \ref{Fig:FirstGraph1}, compute $\nP(X_{n+1} = 2 \, | \, X_n=2)$, $\nP(X_{n+2} = 2 \, | \, X_n=2)$ and $\nP(X_{n+3} = 2 \, | \, X_n=2)$.
        \item Find the same probabilities for the simple symmetric random walk on the graph in Figure \ref{Fig:Hexagon}.
    \end{enumerate}
\end{exercise}
    
\subsection{Weighted walks on graphs}\label{Sec:NonSRW}

Sometimes we may wish to bias moving one direction over another.  In general, for each vertex $v$ and adjacent vertex $w \sim v$, we can assign a probability $p_{vw} \geq 0$ for how likely it is to move from $v$ to $w$, where
\begin{align}\label{Eq:WeightedWalk}
     \sum_{w \sim v} \nP(X_{n+1} = w \, | \, X_n = v) = \sum_{w \sim v} p_{vw} = 1,
\end{align}
as for the simple walk \eqref{Eq:SRWTransProb}.

In the 6-cycle, for example, we may want to move clockwise with probability $p$, and counterclockwise with probability $1-p$, for some $0 \leq p \leq 1$.  See Figure \ref{Fig:SimpleVsBiased}.  Similarly with the number line: perhaps we want to go right with probability $p$, and left with probability $1-p$. Such walks thus need not be symmetric, are called in general \textbf{weighted} or \textbf{biased random walks}.  In this more general case we don't have to have any fixed ``rule'' of how to assign the probabilities, such as saying the probability of going CW is always $p$.  The only requirement is that \eqref{Eq:WeightedWalk} holds at each vertex $v$.  And as in the case of simple random walk, we may or may not have $p_{vw} = p_{wv}$.

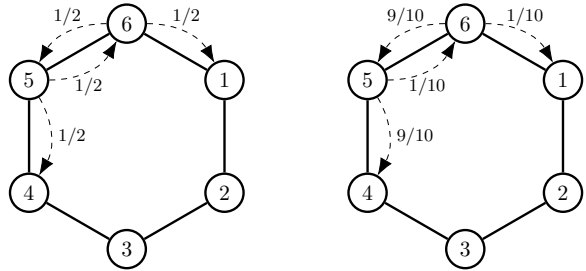
\begin{figure}
    \centering
    \scalebox{0.75}{
    \begin{tikzpicture}
    \tikzset{
        node/.style={circle, draw=black, very thick, minimum size=4mm},
         arrow/.style={-{Latex[length=3mm]}, black},
        line/.style={black, very thick}
    }
    
    \node[node] (one) at (-1.268,1) {1};
    \node[node] (two) at (-1.268,-1) {2};
    \node[node] (three) at (-3,-2) {3};
    \node[node] (four) at (-4.732,-1) {4};
    \node[node] (five) at (-4.732,1) {5};
    \node[node] (six) at (-3,2) {6};
    
    \draw[line] (one) -- (two);
    \draw[line] (two) -- (three);
    \draw[line] (three) -- (four);
    \draw[line] (four) -- (five);
    \draw[line] (five) -- (six);
    \draw[line] (six) -- (one);

    \draw [arrow, dashed] (six) edge [bend left] node[above]{\footnotesize $1/2$}  (one);
    \draw [arrow, dashed] (six) edge [bend right] node[above]{\footnotesize $1/2$}  (five);
    
    \draw [arrow, dashed] (five) edge [bend right] node[below]{\footnotesize $1/2$}  (six);
    \draw [arrow, dashed] (five) edge [bend left] node[right]{\footnotesize $1/2$}  (four);
    
    \node[node] (one2) at (4.732,1) {1};
    \node[node] (two2) at (4.732,-1) {2};
    \node[node] (three2) at (3,-2) {3};
    \node[node] (four2) at (1.268,-1) {4};
    \node[node] (five2) at (1.268,1) {5};
    \node[node] (six2) at (3,2) {6};
    
    \draw[line] (one2) -- (two2);
    \draw[line] (two2) -- (three2);
    \draw[line] (three2) -- (four2);
    \draw[line] (four2) -- (five2);
    \draw[line] (five2) -- (six2);
    \draw[line] (six2) -- (one2);

    \draw [arrow, dashed] (six2) edge [bend left] node[above]{\footnotesize $1/10$}  (one2);
    \draw [arrow, dashed] (six2) edge [bend right] node[above]{\footnotesize $9/10$}  (five2);
    
    \draw [arrow, dashed] (five2) edge [bend right] node[below]{\footnotesize $1/10$}  (six2);
    \draw [arrow, dashed] (five2) edge [bend left] node[right]{\footnotesize $9/10$}  (four2);
    
    \end{tikzpicture}}
    \caption{\small A simple and biased walk on the 6-cycle.}
    \label{Fig:SimpleVsBiased}
\end{figure}

\section{Transition matrices}

Whether we do the simple or the biased walk on our graph, we need some way of keeping track of the probabilities of moving between the vertices.  For a graph of $n$ vertices, we do this through an $n \times n$ matrix $P$, where entry $P_{ij}$ of $P$ is the probability $\nP(X_{k+1} = j \, | \, X_k = i)$.  For us that this probability is independent of the time $k$, so we could also say $P_{ij} = \nP(X_{1} = j \, | \, X_0 = i)$, for instance. The resulting \emph{transition matrix} $P = (P_{ij})$ will be a key tool in our analysis of Markov chains.
\begin{definition}
    A \textbf{transition matrix}, or \textbf{transition probability matrix}, is an $n\times n$ matrix $P$ satisfying the following two conditions.
    \begin{enumerate}[$(i)$]
        \item All the entries are non-negative: $P_{jk} \geq 0$ for all $1 \leq j,k \leq n$.
        \item Each row sums to one: $\sum_{k=1}^n P_{jk} =1$ for each $j$.
    \end{enumerate}
\end{definition}
Every random walk (simple, or with weights) on a graph corresponds to a transition matrix $P$, and conversely each transition matrix defines a random walk.  For example, the matrix for the simple walk on the graph in Figure \ref{Fig:FirstGraph1} is
\begin{align}\label{Eq:TransMat1}
   P_1= \begin{bmatrix}
        0 & 1 & 0 & 0 & 0\\
        1/3 & 0 & 1/3 & 1/3 & 0\\
        0 & 1/3 & 0 & 1/3 & 1/3\\
        0 & 1/3 & 1/3 & 0 & 1/3\\
        0 & 0 & 1/2 & 1/2 & 0
    \end{bmatrix}.
\end{align}
Entry $P_{23}$ is $1/3$ because the probability of moving to 3, starting from vertex 2, is 1/3.  $P_{14}=0$ because there is no edge from vertex 1 to 4.  And so on. The lack of symmetry across the diagonal shows instances where $p_{ij}$ is not the same as $p_{ji}$.  The transition matrix for the 6-cycle is
\begin{align}\label{Eq:TransMatHexagon}
   P_2= \begin{bmatrix}
        0 & 1/2 & 0 & 0 & 0 & 1/2\\
        1/2 & 0 & 1/2 & 0 & 0 & 0\\
        0 & 1/2 & 0 & 1/2 & 0 & 0\\
        0 & 0 & 1/2 & 0 & 1/2 & 0\\
        0 & 0 & 0 & 1/2 & 0 & 1/2\\
        1/2 & 0 & 0 & 0 & 1/2 & 0
    \end{bmatrix}.
\end{align}
As you proceed in our study you may need to review your class notes on matrices from time to time; we will eventually see that surprisingly deep and rich properties of our Markov chains are encoded in $P$.

\begin{exercise}\label{Ex:FirstTransMatrix}
Consider the transition matrix $P_1$ in \eqref{Eq:TransMat1}.
    \begin{enumerate}[$(i)$]
        \item What do the rows add up to?  Explain what this means intuitively.
        \item What do the columns add up to? 
        \item Give a (right) eigenvector for $P_1$ corresponding to eigenvalue $\lambda=1$.
        \item Answer the same three questions for $P_2$ in \eqref{Eq:TransMatHexagon}.
        \item What does it intuitively mean that the diagonal of both matrices have only zeros?
        \item Use a computer to compute the eigenvalues and their multiplicities for either $P_1$ or $P_2$. What do you notice?
    \end{enumerate}
\end{exercise}

\begin{exercise}
    Draw a graph $G$ for which the matrix
    \begin{align}
   P= \begin{bmatrix}
        0 & 1 & 0 & 0 & 0 \\
        1 & 0 & 0 & 0 & 0\\
        0 & 0 & 0 & 1/2 & 1/2\\
        0 & 0 & 1/2 & 0 & 1/2\\
        0 & 0 & 1/2 & 1/2 & 0
    \end{bmatrix}
\end{align}
is the transition matrix for the simple symmetric random walk on $G$.  Use a computer to compute the eigenvalues of $P$ along with their multiplicities.  What do you notice?  Compare and contrast with Exercise \ref{Ex:FirstTransMatrix} $(vi)$.
\end{exercise}


\section{Markov chains}

We are now ready to define the leading actor of our book, a Markov chain. A Markov chain is, first of all, a \emph{stochastic process}.

\begin{definition}\label{Def:StochasticProcess}
Given a state space $\Omega$, a discrete-time\footnote{It is possible to consider stochastic processes $(X_t)$ with continuous time $t \in \mathbb{R}$, but the Markov chains we consider here always have discrete time.} \textbf{stochastic process} on $\Omega$ is a sequence of random variables $(X_0, X_1, X_2, \ldots)$, all defined on the same probability space, such that every $X_j$ takes values in $\Omega$. 
\end{definition}

So a stochastic process is a random sequence $(X_0, X_1, X_2, \ldots)$ of elements of $\Omega$. We typically think of a stochastic process having infinite length, as opposed to the random vectors of finite length that you might have encountered before.  We may think of a stochastic process as a ``movie'': time is unfolding in discrete intervals $t=0,1,2,\ldots$, and each new step in time yields a new random element of $\Omega$.

While each of the $X_k$ have the same codomain $\Omega$, they also have the same \emph{domain}, which is the underlying probability space in Definition \ref{Def:StochasticProcess}.  That is, there is a collection of outcomes $\{\omega\}$, and for every outcome $\omega$, each of the random variables $X_k$ takes a value $X_k(\omega) \in \Omega$.  So, every $X_k$ is a \emph{function} from the \emph{same} underlying probability space to the state space $\Omega$. This is important but will be largely swept under the rug in our considerations\footnote{The reader may consult \cite[Chapter 1]{ASV}, for instance, to learn more about the formalism behind probability spaces and random variables.}

Not all stochastic processes are Markov chains. To be a Markov chain, we additionally need the ``forgetfulness'' property that we saw with random walks on graphs: our transition probabilities must depend only upon our current state.  

\begin{definition}\label{Def:MC}
    Given a set $\Omega$ with $n$ elements labeled by $\{1,2,\ldots, n\}$ and an $n \times n$ transition matrix $P$, a \textbf{Markov chain} with state space $\Omega$ and transition matrix $P$ is a stochastic process $X=(X_0, X_1, X_2, \ldots)$ on $\Omega$ such that $X$ satisfies \emph{the Markov property}
    \begin{multline}\label{Eq:MarkovProperty}
        \nP(X_{k+1}=y \, | \, X_0 = j_0, X_1 = j_1, \ldots, X_{k-1}=j_{k-1}, X_k=x\}\\
        = \nP(X_{k+1}=y \, | \, X_k = x) = P_{xy}
    \end{multline}
    for any $k\ge 1$ and any $j_0, j_1, \ldots, j_{k-1},x,y \in \Omega$.
\end{definition}

At first glance the definition is rather unwieldy, and so let's make slow down to make several observations.  First, pay special attention to what the Markov property \eqref{Eq:MarkovProperty} is saying: the history of how we arrived at state $x$ does not matter for the next jump - only the fact that we are now at $x$. This is what we saw with our random walks on graphs, and this is what makes Markov chains special among stochastic processes. 

The elements of $\Omega$ are called the \emph{states} of the Markov chain. For a random walk on a graph, the states are the vertices of the graph. 
We will see later that, if we know the distribution of $X_0$, from the above definition we get the joint distribution of the random vector $(X_0, X_1, \ldots, X_k)$, for every $k=1,2,\ldots$. It follows that the probabilistic properties of a Markov chain is completely determined by three things:  its state space $\Omega$, the transition matrix $P$, and the distribution of the initial state $X_0$, which is called the \textit{initial distribution} of the chain. 

In Definition \ref{Def:MC}, the \emph{state space} $\Omega$ is finite. We will describe a few examples of Markov chains for infinite but discrete $\Omega$ (say the set of natural numbers) later in the text. Markov chains with finite state space are sometimes called finite Markov chains to distinguish from the infinite case. Whether or not $\Omega$ is finite, keep in mind that the number of steps $k$ in the chain need not be bounded. Most of the interesting questions, in fact, deal with the chain's behavior as $k \rightarrow \infty$, and this will be the focus of much of our attention.

Some authors describe the the second equality in \eqref{Eq:MarkovProperty} as the \emph{time homogeneous Markov property} and the corresponding stochastic process as a \emph{time-homogeneous Markov chain}. There are Markov chains that are not time-homogeneous, in the sense that the transition matrix varies in time. However, our Markov chains will always be time-homogeneous, so we will ignore this additional qualifier.

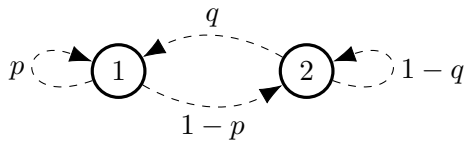
\begin{figure}[b]
    \centering
    \scalebox{1}{
    \begin{tikzpicture}
    \tikzset{
        node/.style={circle, draw=black, very thick, minimum size=7mm},
        arrow/.style={-{Latex[length=3mm]}, black},
        line/.style={black, very thick}
    }
    
    \node[node] (one) at (0,0) {1};
    \node[node] (two) at (2.5,0) {2};
    
    \draw [arrow, dashed] (one) to [out=-30,in=210, bend right] node[below]{$1-p$}  (two);
    \draw [arrow, dashed] (two) to [out=150,in=30, bend right] node[above]{$q$}  (one);
    
    \draw[thin, arrow, dashed] (two) to [out=340,in=20,looseness=11] node[midway, right] {$1-q$} (two) ;
    \draw[thin, arrow, dashed] (one) to [out=200,in=160,looseness=11] node[midway, left] {$p$} (one);

    \end{tikzpicture}}
    
    \caption{\small A diagram of the Markov chain in Example \ref{Eg:2by2pq}.  Note that we could also draw this as a graph with three edges (namely, $\{1,1\}$, $\{1,2\}$, and $\{2,2\}$) and keep in mind that the probability of traversing the edge $\{1,2\}$ depends on the direction of travel (as also happens in general for simple random walks).}
    \label{Fig:Lect2Graph}
\end{figure}

\begin{example}\label{Eg:2by2pq}
    Let's consider a simple example.  Let $\Omega= \{1,2\}$ and 
    \begin{align}\label{Eq:Lect2Eg}
        P = \begin{bmatrix}
                p & 1-p\\
                q & 1-q
            \end{bmatrix}
    \end{align}
for some $0<p<1$ and $0<q<1$. This corresponds to a weighted random walk on a graph with two vertices, labelled 1 and 2, as in Figure \ref{Fig:Lect2Graph}. Note that this graph has loops, or edges $\{i,i\}$ that connect the same vertex to itself. Our transition matrix  \eqref{Eq:Lect2Eg} says
\begin{align*}
    \nP(X_{j+1} = 1 \, | \, X_j = 1) &= P_{11} = p,\\
     \nP(X_{j+1} = 2 \, | \, X_j = 1) &= P_{12} = 1-p,\\
      \nP(X_{j+1} = 1 \, | \, X_j = 2) &= P_{21} = q,\\
       \nP(X_{j+1} = 2 \, | \, X_j = 2) &= P_{22} = 1-q.
\end{align*}
Drawing a \textit{graphical representation} of the Markov chain, as in Figure \ref{Fig:Lect2Graph}, is often a very helpful way of keeping track of these probabilities and developing intuition for the chain's behavior. All finite time-homogeneous Markov chains have a graphical representation as a weighted random walk on a graph with loops. 

\begin{exercise}
    Give a graphical representation of the Markov chain associated to the transition matrix
    \begin{align*}
        P = \begin{bmatrix}
                0 & 0 & 1 & 0 & 0 & 0\\
                0 & 1/2 & 0 & 0 & 2/3 & 0\\
                0 & 0 & 1 & 0 & 0 & 0\\
                0 & 0 & 0 & 0 & 0 & 1\\
                0 & 2/3 & 0 & 0 & 1/3 & 0\\
                1/5 & 1/5 & 1/5 & 1/5 & 1/5 & 0
            \end{bmatrix}.
    \end{align*}
\end{exercise}

Continuing with Example \ref{Eg:2by2pq}, we know that if we start at vertex 1, the probability of moving to state 2 is $1-p$.  But how do we compute probabilities for more than one step?  What is 
\begin{align*}
    \nP(X_2 = 2 \, | \, X_0 = 1),
\end{align*}
for instance?  This is where the Markov property \eqref{Eq:MarkovProperty} comes into play.  Recalling $\nP(A \text{ and }B) = \nP(A|B) \nP(B)$, we see
\begin{align}
    \nP(X_2 = 2 | X_0 = 1) &= \nP(X_2=2, X_1=2 | X_0=1) + \nP(X_2=2, X_1=1| X_0=1) \notag\\
    &= \nP(X_2=2 \, | \, X_1=2, X_0=1)\nP(X_1=2 \, |\, X_0=1) \notag\\
   &\qquad  + \nP(X_2=2 \,|\, X_1=1, X_0=1)\nP(X_1=1 \, | \, X_0=1)\notag\\
   &= \nP(X_2=2 \, | \, X_1=2)\nP(X_1=2 \, |\, X_0=1) \label{Eg:FirstMPUsage}\\
   &\qquad  + \nP(X_2=2 \,|\, X_1=1)\nP(X_1=1 \, | \, X_0=1) \notag \\
   &= (1-q)(1-p) + (1-p)p,\label{Eg:FirstMCAns1}
\end{align}
where we used the Markov property in \eqref{Eg:FirstMPUsage} to eliminate conditioning on the first step.

Similarly, we find
\begin{align}
    \nP(X_2 = 1 | X_0 = 1) &= \nP(X_2=1, X_1=2 | X_0=1) + \nP(X_2=1, X_1=1 | X_0=1) \notag\\
    &= \nP(X_2=1 \, | \, X_1=2, X_0=1)\nP(X_1=2 \, |\, X_0=1) \notag\\
   &\qquad  + \nP(X_2=1 \,|\, X_1=1, X_0=1)\nP(X_1=1 \, | \, X_0=1)\notag\\
   &= \nP(X_2=1 \, | \, X_1=2)\nP(X_1=2 \, |\, X_0=1) \notag\\
   &\qquad  + \nP(X_2=1 \,|\, X_1=1)\nP(X_1=1 \, | \, X_0=1) \notag \\
   &= q(1-p) + p^2\label{Eg:FirstMCAns2} 
\end{align}
\qed

\end{example}

\begin{exercise}
    Compute $\nP(X_2 = 2 \,|\, X_0 =2)$. Suppose $X_0$ has the following initial distribution $\nP(X_0=1)=\nP(X_0=2)=1/2$. Find the joint probability $\nP(X_0=2, \; X_2=2)$.
\end{exercise}


\section{A first look at asymptotic behavior}\label{Sec:FirstLook}

Obviously the approach outlined above will get tedious very quickly.  What if we wish to find $\nP(X_3 = 2 \,|\, X_0 =2)$ or $\nP(X_5 = 2 \,|\, X_0 =2)$ or 
\begin{align}\label{Eq:HighNProb}
    \nP(X_{1000} = 2 \,|\, X_0 =2)?
\end{align}
This sort of question is what we mean by the \textbf{asymptotic behavior} of the Markov chain.  That is, if we run the chain for a \emph{very large} number of steps, what is the probability it is at any given state?  Are some vertices more likely than others?  Does a limiting distribution exist? If we change the value of $X_0$ do the above questions give a different answer? The goal of this text is to answer some of these questions.

Thinking concretely about \eqref{Eq:HighNProb}, it is clear that working out all the possible chains starting and ending at 2 through 1000 steps is not feasible.  Fortunately, matrix algebra comes in to save the day in a remarkable way.  Notice that the square of the matrix $P$ in \eqref{Eq:Lect2Eg} is
\begin{align*}
    P^2 = \begin{bmatrix}
            p^2 + q(1-p) & p(1-p)+ (1-p)(1-q)\\
            pq + q(1-q) & (1-p)q + (1-q)^2
        \end{bmatrix}.
\end{align*}
We observe that $P^2_{12}$ is exactly our answer \eqref{Eg:FirstMCAns1} for $\nP(X_2 = 2 \,|\, X_0 = 1)$, while $P^2_{11}$ is what we found in \eqref{Eg:FirstMCAns2} for $\nP(X_2 = 1 \,|\, X_0 = 1)$.  This is not a coincidence: the $(j,k)$ entry $P^n_{jk}$ of the $n$th power of $P$ is exactly what you get for summing the probabilities of all possible paths from $j$ to $k$ in $n$ steps, using the Markov property.  We formalize this useful fact in the following theorem.


\begin{theorem}\label{Thm:PowersOfP}
    For any $k \in \mathbb{N} = \{1,2,3, \ldots\}$ and $i,j \in \Omega$,
    \begin{align}\label{Eq:PowersOfP}
        \nP(X_k = j \,|\, X_0 = i) = P^k_{ij},
    \end{align}
    the $(i,j)$-entry of the $k$th power of the transition matrix $P$.
\end{theorem}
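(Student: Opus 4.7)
The plan is to proceed by induction on $k$. The base case $k=1$ is immediate: by the definition of the transition matrix $P$, we have $\mathbb{P}(X_1 = j \mid X_0 = i) = P_{ij} = P^1_{ij}$.

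For the inductive step, assume the identity \eqref{Eq:PowersOfP} holds for some $k \geq 1$, and consider $\mathbb{P}(X_{k+1} = j \mid X_0 = i)$. The key idea is to condition on the penultimate state $X_k$ (equivalently, one could condition on $X_1$; either works). Using the law of total probability, I would write
\begin{align*}
    \mathbb{P}(X_{k+1} = j \mid X_0 = i) = \sum_{\ell \in \Omega} \mathbb{P}(X_{k+1} = j, X_k = \ell \mid X_0 = i),
\end{align*}
and then split each summand via the conditional probability identity $\mathbb{P}(A \cap B \mid C) = \mathbb{P}(A \mid B \cap C)\,\mathbb{P}(B \mid C)$, obtaining
\begin{align*}
    \mathbb{P}(X_{k+1} = j \mid X_0 = i) = \sum_{\ell \in \Omega} \mathbb{P}(X_{k+1} = j \mid X_k = \ell, X_0 = i)\,\mathbb{P}(X_k = \ell \mid X_0 = i).
\end{align*}

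At this point I apply the Markov property \eqref{Eq:MarkovProperty} to reduce $\mathbb{P}(X_{k+1} = j \mid X_k = \ell, X_0 = i)$ to $P_{\ell j}$, and then apply the inductive hypothesis to replace $\mathbb{P}(X_k = \ell \mid X_0 = i)$ with $P^k_{i\ell}$. The sum becomes $\sum_{\ell} P^k_{i\ell} P_{\ell j}$, which is precisely the $(i,j)$-entry of the matrix product $P^k \cdot P = P^{k+1}$, completing the induction.

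The only genuine subtlety — and what I expect to be the main obstacle to a fully rigorous writeup — is that the Markov property as stated in \eqref{Eq:MarkovProperty} conditions on the \emph{entire} history $X_0, X_1, \ldots, X_{k-1}, X_k$, not just on two states $X_0$ and $X_k$. To justify replacing $\mathbb{P}(X_{k+1} = j \mid X_k = \ell, X_0 = i)$ by $P_{\ell j}$, one should really expand $\{X_k = \ell, X_0 = i\}$ as a disjoint union over all intermediate trajectories $(X_1, \ldots, X_{k-1}) = (j_1, \ldots, j_{k-1})$, apply \eqref{Eq:MarkovProperty} on each piece (where the full history is specified), and then recombine. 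This is a short but important bookkeeping step, and it is the place where the seemingly elementary theorem actually uses the full strength of the Markov property rather than a naive two-state version.
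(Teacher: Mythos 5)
Your proof is correct and follows essentially the same route as the paper's: induction on $k$, conditioning on the penultimate state $X_k$, applying the Markov property and the inductive hypothesis, and recognizing the resulting sum $\sum_{\ell} P^k_{i\ell}P_{\ell j}$ as the $(i,j)$-entry of $P^{k+1}$. Your closing remark about expanding $\{X_k=\ell,\,X_0=i\}$ over all intermediate trajectories before invoking \eqref{Eq:MarkovProperty} is a genuine subtlety that the paper's own proof silently elides, so including that bookkeeping step would make your writeup slightly more rigorous than the original.
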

We will use the notations $P_{ij}^k$ and $P^k(i,j)$ interchangeably.  This theorem tells us that $P_{ij}^k = P^k(i,j)$ is the probability of moving from $i$ to $j$ in exactly $k$ steps.
\begin{proof}
    We use induction on $k$.  By the definition of $P$, \eqref{Eq:PowersOfP} holds for $k=1$.  Now suppose \eqref{Eq:PowersOfP} holds for some $k=m \geq 1$.  We need to show it holds for $k=m+1$.  We condition on the $m$th step and observe
    \begin{align}
        \nP(X_{m+1} = j& \,|\, X_0 = i) \notag\\
        &= \sum_{l=1}^n \nP(X_{m+1} = j, X_m=l \,|\, X_0 = i) \notag\\
        &= \sum_{l=1}^n \nP(X_{m+1} = j \, |\, X_m=l, X_0 = i)\nP(X_m=l \,|\, X_0=i) \notag\\
        &= \sum_{l=1}^n \nP(X_{m+1} = j \, |\, X_m=l)\nP(X_m=l \,|\, X_0=i) \label{Eq:PowersOfP1}\\
        &= \sum_{l=1}^n P_{lj}P^m_{il}= \sum_{l=1}^n P^m_{il}P_{lj}  \label{Eq:PowersOfP2}\\
        &= (P^m \cdot P)_{ij} = P^{m+1}_{ij},  \label{Eq:PowersOfP3}
    \end{align}
    where we used the Markov property in \eqref{Eq:PowersOfP1} and the inductive hypothesis in the first equality of \eqref{Eq:PowersOfP2}, and \eqref{Eq:PowersOfP3} is just the definition of matrix multiplication.  This proves our desired formula for the case $m+1$, completing the induction argument.
\end{proof}

Note that \eqref{Eq:PowersOfP} is stated in terms of the number of steps from the initial position.  But what about probabilities like
\begin{align*}
    \nP(X_{100} = y \, | \, X_{90} = x)?
\end{align*}
By the Markov property, it is as if the entire chain starts afresh at step 90.  So the only thing that matters is the number of steps between the two times, and so $\nP(X_{100} = y \, | \, X_{90} = x) = \nP(X_{10} = y \, | \, X_{0} = x) = P^{10}_{xy}$.

\begin{example}[The 6-cycle, revisited]\label{Eg:Hexagon2}
    Let's go back to our simple symmetric random walk on the 6-cycle $\Omega = \{1,2,3,4,5,6\}$.  We found its transition matrix $P_2$ above in \eqref{Eq:TransMatHexagon}.  Suppose we wanted to find $\nP(X_{k} = 5 \,|\, X_0 =1)$ for large $k$.  That is, we start our walk at vertex 1, and want to know the likelihood it is at vertex 5 after a large number of steps.  Using a computer, we have
    \begin{align}\label{Eq:P_2^50}
P_2^{50} &\approx        \left[
\begin{array}{cccccc}
 0.333 & 0 & 0.333 & 0 & 0.333 & 0 \\
 0 & 0.333 & 0 & 0.333 & 0 & 0.333 \\
 0.333 & 0 & 0.333 & 0 & 0.333 & 0 \\
 0 & 0.333 & 0 & 0.333 & 0 & 0.333 \\
 0.333 & 0 & 0.333 & 0 & 0.333 & 0 \\
 0 & 0.333 & 0 & 0.333 & 0 & 0.333 \\
\end{array}
\right],
\end{align}
while
\begin{align}\label{Eq:P_2^51}
P_2^{51} &\approx \left[
\begin{array}{cccccc}
 0 & 0.333 & 0 & 0.333 & 0 & 0.333 \\
 0.333 & 0 & 0.333 & 0 & 0.333 & 0 \\
 0 & 0.333 & 0 & 0.333 & 0 & 0.333 \\
 0.333 & 0 & 0.333 & 0 & 0.333 & 0 \\
 0 & 0.333 & 0 & 0.333 & 0 & 0.333 \\
 0.333 & 0 & 0.333 & 0 & 0.333 & 0 \\
\end{array}
\right].
    \end{align}
So, by Theorem \ref{Thm:PowersOfP}, we have $\nP(X_{50} = 5 \,|\, X_0 =1) \approx 1/3$, while, interestingly, $\nP(X_{51} = 5 \,|\, X_0 =1) \approx 0$.  

What is happening?  Note that if we start at vertex 1, then after an \emph{even} number of steps, we must be at an odd-numbered vertex.  Similarly, after an \emph{odd} number of steps, we must be at an even-numbered vertex.  Our $P_2^{50}$ matrix \eqref{Eq:P_2^50} suggests that, after a large even number of steps, our chain is equally likely to be at any of the odd vertices, while \eqref{Eq:P_2^51} suggests that, after a large odd number of steps, we are equally likely to be at any even vertex.  In particular, it appears that
\begin{align}\label{Lim:AsymptoticBehavior5Given1}
    \lim_{k \rightarrow \infty} \nP(X_{k} = 5 \,|\, X_0 =1)
\end{align}
does not exist, since the value alternates between 1/3 and 0 for large values of $k$ - that is, assuming the pattern we see in \eqref{Eq:P_2^50} and \eqref{Eq:P_2^51} continues, which we would need to verify.
\end{example}

\begin{exercise}\label{Ex:MatrixPowers1}
    Find the transition matrix $P$ for the simple symmetric random walk on the 5-cycle $\{1,2,3,4,5\}$.  Use a computer to compute $P^{50}$ and $P^{51}$.  Compare and contrast to the case of the 6-cycle above.
\end{exercise}

\begin{exercise}\label{Ex:MatrixPowers2}
    Recall the transition matrix $P_1$ in \eqref{Eq:TransMat1} for the graph in Figure \ref{Fig:FirstGraph1}.  Use a computer to compute $P_1^{50}$ and $P_1^{51}$.  What do you find?  How does this compare with the 5-cycle example in Exercise \ref{Ex:MatrixPowers1}?
\end{exercise}

While a computer can typically compute the 50th power of a small transition matrix, it can be computationally expensive.  And sometimes it is just not possible, as with the matrix \eqref{Eq:TransMat1} in Example \ref{Eg:2by2pq} for general $p$ and $q$ (the expressions for each entry would be prohibitively nasty; try, if you dare, in Mathematica or Maple). So while we know that 
\begin{align*}
    \nP(X_{1000} = 2 \,|\, X_0 =2) = P_{22}^{1000},
\end{align*}
we still do not necessarily know how to practically get our hands on this probability.

Let's step back and think more generally. Suppose we have a Markov chain on $\Omega=\{1,2,\ldots, n\}$ with $n\times n$ transition matrix $P$.  Let's start our Markov chain at $X_0=1$, which we could describe as beginning with the probability distribution
\begin{align*}
    \mu_0 = \underbrace{(1,0,0, \ldots, 0)}_n
\end{align*}
on the state space $\Omega$. For $k=1,2,3\ldots$, let 
\begin{align*}
    \mu_k = (\mu_{k1}, \mu_{k2}, \ldots, \mu_{kn})
\end{align*}
be the row vector which is the probability distribution of where our chain is at after $k$ steps.  In other words,
\begin{align*}
    (\mu_{k})_j = \mu_{kj} = \nP(X_k = j \, | \, X_0 =1).
\end{align*}
Do we have a limit $\lim_{k \rightarrow \infty} \mu_{kj}$ for each entry, or a limiting distribution $\lim_{k \rightarrow \infty} \mu_k$ as a whole?

Observe that
\begin{align*}
    \mu_1 &= \big(\nP(X_1 = 1 \, | \, X_0 =1),\nP(X_1 = 2 \, | \, X_0 =1), \ldots, \nP(X_1 = n \, | \, X_0 =1) \big)\\
    &= (P_{11}, P_{12}, \ldots, P_{1n}) = \mu_0 P,
\end{align*}
where $\mu_0 P$ is the row vector $\mu_0$ times the matrix $P$ (which, in this case, just picks out the top row of $P$).  Similarly,
\begin{align*}
    \mu_2 &= \big(\nP(X_2 = 1 \, | \, X_0 =1),\nP(X_2 = 2 \, | \, X_0 =1), \ldots, \nP(X_2 = n \, | \, X_0 =1) \big)\\
    &= \left(P^2_{11}, P^2_{12}, \ldots, P^2_{1n}\right)\\
    &= \mu_0 P^2 = (\mu_0P)P = \mu_1P,
\end{align*}
since we found $\mu_0P = \mu_1$.  Continuing this pattern, we see
\begin{align}\label{Eq:mu_k1}
    \mu_{k+1} = \mu_k P
\end{align}
for all $k$.  Now \emph{suppose} a limiting vector $\pi = \lim_{k \rightarrow \infty} \mu_k$ exists.  Then taking limits on both sides of \eqref{Eq:mu_k1} gives
\begin{align*}
    \pi = \lim_{k \rightarrow \infty} \mu_{k+1} &= \lim_{k \rightarrow \infty} (\mu_k P)
    = (\lim_{k \rightarrow \infty} \mu_k )P = \pi P,
\end{align*}
and so such a limit would satisfy
\begin{align}\label{Eq:pi}
    \boxed{\pi = \pi P}
\end{align}
That is, $\pi$ would be a row vector that is a \emph{left eigenvector} of $P$ with eigenvalue 1.  This equation \eqref{Eq:pi} will prove very significant in our investigation of Markov chains and their limiting behavior. We are immediately led to a number of questions:
\begin{enumerate}
    \item When does such a solution $\pi$ of \eqref{Eq:pi} exist? Can $\pi$ be taken to be a probability vector? That is, is there a solution $\pi$ to \eqref{Eq:pi} that has nonnegative coordinates that add up to one? Finally, if such a solution exists, is it unique?
    \item Does 
    \begin{align}\label{Eq:muLimit1}
        \lim_{k \rightarrow \infty} \mu_{kj} = \lim_{k \rightarrow \infty} \nP(X_k=j \, | \, X_0=1) = \pi_j?
    \end{align}
    Or, more generally, does the initial value matter? Is
    \begin{align}\label{Eq:muLimit2}
        \lim_{k \rightarrow \infty} \nP(X_k=j \, | \, X_0=i) = \pi_j
    \end{align}
    for any starting point $i \in \{1,2,\ldots,n\}$?
    
    \item If \eqref{Eq:muLimit1} or \eqref{Eq:muLimit2} holds, what is the rate of convergence?  In other words, how many steps $k$ does it take until the probability we are at vertex $j$ is within a given magnitude of error of $\pi_j$?
\end{enumerate}

\begin{exercise}\label{Ex:Firstpi}
    Show that $\pi=(\frac{1}{6}, \frac{1}{6}, \frac{1}{6}, \frac{1}{6}, \frac{1}{6}, \frac{1}{6})$ is a left eigenvector for the 6-cycle transition matrix \eqref{Eq:TransMatHexagon}.
\end{exercise}

Exercise \ref{Ex:Firstpi} shows that answering question 1 affirmatively above is not the same as that for question 2.  Indeed, we have such a $\pi$ for the simple random walk on the 6-cycle, but our computations in Example \ref{Eg:Hexagon2} suggest that a limiting probability distribution doesn't exist, since the walk alternates between even and odd vertices. 

In order to give a positive answer to both questions 1 and 2, we will see that our Markov chain needs the properties of \emph{irreducibility} and \emph{aperiodicity}, which we introduce in the next section.  In Chapter \ref{Ch:Asymptotics}, we will see that irreducible and aperiodic Markov chains converge exponentially fast to their limiting distributions $\pi$ (see Theorem \ref{Thm:ConvToStationary}), answering question 3.

\section{Irreducibility and aperiodicity}

\begin{definition}
    A Markov chain $X$ on state space $\Omega$ with transition matrix $P$ is \textbf{irreducible} if for any two states $x,y \in \Omega$, there exists $k \in \mathbb{N}$ such that $(P^k)_{x,y}>0$.  In other words, \textit{for any pair of states} $x$ and $y$, there is a positive probability that the chain can move from $x$ to $y$ in some finite number of steps.
\end{definition}

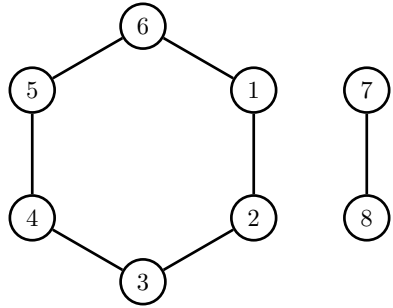
\begin{figure}
    \centering
    \scalebox{0.85}{
    \begin{tikzpicture}
    \tikzset{
        node/.style={circle, draw=black, very thick, minimum size=7mm},
        line/.style={black, very thick}
    }
    
    \node[node] (one) at (1.732,1) {1};
    \node[node] (two) at (1.732,-1) {2};
    \node[node] (three) at (0,-2) {3};
    \node[node] (four) at (-1.732,-1) {4};
    \node[node] (five) at (-1.732,1) {5};
    \node[node] (six) at (0,2) {6};
    \node[node] (seven) at (3.5,1) {7};
    \node[node] (eight) at (3.5,-1) {8};
    
    \draw[line] (one) -- (two);
    \draw[line] (two) -- (three);
    \draw[line] (three) -- (four);
    \draw[line] (four) -- (five);
    \draw[line] (five) -- (six);
    \draw[line] (six) -- (one);
    \draw[line] (seven) -- (eight);
    \end{tikzpicture}}
    \caption{\small Disconnected graphs are not irreducible.}
    \label{Fig:HexagonPlusLine}
\end{figure}

\begin{example}\label{Eg:Irreducibility}
    The random walk on the six-cycle is irreducible.  If $x=1$ and $y=4$, for instance, we have a path $1 \rightarrow 2 \rightarrow 3 \rightarrow 4$ which has probability $(1/2)^3 >0$.  Similarly, any two vertices have a path of positive probability connecting them.
    
    However, if we add a disconnected line segment to the graph, as in Figure \ref{Fig:HexagonPlusLine}, then the random walk is no longer irreducible: there is no positive-probability path from vertex 1 to 7, for example.  
    
     The random walk on the graph in Figure \ref{Fig:FirstGraph1} is also irreducible, as is the weighted walk on the graph in Figure \ref{Fig:Lect2Graph}, so long as $0<p<1$ and $0<q<1$.  However, if either $p \in \{0,1\}$ or $q \in \{0,1\}$, the graph fails to be irreducible.  For instance, if $p=1$, then we cannot get from $x=1$ to $y=2$. \qed
\end{example}

From the point of view of the graph $G=(V,E)$, irreducibility of the simple symmetric random walk on $G$ is equivalent to the connectivity of the graph. That is, there must be a path on the graph from any one vertex to any other. This also works for weighted random walks if all edges have positive probability either way. Inuitively speaking, the graph must consist of only a single piece. 

\begin{exercise}
    Is the Markov chain in Problem \ref{Prob:AbsorbingBarriers1} irreducible?  Why or why not?
\end{exercise}


Aperiodicity is a slightly more subtle property.  Start with an irreducible Markov chain $X$, and for a fixed state $x \in \Omega$ set
\begin{align*}
    T(x) := \{\, k \; : \; P^k(x,x) >0 \,\}.
\end{align*}
We say that the \textbf{period of the state $x$} is $\gcd(T(x))$, the greatest common divisor of all the integers in $T(x)$.  The striking thing about this number is that it does not depend on the state $x$.

\begin{theorem}
    If $X$ is an irreducible Markov chain, then
    \begin{align}\label{Eq:PeriodInvariant}
        \gcd(T(x)) =  \gcd(T(y))
    \end{align}
    for any two states $x,y \in \Omega$.
\end{theorem}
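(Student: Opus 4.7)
My plan is to show $d_x \mid d_y$, where $d_x := \gcd(T(x))$ and $d_y := \gcd(T(y))$; by symmetry the same argument will give $d_y \mid d_x$, and the two divisibilities together force $d_x = d_y$. The key leverage is that, by Theorem \ref{Thm:PowersOfP}, $P^k(u,v)$ is the probability of going from $u$ to $v$ in exactly $k$ steps, so concatenating paths bounds probabilities below via the Chapman--Kolmogorov identity $P^{m+n}(x,x) = \sum_z P^m(x,z) P^n(z,x) \geq P^m(x,z) P^n(z,x)$ for any fixed intermediate state $z$.

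First I would invoke irreducibility to pick positive integers $r$ and $s$ with $P^r(x,y) > 0$ and $P^s(y,x) > 0$. Concatenating these two paths gives $P^{r+s}(x,x) \geq P^r(x,y) P^s(y,x) > 0$, so $r+s \in T(x)$ and hence $d_x \mid (r+s)$. Next, for any $k \in T(y)$, I would insert a $k$-step self-loop at $y$ into the round trip, producing $P^{r+k+s}(x,x) \geq P^r(x,y) P^k(y,y) P^s(y,x) > 0$. Thus $r+k+s \in T(x)$ and $d_x \mid (r+k+s)$. Subtracting from $d_x \mid (r+s)$ yields $d_x \mid k$. Since $k \in T(y)$ was arbitrary, $d_x$ is a common divisor of $T(y)$, and therefore $d_x \mid \gcd(T(y)) = d_y$.

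The proof then finishes by swapping the roles of $x$ and $y$ to obtain $d_y \mid d_x$. There is no real obstacle; the only step that deserves a moment of care is the ``insertion trick'' of wedging a $k$-step self-loop at $y$ into a fixed $x \to y \to x$ round trip, which is the device that transports period information at $y$ back to $x$. Everything else is elementary divisibility together with the nonnegativity of entries of $P^k$.
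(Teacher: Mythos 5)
Your proof is correct and is exactly the standard argument that the paper sketches but defers to \cite[Lemma 1.6]{Peres}: use irreducibility to build a round trip $x \to y \to x$ of length $r+s$, insert a $k$-step loop at $y$ for each $k \in T(y)$, and conclude by divisibility that $\gcd(T(x)) \mid \gcd(T(y))$, then symmetrize. Nothing is missing; this is a faithful filling-in of the omitted proof.
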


Although we omit the proof, the idea is to use the irreducibility of the chain: since $X$ is irreducible/connected, any information about one vertex $x$ can be ``communicated'' to any other vertex $y$ by using a path from $x$ to $y$.  See \cite[Lemma 1.6]{Peres} for proof details.

Given \eqref{Eq:PeriodInvariant}, we can define the \emph{period} of an irreducible Markov chain.

\begin{definition}
    The \textbf{period } of an irreducible Markov chain $X$ is $\gcd(T(x))$, where $x \in \Omega$ is any state. The Markov chain is \textbf{aperiodic} if the period is 1.
\end{definition}

Some exercises and examples will help us start to get a sense of what this definition is getting at.

\begin{exercise}
    Suppose there is a state $x\in \Omega$ such that $P(x,x)>0$. Argue that the Markov chain must be aperiodic. 
\end{exercise}

The previous exercises can be generalized with the help of a fact from number theory to give a helpful interpretation of the period.  If the period of a chain is $d$, then for each state $x$, there exists an $N$ (that may depend on $x$) such that, for all $n \geq N$,
\begin{align*}
    P^{nd}(x,x) > 0.
\end{align*}
In other words, it is possible to return to the same state $x$ in all sufficiently large multiples of the period.    And if the chain is aperiodic, i.e. $d=1$, this says that $P^n(x,x) >0$ for all sufficiently large $n$: it is possible to return to $x$ in \emph{all} sufficiently large numbers of steps.

\begin{example}\label{Eg:WalkOn4Cycle}
    The simple symmetric random walk on the 4-cycle $\{1,2,3,4\}$, as in Figure \ref{Fig:4cycle}, has transition matrix
    \begin{align}\label{Eq:SquareTransMatrix}
        P = \begin{bmatrix}
            0 & 1/2 & 0 & 1/2\\
            1/2 & 0 & 1/2 & 0\\
            0 & 1/2 & 0 & 1/2\\
            1/2 & 0 & 1/2 & 0
        \end{bmatrix}.
    \end{align}
    
\begin{figure}
    \centering
    \scalebox{0.85}{
        \begin{tikzpicture}
    \tikzset{
        node/.style={circle, draw=black, very thick, minimum size=7mm},
        line/.style={black, very thick}
    }
    
    \node[node] (one) at (0,0) {4};
    \node[node] (two) at (2,0) {1};
    \node[node] (three) at (2,-2) {2};
    \node[node] (four) at (0, -2) {3};
    
    \draw[line] (one) -- (two);
    \draw[line] (two) -- (three);
    \draw[line] (three) -- (four);
    \draw[line] (four) -- (one);s
    \end{tikzpicture}}
    \caption{\small Is the walk on the 4-cycle aperiodic?}
    \label{Fig:4cycle}
\end{figure}
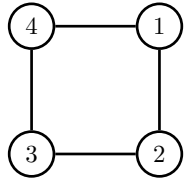

    Since the 4-cycle is a connected graph, the chain is irreducible. Is it aperiodic?  Let's consider $T(1) = \{\, k \; : \; P^k(1,1)>0\,\}$, the set of possible return times to 1.  Some possible paths from 1 to 1 include
    \begin{align*}
        &1 \ra 2 \ra 1  &k=2\\
        &1 \ra 2 \ra 3 \ra 2 \ra 1 &k=4\\
        &1 \ra 2 \ra 3 \ra 4 \ra 1     &k=4\\
        &1 \ra 2 \ra 3 \ra 2 \ra 3 \ra 4 \ra 1      &k=6
    \end{align*}
    We claim that \emph{all} possible paths from 1 to 1 require an even number of steps.  Indeed, given a path $p$, write the length $k_p$ as 
    \begin{align*}
        k_p = a_p+b_p,
    \end{align*}
    where $a_p$ is the number of clockwise steps in the path, and $b_p$ the number of counter-clockwise.  Numbering the paths above as 1 to 4, respectively, we have
    \begin{align*}
        &a_1=b_1 = 1\\
        &a_2=b_2 = 2\\
        &a_3=4, \; b_3=0\\
        &a_4=5, \; b_4=1.
    \end{align*}
    In general, we claim that
    \begin{align}\label{Eg:Periodic1}
        a_p = b_p +4n
    \end{align}
    for some $n \in \mathbb{Z}$, as we can begin to see from our examples.  Why \eqref{Eg:Periodic1}?  As we are travelling from 1 to 1, if we do a ``sub-cycle'' like $3 \ra 2 \ra 3$ in the fourth path, we add the same number of clockwise and counter-clockwise steps.  The remaining difference $4n$ accounts for how many times we loop around the square.

    Thus, the total number of steps is 
    \begin{align*}
        k_p &= a_p + b_p\\
        &= 2b_p + 4n,
    \end{align*}
    which is always even.  Since a return of length 2 is possible, $\gcd(T(1))=2$.  So we see that the walk on the 4-cycle is not aperiodic, but is rather periodic with period 2. 
\end{example}

\begin{exercise}
    For the random walk on the 4-cycle, compute
    \begin{align*}
        \nP(X_{1000}=3 \, | \, X_0=1) \quad \text{ and } \quad \nP(X_{1000}=2 \, | \, X_0=1).
    \end{align*}
\end{exercise}

\begin{example}\label{Eg:WalkOn5Cycle}
    What about a random walk $X$ on the 5-cycle $\{1,2,3,4,5\}$?  This is again clearly irreducible, but it is also periodic? Since paths from 1 to 1 include
    \begin{align*}
       &1 \ra 2 \ra 1    &k=2\\
       &1 \ra 2 \ra 3 \ra 4 \ra 5 \ra 1     &k=5,
    \end{align*}
    we see $T(1)$ contains co-prime integers $2$ and $5$, yielding $\gcd(T(x))=1$.  
\end{example}

So while the 4-cycle walk is periodic, the 5-cycle walk is aperiodic.  These arguments easily generalize to walks on $n$-cycles.

\begin{theorem}
    Let $X$ be a random walk on an $n$-cycle.  If $n$ is even, the period of $X$ is 2.  If $n$ is odd, $X$ is aperiodic.
\end{theorem}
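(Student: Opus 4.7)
The plan is to compute the return-time set $T(1) = \{\,k : P^k(1,1) > 0\,\}$ explicitly for the simple symmetric random walk on the $n$-cycle $\{1,2,\ldots,n\}$ by a winding-number argument in the spirit of Example \ref{Eg:WalkOn4Cycle}. Since the $n$-cycle is connected, $X$ is irreducible, so by the preceding theorem the period is independent of the starting state and it suffices to analyze returns to vertex $1$.

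The first step is to parametrize positive-probability closed walks at $1$. Any walk of length $k$ consists of $a$ clockwise steps and $b$ counterclockwise steps with $a + b = k$, and it returns to vertex $1$ if and only if the net displacement $a - b$ is an integer multiple of $n$, say $a - b = mn$ for some $m \in \mathbb{Z}$. Solving gives $a = b + mn$, hence
\[
k \;=\; a + b \;=\; 2b + mn.
\]
Conversely, any pair $(m,b)$ with $m \in \mathbb{Z}$, $b \ge 0$, and $b + mn \ge 0$ yields a valid walk of positive probability, so
\[
T(1) \;=\; \{\,2b + mn : b \ge 0,\ m \in \mathbb{Z},\ b + mn \ge 0\,\}.
\]

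The second step is to split on the parity of $n$. If $n$ is even, then every element $2b + mn$ of $T(1)$ is even, so $T(1) \subseteq 2\mathbb{N}$. Since the walk $1 \to 2 \to 1$ shows $2 \in T(1)$, we conclude $\gcd(T(1)) = 2$. If $n$ is odd, then the walk $1 \to 2 \to 1$ shows $2 \in T(1)$, and the once-around walk $1 \to 2 \to 3 \to \cdots \to n \to 1$ shows $n \in T(1)$; since $n$ is odd, $\gcd(2,n) = 1$, which forces $\gcd(T(1)) = 1$.

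I do not anticipate a real obstacle: the only mildly delicate point is justifying the claim that the net clockwise displacement of a closed walk on the cycle must be a multiple of $n$, which is essentially the statement that vertex labels are well-defined modulo $n$. Once the parametrization $k = 2b + mn$ is in hand, the parity of $mn$, controlled entirely by the parity of $n$, makes both conclusions immediate.
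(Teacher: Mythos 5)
Your proposal is correct and follows essentially the same route as the paper, which proves the theorem by generalizing the winding-number decomposition $k = a + b$ with $a - b$ a multiple of $n$ from Example \ref{Eg:WalkOn4Cycle} (giving $k = 2b + mn$, hence all even when $n$ is even) and the coprime pair $\{2, n\}$ from Example \ref{Eg:WalkOn5Cycle} for the odd case. No gaps; your write-up just makes explicit the parametrization that the paper leaves to the reader.
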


\begin{proof}
    Repeat the logic of examples \ref{Eg:WalkOn4Cycle} and \ref{Eg:WalkOn5Cycle} for the even and odd cases, respectively.
\end{proof}

\begin{exercise}
    Suppose that every non-diagonal element of the transition probability matrix $P$ is strictly positive. That is, $P(x,y)>0$ for all $x\neq y\in \Omega$. If $\#\Omega=2$, additionally assume that $P(x,y) <1$ for some $x \neq y \in \Omega$.  Other than this, we don't assume anything about the diagonal elements (so there are no assumptions on the diagonal when $\#\Omega \geq 3$). Argue that the Markov chain is aperiodic.   
\end{exercise}

\begin{exercise}
    Let $G$ be a connected graph.  Suppose there is a path of odd-length $2k+1$, for some $k$, from one given vertex $x$ to itself.  Show that the random walk on $G$ is aperiodic.
\end{exercise}


\section{Stationary distributions $\pi$}\label{Sec:Stationary}


So far our calculations in terms of probabilities for Markov chains have all been conditional.  We know how to compute things like
\begin{multline*}
    \nP(X_{1} = y \, | \, X_{0} = x) \; \text{ or } \; \nP(X_{14} = y \, | \, X_{0} = x)\; 
    \text{or } \; \nP(X_{350} = y \, | \, X_{125} = x)
\end{multline*}
using the transition matrix $P$ and its powers. But what if we want the \emph{unconditional} probability $\nP(X_1 =y)$?  

Suppose the \textbf{initial distribution} of $X_0$ is given by the vector 
\begin{align*}
    \mu_0 =& (q_1, q_2, \ldots, q_n) := ( \nP(X_0=1), \nP(X_0=2), \ldots, \nP(X_0=n)).
\end{align*}
By the law of total probability,
\begin{align}
    \nP(X_1 = y) &= \sum_{j=1}^n \nP(X_1 = y \, | \, X_0 = j) \nP(X_0=j) 
    = \sum_{j=1}^n P_{jy} q_j \label{Eq:muP}.
\end{align}

So, if we have an explicit initial distribution $\mu_0 = (q_1, q_2, \ldots, q_n)$, we can compute unconditional probabilities. Note that we can view \eqref{Eq:muP} as matrix multiplication
\begin{align*}
    \sum_{j=1}^n P_{jy} q_j = (\mu_0P)_y,
\end{align*}
the $y$th entry of row-matrix product $\mu_0 P$.  Thus, denoting the \emph{unconditional} probability row vector for the first step as $\mu_1$,
\begin{align*}
    \mu_1 :=& ( \nP(X_1=1), \nP(X_1=2), \ldots, \nP(X_1=n)),
\end{align*}
we have
\begin{align}
    \mu_1 =& ( (\mu_0P)_1,  (\mu_0P)_2, \ldots,  (\mu_0P)_n) \notag\\
    =& \mu_0P. \label{Eq:DistributionAfter1Step}
\end{align}
Similarly, by the law of total probability and \eqref{Eq:DistributionAfter1Step},
\begin{align*}
    \mu_2 :=& ( \nP(X_2=1), \nP(X_2=2), \ldots, \nP(X_2=n))\\
    =& \mu_1P = (\mu_0 P)P = \mu_0 P^2,
\end{align*}
and, continuing this logic, we have
\begin{align}
    \mu_k :=& ( \nP(X_k=1), \nP(X_k=2), \ldots, \nP(X_k=n)) =\mu_0 P^k. \label{Eq:mu_k}
\end{align}
So, matrix powers also give us \emph{unconditional} probabilities after multiple steps, so long as we \textit{left-multiply} by the initial distribution.

We may have a starting distribution $\mu_0$ that is \emph{deterministic}, i.e. that always begins the chain at some fixed vertex $j$, and so $\mu_0$ is all zeros except for a single 1 at entry $j$. This is indeed what we did in the derivation leading up to \eqref{Eq:pi}. But it doesn't have to be the case. We can also randomly choose our initial position.  

\begin{example}\label{Eg:HexagonStationary}
    Consider the random walk on the 6-cycle (hexagon) with $X_0 \sim \text{Unif}(1,2,\ldots, 6)$.  That is,
    \begin{align}\label{Eq:HexagonStationary}
        \mu_0 = \left(\frac{1}{6}, \frac{1}{6}, \ldots, \frac{1}{6} \right).
    \end{align}
    What is $\nP(X_1 = 2)$?  If $X_1=2$, clearly $X_0$ could only be $1$ or $3$. Repeating our calculation with the law of total probability, we find
    \begin{align*}
        \nP(X_1 = 2) &= \sum_{j=1}^6 \nP(X_1 = 2 \, | \, X_0 = j) \nP(X_0=j)\\
        &= \nP(X_1 = 2 \, | \, X_0 = 1) \nP(X_0=1) + \nP(X_1 = 2 \, | \, X_0 = 3) \nP(X_0=3)\\
        &= \frac{1}{2} \cdot \frac{1}{6} + \frac{1}{2} \cdot \frac{1}{6} = \frac{1}{6}.
    \end{align*}
So, note that we interestingly have
\begin{align*}
    \nP(X_1 = 2) = \nP(X_0 = 2) = \frac{1}{6}
\end{align*}
under the initial distribution \eqref{Eq:HexagonStationary}.  But a moments thought shows this computation would equally apply to \emph{any} vertex in the 6-cycle, and so
\begin{align}\label{Eq:Hexagon1StepStationary}
    \nP(X_1 = j) = \nP(X_0 = j) = \frac{1}{6},
\end{align}
for \emph{every} $j$.  Thus if we start the chain with the distribution $\mu_0$, \eqref{Eq:Hexagon1StepStationary} says that our distribution after one step is still $\mu_0$.  But then we can repeat this process: taking another step is just like starting over by the Markov property, and so the distribution of $X_k$ after \emph{any} number of steps is still $\mu_0$!
\end{example}

This is rather remarkable.  Even though our state $X_n$ varies over time, from a probabilistic point of view our chain is \emph{stationary}: the probability that we are at any given vertex $j$ after $k$ steps is always $q_j = (\mu_0)_j$. Such initial distributions, when they exist, will prove to be significant, and we give them the obvious name.

\begin{definition}
    For a Markov chain $X$ on $\Omega = \{1,2,\ldots, n\}$ with transition matrix $P$, a \textbf{stationary distribution} 
    \begin{align*}
        \pi = (\pi_1, \pi_2, \ldots, \pi_n)
    \end{align*}
    is a probability distribution on $\Omega$ (arranged in a row vector) such that
    \begin{align}\label{Eq:Stationary}
        \pi P = \pi.
    \end{align}
    In other words, $\pi$ is a left eigenvector for $P$ with eigenvalue $\lambda=1$.
\end{definition}

The observation we made above bears repeating.  If we have $\pi$ as in \eqref{Eq:Stationary} and we start the chain at a random vertex distributed according to $\pi$, what is the distribution of $X_k$?  From \eqref{Eq:mu_k} and the eigenvector property \eqref{Eq:Stationary}, we have
\begin{align*}
    \mu_k :=& ( \nP(X_k=1), \nP(X_k=2), \ldots, \nP(X_k=n)) \notag\\
    =& \pi P^k = (\pi P)P^{k-1} = \pi P^{k-1} = (\pi P)P^{k-2} = \pi P^{k-2} = \cdots = \pi P = \pi.
\end{align*}
So, if we start the chain with the stationary distribution, the distribution of the chain is stationary: $X_k$ is \emph{always} distributed according to $\pi$; $X_k$'s pdf is always $\pi$.

The following exercise is a helpful observation that we will repeatedly use, and will be a good tool for you to keep in mind.

\begin{exercise}\label{Ex:ColsAddToOne}
(Characterizing chains which have uniform $\pi$)
\begin{enumerate}[$(a)$]
    \item Suppose a Markov chain on $\Omega = \{1,2,\ldots, n\}$ has a transition matrix $P$ such that every column of $P$ sums to 1.  Show that the uniform distribution $\pi=(\frac{1}{n}, \frac{1}{n}, \ldots, \frac{1}{n})$ on $\Omega$ is a stationary distribution for $P$.
    \item Now suppose that $\pi =(\frac{1}{n}, \frac{1}{n}, \ldots, \frac{1}{n})$ is a stationary distribution for some transition matrix $P$.  Show that each column of $P$ sums to 1.
    \item Build a Markov chain on the state space $\Omega = \{1,2\}$ for which the uniform distribution is not stationary.
\end{enumerate}
\end{exercise}

\subsection{$\pi$ for simple symmetric random walks on graphs}
When does such a distribution $\pi$ exist?  If it exists, is it unique?  These are natural questions that we seek to answer in subsequent sections.  There is one class of Markov chains for which the answer is simple. 
    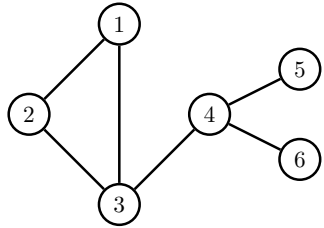
\begin{figure}
        \centering
            \scalebox{0.8}{
    \begin{tikzpicture}
    \tikzset{
        node/.style={circle, draw=black, very thick, minimum size=4mm},
        line/.style={black, very thick}
    }
    
    \node[node] (one) at (0,1.5) {1};
    \node[node] (two) at (-1.5,0) {2};
    \node[node] (three) at (0,-1.5) {3};
    \node[node] (four) at (1.5,0) {4};
    \node[node] (five) at (3,0.75) {5};
    \node[node] (six) at (3,-0.75) {6};
    
    \draw[line] (one) -- (two);
    \draw[line] (two) -- (three);
    \draw[line] (three) -- (four);
    \draw[line] (three) -- (one);
    \draw[line] (four) -- (five);
    \draw[line] (four) -- (six);
    \end{tikzpicture}}
        \caption{\small What is $\pi$ for the simple random walk on this graph?}
        \label{Fig:Eg1Lecture5}
    \end{figure}

\begin{theorem}\label{Thm:SRWpi}
    A simple symmetric random walk on a connected graph $G = (V,E)$ has a unique stationary distribution $\pi = (\pi_v)_{v \in V}$, given by the formula
    \begin{align}\label{Eq:SRWStationaryDistribution}
        \pi_v = \frac{\deg(v)}{2|E|}
    \end{align}
    for each $v \in V$. Here $|E|$ is the number of edges of $G$.
\end{theorem}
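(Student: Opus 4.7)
My plan is to prove existence first by directly verifying the two defining properties of a stationary distribution, and then address uniqueness by appealing to irreducibility.

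First I would check that the proposed $\pi$ is actually a probability distribution. The entries $\deg(v)/(2|E|)$ are clearly nonnegative, and a standard counting argument (often called the handshake lemma) shows $\sum_{v \in V} \deg(v) = 2|E|$: each edge $\{u,v\}$ contributes 1 to $\deg(u)$ and 1 to $\deg(v)$, so summing degrees double-counts edges. Hence $\sum_v \pi_v = 1$, confirming $\pi$ is a probability vector.

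Next I would verify the eigenvector equation $\pi P = \pi$ by computing the $w$-th coordinate directly. Since $P_{vw} = 1/\deg(v)$ when $v \sim w$ and $0$ otherwise, the sum collapses beautifully:
\begin{align*}
(\pi P)_w = \sum_{v \in V} \pi_v P_{vw} = \sum_{v \sim w} \frac{\deg(v)}{2|E|} \cdot \frac{1}{\deg(v)} = \sum_{v \sim w} \frac{1}{2|E|} = \frac{\deg(w)}{2|E|} = \pi_w.
\end{align*}
The key point is that the $\deg(v)$ in the numerator of $\pi_v$ is exactly what is needed to cancel the $1/\deg(v)$ in the transition probability, which is the structural reason the formula works for simple symmetric walks in the first place.

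For uniqueness, I would first observe that connectedness of $G$ implies the chain is irreducible: any two vertices $x$ and $y$ are joined by some path $x = v_0 \sim v_1 \sim \cdots \sim v_k = y$ in $G$, and this path has positive probability $\prod_{i=0}^{k-1} 1/\deg(v_i) > 0$ under the chain, so $P^k(x,y) > 0$. Given irreducibility, uniqueness of the stationary distribution is the hard part: it requires showing that the eigenspace of $P^\top$ associated with eigenvalue $1$ is one-dimensional. I would plan to invoke this general theorem about irreducible finite Markov chains (which is the natural next topic after the section on irreducibility/aperiodicity, and presumably proved later in the book). With that result in hand, any stationary $\pi'$ must be a scalar multiple of the $\pi$ we constructed; the normalization $\sum_v \pi'_v = 1$ then forces $\pi' = \pi$.

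The main obstacle is really the uniqueness half: existence is an almost mechanical calculation, but uniqueness depends on a nontrivial fact about irreducible transition matrices that one typically proves via Perron--Frobenius or by a coupling/probabilistic argument about hitting times. In a first exposition I would cite the forthcoming general uniqueness theorem rather than reprove it here.
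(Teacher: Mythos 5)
Your proposal is correct and follows essentially the same route as the paper: the existence half is the identical direct verification that $(\pi P)_w = \pi_w$ via the cancellation of $\deg(v)$ against $1/\deg(v)$, and the uniqueness half is deferred to the general theorem on irreducible chains (Theorem~\ref{Thm:ReturnTime}), exactly as the text does. Your additional check that $\sum_v \pi_v = 1$ via the handshake lemma is left as an exercise in the paper but is the intended argument.
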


Note that the assumption that $G$ is connected is equivalent to saying that the random walk is irreducible - it is possible to travel from any one vertex to any other.  Note also there is no assumption on periodicity.

Before we prove this, let's work through some exercises and examples.

\begin{exercise}
    Theorem \ref{Thm:SRWpi} says that $\pi$ is unique when the graph is connected.  Consider the disconnected graph $G = (V,E)$ given by
    \begin{align*}
        V = \{1,2,3,4\} \quad \text{ and } \quad E = \{\{1,2\}, \{3,4\} \}.
    \end{align*}
    Show there are infinitely-many distinct stationary distributions for the simple symmetric random walk on $G$.
\end{exercise}

\begin{exercise}
    For a connected graph $G= (V,E)$ and $\pi_v$ given by \eqref{Eq:SRWStationaryDistribution}, show that 
    \begin{align*}
        \sum_{v\in V} \pi_v = 1.
    \end{align*}
\end{exercise}

\begin{example}
    Consider the simple symmetric random walk on the 6-cycle again.  Here, $\deg(v) = 2$ for each vertex, and $|E|=6$.  Hence
    \begin{align*}
        \pi_v = \frac{2}{12} = \frac{1}{6}
    \end{align*}
    for each vertex $v$.  So, as we saw above, $\pi$ is the uniform distribution on $V$.
\end{example}

\begin{example}\label{Example:FirstGraphStationaryDist}
    The graph in Figure \ref{Fig:FirstGraph1} has six edges, and thus we have
    \begin{align*}
        \pi = \Big( \frac{1}{12}, \frac{3}{12}, \frac{3}{12}, \frac{3}{12}, \frac{2}{12} \Big) = \Big( \frac{1}{12}, \frac{1}{4}, \frac{1}{4}, \frac{1}{4}, \frac{1}{6} \Big).
    \end{align*}
\end{example}


\begin{exercise}
    Find the stationary distribution of the graph in Figure \ref{Fig:Eg1Lecture5}.
\end{exercise}


Note that the hypothesis that the Markov chain on the graph G in Theorem \ref{Thm:SRWpi} is \emph{simple and symmetric} is also essential.  Biased random walks on graphs as in \S \ref{Sec:NonSRW} are, of course, completely legitimate, but Theorem \ref{Thm:SRWpi} doesn't apply to them, and they may not have such an elegant formula for a stationary distribution. The simple symmetric random walk is in many senses the simplest and cleanest example.   

\begin{example}
\begin{figure}
    \centering
    \scalebox{0.95}{
    \begin{tikzpicture}
    \tikzset{
        node/.style={circle, draw=black, very thick, minimum size=4mm},
         arrow/.style={-{Latex[length=3mm]}, black},
        line/.style={black, very thick}
    }
    
    \node[node] (one) at (-2.5,2.17) {1};
    \node[node] (two) at (-3.75,0) {2};
    \node[node] (three) at (-1.25,0) {3};
    
    \node[node] (four) at (2.5, 2.17) {1};
    \node[node] (five) at (1.25,0) {2};
    \node[node] (six) at (3.75,0) {3};
    
    \draw[line] (one) -- (two);
    \draw[line] (two) -- (three);
    \draw[line] (three) -- (one);
    \draw [arrow, dashed] (one) to [looseness=6] node[above]{$1/2$}  (one);
    \draw [arrow, dashed] (one) to [bend right] node[left]{$1/4$}  (two);
    \draw [arrow, dashed] (one) to [bend left] node[right]{$1/4$}  (three);
    
    \draw [arrow, dashed] (two) to [out=-20,in=200] node[below]{$1$}  (three);
    
    \draw [arrow, dashed] (three) to [out=140, in=-80] node[left]{{\scriptsize $1/2$}}  (one);
    \draw [arrow, dashed] (three) to [out=160, in=20] node[above]{{\scriptsize $1/2$}}  (two);
    
    \draw [arrow, dashed] (four) edge [bend right] node[left]{$1/2$}  (five);
    \draw [arrow, dashed] (four) edge [bend left] node[right]{$1/2$}  (six);
        
    \draw[line] (four) -- (five);
    \draw[line] (five) -- (six);
    \draw[line] (six) -- (four);
    \end{tikzpicture}}
    \caption{\small A non-simple walk and a simple random walk on a 3-cycle.  Their differing transition probabilities yield different stationary distributions.}
    \label{Fig:TrianglesInLecture6}
\end{figure}
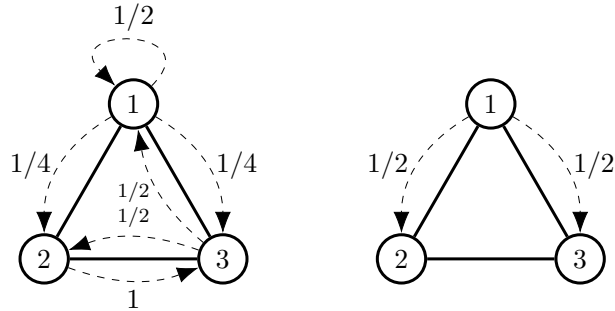
Consider both a biased and simple walk on the 3-cycle, as depicted in Figure \ref{Fig:TrianglesInLecture6}.  The transition matrices are
\begin{align*}
    P_1 = \begin{bmatrix}
        \frac{1}{2} & \frac{1}{4} & \frac{1}{4} \\
        0 & 0 & 1\\
        \frac{1}{2} & \frac{1}{2} & 0\\
    \end{bmatrix} \quad \text{ and } \quad P_2 = \begin{bmatrix}
        0 & \frac{1}{2} & \frac{1}{2} \\
        \frac{1}{2} & 0 & \frac{1}{2}\\
        \frac{1}{2} & \frac{1}{2}  & 0\\
    \end{bmatrix}
\end{align*}
The latter matrix $P_2$ defines the simple random walk, and each entry of its stationary distribution $\pi_2$ has the same formula $\pi_2(v) = \frac{2}{2 \cdot 3} = \frac{1}{3}$.  However, we find that $\pi_1 = (\frac{4}{11}, \frac{3}{11}, \frac{4}{11})$ satisfies
\begin{align*}
    \pi_1 = \pi_1 P_1,
\end{align*}
and so $\pi_1$ is a stationary distribution for the non-simple walk.  In particular, the formula \eqref{Eq:SRWStationaryDistribution} does not apply to $P_1$.
\end{example}

\begin{proof}[Proof of Theorem \ref{Thm:SRWpi}]
    Theorem \ref{Thm:SRWpi} consists of two parts: one is that the formula \eqref{Eq:SRWStationaryDistribution} gives \emph{a} stationary distribution, and another is that this is the only one. We here only prove the first part, and we take up the question of uniqueness below in a more general setting in Theorem \ref{Thm:ReturnTime}. 
    
    Recall that the entries for the transition matrix $P$ are $P_{ij} = \frac{1}{\deg(i)}$ if $i \sim j$ and 0 otherwise.  We need to show $\pi P = \pi$, and we can do this component-wise by showing 
    \begin{align}\label{Eq:SRWPiProof1}
        (\pi P)_j = \pi_j
    \end{align}
    for each $j \in \{1,2,\ldots, n\}$, where $n = |V|$. Indeed, for a fixed $j$, $(\pi P)_j$ is the result of multiplying $\pi$ with the $j$th column of $P$, and hence
    \begin{align*}
        (\pi P)_j &= \sum_{i=1}^n \pi_i P_{ij}\\
        &= \sum_{i \sim j} \frac{\deg(i)}{2|E|} \cdot \frac{1}{\deg(i)}\\
        &= \sum_{i \sim j} \frac{1}{2|E|} = \frac{\deg(j)}{2|E|} = \pi_j,
    \end{align*}
    giving \eqref{Eq:SRWPiProof1}.  (The second-to-last equality holds because the number of vertices $i$ adjacent to $j$ is, by definition, $\deg(j)$.)
\end{proof}

\begin{example}\label{Eg:StationaryForBiasedWalk}
    We study another biased random walk as a final example.  Consider the $n$-cycle where we move clockwise with probability $p$ and counter-clockwise with probability $1-p$, $0\leq p \leq 1$ (of course we recover the simple symmetric walk when $p=1/2$).  What is the stationary distribution $\pi_p$?  Does it depend on $p$? Surprisingly it does not, and we find
    \begin{align*}
        \pi_p = \Big( \frac{1}{n}, \frac{1}{n} \ldots, \frac{1}{n} \Big) = \text{Unif}\{1,2,\ldots,n\}
    \end{align*}
    for all $p$.  One way to see this is to try a specific case and then extrapolate.  If we do the biased walk on the 5-cycle, for instance, we have the transition matrix
    \begin{align*}
        P = \begin{bmatrix}
                0 & p & 0 & 0 & 1-p\\
                1-p & 0 & p & 0 & 0\\
                0 & 1-p & 0 & p & 0 \\
                0 & 0 & 1-p & 0 & p\\
                p & 0 & 0 & 1-p & 0
            \end{bmatrix},
    \end{align*}    
    and we can simply compute that 
    \begin{align*}
        \Big( \frac{1}{5}, \frac{1}{5}, \frac{1}{5}, \frac{1}{5}, \frac{1}{5} \Big)P = \Big( \frac{1}{5}, \frac{1}{5}, \frac{1}{5}, \frac{1}{5}, \frac{1}{5} \Big).
    \end{align*}
    The underlying reason is that the columns still add to 1 (why?), which yields a uniform $\pi$ by Exercise \ref{Ex:ColsAddToOne} $(a)$. 
\end{example}

\section{Hitting times and return times}

Given a stationary distribution $\pi$, we now know how to run a Markov chain in such a way that we know the exact distribution of $X_n$ for all $n$: we simply start with $X_0 \sim \pi$, and then $X_n \sim \pi$ for all $n$.  

There are other important matters to settle, though.  For instance, how long will it take for a chain to reach a certain state?  Or how many steps, on average, does it take to return to a given state?  These turn out to be important questions in furthering our analysis, and these are the questions we take up in this section. 



\begin{definition}
    The \textbf{hitting time} $\tau_x$ of a state $x \in \Omega$ is the first nonnegative time that the Markov chain is in state $x$.  That is,
    \begin{align}\label{Def:tau_x}
        \tau_x := \min\big\{\, k\in \{0,1,2,\ldots\} \; : \; X_k=x \,\big\}.
    \end{align}
\end{definition}
\noindent Note that $\tau_x$ is a random amount of time, and thus a \emph{random variable}.  Different runs of the chain will result in different hitting times $\tau_x$, and we can ask questions like, ``What is the average $\nE(\tau_x)$ of $\tau_x$?''  Or, ``What is $\tau_x$'s distribution?''  Note also that if we start the chain at $X_0=x$, then \eqref{Def:tau_x} says $\tau_x=0$; a hitting time of zero \emph{is} allowed.

On the other hand, sometimes we wish to know when we first \emph{return} to a given state.  Then we exclude the possibility $k=0$ in \eqref{Def:tau_x}.

\begin{definition}
    The (first) \textbf{return time} $\tau_x^+$ of $x$ is the first time after the chain has started that we assume the state $x$.  That is,
    \begin{align}\label{Def:tau_x+}
        \tau_x^+ := \min\big\{\, k\in \{1,2,\ldots\} \; : \; X_k=x \,\big\}.
    \end{align}
\end{definition}
\noindent This is, of course, another random amount of time, just like $\tau_x$.  While $\tau_x \geq 0$, however, we have $\tau_x^+ \geq 1$.  So if we start the chain at $x$, then $\tau_x=0$ but we do not automatically know anything about $\tau_x^+$, other than $\tau_x^+ \geq 1$.  If we start the chain at some $y \neq x$, then  $\tau_x = \tau_x^+$.

We can extend these definitions in a natural way to collections of states $A \subset \Omega$.  We define the \textbf{hitting and return times to a set $A$} as, respectively,
\begin{align*}
     \tau_A &:= \min\big\{\, k\in \{0,1,2,\ldots\} \; : \; X_k\in A \,\big\},\\
     \tau_A^+ &:= \min\big\{\, k\in \{1,2,\ldots\} \; : \; X_k\in A \,\big\}.
\end{align*}
These are the first time, and the first time after starting, that the chain enters the set $A$, respectively.

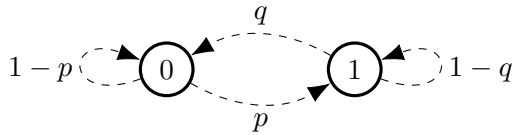
\begin{figure}[b]
        \centering
    \scalebox{1}{\begin{tikzpicture}
    \tikzset{
        node/.style={circle, draw=black, very thick, minimum size=7mm},
        arrow/.style={-{Latex[length=3mm]}, black},
        line/.style={black, very thick}
    }
    
    \node[node] (one) at (0,0) {0};
    \node[node] (two) at (2.5,0) {1};
    
    \draw [arrow, dashed] (one) to [out=-30,in=210, bend right] node[below]{$p$}  (two);
    \draw [arrow, dashed] (two) to [out=150,in=30, bend right] node[above]{$q$}  (one);
    
    \draw[arrow, dashed] (two) to [out=340,in=20,looseness=11] node[midway, right] {$1-q$} (two) ;
    \draw[arrow, dashed] (one) to [out=200,in=160,looseness=11] node[midway, left] {$1-p$} (one);

    \end{tikzpicture}}
        \caption{\small What are the hitting times for this Markov chain?}
        \label{Fig:ReturnTimeEg}
\end{figure}
\begin{example}\label{Eg:ReturnTime}
    Consider the walk on $\Omega = \{0,1\}$ defined by the transition matrix
    \begin{align*}
        P = \begin{bmatrix}
                1-p & p\\
                q & 1-q
            \end{bmatrix}, \qquad 0<p,q<1.
    \end{align*}
See Figure \ref{Fig:ReturnTimeEg}. Suppose $X_0=0$ (equivalently, $\mu_0 = (1,0)$).  Then the hitting time $\tau_1$ of state 1 is a random variable.  What is its distribution?  Let us write $\nP_0(E)$ for the probability of the event $E$ given that the chain starts at $X_0$. We compute
\begin{align*}
    \nP_0(\tau_1=0) &= 0\\
    \nP_0(\tau_1=1) &= p\\
    \nP_0(\tau_1=2) &= \nP_0(0 \rightarrow 0 \rightarrow 1) = (1-p)p\\
    \nP_0(\tau_1=3) &=  \nP_0(0 \rightarrow 0 \rightarrow 0 \rightarrow 1) =(1-p)^2p\\
    &\vdots\\
    \nP_0(\tau_1=k) &= (1-p)^{k-1}p, \qquad k=1,2,\ldots,
\end{align*}
since to hit 1 for the first time in $k$ steps, we must stay at vertex $0$ for $k-1$ steps, and then move to vertex 1.  This is the familiar pmf of the geometric random variable for the number of trials to have the first success; we thus see $\tau_1 \sim \text{Geo}(p)$.

What about $\tau_1^+$?  Well, since we are starting with $X_0=0 \neq 1$, the minima in \eqref{Def:tau_x} and \eqref{Def:tau_x+} give exactly the same numbers, and so $\tau_1^+ \sim \text{Geo}(p)$ too. 

Similarly, if $X_0=1$, then $\tau_0 = \tau_0^+ \sim \text{Geo}(q)$.

How about the distribution of $\tau_0^+$, when $X_0=0$?  We find
\begin{align*}
    \nP_0(\tau_0^+=1) &= 1-p\\
    \nP_0(\tau_0^+=2) &= \nP_0(0 \rightarrow 1 \rightarrow 0) = pq\\
    \nP_0(\tau_0^+=3) &= \nP_0(0 \rightarrow 1 \rightarrow 1 \rightarrow 0) = p(1-q)q\\
    \nP_0(\tau_0^+=4) &= \nP(0 \rightarrow 1 \rightarrow 1 \rightarrow 1 \rightarrow 0) = p(1-q)^2q\\
        \nP_0(\tau_0^+=5) &= \nP_0(0 \rightarrow 1 \rightarrow 1 \rightarrow 1 \rightarrow 1 \rightarrow 0) = p(1-q)^3q\\
    &\vdots\\
    \nP_0(\tau_0^+=k) &= p(1-q)^{k-2}q, \qquad k \geq 2.
\end{align*}
In particular, we see $\tau_0^+$ is \emph{not} a geometric random variable, even though it is similar to one.
\end{example}


\subsection{Expected hitting times}

To understand a random variable we need to know its average.  Can we say anything about the expectations of our new random variables $\tau_x, \tau_x^+, \tau_A$ and $\tau_A^+$?  On average, how long does it take to hit or return to state $x$ or a collection of states $A$?  
\begin{example}
For example, suppose the vertices of our graph were all airports in the world, with $x \sim y$ if there is a flight between airport $x$ and $y$.  Now, imagine a virus arises in some city $x$.  How much time would a country, on average, have to prepare before the virus reaches them via flights?  Using a Markov chain model, this is simply $\nE_x(\tau_A)$ for the collection of airports/vertices $A$ in the country.  (The subscript of ``$x$'' in the expectation tells us that the chain starts at state $x$.)
\end{example}

We will be able to explicitly compute the averages using linear algebra.  Let's first establish some notation, however.  As usual, our state space is $\Omega = \{1,2,\ldots, n\}$, and as above in Example \ref{Eg:ReturnTime}, we continue to write
\begin{align}
    \nP_i(X_j = k) &:= \nP(X_j = k \, | \, X_0=i) \quad \text{ and}\label{Notation:ConditionalSubscriptP}
\end{align}
Similarly, 
\begin{align}
    \nE_i(\tau_A) &:= \nE(\tau_A \, | \, X_0=i)\label{Notation:ConditionalSubscriptE}
\end{align}
abbreviates the average of $\tau_A$ given the starting position.  We also want notation for starting the chain according to a given distribution $\pi$, instead of just at a fixed state $x$. We write this informally as $\nP_\pi(X_j = k) := \nP(X_j = k \, | \, X_0 \sim \pi)$ and similarly for the expectation, but the rigorous definitions are
\begin{align}
    \nP_\pi(X_j = k) &:= \sum_{i=1}^n\nP(X_j = k \, | \, X_0 =i)\pi(i)\label{Notation:ConditionalSubscriptPiP}
\end{align}
and
\begin{align}
    \nE_\pi(\tau_A) &:= \sum_{i=1}^n \nE(\tau_A \, | \, X_0=i)\pi(i).\label{Notation:ConditionalSubscriptPiE}
\end{align}
\begin{exercise}
    One possible initial distribution $\pi$ is the \emph{Dirac mass} $\delta_i$ at state $i$, which is the probability distribution on $\Omega$ defined by
    \begin{align*}
        \delta_i(j) = \begin{cases}
            1 & j=i,\\
            0 & j \neq i.
        \end{cases}
    \end{align*}
    Use the Dirac mass to show that the formulas \eqref{Notation:ConditionalSubscriptP} and \eqref{Notation:ConditionalSubscriptE} are special cases of \eqref{Notation:ConditionalSubscriptPiP} and \eqref{Notation:ConditionalSubscriptPiE}, respectively.
\end{exercise}

Notation in hand, we return to the problem of calculating the average hitting time $\nE_i(\tau_A)$.  Without loss of generality  we may assume that the set $A$ in question consists of the last $k$ elements $\{n-k+1, n-k+2, \ldots, n\}$ of $\Omega$ for some $1 \leq k < n$, since if not, we can just relabel the states to make it so.  Now, partition the transition matrix $P$ as
\begin{align*}
    P = \left[ \begin{array}{c|c}
            P_{(n-k) \times (n-k)} & P_{(n-k) \times k}\\
            \hline
            P_{k \times (n-k)} & P_{k \times k}
        \end{array} \right].
\end{align*}
Here $P_{(n-k) \times (n-k)}$ is the top left $(n-k) \times (n-k)$ submatrix, giving all probabilities of moving from a state in $\{1,2, \ldots, n-k\}$ to a state in $\{1,2, \ldots, n-k\}$.  $P_{(n-k)\times k}$ is the top right submatrix, corresponding to the probabilities of moving from states $\{1,2, \ldots, n-k\}$ to states $\{n-k+1, n-k+2, \ldots, n\}$.  And so on for the bottom two blocks.  

As it turns out, the submatrix that will be most important for us is the top left block $P_{(n-k) \times (n-k)} =: Q$, which gives the probabilities of remaining \textit{outside} of $A$, given one starts \emph{outside} of $A$.   Define the $(n-k) \times (n-k)$ matrix $M$ as 
\begin{align}\label{Eq:ReturnTimeMMatrix}
    M := (I - Q)^{-1}, 
\end{align}
where $I$ is the $(n-k) \times (n-k)$ identity matrix.  Of course, to make this definition one has to show that $I-Q$ is invertible, but we will do that during the proof of the following theorem.  This theorem says that, perhaps surprisingly, our expected hitting time comes from summing the row of $M$ corresponding to our starting state. 

\begin{theorem}\label{Thm:ExpectedHittingTimes}
    Let $X$ be an irreducible Markov chain on $\Omega$ with transition matrix $P$, and let 
    \begin{align*}
        i \in \{1,2, \ldots, n-k\} = \Omega \backslash A
    \end{align*}  
    be any state outside of $A$.  Then
    \begin{align}\label{Eq:ExpectedHittingTime}
        \nE_i(\tau_A) = \sum_{j=1}^{n-k} M_{ij},
    \end{align}
    where $M$ is given by \eqref{Eq:ReturnTimeMMatrix}, and the $\nE_i$ notation is as in \eqref{Notation:ConditionalSubscriptE}.
\end{theorem}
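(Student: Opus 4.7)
The plan is to use classical first-step analysis combined with a linear-algebra argument. Writing $h_i := \nE_i(\tau_A)$ for $i \in \Omega \setminus A$, I would condition on $X_1$. Starting outside $A$, the hitting time satisfies $\tau_A \geq 1$; by the Markov property, given $X_1 = j \in A$ we have $\tau_A = 1$, while given $X_1 = j \notin A$ the remaining time $\tau_A - 1$ has the same law as $\tau_A$ under $\nP_j$. Taking expectations and splitting the sum at $A$ yields
\begin{align*}
h_i \;=\; 1 + \sum_{j \notin A} P_{ij}\, h_j \qquad (i \notin A).
\end{align*}
In matrix notation, with $h$ the column vector $(h_1,\ldots,h_{n-k})^T$ and $\vone$ the all-ones vector, this reads $h = \vone + Q h$, equivalently $(I - Q)h = \vone$. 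Once I know $I - Q$ is invertible, I can solve $h = M \vone$, and reading off the $i$th coordinate immediately gives $h_i = \sum_{j=1}^{n-k} M_{ij}$, which is exactly \eqref{Eq:ExpectedHittingTime}.

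The second step is to establish invertibility of $I - Q$ together with finiteness of the $h_i$. The idea is to exploit irreducibility: for each $i \notin A$ there is a path of positive probability from $i$ to some state in $A$. Letting $N$ be the maximum, over $i \notin A$, of the lengths of such shortest paths, and $p > 0$ a uniform lower bound on their probabilities, I obtain $\nP_i(\tau_A \leq N) \geq p$ for all $i \notin A$. Iterating via the Markov property gives $\nP_i(\tau_A > mN) \leq (1-p)^m$. Hence $\tau_A$ has geometric tails and in particular $h_i < \infty$, which legitimizes the recurrence. Moreover, since $(Q^m \vone)_i = \nP_i(\tau_A > m)$ by induction (using $Q_{ij} = \nP_i(X_1 = j, X_1 \notin A)$), the entries of $Q^m$ decay geometrically to zero. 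Therefore the spectral radius of $Q$ is strictly less than $1$, and $M = (I-Q)^{-1}$ exists with convergent Neumann expansion $M = \sum_{m \geq 0} Q^m$.

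The main obstacle I expect is the finiteness issue: one cannot manipulate $h_i$ as a real number until one knows a priori that $h_i < \infty$, and this is precisely where the irreducibility hypothesis does essential work (it is not needed for the algebra but for the tail estimate). A secondary technical point is that the first-step recurrence uses time-homogeneity of the chain, so that the post-$X_1$ process has the same transition law as the original; this is built into Definition \ref{Def:MC} and so is not an obstacle per se, but it is worth flagging explicitly. Once these two points are in hand, the proof is a short combination of the recurrence $(I-Q)h = \vone$ and the convergent Neumann series $M = \sum_{m \geq 0} Q^m$.
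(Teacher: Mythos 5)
Your proof is correct, but it reaches the formula by a genuinely different route than the text. You derive the one-step recurrence $h_i = 1 + \sum_{j\notin A}P_{ij}h_j$, i.e.\ $(I-Q)h=\mathbf{1}$, and then solve the linear system once $I-Q$ is known to be invertible; the paper instead proves the stronger entrywise statement that $M_{ij}$ equals the expected number of visits to $j$ before hitting $A$ (its \eqref{Eq:HittingTimePfClaim1}), obtained by writing $(Q^\ell)_{ij}=\nP_i(X_\ell=j,\,\tau_A>\ell)$ and summing the resulting series $\sum_{\ell\ge 0}Q^\ell=(I-Q)^{-1}$, and only then sums over $j$ using the decomposition $\tau_A=\sum_{j\notin A}\#\{\text{visits to }j\}$. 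The two arguments share their only hard ingredient: both must show $Q^m\to\mathbf{0}$ (equivalently, spectral radius of $Q$ strictly less than one) via irreducibility, and your tail estimate $\nP_i(\tau_A>mN)\le(1-p)^m$ is essentially the same uniform sub-stochasticity bound the paper establishes. What your route buys is economy--the recurrence plus invertibility finishes the proof in two lines, and it is the same ``method of recursion'' the book itself uses for the gambler's ruin times in Theorem \ref{Thm:GamblersRuinTimes}. What the paper's route buys is the interpretation of the individual entries $M_{ij}$ as expected visit counts, which your argument never produces but which the text explicitly advertises after the theorem statement and reuses later (for instance in the proof of Theorem \ref{Thm:ReturnTimeLite} and in Problem \ref{Prob:B2.1}). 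You were also right to flag finiteness of $h_i$ as the point where irreducibility does real work: without the a priori tail bound the recurrence could not be manipulated as an identity of real numbers.
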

So to find $\nE(\tau_A \, | \, X_0 = i)$, we sum the elements of the $i$th row of the matrix $M$.  In matrix $Q$, the $i$th row gives the probabilities of staying outside of the set $A$, starting from state $i \notin A$.  We will see that the $i$th row of $M$ gives the expected number of \emph{visits} to each of the states outside of $A$ before hitting $A$, starting at $i$.  So \eqref{Eq:ExpectedHittingTime} says that the average time to reach $A$ is the total average number of visits to all of the states outside of $A$.

\begin{example}\label{Eg:ExpectedHittingTimes1}
    Let's consider the simple symmetric random walk on the graph in Figure \ref{Fig:ExpectedHittingTimes1}, which has transition matrix
    \begin{align*}
        P = \begin{bmatrix}
                0 & \frac{1}{3} & \frac{1}{3} & \frac{1}{3}\\
                \frac{1}{2} & 0 & \frac{1}{2} & 0\\
                \frac{1}{3} & \frac{1}{3} & 0 & \frac{1}{3}\\
                \frac{1}{2} & 0 & \frac{1}{2} & 0
            \end{bmatrix}.
    \end{align*}
    Suppose we want to find the expected hitting time to $A = \{3,4\}$ starting from either vertex 1 or 2.  We partition the transition matrix $P$ as
    \begin{align*}
            P = \left[\begin{array}{c c | c c}
                0 & \frac{1}{3} & \frac{1}{3} & \frac{1}{3}\\
                \frac{1}{2} & 0 & \frac{1}{2} & 0\\
                \hline
                \frac{1}{3} & \frac{1}{3} & 0 & \frac{1}{3}\\
                \frac{1}{2} & 0 & \frac{1}{2} & 0
            \end{array} \right],
    \end{align*}
    isolating the vertices corresponding to $A$.  Our matrix $Q$ is then the top-left corner
    \begin{align*}
        Q = \begin{bmatrix}
             0 & \frac{1}{3}\\
              \frac{1}{2} & 0
        \end{bmatrix},
    \end{align*}
    the probabilities that we start and remain outside of $A$ (if our vertices were labelled differently, $Q$ would not be in the top-left corner; we then simply select the submatrix with these probabilities).  Thus
    \begin{align*}
        I-Q = \begin{bmatrix}
             1 & -\frac{1}{3}\\
              -\frac{1}{2} & 1
        \end{bmatrix} \quad \text{and} \quad (I-Q)^{-1} = \begin{bmatrix}
             \frac{6}{5} & \frac{2}{5}\\
              \frac{3}{5} & \frac{6}{5}
        \end{bmatrix},
    \end{align*}
    and \eqref{Eq:ExpectedHittingTime} says
    \begin{align*}
        \nE_1(\tau_A) &= \frac{6}{5} + \frac{2}{5} = \frac{8}{5},\\
        \nE_2(\tau_A) &= \frac{3}{5} + \frac{6}{5} = \frac{9}{5}.
    \end{align*}
So, if we begin at vertex 1, on average it takes $8/5$ steps to enter  $\{3,4\}$.  If we start at vertex 2, it takes $9/5$ steps, on average. We are not surprised that $\nE(\tau_A) < \nE(\tau_A)$, as there are more routes to $A$ from vertex 1 than from vertex 2.  \qed
\end{example}

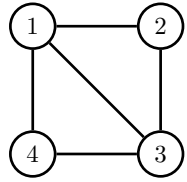
\begin{figure}
    \centering
    \scalebox{0.85}{
        \begin{tikzpicture}
    \tikzset{
        node/.style={circle, draw=black, very thick, minimum size=7mm},
        line/.style={black, very thick}
    }
    
    \node[node] (one) at (0,0) {1};
    \node[node] (two) at (2,0) {2};
    \node[node] (three) at (2,-2) {3};
    \node[node] (four) at (0, -2) {4};
    
    \draw[line] (one) -- (two);
    \draw[line] (two) -- (three);
    \draw[line] (three) -- (four);
    \draw[line] (four) -- (one);
    \draw[line] (one) -- (three);
    \end{tikzpicture}}
    \caption{\small Computing $\nE(\tau_A \, | \, X_0=i)$ for $A=\{3,4\}$, $i \in \{1,2\}$.}
    \label{Fig:ExpectedHittingTimes1}
\end{figure}

\begin{proof}[Proof of Theorem \ref{Thm:ExpectedHittingTimes}]
    We first claim that, for any $j \notin A$,
    \begin{align}\label{Eq:HittingTimePfClaim1}
        M_{ij} &= \nE(\, \#\{ \text{visits to $j$ before hitting $A$} \} \; | \; X_0 = i\,)
    \end{align}
    Before we prove this, let's see that this would give our result \eqref{Eq:ExpectedHittingTime}.  Indeed, we can decompose the hitting time $\tau_A$ into the sum
    \begin{align*}
        \tau_A &= \sum_{j \notin A} \#\{ \text{visits to $j$ before hitting $A$} \}
    \end{align*}
    and thus taking expectations yields
    \begin{align}
        \nE(\tau_A \, | \, X_0 = i) &= \nE \Big( \sum_{j \notin A}\#\{ \text{visits to $j$ before hitting $A$}\}  \, \Big| \, X_0 = i\Big) \notag\\
        &= \sum_{j \notin A} \nE \big(\#\{ \text{visits to $j$ before hitting $A$}\}\, | \, X_0 = i\big) \notag\\
        &= \sum_{j \notin A} M_{ij},\label{Eq:HittingTimePfGoal}
    \end{align}
    assuming \eqref{Eq:HittingTimePfClaim1} holds.  But \eqref{Eq:HittingTimePfGoal} is precisely the claimed result \eqref{Eq:ExpectedHittingTime}.  So it is enough to prove \eqref{Eq:HittingTimePfClaim1}.
    
    Recall $Q= P_{(n-k) \times (n-k)}$ gives the probabilities of moving around in states outside of $A$, and thus
    \begin{align*}
        (Q^\ell)_{ij} = \nP(\, X_\ell = j, \tau_A >\ell \, |\, X_0=i \,).
    \end{align*}
    That is, $Q^\ell$ gives the probabilities of moving around $\Omega$ \emph{outside} of the set $A$ for $\ell$ steps (just as $P^\ell$ gives the probabilities for moving anywhere in $\Omega$ in $\ell$ steps).  Recalling the expectation formula for indicator random variables
    \begin{align*}
        \nE(\I_B) = 1 \cdot \nP(B) + 0 \cdot \nP(B^c) = \nP(B),
    \end{align*}
    we see
    \begin{align}\label{Eq:ExpectedHittingTimeQ^l}
        (Q^\ell)_{ij} &= \nP(\, X_\ell = j, \tau_A >\ell \, |\, X_0=i \,) = \nE\big(\, \I_{\{X_\ell=j, \tau_A > \ell\}} \, |\, X_0=i \,\big),
    \end{align}
    and so, noting that 
    \begin{align*}
        \#\{ \text{visits to $j$ before hitting $A$}\} = \sum_{\ell=0}^\infty \I_{\{X_\ell=j, \ell < \tau_A \}},
    \end{align*}
    we have
    \begin{align}
        \nE(\, \#\{&\text{visits to $j$ before hitting $A$} \} \; | \; X_0 = i\,) \notag\\
        &= \nE\Big(\, \sum_{\ell=0}^\infty \I_{\{X_\ell=j, \ell <\tau_A\}}  \; \Big| \; X_0 = i\, \Big) \notag\\
        &= \sum_{\ell=0}^\infty  \nE\big(\, \I_{\{X_\ell=j, \ell < \tau_A\}} \, |\, X_0=i \,\big) 
        =\sum_{\ell=0}^\infty (Q^\ell)_{ij}, \label{Eq:HittingTimeAlmostThere}
    \end{align}
    where the last equality is by \eqref{Eq:ExpectedHittingTimeQ^l}, and we are allowed to move the expectation through the infinite sum in the previous equality because each term of the sum is non-negative.\footnote{This is \emph{Tonelli's theorem} from measure theory.}  Now, the matrix $Q$ is \emph{sub-stochastic}, which is to say each row of $Q$ sums to at most one (instead of exactly one for the entire \emph{stochastic} matrix $P$).  Furthermore, one of the rows of $Q$ must sum to strictly less than one, since if every row summed to one, it would be impossible to reach a state in $A$ from a state in $\Omega\backslash A$, and the chain would not be irreducible.   
    
    To finish the proof, we claim that for such sub-stochastic matrices $Q$, we have the identity 
    \begin{align}\label{Eq:MatrixGeoSum}
        \sum_{\ell=0}^\infty Q^\ell = (I-Q)^{-1} = M,
    \end{align}
    where $I$ is the $(n-k) \times (n-k)$ identity matrix.  In essence, this is saying that the familiar geometric series formula 
    \begin{align*}
        \sum_{\ell=0}^\infty x^\ell = (1-x)^{-1} = \frac{1}{1-x}
    \end{align*}
which holds for \emph{numbers} $|x|<1$, also holds for \emph{matrices} $Q$ which satisfy ``$|Q|<1$,'' which is a beautiful fact.  We prove this below, but let us first see how the argument finishes assuming \eqref{Eq:MatrixGeoSum}.  Indeed, we have
    \begin{align*}
        M_{ij} = (I-Q)^{-1}_{ij} &= \Big(  \sum_{\ell=0}^\infty Q^\ell \Big)_{ij}\\
        &= \sum_{\ell=0}^\infty (Q^\ell)_{ij}\\
        &=  \nE\big(\, \#\{ \text{visits to $j$ before hitting $A$} \}\,\big)
    \end{align*}
    by \eqref{Eq:HittingTimeAlmostThere}.  This is exactly \eqref{Eq:HittingTimePfClaim1}, what we needed to prove.

    The rest of the proof is to show \eqref{Eq:MatrixGeoSum} holds. Note that it is equivalent to saying 
    \begin{align}\label{Eq:MatrixGeoSumStep}
        I = (I-Q) \sum_{\ell=0}^\infty Q^\ell = \lim_{N \rightarrow \infty} (I-Q) \sum_{\ell=0}^N Q^\ell.
    \end{align}
    Now, we have that
    \begin{align*}
        (I-Q) \sum_{\ell=0}^N Q^\ell = \Big(\sum_{\ell=0}^N Q^\ell\Big) - \Big( \sum_{\ell=0}^N Q^{\ell+1} \Big) = I - Q^{N+1},
    \end{align*}
    and so, going back to \eqref{Eq:MatrixGeoSumStep}, we need to show that 
    \begin{align*}
        I = \lim_{N \rightarrow \infty} (I - Q^{N+1}) = I - \lim_{N \rightarrow \infty} Q^{N+1}.
    \end{align*}
    It suffices, therefore, to prove that $\lim_{N \rightarrow \infty} Q^{N} = \mathbf{0}$, the $(n-k) \times (n-k)$ zero matrix.

    We will first show that there is some power $M>0$ such that $\lim_{k \rightarrow \infty} Q^{kM} = \mathbf{0}$.  Indeed, fix a state $i \in \{1, \ldots, n-k\}$, which is to say, a state $i \notin A$.  For another fixed state $j \in A$, by irreducibility we have an $m = m_i$ such that $\nP(X_{n+m}=j \, | \, X_n = i) >0$.  Thus the probability that we remain in states outside of $A$ after $m$ steps, starting from $i$, is strictly less than 1.  Another way to say this is that the row sum
    \begin{align*}
        \sum_{j=1}^{n-k} Q^m_{ij} \leq \Lambda_i < 1
    \end{align*}
    for some number $\Lambda_i$. Therefore, for any $\ell \geq 0$, and $p \in \{1, \ldots, n-k\}$, the $(i,p)$-entry of $Q^{m+\ell}$ satisfies
    \begin{align}\label{Ineq:SubstochasticEntryBound}
        Q^{m+\ell}_{ip} = (Q^mQ^\ell)_{ip} = \sum_{j=1}^{n-k} Q^m_{ij}Q^\ell_{jp} \leq \sum_{j=1}^{n-k} Q^m_{ij} \cdot 1 \leq \Lambda_i,
    \end{align}
    where the first inequality is because $Q^\ell$ is still a (sub)stochastic matrix.

    We can do this for each state $j \in \{1, \ldots, n-k\}$, getting a number $m_j$ and bound $\Lambda_j <1$ for the $j$th row sum of $Q^{m_j}$.  Setting $M:= \max \{m_1, \ldots, m_{n-k}\}$ and $\Lambda := \max \{\Lambda_1, \ldots, \Lambda_{n-k})$, we have that, exactly as in \eqref{Ineq:SubstochasticEntryBound}, $Q^{M+\ell}_{ij} \leq \Lambda <1$ for all $i, j \in \{1, \ldots, n-k\}$ and all $\ell \geq 0$.    

    We can now that $\lim_{k \rightarrow \infty} Q^{kM} = \mathbf{0}$.  In fact, we claim that for any entry $(i,p)$ of $Q^{kM}$, we have 
    \begin{align}\label{Ineq:SubstochasticInduction1}
        Q^{kM}_{ip} \leq \Lambda^k.
    \end{align}
    By the previous paragraph we have this when $k=1$.  Let us assume \eqref{Ineq:SubstochasticInduction1} holds for some fixed $k \geq 1$; we need to show it also holds for $k+1$.  And indeed, estimating as in \eqref{Ineq:SubstochasticEntryBound} again, we find
    \begin{align*}
        Q^{(k+1)M}_{ip} = (Q^MQ^{kM})_{ip} = \sum_{j=1}^{n-k} Q^M_{ij}Q^{kM}_{jp} &\leq \sum_{j=1}^{n-k} Q^M_{ij} \cdot \Lambda^k\\ &= \Lambda^k \sum_{j=1}^{n-k} Q^M_{ij} \leq \Lambda^{k+1},
    \end{align*}
    as claimed.  We conclude by induction that \eqref{Ineq:SubstochasticInduction1} holds for all $k$.  Since $\Lambda <1$, this shows that each entry of $Q^{kM} \rightarrow 0$ (exponentially fast!) as $k \rightarrow \infty$, which is what we wanted to show:
    \begin{align*}
        \lim_{k \rightarrow \infty} Q^{kM} = \mathbf{0}.
    \end{align*}
    
    The argument for why $\lim_{N \rightarrow \infty} Q^{N} = \mathbf{0}$ is nearly complete.  For large $N$, we simply write $N = kM + r$ for some $r \in \{0, \ldots, M-1\}$ and estimate any fixed $(i,p)$-entry of $Q^N$ similar to the above.  We leave this detail as the next exercise, and conclude the proof.
\end{proof}

\begin{exercise}
    Finish the proof by showing that for a substochastic matrix $Q$, $\lim_{k \rightarrow \infty} Q^{kM} = \mathbf{0}$ implies $\lim_{N \rightarrow \infty} Q^{N} = \mathbf{0}$.
\end{exercise}

\begin{exercise}
    For the simple symmetric random walk on the graph in Figure \ref{Fig:ExpectedHittingTimes1}, compute $\nE(\tau_A \; | \; X_0=4)$, where $A = \{2,3\}$.
\end{exercise}

\subsection{Expected return times}

Theorem \ref{Thm:ExpectedHittingTimes} tells us how to find the average hitting time for $A$ when we start outside of $A$.  What about for the expected \emph{return} time $\nE(\tau_A^+ \, | \, X_0 = i)$ to $A$?  We can start with an obvious observation: if $i \notin A$, then ``returning'' is the same as hitting, and so $\tau_A^+ = \tau_A$ and we can use Theorem \ref{Thm:ExpectedHittingTimes}.  However, this is still rather cumbersome if we wish to focus on an individual state $A = \{x\}$, since we would need to invert an $(n-1)\times (n-1)$ matrix $I- Q$, and then sum up the row corresponding to our starting vertex.  The following theorem says there is a simpler way.

\begin{theorem}\label{Thm:ReturnTime}
    Let $\pi$ be a stationary distribution of an irreducible Markov chain $X$ on $\Omega = \{1,2,\ldots, n\}$.  Then, for any $x \in \Omega$,
    \begin{align}\label{Eqn:ReturnTime}
        \nE_x(\tau_x^+) = \frac{1}{\pi(x)}.
    \end{align}
    
    Conversely, given such an irreducible chain $X$, the formula 
    \begin{align}\label{Eq:piFormula}
        \pi(x) = \frac{1}{ \nE_x(\tau_x^+)}, \qquad x \in \Omega
    \end{align}
    gives a probability distribution $\pi$ on $\Omega$ that is a stationary distribution.  
\end{theorem}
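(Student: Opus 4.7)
The strategy centers on the classical \emph{excursion} or \emph{cycle trick}: for a fixed state $x$, I want to count the expected number of visits to each state $y$ during a single excursion from $x$ back to $x$, show this gives a stationary measure, and then read off both conclusions from it.

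Step 1 (finiteness of return times). Before anything, I need $\nE_x(\tau_x^+)<\infty$ so the rest is meaningful. Using irreducibility, exactly as in the tail-of-$Q^N$ argument at the end of the proof of Theorem \ref{Thm:ExpectedHittingTimes}, one gets constants $\Lambda<1$ and $M\in\nN$ such that $\nP_x(\tau_x^+ > kM) \le \Lambda^k$ for every $k$. Summing $\nP_x(\tau_x^+ > j)$ then yields a finite expectation.

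Step 2 (define and analyze the cycle measure). Fix $x\in\Omega$ and set
\begin{equation*}
\hat\pi_x(y) \;:=\; \nE_x\!\left[\,\sum_{k=0}^{\tau_x^+-1}\I_{\{X_k=y\}}\,\right], \qquad y\in\Omega.
\end{equation*}
Two bookkeeping observations are immediate: $\hat\pi_x(x)=1$ (the only index in $\{0,\ldots,\tau_x^+-1\}$ at which we sit at $x$ is $k=0$), and $\sum_{y\in\Omega}\hat\pi_x(y)=\nE_x(\tau_x^+)$ by swapping the sum and the expectation.

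Step 3 (stationarity of $\hat\pi_x$). The key computation is $\hat\pi_x P = \hat\pi_x$. I would rewrite
\begin{equation*}
\hat\pi_x(y) \;=\; \sum_{k=0}^{\infty}\nP_x(X_k=y,\ \tau_x^+>k),
\end{equation*}
and exploit that the event $\{\tau_x^+>k\}=\{X_1\ne x,\ldots,X_k\ne x\}$ depends only on $X_0,\ldots,X_k$. Conditioning on $X_{k-1}$ via the Markov property gives $\nP_x(X_k=z,\ \tau_x^+\ge k) = \sum_y \nP_x(X_{k-1}=y,\ \tau_x^+>k-1)\,P_{yz}$; summing over $k\ge 1$ and using $\hat\pi_x(x)=1$ (to take care of the $k=0$ term coming from $z=x$), one obtains $\hat\pi_x(z)=\sum_y\hat\pi_x(y)P_{yz}$. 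Normalizing, $\pi^{(x)}(y):=\hat\pi_x(y)/\nE_x(\tau_x^+)$ is a genuine probability distribution which is stationary, and by construction $\pi^{(x)}(x)=1/\nE_x(\tau_x^+)$. This already proves the converse direction.

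Step 4 (uniqueness, to close the forward direction). Knowing $\pi^{(x)}$ exists, I still need that \emph{any} stationary $\pi$ satisfies $\pi(x)=1/\nE_x(\tau_x^+)$, so I need uniqueness of the stationary distribution for an irreducible chain. The cleanest undergraduate-friendly route is the maximum principle on harmonic functions: if $h:\Omega\to\nR$ satisfies $Ph=h$, then at a state $x_0$ where $h$ is maximized, the averaging identity forces $h$ to equal $h(x_0)$ on every neighbor, and irreducibility propagates this to all of $\Omega$; thus the only right eigenvectors of $P$ for eigenvalue $1$ are constants. Translating this into the left eigenspace (for instance by applying the principle to $h(y)=\mu(y)/\pi^{(x)}(y)$ for any stationary $\mu$, or by a rank/duality argument, noting $\pi^{(x)}(y)>0$ for all $y$ by irreducibility and Step~3), one concludes that the stationary distribution is unique. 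Then any $\pi$ must equal $\pi^{(x)}$, giving $\pi(x)=1/\nE_x(\tau_x^+)$.

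The main obstacle I expect is Step 3, specifically the careful indexing at the boundary of the excursion (handling the $k=0$ term, and showing why the shift in summation lines up with $\hat\pi_x(x)=1$); this is where most student proofs get tangled. Step 4 is conceptually the deepest but short once the maximum principle is in hand, and Step 1 is routine given what was done for Theorem \ref{Thm:ExpectedHittingTimes}.
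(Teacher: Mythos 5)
Your proposal is correct and follows essentially the same route as the book: the cycle/excursion measure argument of Theorem \ref{Thm:ReturnTimeLite} for existence and the formula $\pi(x)=1/\nE_x(\tau_x^+)$ (your $\hat\pi_x$ is exactly the $\tilde\pi$ there, with the same $k=0$ versus $k\le\tau_x^+$ index bookkeeping), with finiteness from the sub-stochastic $Q$ estimate and uniqueness supplied by the maximum principle for harmonic functions exactly as in \S\ref{Sec:Harmonic}. The one caveat is in Step 4: the ratio $h(y)=\mu(y)/\pi^{(x)}(y)$ is harmonic for the \emph{time-reversed} chain $\hat P$ rather than for $P$ itself, so you should either apply the maximum principle to $\hat P$ (which is also irreducible) or use your alternative rank/duality observation --- that the right $\lambda=1$ eigenspace consists of constants, hence the left eigenspace is one-dimensional --- which is precisely the argument the book gives.
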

\noindent An important consequence of Theorem \ref{Thm:ReturnTime} is that
\begin{align*}
    \text{\textit{irreducible Markov chains always have a unique stationary distribution}.}
\end{align*}
Indeed, the second half of the theorem says that we obtain at least one stationary distribution through the formula \eqref{Eq:piFormula}.  Furthermore, if $\pi_1$ and $\pi_2$ are two stationary distributions for $X$, then by \eqref{Eqn:ReturnTime},
\begin{align*}
    \frac{1}{\pi_1(x)} = \nE(\tau_x^+ \, | \, X_0 = x) = \frac{1}{\pi_2(x)},
\end{align*}
and so $\pi_1(x) = \pi_2(x)$ for each $x \in \Omega$.  Thus stationary distributions on irreducible chains are unique and are always given by \eqref{Eq:piFormula}.

Before we discuss the proof of Theorem \ref{Thm:ReturnTime}, we argue that the denominator in \eqref{Eq:piFormula} is finite.  That is, the stationary distribution defined by \eqref{Eq:piFormula} has no zero entries.  We will use this in our proof of Theorem \ref{Thm:ReturnTime} and it will also be important when we later divide by $\pi(x)$ when computing transition probabilities for the \emph{time reversal} of irreducible Markov chains in \S \ref{Sec:Reverse}.
\begin{lemma}\label{Lemma:StationaryPositive}
    For an irreducible Markov chain $X$ on $\Omega = \{1,2,\ldots, n\}$, $1 \leq \nE_x(\tau_x^+)<\infty$ for every $x \in \Omega$.  Consequently $0 < \pi(x)$ for every $x \in \Omega$ by \eqref{Eq:piFormula}.
\end{lemma}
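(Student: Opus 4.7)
The plan is to establish the two bounds separately. The lower bound $\nE_x(\tau_x^+) \geq 1$ is immediate from the definition \eqref{Def:tau_x+}, since $\tau_x^+$ always takes values in $\{1, 2, 3, \ldots\}$ and so a deterministic inequality $\tau_x^+ \geq 1$ holds pointwise, which passes to the expectation.

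For the upper bound I would reduce to Theorem \ref{Thm:ExpectedHittingTimes} by conditioning on the first step. Specifically, for $y \neq x$, a return to $x$ starting from $X_0 = x$ passing through $X_1 = y$ takes $1 + \tau_x$ steps where $\tau_x$ is now measured from $X_1 = y$. Thus, by the Markov property,
\begin{align*}
    \nE_x(\tau_x^+) = P(x,x) \cdot 1 + \sum_{y \neq x} P(x,y)\bigl(1 + \nE_y(\tau_x)\bigr) = 1 + \sum_{y \neq x} P(x,y)\,\nE_y(\tau_x).
\end{align*}
Now apply Theorem \ref{Thm:ExpectedHittingTimes} with $A = \{x\}$: for every $y \in \Omega \setminus \{x\}$, the expected hitting time $\nE_y(\tau_x)$ equals the sum of the $y$-row of $M = (I-Q)^{-1}$, which is a finite number because the matrix inverse is well-defined. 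Recall that the key input to Theorem \ref{Thm:ExpectedHittingTimes} was showing $\lim_{N \to \infty} Q^N = \mathbf{0}$ geometrically, a conclusion which used irreducibility (to guarantee some row of some power of $Q$ sums to strictly less than one). Summing finitely many finite numbers weighted by probabilities $P(x,y)$ gives $\nE_x(\tau_x^+) < \infty$, as desired.

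The consequence $0 < \pi(x)$ for every $x \in \Omega$ then follows by noting that \eqref{Eq:piFormula} defines $\pi(x) = 1/\nE_x(\tau_x^+)$, and we have shown the denominator is a finite positive number, so $\pi(x) \in (0, 1]$ for each $x$.

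There is no real obstacle here since all the heavy machinery was already carried out in the proof of Theorem \ref{Thm:ExpectedHittingTimes}. The only subtle point to get right is the first-step decomposition: one must be careful to separate the case $y = x$ (a self-loop contributing $\tau_x^+ = 1$) from $y \neq x$ (where one adds $1$ for the first step and then uses the expected hitting time $\nE_y(\tau_x)$ from the new starting state).
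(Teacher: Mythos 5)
Your proposal is correct and follows essentially the same route as the paper's proof: a first-step decomposition giving $\nE_x(\tau_x^+) = 1 + \sum_{y \neq x} P(x,y)\,\nE_y(\tau_x)$, combined with the finiteness of $\nE_y(\tau_x)$ supplied by Theorem \ref{Thm:ExpectedHittingTimes} applied to $A=\{x\}$. The only difference is cosmetic ordering of the two ingredients.
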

\begin{proof}
    The lower bound on $\nE_x(\tau_x^+)$ is obvious since $1 \leq \tau_x^+$.  Note that with $A=\{x\}$ and $y\neq x$, equation \eqref{Eq:ExpectedHittingTime} in Theorem \ref{Thm:ExpectedHittingTimes} shows
    \begin{align}\label{Eq:ExpectedReturnFinite}
        \nE_y(\tau_x^+) = \nE_y(\tau_x) = \nE_y(\tau_A) < \infty.
    \end{align}
   The result follows by direct computation.  We condition on $X_1$ and use the tower property of conditional expectation to see
\begin{align}
    \nE_x(\tau_x^+) &= \nE_x(\nE_x(\tau_x^+\, |\, X_1)) \notag\\
    &= \nE_x(\tau_x^+\, |\, X_1=x)P_{xx} + \sum_{y \neq x}\nE_x(\tau_x^+\, |\, X_1=y)P_{xy} \notag\\
    &= 1 \cdot P_{xx} + \sum_{y \neq x}\nE_x(\tau_x^+\, |\, X_1=y)P_{xy}. \label{Eq:ExpectedReturnFinite2}
\end{align}
At this point we note by the Markov property that, for $y \neq x$, 
$$\nE_x(\tau_x^+\, |\, X_1=y) = \nE_y(\tau_x) +1,$$ since the number of steps to return to $x$, given we first visit $y$, is one more than the number of steps to $x$ starting from $y$. Thus \eqref{Eq:ExpectedReturnFinite2} says
\begin{align}
   \nE_x(\tau_x^+) &= P_{xx} + \sum_{y \neq x}\nE_y(\tau_x)P_{xy} + \sum_{y \neq x} P_{xy} \notag\\
   &= 1 + \sum_{y \neq x}\nE_y(\tau_x)P_{xy},\label{Eq:AverageReturnTimeIdentity2}
\end{align}
which is finite by \eqref{Eq:ExpectedReturnFinite}.
\end{proof}

Note that we have actually proved more than just $\nE_x(\tau_x^+) <\infty$. Since each $\nE_y(\tau_x)$ is explicitly computable through the identity \eqref{Eq:ExpectedHittingTime}, \eqref{Eq:AverageReturnTimeIdentity2} gives us an additional way to compute the average return time.  Sometimes this is useful, and other times it will be easier to use the formula \eqref{Eqn:ReturnTime} in Theorem \ref{Thm:ReturnTime}. 

This detail under our belts, we return to proving Theorem \ref{Thm:ReturnTime}.  A complete proof of the theorem requires some tools from linear algebra that we currently lack, unfortunately.  What we can do, however, is immediately prove the second half of the theorem, which we will use along with the \emph{Perron-Frobenius theorem} to finish the entire proof later in \S \ref{Sec:PF}.\footnote{Even there, however, we will use a part of the Perron-Frobenius theorem that we state without proof, which leaves something to be desired.  We will completely tie this down by using \emph{harmonic functions} in \S \ref{Sec:Harmonic} to complete the proof of Theorem \ref{Thm:ReturnTime} only using Theorem \ref{Thm:ReturnTimeLite}.  See the end of \S \ref{Sec:Harmonic}.}  We isolate the second half of Theorem \ref{Thm:ReturnTime} as a separate theorem statement and then proceed with its proof.  

\begin{theorem}\label{Thm:ReturnTimeLite}
    Let $X$ be an irreducible Markov chain on $\Omega = \{1,2,\ldots, n\}$ with transition matrix $P$.  Then the row vector $\pi \in \mathbb{R}^n$ defined by \eqref{Eq:piFormula} is a probability distribution on $\Omega$ and satisfies
    \begin{align}\label{Eq:StationaryGoal}
        \pi P = \pi.
    \end{align}
    In particular, an irreducible Markov chain always has a stationary distribution.
\end{theorem}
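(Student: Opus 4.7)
The plan is to construct a stationary distribution by counting expected visits during a single excursion from a fixed reference state, and then identify the result with the vector $\pi$ defined by \eqref{Eq:piFormula}. First I would fix any $z \in \Omega$ and set
$$\mu_z(y) := \nE_z\!\left[\sum_{t=0}^{\tau_z^+-1}\I_{\{X_t=y\}}\right] = \sum_{t\ge 0}\nP_z(X_t=y,\ t<\tau_z^+),$$
the expected number of visits to $y$ during one excursion from $z$. Two observations anchor the argument: $\mu_z(z)=1$, since the only visit to $z$ in $\{0,1,\dots,\tau_z^+-1\}$ is at time $t=0$; and interchanging sum and expectation (valid by nonnegativity) gives $\sum_y \mu_z(y)=\nE_z(\tau_z^+)$, which is \emph{finite} by Lemma \ref{Lemma:StationaryPositive}.

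The heart of the argument is to verify $\mu_z P = \mu_z$. Expanding $(\mu_z P)(y)=\sum_x \mu_z(x) P(x,y)$ and applying the Markov property (since $\{t<\tau_z^+\}$ depends only on $X_0,\dots,X_t$) collapses the inner sum to
$$(\mu_z P)(y) = \sum_{t\ge 0}\nP_z(X_{t+1}=y,\ t<\tau_z^+).$$
From here I would split on whether $y=z$. For $y\neq z$, the event $\{X_{t+1}=y, t<\tau_z^+\}$ equals $\{X_{t+1}=y, t+1<\tau_z^+\}$ (hitting a non-$z$ state cannot be the first return), and reindexing $s=t+1$ with $\nP_z(X_0=y)=0$ recovers $\mu_z(y)$. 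For $y=z$, the event is exactly $\{\tau_z^+=t+1\}$, summing to $\nP_z(\tau_z^+<\infty)=1=\mu_z(z)$. Thus $\sigma_z:=\mu_z/\nE_z(\tau_z^+)$ is a stationary probability distribution with $\sigma_z(z)=1/\nE_z(\tau_z^+)$, matching \eqref{Eq:piFormula} \emph{at the reference state}.

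The main obstacle I foresee is extending the agreement with \eqref{Eq:piFormula} from the single state $z$ to every state, i.e.\ showing $\sigma_z(y)=1/\nE_y(\tau_y^+)$ for all $y\in\Omega$. The cleanest route is to repeat the construction taking each $y$ in turn as reference state, producing a stationary distribution $\sigma_y$ that agrees with \eqref{Eq:piFormula} at $y$, and then invoke uniqueness of the stationary distribution for an irreducible chain to conclude $\sigma_z=\sigma_y$. Since uniqueness is precisely what the later Perron--Frobenius / harmonic-function treatment supplies, once it is in hand the identification is immediate and the theorem follows. A more self-contained alternative would be to prove the identity $\mu_z(y)\,\nE_y(\tau_y^+) = \nE_z(\tau_z^+)$ directly, by using the strong Markov property to decompose an excursion from $z$ at each successive visit to $y$; this also delivers $\sigma_z(y)=1/\nE_y(\tau_y^+)$ without appealing to uniqueness.
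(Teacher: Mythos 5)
Your proposal is correct and follows essentially the same route as the paper: your excursion-visit vector $\mu_z$ is exactly the paper's $\tilde{\pi}$, the finiteness argument via Lemma \ref{Lemma:StationaryPositive}, the invariance computation $\tilde{\pi}P=\tilde{\pi}$ (Markov property plus reindexing, splitting on whether the target state is the reference state), and the normalization yielding $\pi(z)=1/\nE_z(\tau_z^+)$ all match the text's proof step for step. The subtlety you flag---that identifying the normalized vector with \eqref{Eq:piFormula} at states \emph{other} than the reference state requires uniqueness of the stationary distribution---is likewise left by the paper to its later Perron--Frobenius and harmonic-function treatment, so your handling of that point is consistent with the text as well.
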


\begin{proof}[Proof of Theorem \ref{Thm:ReturnTimeLite}]
    Fix a state $x$ where we will start our chain and define
    \begin{align}
        M_j:=\#\{ \text{visits to $j$ before returning to $x$}\} &= \sum_{k=0}^\infty \I_{\{X_k=j, \, k < \tau_x^+ \}} \notag\\
        &= \sum_{k=0}^\infty \I_{\{X_k=j\}} \I_{\{k < \tau_x^+ \}}\label{Eq:StationaryM_jDefb} \\
        &= \sum_{k=1}^\infty \I_{\{X_{k} = j\}} \I_{\{k \leq \tau_x^+\}}.\label{Eq:StationaryM_jDef}
    \end{align}
    This last line requires some explanation, and there are two cases to consider.  First, if $j=x$, then $M_j = M_x =1$, because only the initial step is counted for $M_x$. In this case the sum \eqref{Eq:StationaryM_jDef} is also 1, because it only counts the step $X_{\tau_x^+}$ when we return to $x$.  The second case is when $j \neq x$, and here both the first step $X_0 =x$ and the last step $X_{\tau_x^+} =x$ are not $j$, and so we can omit the first and include the last and preserve equality.  Thus \eqref{Eq:StationaryM_jDef} holds in either case.  
    
    Let $\tilde{\pi}(j) := \nE_x(M_j)$ be the average of $M_j$ (starting from $x$) and define the row vector $\tilde{\pi} := (\tilde{\pi}(1), \tilde{\pi}(2), \ldots, \tilde{\pi}(n))$. \emph{A priori} the $\tilde{\pi}(j)$ could be infinite, but we can use Lemma \ref{Lemma:StationaryPositive} to show each is actually finite.  Indeed, observe that by \eqref{Eq:StationaryM_jDefb} a fixed entry $\tilde{\pi}(\ell)$ satisfies
    \begin{align}
        &\tilde{\pi}(\ell) \leq \sum_{j=1}^n \tilde{\pi}(j) = \sum_{j=1}^n \nE_x(M_j) = \nE_x\Big( \sum_{j=1}^n M_j \Big) \notag\\
        &= \nE_x\Big(\sum_{j=1}^n \sum_{k=0}^\infty \I_{\{X_k=j\}} \I_{\{k < \tau_x^+ \}} \Big) 
        = \nE_x\Big(\sum_{k=0}^\infty \sum_{j=1}^n  \I_{\{X_k=j\}} \I_{\{k < \tau_x^+ \}} \Big) \label{Eq:ExpectedVisitsFinite}\\
        &= \nE_x\Big(\sum_{k=0}^\infty 1 \cdot \I_{\{k < \tau_x^+ \}} \Big) = \nE_x(\tau_x^+) < \infty \notag
    \end{align}
    by Lemma \ref{Lemma:StationaryPositive}. Here in \eqref{Eq:ExpectedVisitsFinite} we use the fact that each entry in the double sum is non-negative, and thus we can freely interchange the order of summation.\footnote{This is a version of the \emph{Fubini-Tonelli theorem} in real analysis.}  The next equality follows from observing that the sum of the indicator $\I_{\{X_k=j\}}$ over all $j$ is, of course, always exactly one.     
    
    Thus our $\tilde{\pi}$ vector is a well-defined element of $\mathbb{R}^n$.  It is not necessarily a \emph{probability} distribution, though.  Nevertheless, our first goal will be to show
    \begin{align}\label{Eq:ReturnTimesGoal}
        \tilde{\pi} P = \tilde{\pi},
    \end{align}
    since if this is the case, we only need to divide out by $\sum_{\ell=1}^n \tilde{\pi}(\ell)$ and set
    \begin{align}\label{Eq:piFormulaAlmost}
        \pi := \Big(\frac{1}{\sum_{\ell=1}^n \tilde{\pi}(\ell)}\Big) \tilde{\pi}
    \end{align}
    to obtain a stationary distribution.
    
    In fact, let us finish the proof of Theorem \ref{Thm:ReturnTimeLite} assuming that \eqref{Eq:piFormulaAlmost} is a stationary distribution, and then we will come back to prove \eqref{Eq:ReturnTimesGoal}.  Given $\pi$ is stationary, all that is left is to show $\pi(x) = 1/\nE_x(\tau_x^+)$, the identity \eqref{Eq:piFormula}.  For any state $j$, \eqref{Eq:piFormulaAlmost} currently says
    \begin{align}\label{Eq:Generalpi(j)}
        \pi(j) = \frac{\tilde{\pi}(j)}{\sum_{\ell=1}^n \tilde{\pi}(\ell)},
    \end{align}
    and so, in particular,
    \begin{align}\label{Eq:piFormulaAlmost2}
        \pi(x) = \frac{\tilde{\pi}(x)}{\sum_{\ell=1}^n \tilde{\pi}(\ell)} = \frac{1}{\sum_{\ell=1}^n \tilde{\pi}(\ell)},
    \end{align}
    where we use $\tilde{\pi}(x) = \nE_x(M_x) = \nE_x(1) = 1$ for the numerator.  With respect to the denominator, \eqref{Eq:StationaryM_jDefb} yields
    \begin{align}
        \sum_{\ell=1}^n \tilde{\pi}(\ell) &= \sum_{\ell=1}^n \nE_x(M_\ell) \notag\\
        &= \sum_{\ell=1}^n \nE_x \Big( \sum_{k=0}^\infty \I_{\{X_k=\ell\}} \I_{\{k < \tau_x^+\}}  \Big) \notag\\
        &= \nE_x \Big( \sum_{\ell=1}^n \sum_{k=0}^\infty \I_{\{X_k=\ell\}} \I_{\{k < \tau_x^+\}}   \Big) \notag\\
         &= \nE_x \Big( \sum_{k=0}^\infty \Big(\sum_{\ell=1}^n  \I_{\{X_k=\ell\}}\Big) \I_{\{k < \tau_x^+\}}   \Big) \notag\\
         &= \nE_x \Big( \sum_{k=0}^\infty 1 \cdot  \I_{\{k < \tau_x^+\}}   \Big) \notag\\
        &= \nE_x(\tau_x^+).\label{Eq:pi(j)denominator}
    \end{align}
    Again, the non-negativity of the terms enables us to interchange the order of summation as we please.  Thus \eqref{Eq:piFormulaAlmost2} says $\pi(x) = 1/\nE_x(\tau_x^+)$, as claimed, completing the proof of Theorem \ref{Thm:ReturnTimeLite} given \eqref{Eq:ReturnTimesGoal}.

    To prove \eqref{Eq:ReturnTimesGoal} it suffices to show each component on either side of the equation is identical.  The $j$th component is
    \begin{align}\label{Eq:ReturnTimesGoal1}
        \sum_{\ell=1}^n \nE_x(M_\ell) P_{\ell j} = \nE_x(M_j).
    \end{align}
    Proving this will take some effort, but the basic idea will be to substitute the expression \eqref{Eq:StationaryM_jDef} for $M_j$ into the expectation on the right-hand side and then manipulate everything until we obtain the left-hand side. We will again use the Fubini-Tonelli theorem to move expectations through sums of non-negative terms, and we also recall that $\nE_x(\I_A) = \nP_x(A)$.  We have
    \begin{align}
        \nE_x(M_j) &= \nE_x\Big( \sum_{k=1}^\infty \I_{\{X_{k} = j,\, k \leq \tau_x^+\}}\Big) \notag\\
        &= \sum_{k=1}^\infty \nE_x \big( \I_{\{X_{k} = j,\, k \leq \tau_x^+\}}\big) \notag\\
        &= \sum_{k=1}^\infty \nP_x (X_{k} = j, k \leq \tau_x^+) \notag\\
        &= \sum_{k=1}^\infty \sum_{\ell=1}^n \nP_x (X_{k} = j, X_{k-1}=\ell, k \leq \tau_x^+).\label{Eq:NjSum0}
    \end{align}
    The last equality holds because we are summing over all possibilities for $X_{k-1}$.  At this point we would like to separate out the cases $k=1$ and $k>1$ and also remove terms that we know are zero.  We thus write
    \begin{align}
    \sum_{k=1}^\infty \sum_{\ell=1}^n \nP_x (X_{k} = j, X_{k-1}=\ell, k \leq &\tau_x^+)\notag\\
        =\sum_{\ell=1}^n\nP_x (X_{1} = j, X_{0}=\ell, 1 \leq \tau_x^+) &+ \sum_{k=2}^\infty \sum_{\ell=1}^n \nP_x (X_{k} = j, X_{k-1}=\ell, k \leq \tau_x^+)\notag\\
        =\nP_x (X_{1} = j, X_{0}=x, 1 \leq \tau_x^+) &+ \sum_{k=2}^\infty \sum_{\substack{\ell=1\\\ell\neq x}}^n \nP_x (X_{k} = j, X_{k-1}=\ell, k \leq \tau_x^+),\label{Eq:NjSum1}
    \end{align}
   where for the last line we used $\nP_x (X_{1} = j, X_{0}=\ell, 1 \leq \tau_x^+) =0$ when $\ell \neq x$ (the chain starts at $x$), and also that
\begin{align*}
    \nP_x (X_{k} = j, X_{k-1}=x, k \leq \tau_x^+) = 0, \qquad k \geq 2,
\end{align*}
since we cannot have both $X_{k-1} =x$ and $k \leq \tau_x^+$.  For the $k=1$ term in \eqref{Eq:NjSum1}, we observe that $\nP_x(1 \leq \tau_x^+) =1$, and thus
\begin{align*}
    \nP_x (X_{1} = j, X_{0}=x, 1 \leq \tau_x^+) &=\nP_x (X_{1} = j, X_{0}=x)\\ &= P_{xj}\nP_x(X_0=x) = P_{xj}\nP_x(X_0=x, 1 \leq \tau_x^+).
\end{align*}
For the $k\geq 2, \ell \neq x$ terms in \eqref{Eq:NjSum1}, we have
    \begin{align*}
        \nP_x (&X_{k} = j, X_{k-1}=\ell, k \leq \tau_x^+)\notag\\ &= \nP_x (X_{k} = j \,|\, X_{k-1}=\ell, k \leq \tau_x^+)\nP_x (X_{k-1}=\ell, k \leq \tau_x^+).
    \end{align*}
    Noting that the event
    \begin{align*}
        \{X_{k-1}=\ell, k \leq \tau_x^+\} = \{X_{k-1}=\ell, X_{k-2} \neq x, X_{k-3} \neq x, \ldots, X_1 \neq x\},
    \end{align*}
    we have
    \begin{align}
    \nP_x (&X_{k} = j \,|\, X_{k-1}=\ell, k \leq \tau_x^+)\nP_x (X_{k-1}=\ell, k \leq \tau_x^+)\notag\\
        &=\nP_x (X_{k} = j \,|\, X_{k-1}=\ell, X_{k-2} \neq x, \ldots, X_1 \neq x)\nP_x (X_{k-1}=\ell, k \leq \tau_x^+) \notag\\
        &= \nP_x(X_k=j \,|\, X_{k-1}=\ell)\nP_x (X_{k-1}=\ell, k \leq \tau_x^+) \notag\\
        &= P_{\ell j} \nP_x (X_{k-1}=\ell, k \leq \tau_x^+),\label{Eq:NjSum2}
    \end{align}
    where we have used the Markov property, in the sense of an extension of Problem \ref{Prob:MarkovPropertyNegate}, to move from the second to third lines.  Putting these expressions for the two cases back into \eqref{Eq:NjSum1} (and recalling that the sum is $\nE_x(M_j)$ by \eqref{Eq:NjSum0}), we have
\begin{align}
    \nE_x(M_j)&= \nP_x (X_{1} =  P_{x j} \nP_x(X_0=x, 1 \leq \tau_x^+) + \sum_{k=2}^\infty \sum_{\substack{\ell=1\\\ell\neq x}}^n P_{\ell j}\nP_x (X_{k-1}=\ell, k \leq \tau_x^+)\notag\\
        &=  \sum_{\ell=1}^nP_{\ell j} \nP_x(X_0=\ell, 1 \leq \tau_x^+) + \sum_{k=2}^\infty \sum_{\ell=1}^n P_{\ell j} \nP_x (X_{k-1}=\ell, k \leq \tau_x^+) \notag\\
        &=\sum_{k=1}^\infty \sum_{\ell=1}^n P_{\ell j} \nP_x (X_{k-1}=\ell, k \leq \tau_x^+).\notag
    \end{align}
Notice that for the second equality here we have simply re-introduced a bunch of zero terms.  Now we are nearly done; we just examine this last sum and note
\begin{align*}
\sum_{k=1}^\infty \sum_{\ell=1}^n P_{\ell j} \nP_x (X_{k-1}=\ell, k \leq \tau_x^+) &=\sum_{k=1}^\infty \sum_{\ell=1}^n P_{\ell j} \nE_x\big( \I_{\{X_{k-1}=\ell\}} \I_{\{k-1<\tau_x^+\}}  \big)\\
    &= \sum_{\ell=1}^n P_{\ell j} \sum_{k=1}^\infty \nE_x\big( \I_{\{X_{k-1}=\ell\}} \I_{\{k-1<\tau_x^+\}}  \big)\\
    &= \sum_{\ell=1}^n P_{\ell j} \sum_{m=0}^\infty \nE_x\big( \I_{\{X_{m}=\ell\}} \I_{\{m<\tau_x^+\}}  \big)\\
    &= \sum_{\ell=1}^n P_{\ell j} \nE_x\Big(\sum_{m=0}^\infty  \I_{\{X_{m}=\ell\}} \I_{\{m<\tau_x^+\}}  \Big)\\
    &= \sum_{\ell=1}^n P_{\ell j} \nE_x(M_\ell)
\end{align*}
by \eqref{Eq:StationaryM_jDefb}.  Recalling the entire sum is $\nE_x(M_j)$, we have thus shown that
\begin{align*}
    \nE_x(M_j) = \sum_{\ell=1}^n P_{\ell j} \nE_x(M_\ell).
\end{align*}
which is exactly \eqref{Eq:ReturnTimesGoal1}, what we wanted to show!  That is, we have proven that our $\tilde{\pi}$ vector satisfies \eqref{Eq:ReturnTimesGoal}.  As explained at the beginning of the argument, this finishes the proof.
\end{proof}
    
Theorem \ref{Thm:ReturnTimeLite} says that the formula \eqref{Eq:piFormula} always gives us a stationary distribution.  Notice, however, that we also have another useful equation: combining \eqref{Eq:Generalpi(j)} with the expression \eqref{Eq:pi(j)denominator} for the denominator yields
\begin{align}\label{Eq:piFormula2}
    \pi(j) = \frac{\tilde{\pi}(j)}{\nE_x(\tau_x^+)} = \frac{\nE_x \big( \#\{ \text{visits to $j$ before $\tau_x^+$} \} \big)}{\nE_x(\tau_x^+)}.
\end{align}
Both formulas \eqref{Eq:piFormula} and \eqref{Eq:piFormula2} will prove useful in different contexts.

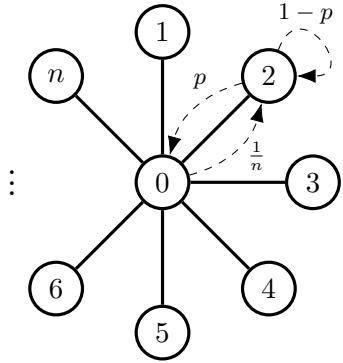
\begin{figure}
    \centering
        \begin{tikzpicture}
    \tikzset{
        node/.style={circle, draw=black, very thick, minimum size=7mm},
         arrow/.style={-{Latex[length=2.5mm]}, black},
        line/.style={black, very thick}
    }
    
    \node[node] (zero) at (0,0) {0};
    \node[node] (one) at (0,2) {1};
    \node[node] (two) at (1.414,1.414) {2};
    \node[node] (three) at (2, 0) {3};
    \node[node] (four) at (1.414, -1.414) {4};
    \node[node] (five) at (0, -2) {5};
    \node[node] (six) at (-1.414, -1.414) {6};
    \node[] (dots) at (-2, 0.1) {\Large{$\vdots$}};
    \node[node] (n) at (-1.414, 1.414) {$n$};
    \node[] (text1) at (1.9,2.25) {\footnotesize{$1-p$}};
    \node[] (text2) at (0.5,1.35) {\footnotesize{$p$}};
    \node[] (text3) at (1.25,0.35) {\footnotesize{$\frac{1}{n}$}};
    
    \draw[line] (one) -- (zero);
    \draw[line] (two) -- (zero);
    \draw[line] (three) -- (zero);
    \draw[line] (four) -- (zero);
    \draw[line] (five) -- (zero);
    \draw[line] (six) -- (zero);
    \draw[line] (n) -- (zero);
    
     \draw[arrow, dashed] (two) to [out=70,in=0,looseness=5] node[anchor=north, sloped] {} (two);
     \draw[arrow, dashed] (two) to [bend right] node[anchor=north, sloped] {} (zero);
    \draw[arrow, dashed] (zero) to [bend right] node[anchor=north, sloped] {} (two);
     
    \end{tikzpicture}
    \caption{\small What is $\pi$ for this chain?}
    \label{Fig:StarGraph}
\end{figure}

\begin{example}
    To see \eqref{Eq:piFormula} in action, consider the ``star graph'' of Figure \ref{Fig:StarGraph}.  Here there is one central vertex, labelled 0, connected to $n$ ``leaf'' vertices $1,2, \ldots, n$.  If we are at 0, we go to any of the leaves with equal probability.  If we are at a leaf, we move to the center with probability $p$, and stay at the same leaf with probability $1-p$, for some fixed $0< p<1$.  What is the stationary distribution for this chain?
    
    To find $\pi$, let's compute the return times and use \eqref{Eq:piFormula}. We could use \eqref{Eq:AverageReturnTimeIdentity2} to find each $\nE_x(\tau_x^+)$, but here arguing directly from the chain is much easier.  We observe that if we start with $X_0=0$, $\tau_0^+ = 1 + Y$, where $Y$ is a $\text{Geo}(p)$ random variable, since we immediately step away to a leaf, and then have independent trials with probability of success $p$ to see if we return.  Hence 
    \begin{align*}
        \nE_0(\tau_0^+) = 1 + \nE(Y) = 1 + \frac{1}{p} = \frac{1+p}{p},
    \end{align*}
    and so $\pi(0) = \frac{p}{1+p}$.  By symmetry, the $\nE_j(\tau_j^+)$ are all identical for $j=1,2,\ldots, n$, and so $\pi(1)=\pi(2) = \cdots = \pi(n)$.  We therefore have
    \begin{align*}
        1 = \sum_{j=1}^n \pi(j) = \pi(0) + n\pi(1) = \frac{p}{p+1} + n \pi(1),
    \end{align*}
    and so $\pi(1) = \frac{1}{n(1+p)}$.  We conclude
    \begin{align*}
        \pi = \Big( \frac{p}{p+1}, \frac{1}{n(1+p)}, \frac{1}{n(1+p)}, \ldots, \frac{1}{n(1+p)} \Big).
    \end{align*}
So, for instance, if there are $n=10$ leaves and we move to the center with probability $p=1/5$.  Then our stationary distribution is
    \begin{align}\label{Eq:StarStationaryEg}
        \pi = \Big( \frac{1}{6}, \frac{1}{12}, \frac{1}{12}, \ldots, \frac{1}{12} \Big).
    \end{align}
    Note two things: 
    \begin{itemize}
        \item If we start the Markov chain with our initial position $X_0$ randomly selected according to \eqref{Eq:StarStationaryEg}, then at \emph{every} subsequent step $k$, the distribution of $X_k$ on the vertices is still exactly $\pi$.  For example, $\nP_\pi(X_{1,000,000}=5 ) = 1/12$.
        \item Secondly, Theorem \ref{Thm:ReturnTime} tells us that if we start at vertex 0, on average it will take us 6 steps to return back to 0.  Similarly, if we start at any vertex $1,2, \ldots, 10$, it will take us 12 steps, on average, to return.
    \end{itemize}
\end{example}

\begin{figure}
    \centering
        \begin{tikzpicture}
    \tikzset{
        node/.style={circle, draw=black, very thick, minimum size=7mm},
        line/.style={black, very thick}
    }
    
    \node[node] (four) at (2,0) {4};
    \node[node] (one) at (4,0) {1};
    \node[node] (two) at (4,-2) {2};
    \node[node] (three) at (2, -2) {3};
    \node[node] (five) at (0, -0) {5};
    \node[node] (six) at (0, -2) {6};
    
    \draw[line] (one) -- (two);
    \draw[line] (two) -- (three);
    \draw[line] (three) -- (four);
    \draw[line] (four) -- (one);
    \draw[line] (one) -- (three);
    \draw[line] (six) -- (three);
    \draw[line] (five) -- (three);
    \draw[line] (five) -- (six);
    \end{tikzpicture}
    \caption{}
    \label{Fig:ReturnTimeEx}
\end{figure}
\begin{exercise}
    Find $\nE_1(\tau_1^+)$ and $\nE_5(\tau_5^+)$ for the graph in Figure \ref{Fig:ReturnTimeEx}.
\end{exercise}


\section{Time-reversibility}\label{Sec:Reverse}

Consider a biased and an unbiased walk on the 5-cycle, as illustrated in Figure \ref{Fig:ReversibleMotivation}. As discussed above in Example \ref{Eg:StationaryForBiasedWalk}, both walks have the same stationary distribution 
\begin{align*}
    \pi = \Big(\frac{1}{5}, \frac{1}{5}, \frac{1}{5}, \frac{1}{5}, \frac{1}{5}   \Big).
\end{align*}
Suppose we had movies of our Markov chains unfolding on each of the pentagons.  What would happen if we played the movies in reverse?  

For the unbiased walk, we wouldn't see any difference: when reversed, the walk would still jump CW or CCW with equal probability, just as in the forward direction. The biased walk would look different, though: in the forward direction it prefers jumping CW, so when reversed it prefers CCW steps. 

In this section we ``reverse the movie'' of a Markov chain and study when the chain looks the same when viewed in either direction.
\begin{figure}
    \centering
    
    \scalebox{0.9}{
    \begin{tikzpicture}
    \tikzset{
        node/.style={circle, draw=black, very thick, minimum size=7mm},
        arrow/.style={-{Latex[length=2mm]}, black},
        line/.style={black, very thick}
        }
    
    \node[node] (one) at (0,1) {1};
    \node[node] (two) at (1.376,0) {2};
    \node[node] (three) at (0.85065,-1.618) {3};
    \node[node] (four) at (-0.85065,-1.618) {4};
    \node[node] (five) at (-1.376,0) {5};
    
    \draw[line] (one) -- (two);
    \draw[line] (two) -- (three);
    \draw[line] (three) -- (four);
    \draw[line] (four) -- (five);
    \draw[line] (five) -- (one);
    
    \draw[arrow, dashed] (one) to[bend left] node[anchor=south, sloped] {0.5} (two);
    \draw[arrow, dashed] (one) to[bend right] node[anchor=south, sloped] {0.5} (five);
    
    \node[node] (oneb) at (6,1) {1};
    \node[node] (twob) at (7.376,0) {2};
    \node[node] (threeb) at (6.85065,-1.618) {3};
    \node[node] (fourb) at (5.14935,-1.618) {4};
    \node[node] (fiveb) at (4.624,0) {5};
    
    \draw[line] (oneb) -- (twob);
    \draw[line] (twob) -- (threeb);
    \draw[line] (threeb) -- (fourb);
    \draw[line] (fourb) -- (fiveb);
    \draw[line] (fiveb) -- (oneb);
    
    \draw[dashed, arrow] (oneb) to[bend left] node[above,sloped] {0.8} (twob);
    \draw[dashed, arrow] (oneb) to[bend right] node[above, sloped] {0.2} (fiveb);
    \end{tikzpicture}}
    
    \caption{\small An unbiased walk and a biased walk on the 5-cycle.}
    \label{Fig:ReversibleMotivation}
\end{figure}
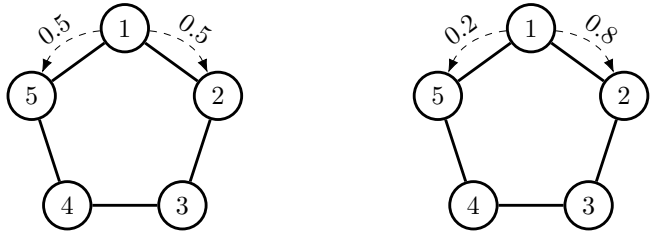


\subsection{Bayes' rule and time-reversal}

The first thing to settle is how to rigorously reverse time, or ``reverse the movie.''  Suppose our chain $X$ has stationary distribution $\pi$ and transition matrix $P$.  If we start with $X_0$ distributed according to $\pi$, then $X_k$ is still distributed as $\pi$ for each $k = 1,2,\ldots$, and
\begin{align}\label{Eq:TimeReversalForward}
    \nP_\pi(X_{k+1} = y \,|\, X_k = x) = P_{xy}
\end{align}
for any $k \geq 1$.  To reverse time, we want to instead compute
\begin{align}\label{Eq:ReversedProbDef}
    \hat{P}_{yx} := \nP_\pi (X_k = x \, | \, X_{k+1} = y).
\end{align}
That is, the probability of going from $y$ to $x$ under time reversal is the probability of arriving at $y$ from state $x$ in the original chain.  This, of course, is backwards from what we are used to computing: now we are conditioning on our position at a \emph{later} time (in the original chain) and asking about the position one step \emph{earlier} (in the original chain).  This is what it means to ``go forward'' in reversed time.

This definition  \eqref{Eq:ReversedProbDef} of the transition probabilities of the reversed chain begs two questions, however.  First, is it really independent of $k$?  While the Markov property tells us that \eqref{Eq:TimeReversalForward} does not depend on $k$, it is not immediately obvious that the right-hand side of \eqref{Eq:ReversedProbDef} is also independent of $k$, as we are conditioning in the opposite order. By Bayes' theorem, however, we see
\begin{align}
    \hat{P}_{yx} &:=\nP_\pi(X_k=x \,|\, X_{k+1}=y) \notag\\
    &= \frac{ \nP_\pi(X_{k+1} = y \,|\, X_k = x)\nP_\pi(X_k = x)}{\nP_\pi(X_{k+1} = y)} = P_{xy} \cdot \frac{\pi(x)}{\pi(y)}. \label{Eq:FindTimeReversal}
\end{align}
\noindent Thus $\hat{P}_{yx}$ is independent of $k$, and we are right to omit $k$ in its notation.  As the above computation shows, this hinges upon having a stationary distribution, which implies that $\nP_\pi(X_k = x)$ and $\nP_\pi(X_{k+1} = y)$ are the same for all $k$.  Of course it is also important that we are not dividing by zero in \eqref{Eq:FindTimeReversal}, but Lemma \ref{Lemma:StationaryPositive} shows that each $\pi(y) >0$.

The second question about \eqref{Eq:ReversedProbDef} is whether or not it truly defines a transition matrix on $\Omega \times \Omega$.  Clearly each $\hat{P}_{yx} \geq 0$, and for a fixed $y \in \Omega$, the corresponding row sum is
    \begin{align*}
        \sum_{x \in \Omega} \hat{P}_{yx} = \frac{1}{\pi(y)}\sum_{x \in \Omega}P_{xy} \pi(x) = \frac{1}{\pi(y)} \pi(y) = 1,
    \end{align*}
    where the second equality follows from the fact that $\pi P = \pi$.  So the transition probabilities given by \eqref{Eq:FindTimeReversal} define a legitimate transition matrix, and we thus have a (potentially new) Markov chain $\hat{X}$, the time reversal of our original chain $X$.

\begin{definition}
    The \textbf{time-reversal} of an irreducible Markov chain $X$ on $\Omega$ with transition matrix $P$ and stationary distribution $\pi$ is the Markov chain $\hat{X}$ on $\Omega$ whose transition matrix $\hat{P}$ has entries
    \begin{align}\label{Eq:TimeReversalTransProb}
        \hat{P}_{yx} =  P_{xy} \frac{\pi(x)}{\pi(y)}
    \end{align}
    for each $x,y \in \Omega$.  
\end{definition}
A simple first observation about time reversal is that it preserves irreducibility.  That is, the original chain $X$ is irreducible if and only if its time reversal $\hat{X}$ also is.  Indeed, suppose $X$ is irreducible and pick any two states $x,y \in \Omega$.  Then there is a forward path $y \rightarrow x$ of positive probability.  Reversing this gives a positive-probability path $x \rightarrow y$ for $\hat{X}$.  For the other direction, exchange the roles of $X$ and $\hat{X}$.

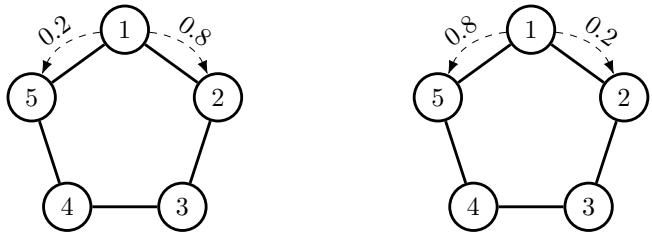
\begin{figure}[b]
    \centering
        
        \scalebox{0.9}{
        \begin{tikzpicture}
        \tikzset{
            node/.style={circle, draw=black, very thick, minimum size=7mm},
            arrow/.style={-{Latex[length=2mm]}, black},
            line/.style={black, very thick}}
        
        \node[node] (one) at (0,1) {1};
        \node[node] (two) at (1.376,0) {2};
        \node[node] (three) at (0.85065,-1.618) {3};
        \node[node] (four) at (-0.85065,-1.618) {4};
        \node[node] (five) at (-1.376,0) {5};
        
        \draw[line] (one) -- (two);
        \draw[line] (two) -- (three);
        \draw[line] (three) -- (four);
        \draw[line] (four) -- (five);
        \draw[line] (five) -- (one);
        
        \draw[arrow, dashed] (one) to[bend left] node[anchor=south, sloped] {0.8} (two);
        \draw[arrow, dashed] (one) to[bend right] node[anchor=south, sloped] {0.2} (five);
        
        \node[node] (oneb) at (6,1) {1};
        \node[node] (twob) at (7.376,0) {2};
        \node[node] (threeb) at (6.85065,-1.618) {3};
        \node[node] (fourb) at (5.14935,-1.618) {4};
        \node[node] (fiveb) at (4.624,0) {5};
        
        \draw[line] (oneb) -- (twob);
        \draw[line] (twob) -- (threeb);
        \draw[line] (threeb) -- (fourb);
        \draw[line] (fourb) -- (fiveb);
        \draw[line] (fiveb) -- (oneb);
        
        \draw[arrow, dashed] (oneb) to[bend left] node[anchor=south, sloped] {0.2} (twob);
        \draw[arrow, dashed] (oneb) to[bend right] node[anchor=south, sloped] {0.8} (fiveb);
        \end{tikzpicture}}
        
    \caption{\small The time reversal of a biased walk on the 5-cycle is another biased walk on the 5-cycle, but with the probabilities reversed.}
    \label{Fig:ReversibleMotivationRevisited}
\end{figure}

\begin{example}\label{Eg:PentagonReversableRevisited}
Let's revisit the 5-cycle.  For both the biased and unbiased walks the stationary distribution is uniform, and so \eqref{Eq:TimeReversalTransProb} becomes
\begin{align}\label{Eq:CycleTimeReversal}
    \hat{P}_{yx} = P_{xy}.
\end{align}
We can now make our introductory remarks regarding Figure \ref{Fig:ReversibleMotivation} more rigorous.  For the unbiased walk, \eqref{Eq:CycleTimeReversal} says
\begin{align}\label{Eq:PentagonReversible}
    \hat{P}(j,j+1) = P(j+1,j) = \frac{1}{2} = P(j,j+1),
\end{align}
so we cannot tell the difference between the original and the reversed movies.  For the biased walk with $P(j,j+1) = p = 1-P(j,j-1)$, however, we find
\begin{align}\label{Eq:CycleTimeReversal2}
    \hat{P}(j,j+1) = P(j+1,j) = 1-p = 1 - P(j,j+1),
\end{align}
and so the movies are different if $p\neq 1/2$.  In particular, for the example of $p=0.8$, \eqref{Eq:CycleTimeReversal2} says the probability that our time-reversed Markov chain $\hat{X}$ moves CW is 0.2, while the probability of moving CCW is 0.8.  See Figure \ref{Fig:ReversibleMotivationRevisited}.  So, as we expected, because the starting walk preferred CW steps, the time-reversed walk prefers to take CCW steps.  

Another helpful case for the intuition is $p=1$.  In this totally-biased walk we are guaranteed to move CW each step.  Equation \eqref{Eq:TimeReversalTransProb} for the time-reversal then says
\begin{align*}
    \hat{P}(j,j-1) = P(j-1,j) = 1,
\end{align*}
and so we move CCW with probability 1.  The movie is reversed in the obvious way.  See Figure \ref{Fig:UltraBiasedReversal}
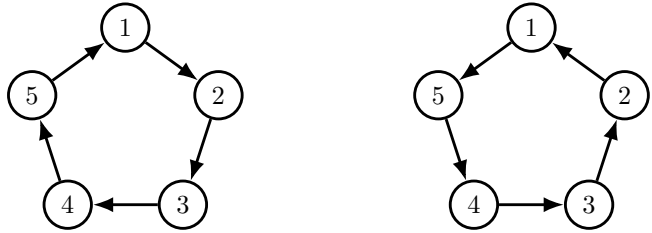
\begin{figure}
    \centering
    
    \scalebox{0.9}{
            \begin{tikzpicture}
        \tikzset{
            node/.style={circle, draw=black, very thick, minimum size=7mm},
            arrow/.style={-{Latex[length=3mm]}, black, very thick},
            line/.style={black, very thick}}
        
        \node[node] (one) at (0,1) {1};
        \node[node] (two) at (1.376,0) {2};
        \node[node] (three) at (0.85065,-1.618) {3};
        \node[node] (four) at (-0.85065,-1.618) {4};
        \node[node] (five) at (-1.376,0) {5};
        
        \draw[arrow] (one) -- (two);
        \draw[arrow] (two) -- (three);
        \draw[arrow] (three) -- (four);
        \draw[arrow] (four) -- (five);
        \draw[arrow] (five) -- (one);
        
        \node[node] (oneb) at (6,1) {1};
        \node[node] (twob) at (7.376,0) {2};
        \node[node] (threeb) at (6.85065,-1.618) {3};
        \node[node] (fourb) at (5.14935,-1.618) {4};
        \node[node] (fiveb) at (4.624,0) {5};
        
        \draw[arrow] (oneb) -- (fiveb);
        \draw[arrow] (twob) -- (oneb);
        \draw[arrow] (threeb) -- (twob);
        \draw[arrow] (fourb) -- (threeb);
        \draw[arrow] (fiveb) -- (fourb);
        \end{tikzpicture}
    }
    
    \caption{\small Reversing the totally-biased walk with $p=1$ gives the totally-biased walk with $p=0$, where $p$ is the probability of moving CW.}
    \label{Fig:UltraBiasedReversal}
\end{figure}

\end{example}

\subsection{Stationary distributions and reversibility; the detail balance equations}

If our original chain has stationary distribution $\pi$,  what can we say about the stationary distribution for the reversed chain?

\begin{theorem}\label{Thm:SamepiForReversed}
    If $(X_n)$ is an irreducible Markov chain with stationary distribution $\pi$, then $\pi$ is also stationary for the reversed chain $(\hat{X}_n)$.  Furthermore, writing $\hat{\nP}$ for probabilities under the transition matrix $\hat{P}$, 
    \begin{multline}\label{Eq:ReversedWalkProbability}
        \hat{\nP}_\pi(\hat{X}_0 = x_0, \hat{X}_1 = x_1, \ldots, \hat{X}_k = x_k)\\
        = \nP_\pi(X_0 = x_k, X_1 = x_{k-1}, \ldots, X_k = x_0).
    \end{multline}
\end{theorem}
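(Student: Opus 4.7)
The plan is to tackle the two claims in sequence, with the first serving as a warm-up for the second. For the first claim, that $\pi$ is also stationary for $\hat{P}$, I would compute $\pi \hat{P}$ directly. Fix $x \in \Omega$ and write
\[
(\pi \hat{P})_x \;=\; \sum_{y \in \Omega} \pi(y)\, \hat{P}_{yx} \;=\; \sum_{y \in \Omega} \pi(y)\cdot P_{xy}\,\frac{\pi(x)}{\pi(y)} \;=\; \pi(x) \sum_{y \in \Omega} P_{xy} \;=\; \pi(x),
\]
using the definition \eqref{Eq:TimeReversalTransProb} of $\hat{P}$ and the fact that each row of $P$ sums to one. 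Lemma \ref{Lemma:StationaryPositive} guarantees $\pi(y) > 0$, so the division is legitimate.

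For the joint distribution identity, the strategy is to expand both sides using the Markov property and observe a telescoping cancellation of the $\pi$ factors. Starting on the left, since $\hat{X}_0 \sim \pi$ and $\hat{X}$ is Markov with transition matrix $\hat{P}$, I would write
\[
\hat{\nP}_\pi(\hat{X}_0 = x_0,\ldots,\hat{X}_k = x_k) \;=\; \pi(x_0)\, \hat{P}_{x_0 x_1}\, \hat{P}_{x_1 x_2} \cdots \hat{P}_{x_{k-1} x_k}.
\]
Substituting $\hat{P}_{x_{i-1} x_i} = P_{x_i x_{i-1}}\,\pi(x_i)/\pi(x_{i-1})$ into each factor, the numerators and denominators of adjacent $\pi$-ratios cancel, leaving
\[
\pi(x_0) \cdot \frac{\pi(x_k)}{\pi(x_0)} \cdot P_{x_1 x_0} P_{x_2 x_1} \cdots P_{x_k x_{k-1}} \;=\; \pi(x_k)\, P_{x_1 x_0} P_{x_2 x_1} \cdots P_{x_k x_{k-1}}.
\]

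On the right-hand side, applying the Markov property to the forward chain $X$ started from $\pi$ gives
\[
\nP_\pi(X_0 = x_k, X_1 = x_{k-1}, \ldots, X_k = x_0) \;=\; \pi(x_k)\, P_{x_k x_{k-1}} P_{x_{k-1} x_{k-2}} \cdots P_{x_1 x_0},
\]
which is the same product written in the reverse order. Matching the two expressions completes the identity \eqref{Eq:ReversedWalkProbability}.

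I do not expect any serious obstacle here: both parts are essentially algebraic manipulations driven by the definition \eqref{Eq:TimeReversalTransProb} of $\hat{P}$. The only subtle point worth flagging is that the argument uses Lemma \ref{Lemma:StationaryPositive} to ensure $\pi(x_i) > 0$ throughout, so that the ratios in the telescoping step are well-defined; this is why the irreducibility hypothesis is essential. Beyond that, the key conceptual takeaway, which I would emphasize in the write-up, is that the telescoping of $\pi$-factors is exactly what implements the time-reversal symmetry at the level of path probabilities.
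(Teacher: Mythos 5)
Your proof is correct and follows essentially the same route as the paper: the stationarity claim is verified by the identical row-sum computation, and the path identity rests on the same cancellation of $\pi$-ratios coming from the definition $\hat{P}_{yx} = P_{xy}\pi(x)/\pi(y)$. The only difference is cosmetic: the paper writes out just the $k=1$ case and defers the rest to induction, whereas your explicit telescoping product handles general $k$ in one pass, which is if anything a more complete write-up.
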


Note that \eqref{Eq:ReversedWalkProbability} is just the mathematical formulation that we are reversing time: the path $x_0 \rightarrow x_1 \rightarrow \cdots \rightarrow x_k$ in the reversed chain arises from reversing the path $x_k \rightarrow x_{k-1} \rightarrow \cdots \rightarrow x_0$ in the original chain. Also, recall the fact that $\pi$ is stationary for both chains matches what happened with our biased walk on the 5-cycle: regardless of $p$, the uniform distribution is stationary, and so it is stationary for both the biased walk and its reversal.

\begin{proof}
    Let's first verify that $\pi$ is still stationary for $\hat{P}$.  Of course, this means
    \begin{align}\label{Eq:StationaryReversed}
        \pi \hat{P} = \pi,
    \end{align}
    which we can show component-wise, as usual. That is, we need to show
    \begin{align}\label{Eq:StationaryReversedComponent}
        \sum_{j=1}^n \pi(j)\hat{P}_{jk} = \pi(k)
    \end{align}
    for each $k$.  Recalling the definition \eqref{Eq:TimeReversalTransProb}, we see
    \begin{align*}
        \sum_{j=1}^n \pi(j)\hat{P}_{jk} &= \sum_{j=1}^n \pi(j)P_{kj} \frac{\pi(k)}{\pi(j)}\\
        &= \pi(k) \sum_{j=1}^n P_{kj} = \pi(k) \cdot 1,
    \end{align*}
    since we are summing the entire $k$th row of $P$.  Hence \eqref{Eq:StationaryReversedComponent} holds as does \eqref{Eq:StationaryReversed}; $\pi$ is indeed stationary for $\hat{P}$.
    
    We can use this to easily prove the second part, the formula \eqref{Eq:ReversedWalkProbability}.  We will only show
    \begin{align*}
        \hat{\nP}_\pi(\hat{X}_0 = x_0, \hat{X}_1 = x_1) = \nP_\pi(X_0 = x_1, X_1 = x_0),
    \end{align*}
    as the general argument is similar and follows from induction.  Indeed, we have
    \begin{align*}
        \hat{\nP}_\pi(\hat{X}_0 = x_0, \hat{X}_1 = x_1) &=  \hat{\nP}_\pi(\hat{X}_1 = x_1 \, | \, \hat{X}_0 = x_0) \hat{\nP}_\pi (\hat{X_0} = x_0)\\
        &= \hat{P}_{x_0,x_1} \pi(x_0)\\
        &= P_{x_1,x_0} \frac{\pi(x_1)}{\pi(x_0)} \pi(x_0)\\
        &= P_{x_1,x_0} \pi(x_1)\\
        &= \nP_{\pi}(X_1 = x_0 \, | \, X_0 = x_1) \nP_\pi (X_0 = x_1)\\
        &= \nP_\pi(X_0 = x_1, X_1=x_0). \qedhere
    \end{align*}
\end{proof}

So we know how to time-reverse a Markov chain (just calculate the new transition probabilities using \eqref{Eq:TimeReversalTransProb}), and we know that the stationary distibution is the same for both chains (Theorem \ref{Thm:SamepiForReversed}).  Still lingering, however, is the issue of when we get a genuinely different chain.  That is, when can we tell which direction the movie is being played?  When is $\hat{P} \neq P$? 

\begin{definition}\label{Def:Reversible}
    An irreducible Markov chain is \textbf{time reversible} (or simply \textbf{reversible}) with respect to its stationary distribution $\pi$ if 
    \begin{align}\label{Eq:Reversible}
        \hat{P} = P.
    \end{align}
\end{definition}
Note that $\pi$ is implicit in \eqref{Eq:Reversible}, since whenever we write $\hat{P}$ we are using \eqref{Eq:TimeReversalTransProb} and hence the stationary distribution $\pi$.  So it's important to keep in mind that reversibility is \emph{always with respect to a given stationary distribution $\pi$}.  In fact, reversibility can easily be broken if we do not start with $X_0$ distributed according to $\pi$, even if the chain is reversible in the sense of Definition \ref{Def:Reversible}.  For instance, we saw above in equation \eqref{Eq:PentagonReversible} that the simple walk on the 5-cycle is reversible with respect to its (uniform) stationary distribution.  What if we instead start with $X_0=1$, i.e. with the distribution $\mu = (1,0,0,0,0)$?  We use the definition \eqref{Eq:ReversedProbDef} to compute, for instance, that
    \begin{align*}
        \hat{P}(2,1) &= \nP_\mu(X_0 = 1 \, | \, X_1=2)\\
        &= \nP_\mu(X_1 =2 \, | \, X_0=1) \frac{\nP_\mu(X_0=1)}{\nP_\mu(X_1=2)} = \frac{1}{2}\cdot \frac{1}{1/2} = 1 \neq \frac{1}{2} = P(2,1),
    \end{align*}
    and so $\hat{P} \neq P$ when the chain is started with distribution $\mu$.  However, as we saw, $\hat{P} = P$ when we start with the stationary distribution $\pi$.

So, how can we tell if the chain is time reversible?  Suppose that we do have $P = \hat{P}$, and let's consider an individual entry $P(x,y)$.  From our assumption and \eqref{Eq:TimeReversalTransProb},
\begin{align*}
    P(x,y) = \hat{P}(x,y) = P(y,x)\frac{\pi(y)}{\pi(x)},
\end{align*}
and so we see $P = \hat{P}$ iff 
\begin{align}\label{Eq:DBE}
    \pi(x)P(x,y) = \pi(y)P(y,x)
\end{align}
for every $x,y \in \Omega$.  We will call \eqref{Eq:DBE} the \textbf{detail balance equations} or \textbf{DBEs} for short, and this will prove to be a very important equation.  So important, in fact, that you should memorize \eqref{Eq:DBE} to have it at your fingertips.

We have proven the second part of the following theorem.

\begin{theorem}\label{Thm:DBEConsequences}
    Let $X$ be an irreducible Markov chain on  $\Omega$ with transition matrix $P$, and let $\pi$ be a probability distribution on $\Omega$ such that the detail balance equations \eqref{Eq:DBE} hold for all $x,y \in \Omega$.  Then
    \begin{enumerate}[$(i)$]
        \item\label{Thm:DBEStationary} $\pi$ is a stationary distribution for $P$, and
        \item\label{Thm:DBEReverse} $P$ is reversible with respect to $\pi$.
    \end{enumerate}
\end{theorem}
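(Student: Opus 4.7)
The plan is to prove part (i) directly from the detail balance equations by a one-line summation, and then note that part (ii) was essentially already derived in the discussion immediately preceding the theorem statement. Since part (ii) asserts $P = \hat{P}$, and the paragraph before the theorem showed that $P = \hat{P}$ is \emph{equivalent} to the DBEs holding, we only need to carefully verify part (i).

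For part (i), the goal is to check $\pi P = \pi$, i.e., that $(\pi P)(y) = \pi(y)$ for each $y \in \Omega$. I would write
\begin{align*}
    (\pi P)(y) = \sum_{x \in \Omega} \pi(x) P(x,y) = \sum_{x \in \Omega} \pi(y) P(y,x) = \pi(y) \sum_{x \in \Omega} P(y,x) = \pi(y),
\end{align*}
where the second equality uses the DBE \eqref{Eq:DBE} term-by-term, and the last equality uses the fact that the $y$th row of $P$ sums to $1$ (since $P$ is a transition matrix). This is the entire content of (i); I do not expect any obstacle, since the DBEs were essentially designed to make this cancellation work.

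For part (ii), the plan is simply to refer back to the derivation surrounding equation \eqref{Eq:DBE}: starting from the definition $\hat{P}(y,x) = P(x,y) \pi(x)/\pi(y)$, the DBE gives $P(x,y) \pi(x) = P(y,x) \pi(y)$, so $\hat{P}(y,x) = P(y,x)$ for all $x,y$, which is exactly $\hat{P} = P$. One subtle point worth mentioning explicitly is that division by $\pi(y)$ requires $\pi(y) > 0$; this is guaranteed because part (i) provides a stationary distribution and Lemma \ref{Lemma:StationaryPositive} then ensures $\pi(y) > 0$ for all $y$ (since the chain is assumed irreducible). So the logical order is: first prove (i), then invoke Lemma \ref{Lemma:StationaryPositive} to know $\hat{P}$ is well defined, then conclude (ii) from the DBEs.

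The main ``obstacle,'' such as it is, is purely expository: making clear to the reader why (i) and (ii) are not circular (namely, (i) is proved from the DBEs alone without any mention of $\hat{P}$), and flagging the use of Lemma \ref{Lemma:StationaryPositive} so that the formula for $\hat{P}$ in (ii) is not dividing by zero. No computation beyond the three-equals-sign display above is needed.
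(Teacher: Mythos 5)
Your proposal is correct and follows essentially the same route as the paper: part (i) by summing the DBEs over $x$ and using that the rows of $P$ sum to one, and part (ii) by dividing the DBE by $\pi(x)$ to read off $\hat{P}=P$. Your extra remark that positivity of $\pi$ (via Lemma \ref{Lemma:StationaryPositive}, once (i) is established) is what makes the division in (ii) legitimate is a small point of care the paper leaves implicit, but it does not change the argument.
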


This theorem highlights the power of the DBEs: if we verify that \eqref{Eq:DBE} holds, we know that both $\pi$ is stationary and that the chain is reversible with respect to $\pi$. 

\begin{proof}
    Suppose that $\pi$ is a probability distribution satisfying the DBE's; we start by showing $(\ref{Thm:DBEStationary})$.  Indeed, sum \eqref{Eq:DBE} over all $x\in\Omega$ to obtain
    \begin{align}\label{Eq:DBEStationary}
        \sum_{x \in \Omega} \pi(x) P(x,y) = \sum_{x \in \Omega} \pi(y) P(y,x).
    \end{align}
    Note that the left-hand side is the $y$-component of the vector-matrix product $\pi P$.  The right-hand side is
    \begin{align*}
        \pi(y) \sum_{x \in \Omega} P(y,x) = \pi(y)
    \end{align*}
    since we are summing over a fixed row in $P$.  Thus \eqref{Eq:DBEStationary} says
    \begin{align*}
        (\pi P)_y = \pi(y)
    \end{align*}
    for any $y$, showing $\pi P = \pi$, as claimed.

    For $(\ref{Thm:DBEReverse})$, we have by assumption that the DBEs holds, yielding
    \begin{align*}
        P(x,y) = \frac{\pi(y)}{\pi(x)}P(y,x) = \hat{P}(x,y)
    \end{align*}
    for all $x,y \in \Omega$.
\end{proof}

So far the only reversible chains we've seen are the simple walks on $n$-cycles.  But this is part of a much larger class of examples.

\begin{theorem}
    Simple symmetric random walks on graphs are reversible with respect to their stationary distribution \eqref{Eq:SRWStationaryDistribution}.
\end{theorem}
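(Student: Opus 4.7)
The plan is to invoke Theorem \ref{Thm:DBEConsequences}: it suffices to verify the detailed balance equations $\pi(x)P(x,y) = \pi(y)P(y,x)$ for all vertices $x, y$, with $\pi_v = \deg(v)/(2|E|)$ as in Theorem \ref{Thm:SRWpi}. Since irreducibility of the simple symmetric random walk on $G$ corresponds to connectedness of $G$ (as noted after Theorem \ref{Thm:SRWpi}), we may assume $G$ is connected so that the hypotheses of Theorem \ref{Thm:DBEConsequences} are met.

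The verification splits into two cases depending on whether $x$ and $y$ are adjacent. First, if $x \not\sim y$, then by the definition of the simple symmetric random walk, $P(x,y) = 0 = P(y,x)$, so both sides of the DBE are zero and the equation holds trivially. Second, if $x \sim y$, then $P(x,y) = 1/\deg(x)$ and $P(y,x) = 1/\deg(y)$, so I compute
\begin{align*}
    \pi(x) P(x,y) = \frac{\deg(x)}{2|E|} \cdot \frac{1}{\deg(x)} = \frac{1}{2|E|} = \frac{\deg(y)}{2|E|} \cdot \frac{1}{\deg(y)} = \pi(y) P(y,x),
\end{align*}
and the DBE holds. Together the two cases give \eqref{Eq:DBE} for every pair $x, y \in V$, so Theorem \ref{Thm:DBEConsequences}$(ii)$ delivers reversibility with respect to $\pi$.

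There is no real obstacle here; the only subtlety is recognizing that the key structural feature of the simple symmetric random walk, namely $P(x,y) = 1/\deg(x)$ whenever $x \sim y$, is precisely what makes the product $\pi(x) P(x,y)$ collapse to the symmetric quantity $1/(2|E|)$ that does not distinguish $x$ from $y$. This symmetry across each edge is really the essence of why the walk ``looks the same when played backwards,'' and it is why the formula \eqref{Eq:SRWStationaryDistribution} for $\pi$ was tailor-made to satisfy the DBEs in the first place.
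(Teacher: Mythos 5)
Your proof is correct and follows exactly the paper's argument: verify the detail balance equations by splitting into the non-adjacent case (both sides zero) and the adjacent case (both sides collapse to $1/(2|E|)$), then invoke Theorem \ref{Thm:DBEConsequences}. The closing remark about why the formula for $\pi$ makes the DBEs work is a nice addition, but the mathematical content matches the paper's proof.
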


\begin{proof}
    We simply need to verify the DBEs \eqref{Eq:DBE}.  Indeed, pick any vertices $x$ and $y$.  If $y \not\sim x$, then both $P(x,y)$ and $P(y,x)$ are zero, and the DBEs holds.  If $y \sim x$, then
    \begin{align*}
        \pi(x)P(x,y) = \frac{\deg(x)}{2|E|} \cdot \frac{1}{\deg(x)} = \frac{1}{2|E|},
    \end{align*}
    while
    \begin{align*}
        \pi(y)P(y,x) = \frac{\deg(y)}{2|E|} \cdot \frac{1}{\deg(y)} =  \frac{1}{2|E|}
    \end{align*}
    as well.
\end{proof}

Another class of reversible Markov chains are those with \emph{symmetric} transition matrices.

\begin{definition}
    A transition matrix is \textbf{symmetric} if $P(x,y) = P(y,x)$ for all $x,y \in \Omega$.  In terms of linear algebra, this says that $P = P^T$, the transpose of the matrix $P$.
\end{definition}

\begin{theorem}\label{Thm:SymmetricReversible}
    If a Markov chain on $\Omega = \{1,2,\ldots, n\}$ has a symmetric transition matrix $P$, then the uniform distribution $\pi=(\frac{1}{n}, \frac{1}{n}, \ldots, \frac{1}{n})$ is stationary and the chain is reversible with respect to $\pi$.
\end{theorem}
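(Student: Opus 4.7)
The plan is to apply Theorem \ref{Thm:DBEConsequences} directly. That theorem tells us that if we can find a probability distribution $\pi$ on $\Omega$ satisfying the detail balance equations $\pi(x) P(x,y) = \pi(y) P(y,x)$ for all $x,y \in \Omega$, then we automatically conclude both that $\pi$ is stationary for $P$ and that $P$ is reversible with respect to $\pi$. Both conclusions of the present theorem would then follow in one stroke.

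The candidate distribution to test is of course the uniform distribution $\pi = (1/n, 1/n, \ldots, 1/n)$. With this choice, the detail balance equation at an arbitrary pair $(x,y)$ reduces to $\tfrac{1}{n} P(x,y) = \tfrac{1}{n} P(y,x)$, i.e., simply $P(x,y) = P(y,x)$. But this is exactly the symmetry hypothesis $P = P^T$ that we are assuming. So the DBEs hold for every pair $x,y \in \Omega$ with no further work.

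Having verified the DBEs, I would then invoke Theorem \ref{Thm:DBEConsequences} \emph{(i)} to conclude that $\pi$ is a stationary distribution, and Theorem \ref{Thm:DBEConsequences} \emph{(ii)} to conclude that $P$ is reversible with respect to $\pi$, finishing the proof. Before invoking that result one should double-check that $\pi$ is genuinely a probability distribution, which is clear since its entries are nonnegative and sum to $n \cdot (1/n) = 1$.

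There is really no main obstacle here: the entire content of the statement is that symmetry of $P$ is precisely the detail balance condition for the uniform measure, so all the heavy lifting has already been done in Theorem \ref{Thm:DBEConsequences}. The only minor point worth flagging is that Theorem \ref{Thm:DBEConsequences} was stated for \emph{irreducible} chains, so if one wishes to be careful one should note that the reversibility part of the conclusion (equality $\hat{P} = P$) does not actually require irreducibility in this setting, because with $\pi$ uniform the formula $\hat{P}(x,y) = P(y,x)\pi(y)/\pi(x) = P(y,x) = P(x,y)$ makes sense and holds entry by entry regardless of irreducibility; the stationarity claim $\pi P = \pi$ also follows directly from summing $P(x,y) = P(y,x)$ over $x$.
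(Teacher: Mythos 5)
Your proof is correct and follows exactly the second of the two routes the paper itself endorses: the result is left as an exercise asking the reader to prove it both by directly verifying $\pi P = \pi$ and by checking the detail balance equations and invoking Theorem \ref{Thm:DBEConsequences}, and you have carried out the latter. Your added remark that the irreducibility hypothesis of Theorem \ref{Thm:DBEConsequences} is not genuinely needed here (since the uniform $\pi$ is strictly positive and both conclusions follow entrywise from $P(x,y)=P(y,x)$) is a careful touch the text itself glosses over.
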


\begin{exercise}
    Prove Theorem \ref{Thm:SymmetricReversible} in two different ways: first, directly verify that $\pi P = \pi$.  Secondly, show that the DBEs \eqref{Eq:DBE} hold and invoke Theorem \ref{Thm:DBEConsequences}. 
\end{exercise}


\subsection{Two in-depth examples}

Time reversing a Markov chain can be conceptually intimidating.  In this section, we work through two in-depth examples to help bolster our understanding of how reversal works in practice.

\subsubsection{Example 1: A ``geometric'' walk on $\mathbb{N}$}

Let $\Omega$ be the infinite state space $\mathbb{N} = \{1,2,3,\ldots\}$ and fix $0<p<1$.  Consider the Markov chain with transition probabilities
\begin{align}\label{Eq:InDepthReversableProbs}
    P(j,j+1) = 1-p \qquad \text{ and } \qquad P(j,1) = p 
\end{align}
for all $j \in \Omega$.  See Figure \ref{Fig:ReversibleSeriousEg}. At each step, this Markov chain moves one step right with probability $1-p$, and jumps back to 1 with probability $p$.  So we can think of it as working in cycles where each cycle consists of walking up the integers via repeated ``failures,'' until it ``succeeds'' and resets at value 1.  Starting from 1, each of these cycles has random length $L$, where $L$ is a Geo$(p)$ random variable. 

Before we explicitly compute the time reversal, it is a good idea to first try to ``reverse the movie'' in our minds. This will give some intuition to guide the subsequent computations. 
\begin{exercise}\label{Ex:GuessReversal}
    Without performing any calculations, informally describe the behavior of the time-reversal of this Markov chain.
\end{exercise}

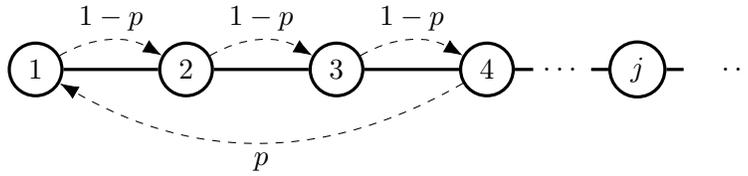
\begin{figure}
    \centering
    
    \scalebox{1}{
    \begin{tikzpicture}
    \tikzset{
        node/.style={circle, draw=black, very thick, minimum size=7mm},
        arrow/.style={-{Latex[length=2.5mm]}, black},
        line/.style={black, very thick}
        }
        
        \node[node] (one) at (0,0) {1};
        \node[node] (two) at (2,0) {2};
        \node[node] (three) at (4,0) {3};
        \node[node] (four) at (6,0) {4};
        \node[] (four5) at (6.75,0) {};
        \node[] (j-5) at (7.25,0) {};
        \node[node] (j) at (8,0) {$j$};
        \node[] (j+5) at (8.75,0) {};
        \node[] (k) at (10,0) {};

        \draw[line] (one) -- (two);
        \draw[line] (two) -- (three);
        \draw[line] (three) -- (four);
        \draw[line] (four) -- (four5);
        \path (four5) -- node[auto=false]{\ldots} (j-5);
        \draw[line] (j-5) -- (j);
        \draw[line] (j) -- (j+5);
        \path (j+5) -- node[auto=false]{\ldots} (k);
        
        \draw[arrow, dashed] (four) to[bend left] node[anchor=north] {$p$} (one);
        \draw[arrow, dashed] (one) to[bend left] node[anchor=south] {$1-p$} (two);
        \draw[arrow, dashed] (two) to[bend left] node[anchor=south] {$1-p$} (three);
        \draw[arrow, dashed] (three) to[bend left] node[anchor=south] {$1-p$} (four);
    \end{tikzpicture}
    }
    \caption{\small Is this Markov chain reversible?}
    \label{Fig:ReversibleSeriousEg}
\end{figure}

We would like to determine the time-reversal of this chain, but of course the time-reversal is with respect to the stationary distribution $\pi$.  So we first need to find $\pi$, which we will do via computing the expected return times $\nE_x(\tau_x^+)$.  We note that
\begin{align*}
    \nP_1(\tau_1^+ = k) &= \nP(1 \rightarrow 2 \rightarrow \cdots \rightarrow k \rightarrow 1) = (1-p)^{k-1}p,
\end{align*}
and so $\tau_1^+ | X_0=1$ is a $\text{Geo}(p)$ random variable.  Hence $\nE_1(\tau_1^+) = \frac{1}{p}$, and so $\pi(1) = p$ by \eqref{Eq:piFormula}.  To compute $\pi(j)$ for $j\neq 1$, we use our other formula \eqref{Eq:piFormula2} with $x=1$,
\begin{align*}
    \pi(j) = \frac{\nE_1 \big( \#\{ \text{visits of $j$ before $\tau_1^+$} \} \big)}{\nE_1(\tau_1^+)} = p \nE_1 \big( \#\{ \text{visits of $j$ before $\tau_1^+$}\}).
\end{align*}
Noting that the chain will visit $j$ at most once before returning to $1$, we find
\begin{align*}
    \nE_1 \big( \#\{ \text{visits of $j$ before $\tau_1^+$}\}) &= 0 \cdot \nP_1( \tau_1^+ < j) + 1 \cdot \nP_1( \tau_1^+ \geq j)\\
    &= \nP_1( \tau_1^+ \geq j)\\
    &= (1-p)^{j-1},
\end{align*}
and thus $\pi(j) = p(1-p)^{j-1}$.  As this formula also holds for $j=1$, we see $\pi$ has the pdf of a Geo$(p)$ random variable.  

We can then compute the time-reversal.  Recalling that
\begin{align*}
    \hat{P}(i,j) = P(j,i) \frac{\pi(j)}{\pi(i)},
\end{align*}
we have from \eqref{Eq:InDepthReversableProbs} that $\hat{P}(i,j) \neq 0$ iff $i = j+1$ or $i=1$. In the former case,
\begin{align}\label{Eq:InDepthReversedProb1}
    \hat{P}(j+1,j) &= P(j,j+1) \frac{\pi(j)}{\pi(j+1)} \notag\\
    &= (1-p) \frac{(1-p)^{j-1}p}{(1-p)^jp}=1.
\end{align}
In the latter case,
\begin{align}\label{Eq:InDepthReversedProb2}
    \hat{P}(1,j) &= P(j,1) \frac{\pi(j)}{\pi(1)} \notag\\
    &= p \frac{(1-p)^{j-1}p}{p}= (1-p)^{j-1}p.
\end{align}
Hence we obtain the following picture for the reversed chain: if we are at a vertex $j>1$, \eqref{Eq:InDepthReversedProb1} says that we decrease by 1 each step until we reach state 1, at which point \eqref{Eq:InDepthReversedProb2} says that we jump to a new integer $j$, where $j$ is a $\text{Geo}(p)$ random variable. See Figure \ref{Fig:ReversibleSeriousEgReversed}.  (Is this what you came up with in Exercise \ref{Ex:GuessReversal}?)  The reversed ``movie'' is thus quite different than the original, and this Markov chain is \emph{not} reversible.  
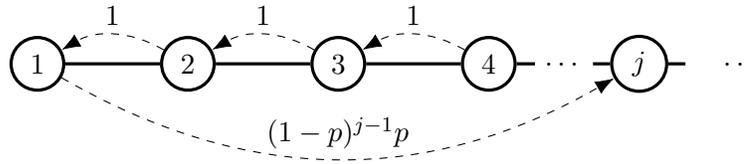
\begin{figure}
    \centering
    
    \begin{tikzpicture}
    \tikzset{
        node/.style={circle, draw=black, very thick, minimum size=7mm},
        arrow/.style={-{Latex[length=2.5mm]}, black},
        line/.style={black, very thick}
        }
        
        \node[node] (one) at (0,0) {1};
        \node[node] (two) at (2,0) {2};
        \node[node] (three) at (4,0) {3};
        \node[node] (four) at (6,0) {4};
        \node[] (four5) at (6.75,0) {};
        \node[] (j-5) at (7.25,0) {};
        \node[node] (j) at (8,0) {$j$};
        \node[] (j+5) at (8.75,0) {};
        \node[] (k) at (10,0) {};

        \draw[line] (one) -- (two);
        \draw[line] (two) -- (three);
        \draw[line] (three) -- (four);
        \draw[line] (four) -- (four5);
        \path (four5) -- node[auto=false]{\ldots} (j-5);
        \draw[line] (j-5) -- (j);
        \draw[line] (j) -- (j+5);
        \path (j+5) -- node[auto=false]{\ldots} (k);
        
        \draw[arrow, dashed] (one) to[bend right] node[anchor=south] {$(1-p)^{j-1}p$} (j);
        \draw[arrow, dashed] (two) to[bend right] node[anchor=south] {$1$} (one);
        \draw[arrow, dashed] (three) to[bend right] node[anchor=south] {$1$} (two);
        \draw[arrow, dashed] (four) to[bend right] node[anchor=south] {$1$} (three);
    \end{tikzpicture}
    
    \caption{\small The time reversal of the geometric walk.}
    \label{Fig:ReversibleSeriousEgReversed}
\end{figure}
Don't forget from Theorem \ref{Thm:SamepiForReversed}, though, that the stationary distribution for the reversed chain is still the same $\pi$ as for the forward chain, namely, $\text{Geo}(p)$.

\subsubsection{Example 2: An asymmetric exclusion on an $n$-cycle}

Consider $1 \leq k <n$ particles placed on the vertices of an $n$-cycle, where no two particles occupy the same vertex.  Order the vertices and assign each a 0 or 1 based on whether it is empty or has a particle, respectively.  The sample space $\Omega$ is then the collection of all elements of $\{0,1\}^n$ that have exactly $k$ ones, and an element $\omega \in \Omega$ is a vector of zeroes and ones.  For example, the starting position of two 6-cycles in Figure \ref{Fig:AsymmetricExclusion} is $\omega = (1,0,1,1,0,0)$.  Since we are choosing $k$ entries of an $n$-tuple to have ones, $\#\Omega = \binom{n}{k}$.

The chain proceeds by choosing a particle uniformly at random at each time and trying to move it one step CCW. It can only move if that position is unoccupied, and otherwise the move is suppressed and the particle remains in the same state. The 6-cycle on the left in Figure \ref{Fig:AsymmetricExclusion} shows a move from $(1,0,1,1,0,0)$ to $(0,0,1,1,0,1)$, while the chain on the right stays at $(1,0,1,1,0,0)$. 

In terms of the vector $\omega$, each step of the chain chooses one of the 1's uniformly at random.  If the entry immediately to the left of this 1 is a 0, the entries switch.  If it is another 1, nothing happens.  

This Markov chain is an \emph{asymmetric exclusion process} on the $n$-cycle. 

\begin{exercise}
    Without performing any calculations, informally describe the behavior of the time-reversal of this Markov chain.
\end{exercise}

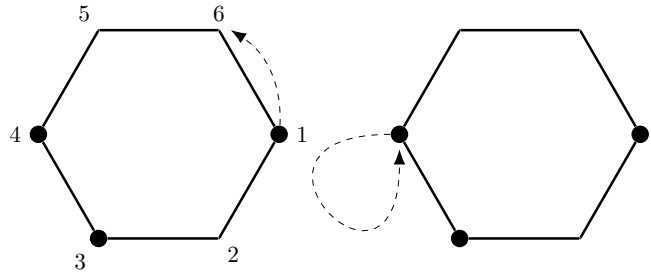
\begin{figure}
    \centering
    \scalebox{0.8}{\begin{tikzpicture}
    \tikzset{
        node/.style={circle, fill = black, thin, minimum size= 1pt, scale=0.8},
        line/.style={black, very thick},
        arrow/.style={-{Latex[length=2.5mm]}, black}
    }
    
    \node[node, scale=0.1, label=6] (one) at (1, 1.732) {};
    \node[node, label=0:1] (two) at (2,0) {};
    \node[node, scale=0.1, label=-60:2] (three) at (1,-1.732 ) {};
    \node[node, label=-120:3] (four) at (-1,-1.732) {};
    \node[node, label =-180:4] (five) at (-2,0) {};
    \node[node, scale=0.1, label=-240:5] (six) at (-1,1.732) {};
    
    \draw[line] (one) -- (two);
    \draw[line] (two) -- (three);
    \draw[line] (three) -- (four);
    \draw[line] (four) -- (five);
    \draw[line] (five) -- (six);
    \draw[line] (six) -- (one);
    \draw[arrow, dashed] (two) to[bend right] node[anchor=south] {} (1.2, 1.732);
    
    \node[node, scale=0.1] (oneb) at (7, 1.732) {};
    \node[node] (twob) at (8,0) {};
    \node[node, scale=0.1] (threeb) at (7,-1.732 ) {};
    \node[node] (fourb) at (5,-1.732) {};
    \node[node] (fiveb) at (4,0) {};
    \node[node, scale=0.1] (sixb) at (5,1.732) {};
    
    \draw[line] (oneb) -- (twob);
    \draw[line] (twob) -- (threeb);
    \draw[line] (threeb) -- (fourb);
    \draw[line] (fourb) -- (fiveb);
    \draw[line] (fiveb) -- (sixb);
    \draw[line] (sixb) -- (oneb);
     \draw[arrow, dashed] (fiveb) to[loop,out=180,in=270,distance = 3cm] node[anchor=south] {} (4,-0.25);
    \end{tikzpicture}}
    \caption{\small An asymmetric exclusion MC. A randomly selected particle tries to take one anti-clockwise step.  If the vertex is unoccupied, it moves there (left), while if occupied, it remains in the same position (right).}
    \label{Fig:AsymmetricExclusion}
\end{figure}
To formally determine the time-reversal, we first need to find the stationary distribution $\pi$.  We claim that this is uniform,
\begin{align}\label{Eq:AsymExclusionStationary}
    \pi(\omega) = \frac{1}{\binom{n}{k}}
\end{align}
for each $\omega \in \Omega$.  Indeed, by Exercise \ref{Ex:ColsAddToOne} $(a)$, it suffices to show that the column sums satisfy
\begin{align}\label{Eq:AsymExclusion1}
    \sum_{\omega' \sim \omega} P(\omega',\omega) =1
\end{align}
for any fixed $\omega \in \Omega$.  So we need to know what states $\omega'$ lead to $\omega$, and what each transition probability $P(\omega',\omega)$ is.  For example, in Figure \ref{Fig:AsymmetricExclusion}, the states that could lead to the given initial configuration $(1,0,1,1,0,0)$ are 
\begin{align*}
    (0,1,1,1,0,0), \; (1,0,1,0,1,0) \; \text{ and } \; (1,0,1,1,0,0),
\end{align*}
corresponding to a particle moving from 2 to 1, a particle moving from 5 to 4, and particle 4 being chosen and not moving, as on in the right in the figure.  Note that this is one $\omega'$ for each of the three particles.  Starting from any one of these $\omega'$, we have $P(\omega', \omega)=1/3$, since we have to pick exactly one of the particles in each case, and they are all equally likely.  Hence \eqref{Eq:AsymExclusion1} holds for the column corresponding to $\omega = (1,0,1,1,0,0)$. 

This logic readily extends.  For a general $\omega$, each of the $k$ ones corresponds to exactly one $\omega'$ such that $\omega' \sim \omega$: if the entry immediately to the right of the 1 is a 0, then swapping the 1 and 0 gives $\omega'$ (the shifted particle can move one step to yield $\omega$).  If the entry is another 1, then $\omega'=\omega$ (the adjacent particle is chosen and cannot move).  In either case, exactly one of the $k$ particles must be chosen, and since each has probability $1/k$, we again have \eqref{Eq:AsymExclusion1}.  We conclude the stationary distribution is uniform \eqref{Eq:AsymExclusionStationary}, as claimed.

For our reversal under $\pi$, we thus have
\begin{align}\label{Eq:AsymExclusion2}
    \hat{P}(\omega,\omega') = P(\omega',\omega) \frac{\pi(\omega')}{\pi(\omega)} = P(\omega',\omega).
\end{align}
What does this mean?  Consider two cases.  If $\omega'=\omega$, \eqref{Eq:AsymExclusion2} says that the probability of staying at state $\omega$ in the reversed chain is the same as in the forward chain.  If $\omega' \neq \omega$, then $P(\omega',\omega) \neq 0$ iff one particle in the $\omega'$ configuration can move one step CCW to obtain $\omega$.  When we switch the order in $\hat{P}(\omega,\omega')$, the particle is then moving clockwise.  Thus $\hat{P}(\omega,\omega') \neq 0$ iff one particle of $\omega$ can move one step \emph{clockwise} to obtain the state $\omega'$.

So time reversal in our exclusion process just switches the orientation of particle movement: we still uniformly choose a particle at each step, but now try to move clockwise instead of anti-clockwise, staying put if there is no open vertex.  This is obviously different than the original chain, and so we see the asymmetric exclusion is \emph{not} time-reversible.  

\begin{exercise}
    Use the DBEs \eqref{Eq:DBE} to give an alternate proof that this chain is not time-reversible.
\end{exercise}

\vfill
\pagebreak
\section*{Problems for chapter \ref{Ch:Markov}}
\addcontentsline{toc}{section}{Problems for chapter 1}

\begin{problem}\label{Prob:MarkovPropertyNegate}
    Let $X$ be a Markov chain on a finite state space $\Omega$.  Then the Markov property \eqref{Eq:MarkovProperty} holds.  In this problem we show that we can replace the ``$=$'' signs in the conditioning event with ``$\neq$''.  Specifically, take $k=3$ and show that 
    \begin{align*}
        \nP(X_2=x_2 \, |\, X_0 \neq x_0, X_1 = x_1) = \nP(X_2=x_2 \, |\, X_1 = x_1).
    \end{align*}
    An extension of this idea was used in the proof of Theorem \ref{Thm:ReturnTimeLite}.
\end{problem}

\begin{problem}\label{Prob:B1.3} Consider the transition matrix 
\[
P=\begin{bmatrix} 1/3 & 1/3 & 1/3 \\ 1/2 & 0 & 1/2 \\ 1/2 & 1/2 & 0\end{bmatrix}
\]
\begin{enumerate}[$(a)$]
\item Draw a graphical representation for how this chain moves (see Figures \ref{Fig:Lect2Graph} and \ref{Fig:TrianglesInLecture6} for inspiration if needed).
\item What do the rows of $P$ sum to?  What do the columns of $P$ sum to?
\item Find the characteristic polynomial of $P$. 
\item Find the eigenvalues of $P$ and a basis for each eigenspace. What do you notice about the eigenvalues of $P$? 
\item Is $P$ diagonalizable? If so, diagonalize it as $QDQ^{-1}$ (that is, find all three of these matrices and verify that their product $QDQ^{-1}$ gives $P$). If not, explain why not.
\end{enumerate}
\end{problem}

\begin{problem}\label{Prob:B1.3b} Consider the transition matrix 
\[
P=\begin{bmatrix} 0 & 1 & 0 \\ 1/3 & 1/3 & 1/3 \\ 0 & 1/2 & 1/2\end{bmatrix}
\]
\begin{enumerate}[$(a)$]
\item Draw a graphical representation for how this chain moves (see Figures \ref{Fig:Lect2Graph} and \ref{Fig:TrianglesInLecture6} for inspiration if needed).
\item What do the rows of $P$ sum to?  What do the columns of $P$ sum to?
\item Find all the (right) eigenvalues for $P$. (Even though we care primarily about left-eigenvalues and vectors, here find the usual \emph{right} ones from linear algebra.)  What do you observe about the eigenvalues?
\item Show that the right-eigenvalues and left-eigenvalues for any square matrix are always the same.  (\emph{Hint}: think properties of the determinant.) 
\item Show that for our $P$, the right eigenspace for $\lambda=1$ and the left eigenspace for $\lambda=1$ are \emph{not} the same.
\item Find a stationary distribution $\pi$ for $P$.
\item Compute $P^{50}$.  What do you observe?
\end{enumerate}
\end{problem}

\begin{problem}\label{Prob:B1.4} Consider the following simple model for a Seattle weather forecast. The weather can be either `rain' or `no rain'. Suppose that the chance of rain tomorrow only depends on whether it rains today or not, and not on past weather conditions. If it rains today, the chance it will be rain tomorrow again is $\alpha \in (0,1)$; and if it does not rain today, the probability it will rain tomorrow is $\beta  \in (0,1)$. Starting from some day zero, let $X_0, X_1, X_2, \ldots$ be a sequence where 
\[
X_k=\begin{cases}
1, & \text{if it rains on day $k$}\\
0, & \text{otherwise}.
\end{cases}
\]
\begin{enumerate}[$(a)$]
\item Find the state space and the transition probability matrix of the Markov chain $\left(X_k,\; k=0,1,2,\ldots \right)$.
\item If it rains on day zero, find the probability it will rain on day $3$. 
\item If there is only $50\%$ chance that it rains on day zero, find the probability it will rain on day $3$. 
\item If it rains on day zero, what is the probability that it is going to rain nonstop from days 1 through 7? 
\end{enumerate}
\end{problem}

\begin{problem}\label{Prob:AbsorbingBarriers1} Consider the so-called random walk with \textit{absorbing} barriers on $\Omega=\{ 1,2,3,4,5\}$. This is also a Markov chain on $\Omega=\{ 1,2,3,4,5\}$ with the following transition probability matrix 
\[
P=\begin{bmatrix}
1 & 0 & 0 & 0 & 0\\
\frac{1}{2} & 0 & \frac{1}{2} & 0 & 0\\
0 & \frac{1}{2} & 0 & \frac{1}{2}& 0 \\
0 & 0 & \frac{1}{2} & 0 & \frac{1}{2} \\
0 & 0 & 0 & 0 & 1
\end{bmatrix}.
\]

\begin{enumerate}[$(a)$]
\item Draw a graphical diagram that shows how this chain moves. 
\item Is this chain irreducible? Why or why not?  
\item Find $\nE(X_5 \mid X_3=2, X_0=3)$. 
\item Find a row vector $\pi$ such that $\pi$ has every coordinate nonnegative and the sum of coordinates is one and $\pi P =\pi$. Is $\pi$ unique?
\item Use a computer to compute $P^{50}$, and round to the nearest hundredth.  What do you observe?  Explain what this intuitively means.
\end{enumerate}
\end{problem}

\begin{problem}\label{Prob:ReflectingBoundary} Consider the so-called random walk with \textit{reflecting boundary} on $\Omega=\{ 1,2,3,4,5\}$. This is a Markov chain with the following transition probability matrix 
\[
P=\begin{bmatrix}
0 & 1 & 0 & 0 & 0\\
\frac{1}{2} & 0 & \frac{1}{2} & 0 & 0\\
0 & \frac{1}{2} & 0 & \frac{1}{2}& 0 \\
0 & 0 & \frac{1}{2} & 0 & \frac{1}{2} \\
0 & 0 & 0 & 1 & 0
\end{bmatrix}.
\]

\begin{enumerate}[$(a)$]
\item Draw a graphical diagram that shows how this chain moves. 
\item Is this chain irreducible? Why or why not? 
\item If this chain is irreducible, what is its period?  
\item Find $\nE(X_2 \mid X_0=1)$. 
\item Find a stationary distribution for this Markov chain.
\item Estimate $\nP(X_{2200}=2 \,|\, X_{1200}=4)$ and $\nP(X_{2201}=2 \,|\, X_{1200}=4)$.
\end{enumerate}
\medskip
\end{problem}

\begin{problem}\label{Prob:B1.6} Consider the random walk on the graph in Figure \ref{Fig:B1.6}. 

\begin{figure}[ht]
    \centering
    \scalebox{0.85}{
        \begin{tikzpicture}
    \tikzset{
        node/.style={circle, draw=black, very thick, minimum size=7mm},
        line/.style={black, very thick}
    }
    
    \node[node] (four) at (0,0) {4};
    \node[node] (five) at (2,0) {5};
    \node[node] (two) at (2,-2) {2};
    \node[node] (three) at (0, -2) {3};
    \node[node] (one) at (4, -2) {1};
    \node[node] (six) at (-2, -2) {6};
    
    \draw[line] (four) -- (five);
    \draw[line] (four) -- (three);
    \draw[line] (five) -- (two);
    \draw[line] (three) -- (two);
    \draw[line] (one) -- (two);
    \draw[line] (one) -- (five);
    \draw[line] (six) -- (four);  
    
    \end{tikzpicture}}
    \caption{ }
    \label{Fig:B1.6}
\end{figure}

\begin{enumerate}[$(a)$]
\item Write down the transition matrix $P$ of this Markov chain.
\item Is the random walk irreducible? Why or why not? 
\item What is the period of this chain?  Is it aperiodic?
\item Suppose $X_0=6$. What is
\[
\nP(X_4=1, X_3\neq 1, X_2 \neq 1, X_1 \neq 1 \mid X_0=6)? 
\]
That is, what is the probability that, starting from $6$, you reach $1$ for the first time in $4$ time steps?
\item Use a computer to compute $P^{50}$.  What do you observe?
\end{enumerate}
\end{problem}


\begin{problem}\label{Prob:A1.3} Consider the $3\times 3$ matrix 
\[
A=\begin{pmatrix}
1/3 & 0 & 2/3\\
1/3 & 2/3 & 0\\
1/3 & 1/3 & 1/3
\end{pmatrix}
\]
\begin{enumerate}[$(a)$]
\item Draw a diagram for the Markov chain whose transition probabilities are given by $A$.
\item Find the characteristic equation for this matrix. 
\item Find all the eigenvalues of this matrix.  What do you notice about their values? (Even though we care primarily about left-eigenvalues and vectors, here find the \emph{right}-eigenvalues, as usual in linear algebra.  Can you show that the right- and left-eigenvalues are always the same?)
\item Find a solution to the linear system of equations $\pi A=\pi$ where $\pi=(\pi_1, \pi_2, \pi_3)$ is a vector in $\rr^3$ satisfying the constraint $\pi_1+\pi_2+\pi_3=1$. This is the stationary distribution for the Markov chain with transition matrix $A$.  
\item Use a computer to compute $A^{50}$.  What do you observe?
\end{enumerate}
\end{problem}

\begin{problem}\label{Prob:A1.5}  Modify the random walk from Problem \ref{Prob:ReflectingBoundary} by considering the so-called \textit{lazy} random walk with reflecting barriers on $\Omega=\{ 1,2,3,4,5\}$. This is a Markov chain with the same state space and a new transition matrix 
\[
\widetilde{P}= \frac{1}{2}\left( I + P\right),
\]
where $I$ represents the $5\times 5$ identity matrix. Answer parts $(a)$ through $(f)$ of Problem \ref{Prob:ReflectingBoundary} for this new Markov chain. 
\end{problem}
\medskip

\begin{problem}\label{Prob:A1.6} Consider a Markov chain with state space $\Omega=\{1,2,3,4,5\}$ and the following transition probability matrix 
\[
P=\begin{bmatrix}
\frac{1}{3} & \frac{2}{3} & 0 & 0 & 0\\
\frac{1}{3} & 0 & \frac{2}{3} & 0 & 0\\
\frac{1}{3} & 0 & 0 & \frac{2}{3}& 0 \\
\frac{1}{3} & 0 & 0 & 0 & \frac{2}{3} \\
1 & 0 & 0 & 0 & 0
\end{bmatrix}.
\]
That is, at every state $\{1,2,3,4\}$, the chain either goes up by one with probability $2/3$ or returns to 1  with probability $1/3$. Once it reaches $5$, it jumps to 1, and continues again. 

\begin{enumerate}[$(a)$]
\item Is this chain irreducible? Why or why not? 
\item If this chain is irreducible, what is its period?  
\item Find a stationary distribution $\pi$ for this chain.  
\item If you start from state $1$, find the exact distribution of the first return to $1$.
\item Compute $\nE(\tau_1^+ \, | \, X_0=1)$.
\item Use a computer to compute $P^{50}$.  What do you observe?  Explain what this intuitively means.
\end{enumerate}
\end{problem}


\begin{problem}\label{Prob:B2.1} Consider a Markov chain with state space $\Omega=\{0,1,2,3,4\}$ and the following transition probability matrix 
\[
P=\begin{bmatrix}
0 & \frac{1}{3} & \frac{1}{3} & \frac{1}{3} & 0\\
\frac{1}{3} & 0 & \frac{1}{3} & 0 & \frac{1}{3}\\
\frac{1}{2} & \frac{1}{2} & 0 & 0 & 0 \\
\frac{1}{2} & 0 & 0 & 0 & \frac{1}{2} \\
0 & \frac{1}{2} & 0 & \frac{1}{2} & 0
\end{bmatrix}.
\]
\begin{enumerate}[$(a)$]
\item Is this chain irreducible? Why or why not? 
\item If this chain is irreducible, what is its period?  
\item Find a stationary distribution $\pi$ for this chain.  
\item If you start from state $0$, find the expected hitting time to the states $\{1,2,4\}$.
\item If you start from state $0$, find the expected number of times you will visit $1$ before you hit the states $\{2,4\}$. Leave the answer as an indicated coordinate of a matrix inverse. That is, say something like ``the answer is the $(1,2)$ coordinate of $M^{-1}$'' where $M$ is written explicitly. You don't need to invert $M$. 
\item Find the expected return time to zero $\nE_0\left( \tau_0^+\right)$.
\item Use a computer to compute $P^{50}$.  What do you observe?  Explain what this intuitively means.
\end{enumerate}

\end{problem}


\begin{problem} A community of $N$ individuals is in the middle of a zombie outbreak. Fortunately, the community has managed to discover a serum that can cure zombies and prevents them from further infection. Suppose at the beginning of each day, every individual is in one of three possible conditions: non-infected, zombie, and cured. If during day $t$, a non-infected person becomes infected, he or she will become a zombie the next day $(t+1)$, but will get cured from the following day $(t+2)$ on-wards. Let $X_t$ and $Y_t$ denote the number of zombies and the number of non-infected persons on day $t$, respectively. During each day, the probability that a given non-infected person comes in contact with a given zombie is $0< p< 1$, independently for every zombie and every other non-infected person.

\begin{enumerate}[$(a)$]
\item If $X_t=i$, what is the probability that a given non-infected person will come in contact with a zombie during day $t$? 
\item Is the pair $\left( X_t, Y_t\right)$, $t=0,1,2,\ldots$, a Markov chain? If so, give an expression for its state space and transition probabilities. 
\item Suppose $X_0=1, Y_0=N-1$, find the distribution of $X_2$. Leave your answer in summation notation.  
\end{enumerate}

\nin(\emph{Remark:} This is the \emph{Reed-Frost model}, a simple epidemiological model for the spread of disease.  It is part of a larger class of \emph{SIR} models, which track the number of \emph{susceptible}, \emph{infected}, and \emph{recovered} individuals.  For an interactive app showing disease spread in this model, made by Jamie Forschmiedt, visit \url{https://forscj.shinyapps.io/ReedFrostModel/}.)
\end{problem}


\begin{problem} A math professor possesses $r$ umbrellas that he uses in going between his home and his office. If he is at his home at the beginning of the day and it is raining, then he will take an umbrella with him to his office, provided there is one at home to be taken. On his way back from his office, he will bring back an umbrella if it is raining and there is (at least) one umbrella at his office. If it is not raining, the professor does not use an umbrella. Assume that it rains at the beginning (or at the end) of each day with probability $1/2$, independently of the past. Let $X_{n}$ be the number of umbrellas at home at the beginning of the day $n=1,2,\ldots$. 

\begin{enumerate}[$(a)$]
\item Is $X_n$ a Markov chain? If so, find its state space and transition probabilities. 
\item Is this chain irreducible? Aperiodic ?  
\item Find a stationary distribution for this Markov chain for $r=3$. 
\item Suppose $r=3$. If the professor finds one day that there are no umbrellas left at home, what is the expected number of days after which he will find himself in a similar situation? 
\end{enumerate}
\end{problem}

\begin{problem} Consider the biased random walk $X$ on the $6$-cycle where at each step the random walker turns one step CCW with probability $\frac{1}{4}$ and turns onfe step CW with probability $\frac{3}{4}$.  
\begin{enumerate}[$(a)$]
\item Find the stationary distribution $\pi$ for this Markov chain. 
\item Describe the transition probabilities of the time-reversal of $X$ under $\pi$.
\item Suppose $X_0 \sim \pi$. What is the probability $\nP_\pi(X_{n-2}=1 \mid X_n=1)$? 
\end{enumerate}

\end{problem}

\begin{problem}\label{Prob:B2.6} Consider the complete bipartite graph. This is a graph with two kinds of vertices $n$ red vertices, $R_1, \ldots, R_n$ and $m$ blue vertices $B_1, \ldots, B_m$. There is an edge for every pair of red and blue vertices $\{R_i, B_j\}$, $1\le i \le n$ and $1\le j \le m$, and so a total of $mn$ edges. Consider the simple symmetric random walk on this graph. 

\begin{enumerate}[$(a)$]
\item What is the stationary distribution of this chain? 
\item Let $\tau_1$ denote the hitting time of vertex $R_1$. Starting from vertex $R_2$, what is the probability mass function and expectation of $\tau_1$? 
\item Let $\tau_1^+$ denote the return time to vertex $R_1$. Starting from vertex $R_1$, what is the  probability mass function and expectation of $\tau_1^+$?
\end{enumerate}
\end{problem}


\begin{problem}\label{Prob:A2.1} Let $K_n$ denote the complete graph on $n$ vertices. That is, the vertex set is $V=\{1,2,\ldots, n\}$ and any pair of possible edges $\{i,j\}$ for $i \neq j$ is present (in particular, note there are no loops). Consider the random walk on this graph. 
\begin{enumerate}[$(a)$]
\item What is the stationary distribution of this Markov chain? 
\item Let $\tau_1$ denote the hitting time of state $1$. What is the the distribution of $\tau_1$ if we start the chain at $k\neq 1$?  What is $\nE_k(\tau_1)$? 
\item Let $\tau$ denote the hitting time of the set $\{1,2\}$. Find $P_k(X_\tau=1)$ and $\nE_k(\tau)$ for $k=3,4,\ldots, n$. 
\item The \textbf{cover time} of a Markov chain is the time it takes the chain to visit every state. Find the expected cover time for this random walk, starting from any state. 
\end{enumerate}
\end{problem}


\begin{problem} Consider the following Markov chain on all permutations of the numbers $\{1,2,3,4\}$. There are $4!=24$ such permutations. For any given permutation, randomly choose two positions and switch the two numbers appearing in those positions. As an example, suppose the current state of the Markov chain is $1243$. Choose randomly a pair, say $1$ and $3$. After switching those two we obtain $3241$. Say, now we randomly pick $2$ and $1$. After switching, we obtain $3142$. And so on. 

\begin{enumerate}[(a)] 
\item Assume that this Markov chain is irreducible. Show that it is reversible with respect to the uniform distribution on the $24$ permutations.
\item Call rank of $4$ to be the position from the left where $4$ appears in the permutation. That is, if the permutation is $4321$, the rank of $4$ is $1$. If the permutation is $1432$, the rank of $4$ is $2$, and so on. Show that the rank of $4$ in the Markov chain of permutations in part (a) is itself a Markov chain with state space $\{1,2,3,4\}$. Find its transition probabilities and stationary distribution.  
\item Suppose we start from the permutation $1243$. Let $\tau$ be the stopping time when $4$ is either the leftmost number or the rightmost number in the permutation (say $4321$ or $4123$ or $1234$ etc.). That is, $\tau$ is the first time the rank of $4$ is either $1$ or $4$. 
What is $\nE_{1243}(\tau)$? Hint: use linear algebra. 
\item Starting at $1243$ again, what is the probability that at $\tau$, $4$ is in the rightmost position (i.e., the rank of $4$ is $4$)? Hint: use recursion.  
\end{enumerate}
\end{problem}


\begin{problem}\label{Prob:5CycleHittingTimes} Consider the random walk on the $5$-cycle. That is $\Omega=\{1,2,3,4, 5\}$ and the transition probability matrix is
\[
P=\begin{bmatrix}
0 & \frac{1}{2} & 0 & 0 & \frac{1}{2}\\
\frac{1}{2} & 0 & \frac{1}{2} & 0 & 0\\
0 & \frac{1}{2} & 0 & \frac{1}{2}& 0 \\
0 & 0 & \frac{1}{2} & 0 & \frac{1}{2} \\
\frac{1}{2} & 0 & 0 & \frac{1}{2} & 0
\end{bmatrix}.
\]

\begin{enumerate}[$(a)$]
\item Let $\tau_1$ denote the hitting time of the state $1$. Let $f_k=\nE_k(\tau_1)$. Note that $f_1=0$. Write down a recursive equation for $f_k, k=1,2,\ldots, 5$.
\item Solve the above recursion to get a formula for $f_k$. 
\item Now let $\tau$ denote the hitting time of $\{3,4\}$. Find $\nP_1(X_\tau=3)$ and $\nE_1(\tau)$ by using known formulas about simple symmetric random walks on the integers.
\end{enumerate}
\end{problem}

\begin{problem} Consider a Markov chain with state space $\Omega=\{1,2,3,4\}$ and the following transition matrix 
\[
P=\begin{bmatrix}
0 & \frac{1}{2} & \frac{1}{2} & 0 \\
\frac{1}{2} & 0 & \frac{1}{2} & 0 \\
\frac{1}{3} & \frac{1}{3} & 0 & \frac{1}{3}\\
0 & 0 & 1 & 0 
\end{bmatrix}.
\]
\begin{enumerate}[$(a)$]
\item Is this chain irreducible? Why or why not? 
\item Is this chain aperiodic? Why or why not? 
\item If the chain is irreducible find a stationary distribution for it. 
\item Find $\nE_4(\tau_{1})$, where $\tau_{1}$ is the first hitting time of $1$. 
\item Find the expected number of times the Markov chain will visit $3$ before $\tau_{12}$, starting from $4$. That is, compute
\[
\nE_4\left[ \sum_{i=0}^{\tau_{12}-1} 1\{X_i=3\} \right].
\]
\item Find the time reversal transition matrix $\hat{P}$ for this Markov chain. 
\item For the time-reversed Markov chain, find $\hat{\nE}_1(\tau_{4})$, where $\tau_{4}$ is the first hitting time of $4$.
\end{enumerate}

\end{problem}

\begin{problem}\label{Prob:A2.6} Consider $\Omega$ to be the set of all sequences of $+1$ or $-1$ of length $2N$ such that the sum of the coordinates is exactly zero. In other words, there are exactly $N$ many $+1$s and $N$ many $-1$s in each such sequence. If $N=2$, one such sequence is $u=(+1,+1, -1,-1)$, while another one is $v=(+1, -1, +1, -1)$.

\begin{enumerate}[$(a)$]
\item How many elements does $\Omega$ have? 
\item Consider a Markov chain $\left\{S_0, S_1, S_2, \ldots \right\}$ with state space $\Omega$ that proceeds as follows. At each step, pick a $+1$ at random, and independently pick a $-1$ at random and swap their positions. For example, $u$ can change to $v$ by switching the second and third coordinates. Is this Markov chain irreducible? 
\item What is the stationary distribution of this Markov chain?  
\item Let $X_n$ denote the number of $+1$s in the first $N$ coordinates of $S_n$. For example, this number for $u$ is $2$ and for $v$ is 1. Then $\{X_0, X_1, \ldots\}$ is also a Markov chain. Find its state space and transition probabilities.
\item Find the stationary distribution of this chain. 
\item Are both $X$ and $S$ time reversible? Why or why not? 
\end{enumerate}
\end{problem}

\begin{problem}\label{Prob:B3.1} (\emph{Random walk bridge})  Consider an urn with $N$-many $+1$'s and $N$-many $-1$'s. Randomly sample, without replacement, each of these $2N$ numbers one by one. This gives us a random sequence $w=(w_1, w_2, \ldots, w_{2N})$ of exactly $N$-many $+1$'s and $N$-many $-1$'s. 

Consider a stochastic process by declaring $S_0=0$ and then successively adding coordinates of $w$, i.e.,
\[
S_k= w_1 + w_2 + \ldots + w_k, \quad k=1,2,\cdots, 2N. 
\]
This process is called the \textbf{random walk bridge} because of the property in the first part below.
\begin{enumerate}[$(a)$]
\item Show that $S_{2N}=0$. 
\item Find the probability mass function and expectation of each $S_k$.
\item Show that the bridge has the following time-reversal symmetry: 
\[
\left( S_{2N}, S_{2N-1}, \ldots, S_1, S_0 \right)
\]
is again distributed as a random walk bridge. \emph{Hint:} Time-reverse the random sequence $w$. 
\end{enumerate}

\end{problem}

\begin{problem}\label{Prob:B4.3} Consider the so-called \textit{top to random} shuffling. Suppose you have a pack of $10$ cards labeled $\{0,1,2,\ldots, 9\}$ arranged in a permutation from left to right. There are $10!$ such arrangements. For example, arrangement $5038761249$ means the leftmost card is $5$, next comes card $0$, and so on, the rightmost two cards are $4$ and $9$. Consider the following Markov chain where at each step you pick the leftmost card, choose a number $I$ uniformly between $1$ and $10$, and insert the card at position $I$ from the left. For the example arrangement, you'd pick $5$ and pick a uniformly distributed number between $1$ and $10$, say $3$. Then insert $5$ as the third card from the left and get the new arrangement $0358761249$. The rank of card numbered $0$ is the position from the left where it appears. In the example shown, the rank of $0$ is initially $2$ (it appears as the second card from the left), and then becomes $1$ after card $5$ jumps. 
\begin{enumerate}[$(a)$] 
\item Show that the rank of card $0$ on $\Omega=\{1,2,\ldots, 10\}$ is a Markov chain by describing its transition probabilities. 
\item Find the stationary distribution for the Markov chain in $(a)$.
\item Describe the time-reversal of this Markov chain in $(a)$ by writing down the explicit transition probabilities.   
\end{enumerate}
\end{problem}

\begin{problem}\label{Prob:B4.4} Consider the following Markov chain on $\Omega=\{1,2,\ldots, n\}$. If you are currently at $i$, for $1\le i\le n-1$, then you sample, uniformly at random, one of the numbers from $\{i+1, \ldots, n\}$ and jump there. If you are at $n$, you move down to $1$ with probability one. 
\begin{enumerate}[$(a)$]
\item Find $\nE_{1}(\tau_n)$, where $\tau_n$ is the hitting time of state $n$. 
\item Find the stationary distribution of this Markov chain. 
\item Starting from $X_0=1$, what is the probability that you will visit $i$ before hitting $n$ for the first time? That is, what is the probability that the number of visits of $i$ before $\tau_n$ is positive? Express as a formula in $2\le i\le n-1$. 
\end{enumerate}
\end{problem}

\begin{problem}\label{Prob:B3.2} Suppose the weather on any day depends on the weather conditions of the previous two days. More precisely, suppose the weather can only be sunny (S) or cloudy (C). If it was sunny today and yesterday, it will be sunny tomorrow with probability $0.8$. If it is sunny today and cloudy yesterday, it will be sunny tomorrow with probability $0.6$. If it is cloudy today but sunny yesterday, it will be sunny tomorrow with probability $0.4$, and if it cloudy for the last two days, it will be sunny tomorrow with probability $0.1$.  

Such a model can be turned to a Markov chain whose current state is the weather both today and yesterday. Consider $\Omega=\left\{ (S,C), (S,S), (C, S), (C,C) \right\}$, where $(S,C)$ means sunny yesterday but cloudy today.

\begin{enumerate}[$(a)$]
\item Find the transition probability matrix of this Markov chain?
\item What is the stationary distribution of this Markov chain? 
\end{enumerate}
\end{problem}

\begin{problem}\label{Prob:B3.3} Let $A$ and $B$ be two walkers independently doing the simple symmetric random walk on the $5$-cycle $\Omega=\{1,2,3,4,5\}$.  Consider the graph distance between $A$ and $B$, i.e., the length of the shortest path joining the two. For example, if $A$ is at $1$ and $B$ is at $5$, the graph distance is $1$ and not $4$. In particular, the graph distance can only take values in $\{0,1,2\}$.
\begin{enumerate}[$(a)$]
\item Let $X_t$ denote the graph distance between the two walkers at time $t$. Show that $X_t$ is a Markov chain by describing its transition probabilities. 
\item Suppose walker $A$ starts at $2$ while walker $B$ starts at $5$. How many steps will it take for them to be at the same vertex, on average? 
\item If $A$ and $B$ both start at $2$, how many times, on average, will they be one unit apart before they reach two units apart? 
\end{enumerate}
\end{problem}

\begin{problem}\label{Prob:1Sociologist}
    A sociologist is studying how socio-economic class changes from generation to generation in a certain country, and assumes that the class of the children depends only upon the class of the parents.  The sociologist uses a simple model of three classes $\{$lower, middle, higher$\}$ and comes up with the transition matrix
    \begin{align*}
        P = \begin{bmatrix}
            0.7 & 0.2 & 0.1 \\ 
            0.1 & 0.7 & 0.2\\
            0.05 & 0.15 & 0.8
        \end{bmatrix}
    \end{align*}
    For example, entry $P_{13} = 0.1$ means that the probability the children from a lower-class family enter the upper class is 0.1 (for simplicity we assume all the children are in the same class).  Round all numerical results to nearest thousandth, and feel free to use a computer.
    \begin{enumerate}[$(a)$]
        \item Is this chain irreducible?  Aperiodic?
        \item Find a stationary distribution $\pi$.
        \item Use a computer to compute $P^{50}$.  What do you observe?
        \item For a couple in the lower class, determine the expected number of generations for their descendants to enter the highest class.
        \item For middle-class parents, what is the expected number of generations before their descendants are in the middle class again?
        \item Is this chain reversible?  Explain what your conclusion intuitively means about the classes and the economy in this country.
        \item A couple comes to the sociologist and asks for the probability that their distant descendants, hundreds of years in the future, will be in the lower class.  What should the sociologist say?
    \end{enumerate}
\end{problem}

\begin{problem}
    (\emph{Random walk with geometric waiting times}) Start with a simple symmetric random walk $(X_n)_{n \geq 0}$ on the vertices of a graph $G = (V,E)$, and let $p:V \rightarrow [0,1]$ be a collection of probabilities.  That is, there is one probability $p_x$ assigned to each vertex $x$.  Now modify the chain as follows: if we arrive at a given vertex $x$ at step $k$, we sample an independent Geo$(p_x)$ random variable $T \in \{1,2,\ldots\}$, and we wait at vertex $x$ for an additional $T-1$ units of time before jumping to the next vertex.  That is, $X_k = X_{k+1} = \cdots = X_{k+T-1} =x$, and $X_{k+T}$ is one of the adjacent vertices $y \sim x$.  For example, if we arrive at $x$ at step $k$ and $T = 3$, then 
    \begin{align*}
        X_k = x, \quad X_{k+1} = x, \quad X_{k+2}=x, \quad X_{k+3}=y,
    \end{align*}
    where $y \sim x$ is a randomly-chosen neighboring vertex.  Is this new process $Y=(Y_k)_{k\geq 0}$ a Markov chain?  Explain.
\end{problem}

\vfill
\pagebreak

\chapter{Classical models}\label{Chapter:ClassicalModels}

\section{Random walks on $\mathbb{Z}$ and gambler's ruin}\label{Sec:RWZ}

An important class of Markov chains are the random walks on the integers $\mathbb{Z}$ or subsets $\{a,a+1,a+2,\ldots, a+n\}$ of the integers.  In this section we look at several such chains and establish some famous formulas describing their behavior.  Section \ref{Sec:GamblersRuin} looks at the probabilities for hitting either end of a subset $\{a,a+1,a+2,\ldots, a+n\} \subset \mathbb{Z}$, while Section \ref{Sec:GamblersRuinTime} looks at the average hitting times for the endpoints.  We conclude in Section \ref{Sec:SRWZBM} by looking at walks on all of $\mathbb{Z}$ and their large-scale behavior, where we observe the emergence of \emph{Brownian motion}, a fundamental stochastic process.

\subsection{Boundary hitting probabilities}\label{Sec:GamblersRuin}

Consider the simple walk on $\Omega = \{0,1,2,\ldots, n\}$ with \textbf{absorbing boundaries}, by which we mean that once the chain reaches either $0$ or $n$, it is permanently stuck there (recall this idea already appeared in Problem \ref{Prob:AbsorbingBarriers1}). We call $\{0,n\}$ the \textbf{boundary} of $\Omega$ and denote it by 
\begin{align}\label{Eq:Boundary}
    \partial \Omega = \{0,n\}.
\end{align}
The transition probabilities for the absorbing walk are
\begin{align}
    P(j,j-1) = P(j,j+1) &= \frac{1}{2}, \qquad 1 \leq j \leq n-1, \notag\\
    P(0,0)=P(n,n) &= 1. \label{Eq:AbsorbingBoundary}
\end{align}
Now, suppose we start with $1 \leq X_0 = k \leq n-1$.  Let $\tau = \tau_{\partial \Omega}$ be the hitting time to the boundary of $\Omega$.  Then either $X_\tau =0$ or $X_\tau = n$.  What is $\nP_k(X_\tau = n)$? It is clear that $\nP_0(X_\tau=n)=0$, since we cannot move away from zero if we begin there.  It is likewise clear that $\nP_n(X_\tau =n) =1$.  It also seems that we should have
\begin{align*}
    0 < \nP_1(X_\tau = n) < \nP_2(X_\tau = n) < \cdots < \nP_{n-1}(X_\tau=n) < 1, 
\end{align*}
since the closer we begin to $n$, the more likely it should be that we hit $\partial \Omega$ at $n$ rather than 0. 

Can we say anything more?  How much do the probabilities increase each step?  Is there an explicit formula?  The famous \emph{gambler's ruin} theorem answers these questions and says that the probabilities are, in fact, a \emph{linear} function in the starting position $k$.

\begin{theorem}[Gambler's ruin]\label{Thm:GamblersRuin}
    For the simple symmetric random walk with absorbing boundaries on $\Omega = \{0,1,2,\ldots, n\}$,
    \begin{align}
        \nP_k(X_\tau = n) &= \frac{k}{n} \label{Eq:GamblersRuinN}\\
        \nP_k(X_\tau = 0) &= 1 - \frac{k}{n}\label{Eq:GamblersRuin0}
    \end{align}
    for any $k \in \{0,1,2,\ldots, n\}$.
\end{theorem}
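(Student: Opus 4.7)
The plan is to introduce the function $h(k) := \nP_k(X_\tau = n)$ for $k \in \{0, 1, \ldots, n\}$ and show it is linear, with $h(0) = 0$ and $h(n) = 1$, forcing $h(k) = k/n$.

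Before doing that, I would verify that $\tau < \infty$ almost surely, so that $h$ is well-defined and so that $\nP_k(X_\tau = 0) = 1 - h(k)$ (the two absorbing events partition the event $\{\tau < \infty\}$). This follows from the same sub-stochastic-matrix analysis used in the proof of Theorem \ref{Thm:ExpectedHittingTimes}: restricting the transition matrix to the interior $\{1, \ldots, n-1\}$ gives a sub-stochastic $Q$ (because from states $1$ and $n-1$ there is positive probability of leaving), and from the argument that powers of a sub-stochastic $Q$ decay geometrically we get $\nP_k(\tau > m) \to 0$. Alternatively, from any interior state $k$ there is a path of length at most $n$ to $\partial\Omega$ having probability at least $2^{-n}$, so $\nP_k(\tau > mn) \leq (1 - 2^{-n})^m \to 0$, which also suffices.

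Next I would derive the key one-step recursion. The boundary values $h(0) = 0$ and $h(n) = 1$ are immediate from the absorbing transitions \eqref{Eq:AbsorbingBoundary}. For $1 \leq k \leq n-1$, I condition on $X_1$ and use the Markov property (exactly as in the manipulation leading to Theorem \ref{Thm:PowersOfP}) to write
\begin{align*}
    h(k) &= \nP_k(X_\tau = n) = \nP(X_\tau = n \mid X_0 = k) \\
    &= \tfrac{1}{2}\,\nP(X_\tau = n \mid X_1 = k-1) + \tfrac{1}{2}\,\nP(X_\tau = n \mid X_1 = k+1) \\
    &= \tfrac{1}{2}\, h(k-1) + \tfrac{1}{2}\, h(k+1).
\end{align*}
Here I am using that, by the Markov property, the chain restarted at time $1$ from state $k \pm 1$ has the same distribution as a fresh chain started at $k \pm 1$, and $\tau$ is determined by the future trajectory.

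Rearranging the recursion gives $h(k+1) - h(k) = h(k) - h(k-1)$ for every $1 \leq k \leq n-1$, so the successive differences of $h$ are all equal to some constant $c$. Hence $h(k) = h(0) + kc = kc$, and the boundary value $h(n) = 1$ forces $c = 1/n$, giving \eqref{Eq:GamblersRuinN}. Formula \eqref{Eq:GamblersRuin0} then follows from $\nP_k(X_\tau = 0) + \nP_k(X_\tau = n) = 1$, which is justified by the first step.

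The only subtle point is the justification of $\tau < \infty$ a.s.\ (needed both to make $h$ well-defined and to get complementary probabilities). The rest is a clean linear-recursion argument; the Markov-property rewrite in the recursion step is the heart of the proof but is essentially the same computation as in Theorem \ref{Thm:PowersOfP}.
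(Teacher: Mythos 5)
Your proposal is correct and follows essentially the same route as the paper's proof: first-step conditioning to obtain the mean-value recursion $h(k) = \tfrac{1}{2}h(k-1) + \tfrac{1}{2}h(k+1)$, constancy of the increments, and the boundary values $h(0)=0$, $h(n)=1$ forcing slope $1/n$. The only difference is that you explicitly verify $\tau < \infty$ almost surely, a detail the paper's proof leaves implicit; this is a reasonable addition but does not change the argument.
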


One way to think about \eqref{Eq:GamblersRuinN} is that the probability of terminating at the right endpoint is the ratio of the distance to the \emph{left} endpoint to the entire interval length, $\nP_k(X_\tau = n) = \frac{k-0}{n-0}$.  That is, we take the ratio of how far away the ``bad'' endpoint is to the length of the entire playing field.  This, of course, makes a lot of sense: the further away the bad point is, the more likely we are to end at the ``good'' boundary point.  The surprising thing about Theorem \ref{Thm:GamblersRuin} is that it says the probability is exactly this ratio, and not some complicated function of it.  

The proof exploits a very deep relationship between Markov chains and differential equations.  

\begin{proof}
    Set $p(k) := \nP_k(X_\tau = n)$.  We have that $p(0)=0$ and $p(n)=1$, and we need to show that $p$ is linear with slope $\frac{1}{n}$.  We first claim that $p$ satisfies the \emph{mean-value property}
    \begin{align}\label{Eq:pkMVP}
        p(k) = \frac{1}{2}p(k-1) + \frac{1}{2}p(k+1)
    \end{align}
    for each $1 \leq k \leq n-1$.  That is, we claim that each value of $p$ (outside of $\partial \Omega$) is just the average of its two neighbors.  We see this from conditioning on our first step.  For fixed $1 \leq k \leq n-1$, $\tau \geq 1$, and we have
    \begin{align}
        p(k) &= \nP(X_\tau =n \, | \, X_0 =k) \notag\\
        &= \nP(X_\tau =n, X_1 = k-1 \, | \, X_0 =k) + \nP(X_\tau =n, X_1 = k+1 \, | \, X_0 =k) \notag\\
        &= \nP(X_\tau =n \, | \, X_1 = k-1,  X_0 =k)\nP(X_1=k-1 \, | \, X_0 = k) \notag\\
        &\hspace{15mm} +\nP(X_\tau =n \, | \, X_1 = k+1,  X_0 =k)\nP(X_1=k+1 \, | \, X_0 = k) \notag\\
        &=  \nP(X_\tau =n \, | \, X_1 = k-1) \frac{1}{2} + \nP(X_\tau =n \, | \, X_1 = k+1)\frac{1}{2} \label{Eq:DeriveMVP1}\\
        &=  \nP(X_\tau =n \, | \, X_0 = k-1) \frac{1}{2} + \nP(X_\tau =n \, | \, X_0 = k+1)\frac{1}{2} \label{Eq:DeriveMVP2}\\
        &= \frac{1}{2}p(k-1) + \frac{1}{2}p(k+1),\notag
    \end{align}
    as claimed.  Here we have used the Markov property twice, first in \eqref{Eq:DeriveMVP1} to ignore the earlier information on $X_0$, and then again in \eqref{Eq:DeriveMVP2} to say that the chain ``starts over'' after the first step.  We remark that this simple idea of conditioning on the first step, called \textbf{the method of recursion}, is often very useful.
    
    Next, define the increment $\Delta_k := p(k) - p(k-1)$, $1 \leq k \leq n$.  If $p$ really is linear, as claimed, then the increment should be constant, independent of $k$.  We note subtracting $\frac{1}{2}p(k)$ and $\frac{1}{2}p(k-1)$ from both sides of \eqref{Eq:pkMVP} yields
    \begin{align*}
         \frac{1}{2}(p(k)-p(k-1)) = \frac{1}{2}(p(k+1)-p(k)),
    \end{align*}
    or $\Delta_k = \Delta_{k+1}$, showing that all adjacent $\Delta$'s are equal.  Hence all the $\Delta$'s are equal to $\Delta_1$, say.  Note that we thus have the telescopic sum
    \begin{align*}
        n\Delta_1 &= \Delta_1 + \Delta_2 + \Delta_3 + \cdots + \Delta_n\\
        &= p(1)-p(0) + p(2)-p(1) + p(3)- p(2) + \cdots + p(n)-p(n-1)\\
        &=p(n)-p(0) = 1-0,
    \end{align*}
    and so $\Delta_1= \frac{1}{n} = \Delta_k$ for each $k$.  Therefore, $p$ is linear, as claimed, with slope $\frac{1}{n}$, and
    \begin{align*}
        p(k) &= p(0) + \Delta_1 + \Delta_2 + \cdots + \Delta_k = \frac{k}{n},
    \end{align*}
    yielding \eqref{Eq:GamblersRuinN}.  The other formula \eqref{Eq:GamblersRuin0} immediately follows, as 
    \begin{equation*}
        \nP_k(X_\tau = 0) = 1 - \nP_k(X_\tau = n).\qedhere
    \end{equation*}
\end{proof}

Sometimes we want to consider different behaviors once we reach $\partial \Omega$, and it is clear that Theorem \ref{Thm:GamblersRuin} will still hold, since the walk does not visit the boundary until the hitting time.  In particular, \eqref{Eq:GamblersRuinN} and \eqref{Eq:GamblersRuin0} hold for the simple symmetric random walk on $\Omega=\{0,1,\ldots, n\}$ with \textbf{reflecting boundary}, which has transition probabilities
\begin{align*}
    P(0,1) = 1, \quad P(n,n-1) = 1,
\end{align*}
as in Figure \ref{Fig:ReflectingBoundary}.  The difference, of course, is that the walk with the reflected boundaries continues going after hitting one of the endpoints. 

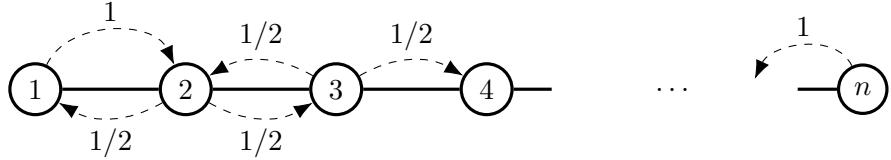
\begin{figure}
    \centering
        \begin{tikzpicture}
    \tikzset{
        node/.style={circle, draw=black, very thick, minimum size=3mm},
        arrow/.style={-{Latex[length=2.5mm]}, black},
        line/.style={black, very thick}
        }
        
        \node[node] (one) at (0,0) {1};
        \node[node] (two) at (2,0) {2};
        \node[node] (three) at (4,0) {3};
        \node[node] (four) at (6,0) {4};
        \node[] (four5) at (7,0) {};
        \node[] (second) at (10,0) {};     
        \node[] (othersecond) at (9.5,0) {};  
        \node[node] (n) at (11,0) {$n$};

        \draw[line] (one) -- (two);
        \draw[line] (two) -- (three);
        \draw[line] (three) -- (four);
        \draw[line] (four) -- (four5);
        \draw[line] (second) -- (n);
        \path (four5) -- node[auto=false]{\ldots} (second);
        
        \draw[arrow, dashed] (one) to[in=115, out=65] node[anchor=south] {$1$} (two);
        \draw[arrow, dashed] (two) to[bend right] node[below] {$1/2$} (three);
        \draw[arrow, dashed] (two) to[bend left] node[below] {$1/2$} (one);
        \draw[arrow, dashed] (three) to[bend right] node[above] {$1/2$} (two);
        \draw[arrow, dashed] (three) to[bend left] node[above] {$1/2$} (four);
        \draw[arrow, dashed] (n) to[out=115, in=65] node[anchor=south] {$1$} (othersecond);
    \end{tikzpicture}
    \caption{\small The simple walk on $\{0,1,\ldots,n\}$ with reflecting boundary.}
    \label{Fig:ReflectingBoundary}
\end{figure}

We can also eliminate the boundary altogether, and consider the simple symmetric random walk $(X_n)$ on the infinite state space $\mathbb{Z}$, we still have
\begin{align*}
    \nP_k(X_\tau = n) = \frac{k}{n}
\end{align*}
for $\tau=\tau_{\{0,n\}}$, and $0 \leq k \leq n$.

Thinking geometrically, it is clear that the principle behind Theorem \ref{Thm:GamblersRuin} should continue to hold even if we choose a different portion of the integers $\{a,a+1, \ldots, b\}$ to do our walk on, and hence have boundary of $\{a,b\}$ instead of $\{0,1\}$.  We ask the reader to use Theorem \ref{Thm:GamblersRuin} to prove the following general version of the hitting probability formulas.

\begin{theorem}[Gambler's ruin for a general interval]\label{Thm:GeneralGamblersRuin}
    Let $(X_n)$ be a simple symmetric random walk on $\mathbb{Z}$, and let $\tau = \tau_{\{a,b\}}$ be the hitting time to integers $a<b$.  Then for any integer $a \leq k \leq b$,
    \begin{align}\label{Eq:GamblersRuinGeneral}
        \nP_k(X_\tau = b) = \frac{k-a}{b-a} \qquad \text{and} \qquad \nP_k(X_\tau = a) = \frac{b-k}{b-a}
    \end{align}
\end{theorem}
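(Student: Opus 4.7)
The plan is to reduce the general interval $\{a, a+1, \ldots, b\}$ to the interval $\{0, 1, \ldots, n\}$ with $n = b - a$ by translating the walk, and then invoke Theorem \ref{Thm:GamblersRuin} (specifically the version noted in the text that applies to the simple symmetric random walk on all of $\mathbb{Z}$).

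First I would define the shifted process $Y_j := X_j - a$ for every $j \geq 0$. Since the increments $X_{j+1} - X_j$ are unchanged by subtracting the constant $a$, the sequence $(Y_j)$ is again a simple symmetric random walk on $\mathbb{Z}$, just with a relabeled starting point. Under the initial condition $X_0 = k$ (so $a \leq k \leq b$), we have $Y_0 = k - a \in \{0, 1, \ldots, b-a\}$.

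Next I would observe that the hitting events transfer cleanly: for every $j$, the event $\{X_j \in \{a,b\}\}$ equals $\{Y_j \in \{0, b-a\}\}$, so the hitting time $\tau = \tau_{\{a,b\}}$ for $X$ is identical (as a random variable) to the hitting time $\tau' = \tau_{\{0, b-a\}}$ for $Y$. Moreover, $\{X_\tau = b\} = \{Y_{\tau'} = b-a\}$ and $\{X_\tau = a\} = \{Y_{\tau'} = 0\}$.

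Applying Theorem \ref{Thm:GamblersRuin} to the walk $(Y_j)$ with $n = b-a$ and starting point $k - a$ yields
\begin{align*}
    \nP_k(X_\tau = b) = \nP_{k-a}(Y_{\tau'} = b - a) = \frac{k-a}{b-a},
\end{align*}
which is the first identity in \eqref{Eq:GamblersRuinGeneral}. The second follows immediately because $X_\tau \in \{a,b\}$ almost surely, so $\nP_k(X_\tau = a) = 1 - \frac{k-a}{b-a} = \frac{b-k}{b-a}$. There is no substantive obstacle here; the only thing to be a bit careful about is verifying that translation really does preserve the simple symmetric random walk structure and that the hitting times coincide sample-path-by-sample-path, but both are transparent once written out.
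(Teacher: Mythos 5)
Your proposal is correct and is exactly the argument the paper intends: the text leaves this theorem as an exercise with the explicit hint to ``use Theorem \ref{Thm:GamblersRuin} and a change of variables,'' and your translation $Y_j = X_j - a$ is that change of variables. You also correctly invoke the version of Theorem \ref{Thm:GamblersRuin} for the walk on all of $\mathbb{Z}$ (rather than the absorbing-boundary version), which is the right one to use here.
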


\begin{exercise}
    Prove \eqref{Eq:GamblersRuinGeneral} by using Theorem \ref{Thm:GamblersRuin} and a change of variables.
\end{exercise}

We will see another elegant way to show \eqref{Eq:GamblersRuinGeneral} in Chapter \ref{Ch:Martingales} once we have the tools of \emph{martingales} and the \emph{Optional Sampling Theorem}.  See Problem \ref{Prob:5Opt1}.


\subsection{Expected hitting times}\label{Sec:GamblersRuinTime}

We completely understand the boundary hitting \emph{probabilities} via Theorem \ref{Thm:GeneralGamblersRuin}: the probability we hit the right boundary is a linear function of the distance we are from the left boundary, with boundary values of 0 and 1.  What about the \emph{time} it takes, on average, to hit the boundary?  How many steps will the random walk take?  We again begin with the simple setting $\Omega = \{0,1,\ldots, n\}$, but now set $\tau := \tau_{\{0,n\}} = \tau_{\partial \Omega}$.  We can use the same ideas as in the proof of Theorem \ref{Thm:GamblersRuin} to obtain an explicit formula.

\begin{exercise}
    \begin{enumerate}[$(i)$]
        \item What are $\nE_0(\tau)$ and $\nE_n(\tau)$?
        \item Where should the maximum of $\nE_k(\tau)$ occur?  Explain why the function $k \mapsto \nE_k(\tau)$ should be symmetric about its maximum.
        \item Sketch your guess of the shape of the graph of the function $k \mapsto \nE_k(\tau)$.
    \end{enumerate} 
\end{exercise}
\noindent We now formalize what you intuitively worked out in this exercise.

\begin{theorem}\label{Thm:GamblersRuinTimes}
    For the simple symmetric random walk on $\Omega = \{0,1,\ldots, n\}$, the hitting time $\tau := \tau_{\partial \Omega}$ satisfies
    \begin{align}\label{Eq:GamblersRuinTime}
        \nE_k(\tau) = k(n-k), \qquad 0 \leq k \leq n.
    \end{align}
\end{theorem}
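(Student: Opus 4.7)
The plan is to emulate the proof of Theorem \ref{Thm:GamblersRuin}: set $f(k) := \nE_k(\tau)$, derive a recursion by the method of conditioning on the first step, and then solve the recursion using the boundary values $f(0) = f(n) = 0$. Before doing this, however, I would first record that each $f(k)$ is finite. Formally, on $\{1, \ldots, n-1\}$ the chain moves by $\pm 1$ with probability $1/2$ and $\tau$ is the hitting time to $\partial \Omega = \{0,n\}$; applying Theorem \ref{Thm:ExpectedHittingTimes} (say to the reflecting-boundary variant of Figure \ref{Fig:ReflectingBoundary}, which agrees with the absorbing walk until $\tau$) ensures $f(k) < \infty$. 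With finiteness in hand I can manipulate the expectations algebraically.

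Next I would condition on $X_1$ for $1 \leq k \leq n-1$. Since $\tau \geq 1$ when $X_0=k \notin \partial \Omega$, I can write $\tau = 1 + \tau'$, where $\tau'$ is the remaining hitting time after one step, and by the Markov property
\begin{align*}
    f(k) = \nE_k(\tau) = 1 + \tfrac{1}{2} \nE_{k-1}(\tau) + \tfrac{1}{2}\nE_{k+1}(\tau) = 1 + \tfrac{1}{2}f(k-1) + \tfrac{1}{2}f(k+1),
\end{align*}
which I would rewrite as the discrete Poisson equation $f(k+1) - 2f(k) + f(k-1) = -2$ for $1 \leq k \leq n-1$, together with the boundary values $f(0)=f(n)=0$.

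To solve this, I would introduce the increment $\Delta_k := f(k) - f(k-1)$ for $1 \leq k \leq n$, exactly as in the proof of Theorem \ref{Thm:GamblersRuin}. The recursion becomes $\Delta_{k+1} = \Delta_k - 2$, so $\Delta_k = \Delta_1 - 2(k-1)$, i.e.\ the increments form an arithmetic progression. Telescoping,
\begin{align*}
    0 = f(n) - f(0) = \sum_{k=1}^n \Delta_k = n\Delta_1 - 2\sum_{k=1}^n (k-1) = n\Delta_1 - n(n-1),
\end{align*}
so $\Delta_1 = n-1$. Telescoping again from $0$ to $k$,
\begin{align*}
    f(k) = \sum_{j=1}^k \Delta_j = k(n-1) - 2 \cdot \tfrac{k(k-1)}{2} = k(n-1) - k(k-1) = k(n-k),
\end{align*}
which is the claimed formula.

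The main obstacle is subtle and easy to overlook: the recursion manipulation is valid only once we know $f(k) < \infty$, so the real technical step is the finiteness argument mentioned at the outset. Beyond that, the conditioning-on-the-first-step argument is purely mechanical, and solving the second-difference equation with vanishing boundary values is straightforward. I would also remark that this result could alternatively be recovered later using martingale methods (analogous to Problem \ref{Prob:5Opt1}), by applying optional sampling to the martingale $M_k = X_k^2 - k$.
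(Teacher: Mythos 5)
Your proof is correct and follows essentially the same route as the paper's: condition on the first step to obtain the recursion $f(k) = 1 + \tfrac{1}{2}f(k-1) + \tfrac{1}{2}f(k+1)$ with $f(0)=f(n)=0$, pass to the increments $\Delta_k$, observe they form an arithmetic progression with common difference $-2$, and telescope. The only cosmetic difference is that you determine $\Delta_1$ by summing the explicit formula $\Delta_k = \Delta_1 - 2(k-1)$ over all $n$ increments, whereas the paper sums the raw recursion and invokes the symmetry $f_1 = f_{n-1}$; both yield $\Delta_1 = n-1$ and the same conclusion.
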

Note that the function $k\mapsto k(n-k)$ in \eqref{Eq:GamblersRuinTime} is maximized at the integer(s) closest to $k=n/2$ and doesn't change when $k$ is replaced with $n-k$.  To give a concrete example, if we are doing the walk on $\{0,1,\ldots, 11\}$, we see that the average hitting time would be largest at both $k=5$ and $k=6$, the integers surrounding the midpoint.  Also, starting at 2, for instance, will have the same average hitting time as starting at 9, as will starting at 3 and 8.  The symmetries in the hitting times reflect the symmetries of the interval and the random walk.

\begin{proof}
    The proof will be conceptually similiar to the proof of Theorem \ref{Thm:GamblersRuin}.  Write $f_k:=\nE_k(\tau)$, and we first claim that we have the averaging property
    \begin{align}\label{Eq:GRHTAveraging}
        f_k = 1 + \frac{1}{2}f_{k-1} + \frac{1}{2}f_{k+1}
    \end{align}
    for each $1 \leq k \leq n-1$ with boundary condition $f_0=f_n=0$ (compare \eqref{Eq:pkMVP}).  Note that \eqref{Eq:GRHTAveraging} is intuitively clear: we take one step ($+1$) and with equal probability arrive at either $k-1$ or $k+1$. The chain starts over, and so we add the average number of steps from that new vertex to hit the boundary.
    
    To make this rigorous, we can again use the method of recursion, combined with the tower property of conditional expectation:
    \begin{align}
        f_k &= \nE_k(\tau) \notag \\
        &= \nE_k(\nE_k(\tau\, | \, X_1)) \notag \\
        &= \nE_k(\tau\, | \, X_1=k-1)\nP_k(X_1=k-1) + \nE_k(\tau\, | \, X_1=k+1)\nP_k(X_1=k+1) \notag \\
        &= \nE_k(1+ \tau-1\, | \, X_1=k-1)\frac{1}{2} + \nE_k(1+ \tau-1\, | \, X_1=k+1)\frac{1}{2} \notag\\
        &= \big(1+\nE_k(\tau-1\, | \, X_1=k-1)  \big)\frac{1}{2} + \big(1+\nE_k(\tau-1\, | \, X_1=k+1) \big)\frac{1}{2} \notag\\
        &= 1+ \nE_k(\tau-1\, | \, X_1=k-1)\frac{1}{2} +    \nE_k(\tau-1\, | \, X_1=k+1)\frac{1}{2} \notag\\
        &= 1 + \nE_{k-1}(\tau)\frac{1}{2} + \nE_{k+1}(\tau)\frac{1}{2} \label{Eq:GRHT1}\\
        &= 1 + \frac{1}{2}f_{k-1} + \frac{1}{2}f_{k+1},\notag
\end{align}
where in \eqref{Eq:GRHT1} we have used the fact that the number of remaining steps until $X_1$ hits the boundary is $\tau-1$ (we have taken one step), and hence
\begin{align*}
    \nE_k(\tau-1\, | \, X_1=k-1) &= \nE(\tau \, | \, X_0=k-1) = \nE_{k-1}(\tau) \quad \text{and}\\
    \nE_k(\tau-1\, | \, X_1=k+1) &= \nE(\tau \, | \, X_0=k+1) = \nE_{k+1}(\tau).
\end{align*}

Thus \eqref{Eq:GRHTAveraging} holds, and we observe that it is equivalent to saying
\begin{align}
    \frac{1}{2}(f_k -f_{k-1}) &= 1 + \frac{1}{2}(f_{k+1}-f_k) \notag\\
    \Leftrightarrow \quad \Delta_k &= 2 + \Delta_{k+1}, \qquad 1 \leq k \leq n-1,\label{Eq:GRHTDeltaRecursion}
\end{align}
where, as in the proof of Theorem \ref{Thm:GamblersRuin}, we have defined $\Delta_k := f_k-f_{k-1}$.  Thus $\Delta_2 = \Delta_1-2$, 
\begin{align*}
    \Delta_3 &= \Delta_2-2 = \Delta_1 - 2 \cdot 2,\\
    \Delta_4 &= \Delta_3-2 = \Delta_1 - 2 \cdot 3,
\end{align*}
and, in general,
\begin{align}\label{Eq:GRHTDeltaRecursion2}
    \Delta_k = \Delta_1 - 2(k-1), \qquad 2 \leq k \leq n.
\end{align}
We can solve for $\Delta_1$, and thus all the increments by this formula, by summing \eqref{Eq:GRHTDeltaRecursion} over all $k$:
\begin{align*}
    \sum_{k=1}^{n-1} \Delta_k &= 2(n-1) + \sum_{k=1}^{n-1}\Delta_{k+1}\\
    \Leftrightarrow \quad f_{n-1}-f_0 &= 2(n-1) + f_n-f_1
\end{align*}
by the telescopic nature of the sum of the $\Delta_j$'s.  Since $f_0 = f_n =0$, we thus have $f_{n-1} + f_1 = 2(n-1)$, and so by symmetry,
\begin{align*}
    f_{n-1} = f_1 = n-1.
\end{align*}
We conclude that $$f_1 = f_1-f_0 = \Delta_1 = n-1,$$ and so \eqref{Eq:GRHTDeltaRecursion2} becomes
\begin{align}\label{Eq:GRHTDeltaRecursion3}
    \Delta_k = n-1 - 2(k-1), \qquad 2 \leq k \leq n.
\end{align}
We thus see that the increment $\frac{f_k-f_{k-1}}{1}$ or ``discrete derivative'' of $f_k$ decreases linearly, and so we expect $f_k$ itself to be quadratic (a function with linear derivative is, after all, quadratic).  Indeed, we do the ``discrete integral'' or sum of the ``derivative identity'' \eqref{Eq:GRHTDeltaRecursion3} to obtain
\begin{align}
f_\ell = f_\ell-f_0 = \sum_{k=1}^\ell \Delta_k &= \sum_{k=1}^\ell\big(n-1 - 2  (k-1)\big) \notag\\
&= \ell(n-1) - 2 \sum_{k=1}^\ell (k-1) \notag\\
&= \ell(n-1) - 2 \sum_{m=0}^{\ell-1} m \notag\\
&= \ell(n-1) - (\ell-1)\ell = \ell(n-\ell),\label{Eq:GRHTDone}
\end{align}
where we have used the summation formula
\begin{align*}
    \sum_{m=0}^{\ell-1} m = \sum_{m=1}^{\ell-1}m = \frac{(\ell-1)\ell}{2}.
\end{align*}
Equation \eqref{Eq:GRHTDone} says $\nE_\ell(\tau) = \ell(n-\ell)$, as claimed.
\end{proof}

Just as we extended our hitting probabilities to general intervals of integers in Theorem \ref{Thm:GeneralGamblersRuin}, so we can do so with Theorem \ref{Thm:GamblersRuinTimes}.

\begin{theorem}[Gambler's ruin hitting times for a general interval]\label{Thm:GeneralGamblersRuinTimes}
    Let $(X_n)$ be a simple symmetric random walk on $\mathbb{Z}$, and let $\tau = \tau_{\{a,b\}}$ be the hitting time to integers $a<b$.  Then for any integer $a \leq k \leq b$,
    \begin{align}\label{Eq:GamblersRuinGeneralTimes}
        \nE_k(\tau) = (k-a)(b-k)
    \end{align}
\end{theorem}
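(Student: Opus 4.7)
The proof will be a short reduction to Theorem \ref{Thm:GamblersRuinTimes} by translation. The key observation is that the hitting time $\tau_{\{a,b\}}$ for a simple symmetric random walk depends only on the shape of the state space and the relative position of the starting point between the two absorbing endpoints, not on the absolute labels of the integers.

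My plan is as follows. Given the simple symmetric random walk $(X_m)$ on $\mathbb{Z}$, define the shifted process $Y_m := X_m - a$. Since each increment $X_{m+1} - X_m$ is $\pm 1$ with equal probability, the same holds for $Y_{m+1} - Y_m$, and thus $(Y_m)$ is also a simple symmetric random walk, now starting at $Y_0 = k - a$. The event $\{X_m \in \{a,b\}\}$ is identical to $\{Y_m \in \{0, b-a\}\}$, so the hitting times coincide:
\begin{align*}
    \tau_{\{a,b\}}^X = \tau_{\{0, b-a\}}^Y.
\end{align*}
Furthermore, the trajectory of $Y$ before hitting $\{0, b-a\}$ stays within $\{0, 1, \ldots, b-a\}$, so its hitting time to $\{0, b-a\}$ has the same distribution as that of the simple symmetric random walk on $\Omega = \{0, 1, \ldots, b-a\}$ with absorbing boundary $\partial\Omega = \{0, b-a\}$.

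Now I can apply Theorem \ref{Thm:GamblersRuinTimes} with $n := b - a$ and starting state $k - a \in \{0, 1, \ldots, n\}$, which yields
\begin{align*}
    \nE_k(\tau_{\{a,b\}}) = \nE_{k-a}(\tau_{\{0,n\}}) = (k-a)\bigl(n - (k-a)\bigr) = (k-a)(b-k),
\end{align*}
which is exactly \eqref{Eq:GamblersRuinGeneralTimes}.

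There is no real obstacle here; the only point that deserves a sentence of care is the justification that shifting $X$ by the constant $-a$ produces an honest simple symmetric random walk with the same transition structure, and that the two hitting times agree pathwise (so their expectations agree). Everything else is routine algebra.
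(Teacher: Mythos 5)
Your proof is correct and is exactly the route the text intends: the paper leaves this result as an exercise with the explicit instruction to apply Theorem \ref{Thm:GamblersRuinTimes} together with a change of variables, which is precisely your translation $Y_m := X_m - a$ with $n = b-a$. The algebra $(k-a)\bigl(n-(k-a)\bigr) = (k-a)(b-k)$ checks out, and your remark that the shifted process is again a simple symmetric random walk with pathwise-identical hitting time is the right point to make explicit.
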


\begin{exercise}
    Prove \eqref{Eq:GamblersRuinGeneralTimes} by using Theorem \ref{Thm:GamblersRuinTimes} and a change of variables.
\end{exercise}
\begin{exercise}\label{Ex:SRWTimeForDistanceN}
    On average, how many units of time does it take for a simple symmetric random walk on $\mathbb{Z}$ to reach a distance of $n$ units away from its starting position?    
\end{exercise}

As with the hitting probability, you will revisit the average hitting time in Problem \ref{Prob:5Opt2} once we have \emph{martingales} and the \emph{Optional Sampling Theorem} to work with.

\subsection{The simple symmetric random walk on $\mathbb{Z}$ and Brownian motion}\label{Sec:SRWZBM}

It is also important to consider the case where there are no boundary conditions and our walk is on all of $\mathbb{Z}$.  Let $X_n$ be such a random walk starting at $X_0=0$.  We can think of this as flipping a fair coin at each step and moving up one unit if we get heads, and down one unit if we get tails.  Plotting $X_n$ as a function of the step $n$ and linearly interpolating between the points (that is, drawing a straight line between them) gives a random piece-wise linear function. Thirty steps of one sampling are shown in Figure \ref{Fig:SRW30}.  We call this random function $X_t$, where now $t$ no longer needs to be an integer.

\begin{figure}[ht]
    \centering
    \includegraphics[scale=0.5]{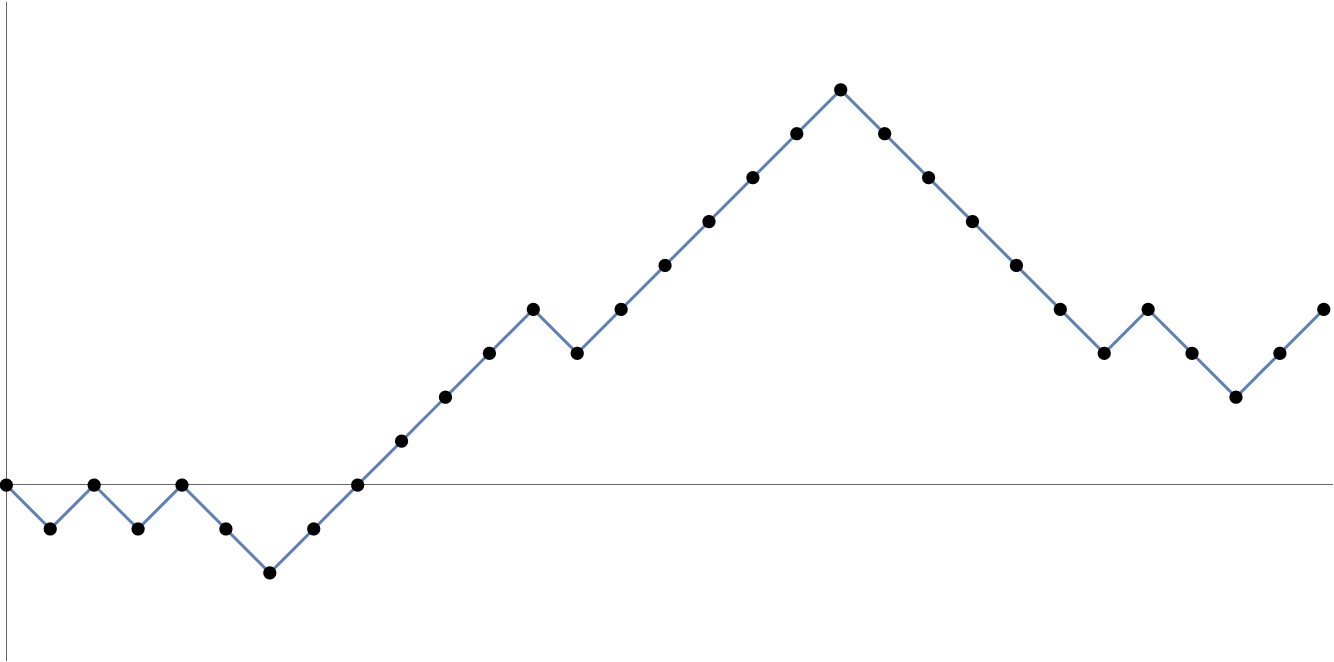}
    \caption{\small Thirty steps of a simple random walk $(X_n)$ on $\mathbb{Z}$, with linear interpolation between the values.}
    \label{Fig:SRW30}
\end{figure}

An interesting question is what happens with this picture as we ``zoom out'' and display more and more steps.  What if we plot up to $n=300$ or $n=3000$, instead of $n=30$?  Since the trajectory $X_t$ is random, some thought is actually needed to know how to ``zoom out'' in the correct manner.  One approach would be to consider $X_{nt}$ as $n \rightarrow \infty$, which compresses more and more steps of $X_n$ into the same amount of time.  At $t=1$, for instance, $X_{n \cdot 1}= X_n$, and thus in one time unit we have already gone $n$ steps in our random walk.  

In terms of the function's graph, as in Figure \ref{Fig:SRW30}, replacing $X_t$ with $X_{nt}$ scales the $x$-axis (time) by a factor of $n$ but leaves the $y$-axis (space) unchanged.  As $n$ gets larger and larger, though, the walk tends to get further and further away from 0, and so if we do not also re-scale space we will not see the entire picture in one window.
\begin{figure}
    \centering
    \includegraphics[scale=0.7]{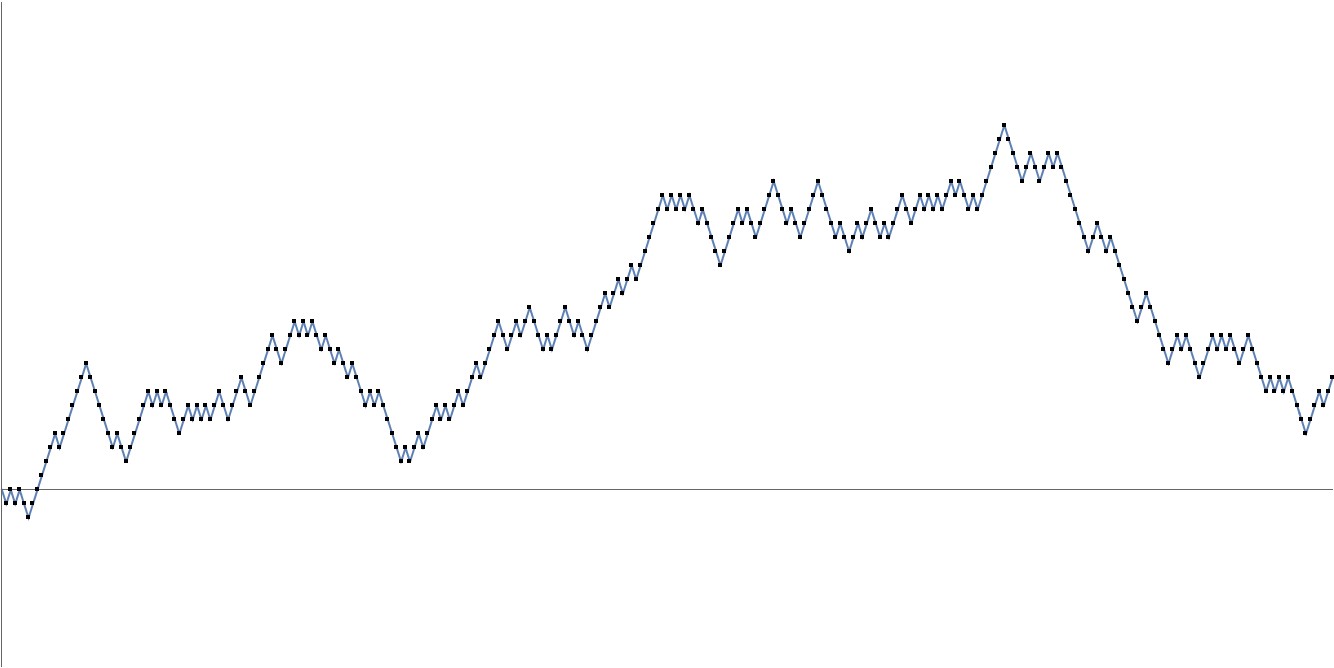}
    \vspace{5mm}
    
    \includegraphics[scale=0.7]{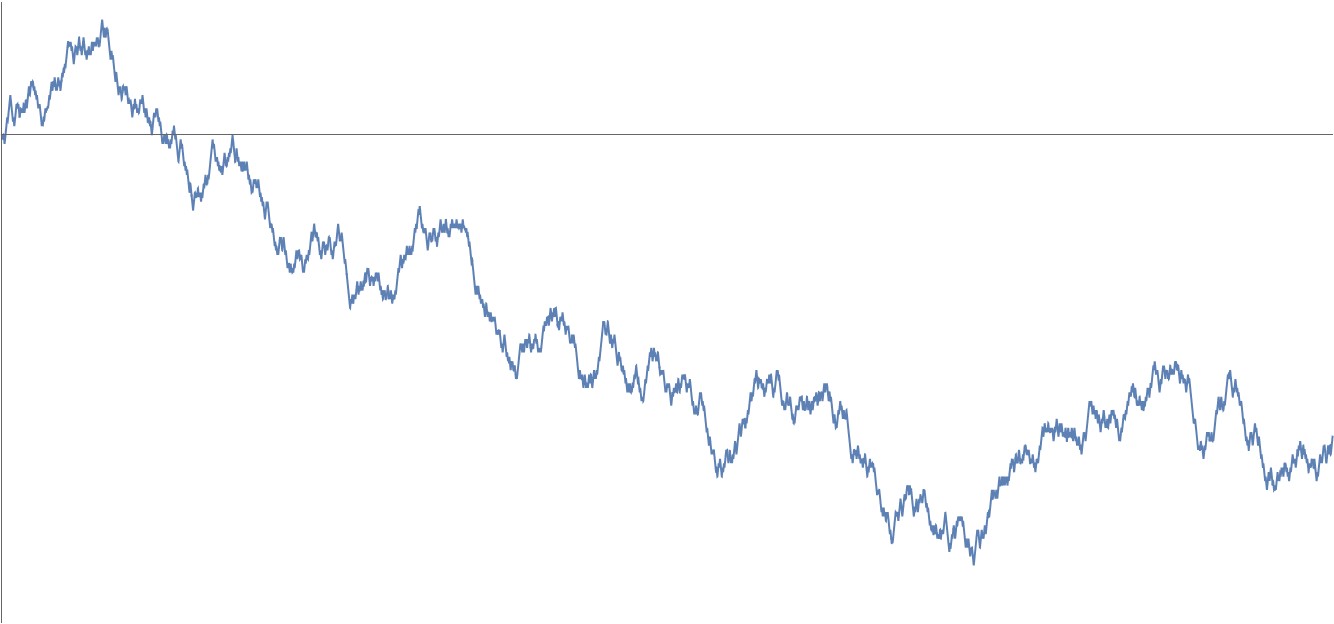}
    \caption{\small The same sampling of the random walk in Figure \ref{Fig:SRW30}, (appropriately) zoomed out to show 300 and 3000 steps, respectively.}
    \label{Fig:SWR300Plus}
\end{figure}

Suppose we scale space by a factor of $n^{-\beta}$, and so consider $\frac{1}{n^\beta} X_{nt}$.  Let us fix $t=1$ and see if we can determine which value of $\beta$ would be reasonable.  The mean of the walk is 
\begin{align}\label{Eq:WalkMeanZero}
    \nE\Big( \frac{1}{n^\beta} X_{n} \Big) = \frac{1}{n^\beta} \nE(X_n) = \frac{1}{n^\beta} \nE\Big(\sum_{j=1}^n Y_j \Big) = \frac{1}{n^\beta} \sum_{j=1}^n \nE( Y_j ) = 0,
\end{align}
where the $Y_j$ are the individual iid $\pm 1$ steps of the random walk, which have zero average.  So the mean is impervious to the choice of $\beta$.  The same is not true for the variance, however, since
\begin{align}\label{Eq:WalkVariance}
    \Var\Big( \frac{1}{n^\beta} X_{n} \Big) = \frac{1}{n^{2\beta}} \Var\Big(\sum_{j=1}^n Y_j \Big) = \frac{1}{n^{2\beta}} \sum_{j=1}^n \Var(Y_j) = \frac{1}{n^{2\beta-1}}.
\end{align}
Here we have used the independence of the individual walk steps and the fact that $\Var(Y_j)=1$.  Thus if we re-scale space by $1/n^\beta$ for any $\beta > 1/2$, such as by $1/n$, then in the $n\rightarrow \infty$ limit the variance at $t=1$ is zero, which implies by \eqref{Eq:WalkMeanZero} that the limiting value is identically zero.  This is compressing space too much and one can show the entire picture collapses to the zero function in the limit.  If we scale by a power $\beta<1/2$, on the other hand, then by \eqref{Eq:WalkVariance} the variance at $t=1$ blows up, and so we see wilder and wilder oscillations in the $t=1$ value as $n$ increases.  

This strongly suggests that the right scaling of space is with $\beta=1/2$.  Then the variance at $t=1$ is always 1, and furthermore the Central Limit Theorem says
\begin{align*}
    \frac{1}{\sqrt{n}}X_n = \frac{1}{\sqrt{n}}\sum_{j=1}^n Y_n \Rightarrow Z,
\end{align*}
a $N(0,1)$ random variable.  We can see the non-trivial distributional limit for $\beta=1/2$ emerging in the two graphs in Figure \ref{Fig:SWR300Plus}.  These plot the linear interpolation of $(\frac{1}{\sqrt{n}} X_{nt})_{0 \leq t \leq 30}$ with $n=10$ and $n=100$ for the same sampling of the random walk as in Figure \ref{Fig:SRW30}.

The jagged, fractal-like random function we get in the distributional limit of $(\frac{1}{\sqrt{n}}X_{nt})_{t \geq 0}$ is of fundamental importance in probability theory, physics, biology and economics, and is called \emph{Brownian motion} $(B_t)_{t \geq 0}$.  Brownian motion is, in fact, a \emph{continuous-time Markov process}.

One technique for analyzing $(B_t)_{t \geq 0}$ is to start with the piece-wise linear functions $\frac{1}{\sqrt{n}}X_{nt}$.  One can study their properties and then attempt to take a distributional limit as $n\rightarrow \infty$ to obtain information about the Brownian motion path.  This boils down to understanding the simple random walk $(X_n)$, our original Markov chain on $\mathbb{Z}$ (without boundary conditions). And a natural starting place for understanding $X_n$ is to determine its distribution for all $n$.

\begin{lemma}\label{Lemma:SRWpdf}
    For $n \in \mathbb{N}$, the pmf of $X_n$ is 
    \begin{align}\label{Eq:SRWonZpdf}
        \nP(X_n = k) = \binom{n}{\frac{n+k}{2}} \frac{1}{2^n} \quad \text{for} \quad k = -n, -n+2, \ldots, n-2, n,
    \end{align}
    while $\nP(X_n=k)=0$ for all other $k$.
\end{lemma}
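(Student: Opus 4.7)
The plan is to reduce the problem to a standard counting exercise using the fact that the walk is a sum of iid $\pm 1$ steps. Writing $X_n = Y_1 + Y_2 + \cdots + Y_n$ with iid steps $Y_j$ taking values $\pm 1$ each with probability $1/2$, I would first argue that any realization of $(Y_1, \ldots, Y_n)$ is just a length-$n$ sequence of $\pm 1$'s, of which there are $2^n$ equally likely outcomes (each with probability $(1/2)^n$ by independence).

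Next I would parametrize by the number of up-steps. If exactly $u$ of the $Y_j$ equal $+1$ and $d = n-u$ equal $-1$, then $X_n = u - d = 2u - n$. So requiring $X_n = k$ forces $u = (n+k)/2$ and $d = (n-k)/2$. For this to yield non-negative integer counts, $k$ must satisfy $|k| \le n$ and $n+k$ must be even (equivalently, $k$ and $n$ share parity); this already handles the "$0$ otherwise" clause in the statement, since no trajectory can realize $X_n = k$ outside this set.

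When $k$ is admissible, the number of sequences $(Y_1, \ldots, Y_n) \in \{-1,+1\}^n$ with exactly $(n+k)/2$ entries equal to $+1$ is $\binom{n}{(n+k)/2}$, since choosing which positions hold the $+1$'s determines the sequence. Summing over these equally likely outcomes yields
\[
    \nP(X_n = k) = \binom{n}{\tfrac{n+k}{2}}\Big(\tfrac{1}{2}\Big)^n,
\]
which is exactly \eqref{Eq:SRWonZpdf}.

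There is no real obstacle here; the only subtlety is being careful about the parity restriction, making sure to state that $\nP(X_n = k) = 0$ when $n+k$ is odd (or $|k| > n$) because no sequence of $\pm 1$ can produce such a sum. One could alternatively note $X_n = 2U - n$ where $U \sim \text{Bin}(n, 1/2)$ counts the number of up-steps, and read the formula directly from the binomial pmf $\nP(U = u) = \binom{n}{u}2^{-n}$ by substituting $u = (n+k)/2$; I would include this reinterpretation as a brief remark for intuition.
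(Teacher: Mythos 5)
Your proof is correct and follows essentially the same route as the paper: the paper also writes $X_n$ as a sum of iid $\pm 1$ steps, observes $X_n = 2H_n - n$ with $H_n \sim \Bin(n,1/2)$ counting the up-steps, and reads the formula off the binomial pmf, with the parity restriction handled exactly as you describe. Your counting argument and the binomial reinterpretation you append as a remark are, respectively, the paper's informal discussion preceding the proof and its formal proof.
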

Contrary to its appearance, this formula is actually quite intuitive.  The condition on $k$ says that the parity of our position $X_n$ must match the parity of the step number $n$.  For instance, if $n=5$, then $X_n$ is odd and $X_n \in \{-5,-3,-1,1,3,5\}$.  If we take $\frac{n+k}{2}$ steps of $+1$, then the number of $-1$ steps is 
\begin{align*}
    n - \frac{n+k}{2} = \frac{n-k}{2},
\end{align*}
and our ending position is $1 \cdot \frac{n+k}{2} - 1 \cdot \frac{n-k}{2} = k$.  Hence the binomial coefficient counts the number of ways to pick out the correct number of $+1$ steps from the $n$ total so that we reach position $k$.  And, of course, the probability for any fixed selection of $+1$'s and $-1$'s in the iid steps is $(1/2)^n$.  

Put another way, each of the $2^n$ random walk paths of length $n$ are equally likely, and the binomial coefficient weights by how many end at height $k$.

This sketches one proof of \eqref{Eq:SRWonZpdf}; we proceed with a slightly more formal argument.
\begin{proof}
    We may write $X_n = \sum_{j=1}^n Y_j$ for iid coin flips $Y_1, \ldots, Y_n$, $\nP(Y_j=1) = \nP(Y_j=-1) = 1/2$.  Let 
    \begin{align*}
        H_n = \sum_{j=1}^n Y_j \I_{\{Y_j=1\}} \quad \text{and} \quad T_n = \sum_{j=1}^n -Y_j \I_{\{Y_j=-1\}}
    \end{align*}
    be the number of heads and tails through $n$ steps, respectively.  Since $T_n = n-H_n$, we have
    \begin{align*}
        X_n &= H_n-T_n = 2H_n -n.
    \end{align*}
    Noting that $H_n \sim \Bin(n,1/2)$, we thus see
    \begin{align*}
        \nP(X_n = k) &= \nP(2H_n-n = k)\\
        &= \nP\big(H_n = \frac{n+k}{2}\big) = \binom{n}{\frac{n+k}{2}} \frac{1}{2^n}
    \end{align*}
    when $\frac{n+k}{2} \in \{0,1,\ldots, n\}$, which is equivalent to $k \in \{-n, -n+2, -n+4, \ldots, n\}$.
\end{proof}
\begin{exercise}
    Use Stirling's formula to show that 
    \begin{align*}
        \lim_{n \rightarrow \infty} \sqrt{\pi n} \, \nP(X_{2n}=0) =1,
    \end{align*}
    and thus that $\nP(X_{2n}=0) \approx 1/\sqrt{\pi n}$ when $n$ is large.
\end{exercise}
We close this section by saying further examination of Brownian motion is beyond the scope of our text, but we hope this very brief introduction has whetted your appetite to study it further in the future.

\subsection{Birth and death chains}

The simple random walk on $\mathbb{Z}$ is fundamentally important, but other variations occur in theory and practice.  One class of such models are the \emph{birth and death chains}, which model population dynamics.  Consider a population between 0 and $n$ that increases or decreases by at most one during each unit of time.  We can view this as a Markov chain on the state space $\Omega = \{0, 1,2,\ldots, n\}$ of size $n+1$ with transition probabilities
\begin{align}\label{BDChainNotation}
	p_k := p(k,k+1), \quad q_k := p(k,k-1), \quad r_k := p(k,k)
\end{align}
for $k \in \Omega$, where $0\leq p_k, q_k, r_k \leq 1$ are parameters chosen such that $p_k + q_k + r_k =1$, with $q_0 = 0$ and $p_n = 0$ (so that our population size $X_n$ always stays in $\Omega$).  Here the $p_k, q_k$ and $r_k$ can all vary with $k$.  We call such a Markov chain a \textbf{birth and death chain}. Note that the chain is irreducible iff
\begin{enumerate}[$(i)$]
	\item $p_k, q_k >0$ for all $1 \leq k \leq n-1$, and 
	\item $p_0 > 0$ and $q_n >0$,
\end{enumerate}
since it is only under these conditions that we can move between any two states.

A natural class of examples of birth and death chains is provided by the so-called queuing model. Imagine customers standing in a queue to get serviced at a service window (see Figure \ref{fig:queue}).  Let $X_k$ denote the number of customers in the queue at time $k$ for $k=0,1,2,\ldots$. Assume that the queue can hold at most $n$ customers. Then each $X_k \in \{0,1,2,\ldots, n\}$. 

Fix a number $p\in (0,1)$ and let $q=1-p$. Imagine that at time point $k-1$ the queue is neither empty nor full. That is, assuming that $1\le X_{k-1} \le n-1$, two things can happen at time $k$: (1) a new customer gets added with probability $p$, thereby $X_{k}=X_{k-1}+1$; or, (2) the first customer gets served with probability $q$ and leaves, making $X_k = X_{k-1}-1$. If $X_{k-1}=n$, i.e., the queue is full, no more new customers may be added to the queue, but the first customer still gets served with probability $q$. Thus $X_k=n-1$ with probability $q$ and $X_k=n$ with probability $p$. Finally, if the queue is currently empty, i.e., $X_{k-1}=0$, then a new customer arrives with probability $p$. That is $X_k=1$ with probability $p$, otherwise $X_k=0$. The Markov chain $(X_k)_{k\ge 0}$ is clearly a birth and death chain where each $p_i=p$ and each $q_i=q$.  

\begin{figure}[h!]
\begin{tikzpicture}[>=Stealth, thick]


\draw (0,0) -- (8.5,0);

\foreach \x in {2,4,6,8}{
  \draw (\x,0) -- (\x,0.55);      
  \draw (\x,0.85) circle (0.25);  
}

\draw (8.5,-0.9) -- (8.5,0.9);
\draw (8.5,-0.9) rectangle (10.5,0.9);
\node at (9.5,0.35) {SERVICE};
\node at (9.5,-0.35) {WINDOW};

\draw[->] (8,-1.1) -- (8,-0.1);
\node[align=center, anchor=north] at (8,-1.2)
  {Customer getting served};

\draw[->] (0.7,-1.1) -- (0.7,-0.1);
\node[align=center, anchor=north] at (0.7,-1.2)
  {New customer joins here};

\end{tikzpicture}
\caption{Customers waiting in a queue get served at the service window one after another. New customers join at the end of the queue. Customers may decide to leave the queue anytime. Here the queue length is four.}
\label{fig:queue}
\end{figure}
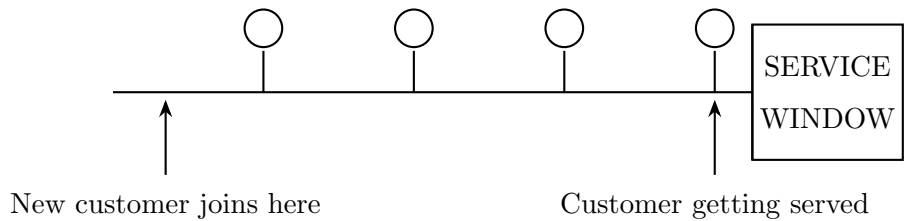

What is the stationary distribution of these chains?  Are they reversible?  The next theorem answers both of these questions in the affirmative.

\begin{theorem}
	Every irreducible birth and death chain is time reversible under a unique and explicit stationary distribution.
\end{theorem}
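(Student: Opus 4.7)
The plan is to exploit the \emph{detail balance equations} (DBEs) from Theorem \ref{Thm:DBEConsequences}, which give us both stationarity and reversibility in one stroke. For a birth and death chain with transition probabilities as in \eqref{BDChainNotation}, the DBEs read
\begin{align*}
    \pi(k) p_k = \pi(k+1) q_{k+1}, \qquad 0 \leq k \leq n-1,
\end{align*}
since $P(k,j) = 0$ whenever $|k-j| \geq 2$ (for these the DBEs hold trivially), and the loop case $P(k,k)$ is symmetric in $k$. The hope is that this is enough to uniquely determine $\pi$.

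First I would rewrite the DBE as the recursion $\pi(k+1) = \pi(k) \cdot \frac{p_k}{q_{k+1}}$, and use irreducibility to guarantee the denominator is nonzero: since the chain is irreducible, $p_k > 0$ for $0 \leq k \leq n-1$ and $q_k > 0$ for $1 \leq k \leq n$, so the recursion is well-defined. Iterating yields the explicit formula
\begin{align*}
    \pi(k) = \pi(0) \prod_{j=0}^{k-1} \frac{p_j}{q_{j+1}}, \qquad 1 \leq k \leq n.
\end{align*}
Next I would define the normalizing constant $Z := 1 + \sum_{k=1}^n \prod_{j=0}^{k-1} \frac{p_j}{q_{j+1}}$, which is finite and positive since $\Omega$ is finite, and set $\pi(0) := 1/Z$. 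This produces a genuine probability distribution on $\Omega$ with strictly positive entries that satisfies the DBEs by construction. Applying Theorem \ref{Thm:DBEConsequences} then immediately gives that $\pi$ is stationary and that the chain is reversible with respect to $\pi$.

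For uniqueness, I would invoke the remark following Theorem \ref{Thm:ReturnTime}, which states that irreducible Markov chains have a unique stationary distribution. Thus $\pi$ as constructed above is the unique stationary distribution, and reversibility holds with respect to it. I do not anticipate any real obstacle: the computation is routine once one notices that the only nontrivial DBEs for a birth and death chain are the ``nearest-neighbor'' ones, which telescope into an explicit product formula. The only mild subtlety to flag is that irreducibility is needed twice — once to ensure the recursion makes sense (no division by zero), and once to apply the uniqueness statement.
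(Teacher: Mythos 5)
Your proof is correct and follows essentially the same route as the paper: reduce the DBEs to the nearest-neighbor relations $\pi(k)p_k = \pi(k+1)q_{k+1}$, telescope them into the explicit product weights $w_k = \frac{p_0\cdots p_{k-1}}{q_1\cdots q_k}$, normalize, and invoke Theorem \ref{Thm:DBEConsequences} for stationarity and reversibility together with Theorem \ref{Thm:ReturnTime} for uniqueness. Your explicit remarks on where irreducibility is used are a nice touch but do not change the argument.
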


\begin{proof}
	We immediately know from Theorem \ref{Thm:ReturnTime} that an irreducible birth and death chain has a unique stationary distribution.  Furthermore, recall from Theorem \ref{Thm:DBEConsequences} that if we can build a probability distribution $\pi$ which satisfies the detail balance equations \eqref{Eq:DBE}, then $\pi$ is stationary and the chain is automatically time reversible under $\pi$.  So, it suffices to find $\pi = (\pi_0, \ldots, \pi_n)$ with $\pi_k >0$ and $\sum_{k=0}^n \pi_k = 1$ satisfying the detail balance equations 
\begin{align*}
	\pi_k p(k,j) = \pi_j p(j,k)
\end{align*}
for all $j,k$.  Since $p(k,j) = 0$ except when $j=k-1,k,k+1$, there are only three cases to check over all possible values of $k$:
\begin{enumerate}[$(i)$]
	\item $\pi_k p(k,k+1) = \pi_{k+1} p(k+1,k)$,
	\item $\pi_k p(k,k-1) = \pi_{k-1} p(k-1,k)$, and
	\item $\pi_k p(k,k) = \pi_k p(k,k)$.
\end{enumerate}
Case $(iii)$ is trivially true, and $(ii)$ is encompassed by $(i)$ through replacing $k$ with $k-1$.  So, recalling our notation   \eqref{BDChainNotation}, we just need build $\pi$ for which
\begin{align}\label{Id:BDChainPf}
	\pi_k p_k = \pi_{k+1} q_{k+1}, \qquad k = 0,1, \ldots, n-1.
\end{align}
To that end, define ``weights'' $w_k$, $k=0,1, \ldots, n$, via $w_0 := 1$ and, for $1 \leq k \leq n$,
\begin{align}
	w_k :&= \frac{p_0 p_1 \cdots p_{k-1}}{q_1 q_2 \cdots q_k} \notag\\
	&= \frac{\nP(X_1=1, X_2=2, \ldots, X_k=k | X_0 = 0)}{\nP(X_1 = k-1, X_2 = k-2, \ldots, X_k = 0 | X_0 = k)} \label{BDChainPfMemory}
\end{align}
and set 
\begin{align}\label{Eq:BDChainStationaryDist}
	\pi_k = \frac{w_k}{\sum_{j=0}^n w_j}, \quad k = 0,1, \ldots, n.
\end{align}
Although at first glance the formula appears unwieldy, it is actually routine to check the following, and you should do so to get your hands dirty.
\begin{exercise}
	Verify that $\pi$ is a probability distribution on $\Omega$ and that $\pi$ satisfies \eqref{Id:BDChainPf}.
\end{exercise}
\noindent We conclude from Theorem \ref{Thm:DBEConsequences} that $\pi$ is stationary and that the birth and death chain is reversible under $\pi$.
\end{proof}

Note that the expression \eqref{BDChainPfMemory} can serve as a helpful mnemonic for the $w_k$ formula: the numerator is the probability of starting at 0 and continually increasing until you reach $k$, and the denominator is the probability of the return journey.

\begin{example}
	The simplest case is when $p_k = p$ for all $0 \leq k \leq n-1$ and $q_k =q$ for $1 \leq k \leq n$, where $p+q =1$ (and so $r_k=0$ for $k=1,2, \ldots n-1$, but $r_0 = 1-p$ and $r_n = 1-q$).  That is, when all the $p$'s and $q$'s are the same. This is the queue length example covered above Figure \ref{fig:queue}.  
    
    What is our stationary distribution \eqref{Eq:BDChainStationaryDist} here?  We find
	\begin{align*}
		w_0 = 1 = \left(\frac{p}{q}\right)^0, \quad w_k = \frac{p_0 p_1 \cdots p_{k-1}}{q_1 q_2 \cdots q_k} = \left(\frac{p}{q}\right)^k,
	\end{align*}
	and thus
	\begin{align*}
		\pi_k = \frac{(p/q)^k}{\sum_{j=0}^n (p/q)^j}, \qquad k=0,1, \ldots, n.
	\end{align*}

Notice that when $p>1/2$, $p/q > 1$, and the mass of the stationary distribution $\pi_k$ is geometrically large for large values of $k$. That is, the queue tends to be near full in stationarity. This makes intuitive sense since the probability of new customers joining the queue is higher than customers getting serviced, which will fill the queue. 

On the other hand, when $p < 1/2$, $p < q$, and, hence, $\pi_k$ is geometrically small for large values of $k$. Thus, customers get serviced faster than new customers join and the queue tends to be mostly empty.  

The case when $p=q$ is critical since there the stationary distribution $\pi$ is uniform over $\{0,1,2,\ldots, n\}$. 
\end{example}

\begin{exercise}
	For $p=q$, the birth and death chain becomes a random walk. Identify the boundary condition. Show that $\pi$ is uniform.
\end{exercise}


\section{The Ehrenfest urn}\label{Sec:EhrenfestUrn}


The Ehrenfest urn is a Markov chain modelling the diffusion of particles through a porous membrane.  It was developed by the physicist Paul Ehrenfest (1880-1933) and his wife Tatyana (1876-1964) as a way of explaining the second law of thermodynamics.\footnote{See 
\url{en.wikipedia.org/wiki/Ehrenfest_model}, for instance.}  It is an intrinsically interesting model, but also important for us because it is a Markov chain which is not a simple random walk on a graph, but rather a \emph{projection} of such a walk.  We explain precisely what this means below.

The model starts with two urns having a total of $N$ identical balls.  One step in the chain consists of picking one of the $N$ balls uniformly at random and switching its urn. So if the selected ball is in the first urn, move it to the second, while if in the second urn, move it to the first.  Tracking the number of balls $X_n$ in the first urn after $n$ steps gives a Markov chain on $\Omega = \{0,1, \ldots, N\}$, the \textbf{Ehrenfest urn model}.  The following two exercises will help you get some feel for this chain.

\begin{exercise}
	Show that the transition probabilities for the Ehrenfest urn chain $(X_n)$ are
	\begin{align*}
		p(k,k+1) &= \nP_k(\text{adding a ball from urn 2 to urn 1}) = 1 - \frac{k}{N},\\
		p(k,k-1) &= \nP_k(\text{adding a ball from urn 1 to urn 2}) = \frac{k}{N},
	\end{align*}
 for $0 \leq k \leq N-1$ and $1 \leq k \leq N$, respectively.
\end{exercise}

\begin{exercise}\label{Ex:EhrenfestTransProb}
Show the following:
	\begin{enumerate}[$(a)$]
        \item $\nE(X_{n+1} \,|\, X_n=k) = 1 + (1-2/N)k$.
        \item Therefore, $\nE(X_{n+1} \,|\, X_n) < X_n$ when $X_n > N/2$, and  $\nE(X_{n+1} \,|\, X_n) > X_n$ when $X_n < N/2$.  Thus, the chain on average moves towards equal numbers of balls in each urn.
	\end{enumerate}    
\end{exercise}

A moment's thought shows that the Ehrenfest urn is irreducible, and it therefore has a unique stationary distribution by Theorem \ref{Thm:ReturnTime}.  Can we explicitly compute $\pi$?  The question is simplified by broadening our viewpoint to see the chain as a \emph{projection} of the simple random walk on the $n$-dimensional hypercube.  


\subsection{The Ehrenfest urn as a projection}

Let 
\begin{align*}
	\Omega' = \{0,1\}^N = \big\{\, \{ s_1, s_2, \ldots, s_N\} \; | \; s_j \in \{0,1\} \text{ for all } j\,\}
\end{align*}
be the set of all sequences of 0's and 1's of length $N$.  This is simply the collection of vertices of the (hyper)cube in $\mathbb{R}^N$, and as such, it has a natural graph structure: say that two vertices $u$ and $v$ are adjacent,
\begin{align*}
	u = (u_1, u_2, \ldots, u_N) \sim (v_1, v_2, \ldots, v_N) =v,
\end{align*} 
if you can go from $u$ to $v$ by switching exactly one coordinate from 0 to 1 or from 1 to 0.  See Figure \ref{Fig:hypercube} for the case $N=3$.  More formally, $u \sim v$ if 
\begin{align*}
	\Norm{u-v}_1 := \sum_{j=1}^N |u_j - v_j| =1.
\end{align*}

\begin{figure}
    \centering
        \scalebox{0.7}{
    \begin{tikzpicture}
    \tikzset{
        node/.style={circle, fill=black, very thick, minimum size=1mm},
        arrow/.style={->, black, very thick},
        line/.style={black, very thick},
        dashl/.style={black, very thick, dashed}
     }
     
     \node[node] (one) at (0,0) {};
     \node (000) at (0,-0.5) {(0, 0, 0)};
     \node[node] (two) at (4,0) {};
     \node (100) at (4,-0.5) {(1, 0, 0)};
     \node[node] (three) at (6,2.667) {};
     \node (110) at (7,2.667) {(1, 1, 0)};
     \node[node] (four) at (2,2.667) {};
     \node (010) at (2.4,2.2) {(0,1,0)};
     \node[node] (five) at (0,4) {};
     \node (001) at (-0.9,4) {(0,0,1)};
     \node[node] (six) at (4,4) {};
     \node (101) at (4.9,4) {(1,0,1)};
     \node[node] (seven) at (6,6.667) {};
     \node (111) at (7,6.667) {(1, 1, 1)};
     \node[node] (eight) at (2,6.667) {};
     \node (011) at (1.1,6.667) {(0,1,1)};
     
     \draw[line] (one) -- (two);
     \draw[line] (one) -- (five);
     \draw[dashl] (one) -- (four);
     \draw[line] (two) -- (three);
     \draw[line] (two) -- (six);
     \draw[line] (three) -- (seven);
     \draw[dashl] (three) -- (four);
     \draw[dashl] (four) -- (eight);
     \draw[line] (five) -- (six);
     \draw[line] (five) -- (eight);
     \draw[line] (six) -- (seven);
     \draw[line] (seven) -- (eight);
    \end{tikzpicture}
    }
    \caption{\small The hypercube $\{0,1\}^3$ in $\mathbb{R}^3$}
    \label{Fig:hypercube}
\end{figure}
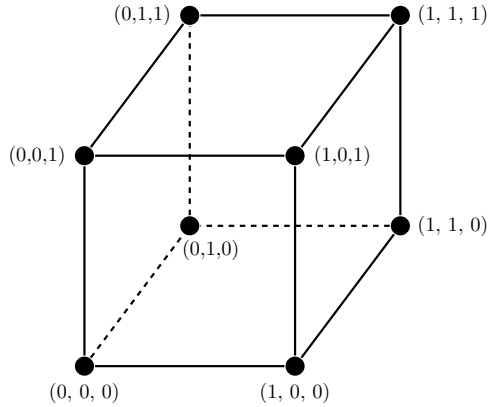

\begin{exercise}\label{Ex:Hypercube}
		What are the degrees of the vertices of the hypercube in $\mathbb{R}^3$?  How many edges are there?  Generalize these two questions to the hypercube in $\mathbb{R}^N$.
\end{exercise}

Consider the simple symmetric random walk $X'=(X_n')_{n\geq 0}$ on the $N$-dimensional hypercube.  By the edge structure we saw above, each step in this walk selects one of the coordinates uniformly at random and switches its value.  Note that the hypercube is connected and is also a  \emph{regular graph} of degree $N$, which is to say every vertex $v$ has the same degree $N$.  By Theorem \ref{Thm:SRWpi} and Exercise \ref{Ex:Hypercube}, the stationary distribution for $X'$ is therefore
\begin{align}\label{Eq:HypercubeStationary}
	\pi'(u) = \frac{\deg(u)}{2|E|} = \frac{N}{2^{N}N} = \frac{1}{2^{N}},
\end{align}
which is simply the uniform distribution on $\Omega'$.  Of course we expect as much from the symmetry of the graph.



So how does this all relate to the Ehrenfest urn?  Label the balls from $1$ to $N$, and given our current location $v = (v_1, \ldots, v_N)$ on the hypercube, place ball $j$ into the first urn if $v_j=1$, and into the second urn if $v_j=0$.  So, for example, if $N=3$ and we are at $v=(1,0,1)$ in the random walk, put balls 1 and 3 into the first urn and ball 2 into the second urn.  This gives a one-to-one correspondence between vertices on the hypercube and lists of which balls are in which urn.  Furthermore, one step of the hypercube walk exactly gives one step of the Ehrenfest urn: a moment's thought shows that for a step on random walk we choose one coordinate $v_j$ uniformly at random and change its value, which corresponds to choosing one ball uniformly at random and switching its urn.  So while running the random walk on the hypercube we are simultaneously running the urn process, but keeping track of additional information (the urn for each ball).  

To recover the actual Ehrenfest urn chain $(X_n)$, we forget the labelling and simply count the balls  in the first urn.  This  ``projects down'' from the $N$-dimensional list $(v_1, \ldots, v_N)$ to give a single number, and is formally accomplished through the function $F: \Omega' \rightarrow \Omega = \{0,1, \ldots, N\}$ given by 
\begin{align*}
	F(v) = \#\{\, \text{coords of $v$ which are 1} \, \} = \sum_{j=1}^N v_j.
\end{align*}
So if $(X_1', X_2',\ldots)$ is the walk on the hypercube, then $(F(X_1'), F(X_2'), \ldots)$ is the Ehrenfest urn.  You are probably already convinced of this, but we can rigorously verify it by looking at the transition probabilities of the $F(X_j')$.  If $F(X_j')=k$, then our current coordinate $v$ on the hypercube has $k$ ones and $N-k$ zeros, and so to go to $k+1$ we must move to one of the $N-k$ adjacent vertices with an additional entry of 1, yielding
\begin{align*}
    \nP( F(X_{j+1}')=k+1 | F(X_j')=k)= \frac{N-k}{N}.
\end{align*}
Of course does not matter \emph{which} vertex $v$ with $k$ ones we are currently at; the above computation is the same for all of them by the symmetry of the hypercube.

Similarly, to go down to $k-1$ from $k$ we must change a 1 coordinate to a 0 coordinate through moving to one of the $k$-such adjacent vertices, yielding
\begin{align*}
	\nP( F(X_{j+1}')=k-1 \, |\, F(X_j')=k) &= \frac{k}{N}.
\end{align*}
By Exercise  \ref{Ex:EhrenfestTransProb} these are the same transition probabilities as the urn chain, showing the projection $(F(X_1'), F(X_2'), \ldots)$ is indeed the Ehrenfest urn, as claimed.

\subsection{Projecting to get the stationary distribution}

The projection $F$ also gives us a convenient way to obtain the stationary distribution $\pi$ for the Ehrenfest urn.  By \eqref{Eq:HypercubeStationary} the uniform distribution $\pi'$ is stationary for the hypercube walk, and is therefore also stationary for the Ehrenfest urn when also keeping track of the location of each of the balls. To get $\pi$ for the actual Ehrenfest urn, we need to ``project'' or \emph{push forward} $\pi'$ from $\Omega'$ to $\Omega$ via $F$.  This means that we want the measure $\pi$ on $\Omega$ defined by
\begin{align*}
    \pi(k) = \pi'\big( F^{-1}(k) \big) &= \pi'\big(\{\, v \in \Omega' \;:\; F(v)=k  \,\} \big)\\
    &= \pi'\big(\{\, v \in \Omega' \;:\; \sum_{j=1}^N v_j =k  \,\} \big).
\end{align*}
Since $\pi'$ is uniform on the $2^N$ vertices of the hypercube,
\begin{align*}
   \pi(k) = \pi'\big(\{\, v \in \Omega' \;:\; \sum_{j=1}^N v_j =k  \,\} \big) &= \frac{1}{2^N} \cdot \#\{\, v \in \Omega' \; : \; \sum_{j=1}^N v_j =k\,\}\\
   &= \frac{1}{2^N} \binom{N}{k}.
\end{align*}
That is, $\pi$ for the Ehrenfest urn is the Bin$(N,1/2)$ distribution! Since we don't have any formal results about pushing forward a stationary distribution $\pi'$ under a projection $F$, however, let's prove this by verifying the DBEs \eqref{Eq:DBE}.  On the one hand,
\begin{align}\label{Eq:EhrenfestDBE1}
	\pi(k)p(k,k+1) = \frac{1}{2^N} \binom{N}{k} \frac{N-k}{N} = \frac{(N-1)!}{k!(N-k-1)!2^N},
\end{align}
while on the other hand,
\begin{align}\label{Eq:EhrenfestDBE2}
	\pi(k+1)p(k+1,k) =\frac{1}{2^N} \binom{N}{k+1} \frac{k+1}{N} = \frac{(N-1)!}{k!(N-k-1)!2^N}
\end{align}
as well.  Thus, by Theorem \ref{Thm:DBEConsequences}, $\pi$ is stationary for the Ehrenfest urn and the chain is also time-reversible under it.\footnote{Note again the power of the DBEs: by simply verifying the equality of \eqref{Eq:EhrenfestDBE1} and \eqref{Eq:EhrenfestDBE2}, we show both stationarity and reversibility.}  See Figure \ref{Fig:Ehrenfest} for a plot of $\pi$ with $N=20$.  

The binomial nature of $\pi$ gives us a nice interpretation of the urn.  We may think of each ball as being independent of the others and having equal probability $1/2$ of being in either urn.  Hence $\pi(k)$ is the probability we have $k$ ``successes'' when looking to see which balls lie in the first urn.  Equivalently, we choose each ball's urn based on a fair coin flip, and total the heads for $N$ tosses.

There is also another intuitive way of seeing that $\pi$ is the stationary distribution for $(X_n)$.  We will see below in Chapter \ref{Ch:Asymptotics} that, for irreducible and aperiodic chains, the chain behaves likes its stationary distribution after many steps (see Theorem \ref{Thm:ConvToStationary} for the precise statement).  So for the walk $(X_n')$ on the hypercube, $X_n'$ is approximately uniformly distributed on $\{0,1\}^N$ when $n$ is large.  In particular, each coordinate $v_j$ on the list $(v_1, \ldots, v_N)$ is equally likely to be either 0 or 1, and so when adding up the 1 coordinates we obtain a Bin$(1/2,N)$ distribution.

\begin{figure}
\centering
\includegraphics[scale=0.32]{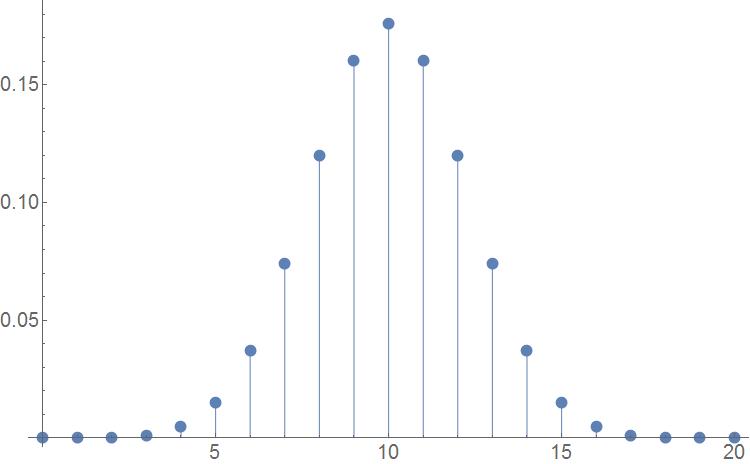}
\caption{{\small The stationary distribution $\pi$ for the Ehrenfest urn with $N=20$ particles.  We see $\pi$ is symmetric and that there is a strong preference for a balance of particles between the two containers.  This is not surprising given what we saw in Exercise \ref{Ex:EhrenfestTransProb}.}}
\label{Fig:Ehrenfest}
\end{figure}

\section{Bernoulli-Laplace diffusion}

A related urn model, the \emph{Bernoulli-Laplace diffusion}, starts with two urns again, but now with $N$ red and $N$ blue balls.  The evolution is identical to the Ehrenfest model: take a ball uniformly at random and switch its urn.  But now keep track of the number $X_n$ of \emph{red} balls in the first urn after $n$ steps, instead of the total number of balls.  Then $(X_n, n=0,1,\ldots)$ is a Markov chain on $\{0, 1, \ldots, N\}$. 

\begin{exercise}
\begin{enumerate}[$(a)$]
		\item Compute the transition probabilites $p(k,k-1), p(k,k)$ and $p(k,k+1)$ for Bernoulli-Laplace diffusion chain.
		\item Show that the hypergeometric distribution 
		\begin{align*}
			\pi(k) = \frac{\binom{N}{k}^2}{\binom{2N}{N}}, \quad k=0,1,\ldots, N
		\end{align*}
		is stationary for the chain by verifying the DBEs.
\end{enumerate}
\end{exercise}


\section{The P\'{o}lya urn}\label{Sec:PolyaUrn}

\subsection{Negative feedback vs positive feedback}
The preference in the Ehrenfest urn is to return the system to having approximately the same number of balls in both urns, as is evident from the transition probabilities of the chain: If one urn has more balls than the other, it is more likely that that urn loses a ball rather than gains one in the next step. This is a type of \emph{negative feedback}, where the system pushes against unbalances.  \emph{Positive feedback} models have the opposite behavior: growth in one direction encourages further growth in the same way.  These are the models where ``the rich get richer.''

\subsection{The P\'{o}lya urn}

The \textbf{P\'{o}lya urn model} is one such scheme.  Here's how it works: start with one black ball and one red ball together in a single urn.  At each step, pull out a ball uniformly at random, and then return it to this same urn with another ball of the same color.  Let $X_n$ be the number of black balls after $n$ steps.  Then $(X_n, n=0,1,\ldots)$ is a Markov chain on $\Omega = \mathbb{N}$.  

The transition probabilities are easy enough: in each step $X_n$ can either stay constant (if we pull out a red ball) or increase by one (if we pull out a black ball).  After $n$ steps there are a total of $n+2$ balls, and so we observe
\begin{multline*}
	\nP(X_{n+1}=k+1 \,|\, X_n = k)\\
	= \nP(\text{choose black in step }n+1 \, | \, X_n=k) = \frac{k}{n+2},
\end{multline*}
and hence $\nP(X_{n+1} = k \, | \, X_n = k) = 1 - \frac{k}{n+2}$.  

Notice there is something striking here: the transition probabilities \emph{depend on $n$}.  This is in contrast to all of the other Markov chains we have seen so far, and we therefore say the P\'olya urn is \textbf{time-inhomogeneous}.  In particular, for each $n$ we need a different transition matrix $P_n$ to describe the transition probabilities.

\subsection{Long-term behavior of the urn (or, life is like a P\'olya urn)}

How does the P\'olya urn behave in the long run?  The chain $(X_n)$ is a random non-decreasing sequence, and so we expect that it may tend to $\infty$ as $n$ grows.  The \emph{proportion} $\frac{X_n}{n+2}$ of black balls in the urn, however, is always between $0$ and $1$, and may therefore be a natural lense through which to view the chain's long-term behavior.  Take a look at Figure \ref{Fig:PoylaUrn}, which plots this proportion for several samples of the urn over 1000 steps.  

At least two things immediately jump out.  First, it appears that $\frac{X_n}{n+2}$ is limiting each time to a constant.  That is, each time we ``play'' the P\'olya urn game, the ratio of black balls after many steps basically becomes fixed.  But this fixed ratio $R$ is itself a \emph{random variable}, so that when we restart and play the game again, we end up with a different limiting ratio, or a different sampling of $R$.  By construction $R \in [0,1]$, and our main goal in this section is to prove the surprising result that $R$ is \emph{uniformly distributed} between 0 and 1.

A second observation from Figure \ref{Fig:PoylaUrn} is that the value of $R$ on a given run of the urn becomes apparent rather quickly: we only need 100 steps or so before there is very little further variation in this ratio.  That is, the first handful of steps in the P\'olya urn basically determine the long-term behavior of $\frac{X_n}{n+2}$.  If we may wax philosophical for a moment, perhaps this is not too unlike life itself, where often our early decisions chart the course of our lives for many years to come.  Life, it turns out, is like a P\'olya urn!


The uniformity of the limiting ratio $R$ becomes more plausible from the following theorem, which says that the count $X_n$ of black balls is always uniform.
\begin{theorem}\label{Thm:PolyaUrnpmf}
For any $n \geq 0$,
	\begin{align}\label{Eq:PolyaUrnXn}
		\nP(X_n = k) = \frac{1}{n+1}, \qquad k = 1,2, \ldots, n+1.
	\end{align}
	That is, $X_n$ is a $\Unif(\{1,2, \ldots, n+1\})$ random variable.
\end{theorem}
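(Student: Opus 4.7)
The plan is to prove Theorem \ref{Thm:PolyaUrnpmf} by induction on $n$, leveraging the time-inhomogeneous transition probabilities derived just above the statement. The base case $n=0$ is immediate: the urn starts with exactly one black ball, so $\nP(X_0 = 1) = 1 = \frac{1}{0+1}$, and the support $\{1\}$ matches $\{1, \ldots, n+1\}$ for $n=0$.

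For the inductive step, I assume $\nP(X_n = j) = \frac{1}{n+1}$ for every $j \in \{1, 2, \ldots, n+1\}$, and compute $\nP(X_{n+1} = k)$ for $k \in \{1, 2, \ldots, n+2\}$ by conditioning on $X_n$. Since the urn chain is non-decreasing, only $X_n = k-1$ and $X_n = k$ can lead to $X_{n+1} = k$, so the law of total probability gives
\begin{align*}
\nP(X_{n+1} = k) &= \nP(X_{n+1} = k \mid X_n = k-1)\nP(X_n = k-1) \\
&\quad + \nP(X_{n+1} = k \mid X_n = k)\nP(X_n = k).
\end{align*}
Using the transition probabilities $\nP(X_{n+1}=k \mid X_n = k-1) = \frac{k-1}{n+2}$ and $\nP(X_{n+1}=k \mid X_n = k) = 1 - \frac{k}{n+2} = \frac{n+2-k}{n+2}$, and the inductive hypothesis, this simplifies to
\begin{align*}
\nP(X_{n+1} = k) = \frac{k-1}{n+2}\cdot\frac{1}{n+1} + \frac{n+2-k}{n+2}\cdot\frac{1}{n+1} = \frac{n+1}{(n+2)(n+1)} = \frac{1}{n+2},
\end{align*}
which is exactly the claim for step $n+1$.

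The only subtlety is handling the endpoints $k=1$ and $k=n+2$, where one of the two conditioning events is either vacuous (there is no state $X_n = 0$ in the support) or outside the inductive hypothesis's support (there is no state $X_n = n+2$). In both cases, the missing term contributes zero and the surviving single term evaluates to $\frac{1}{n+2}$, so the formula holds uniformly across all $k$. There is no substantial obstacle here; the work is just a careful bookkeeping of the endpoint cases and verifying that the two telescoping-like fractions add to give the uniform answer $\frac{1}{n+2}$ on the enlarged support $\{1, \ldots, n+2\}$.
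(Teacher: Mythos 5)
Your proof is correct, but it takes a genuinely different route from the one in the text. You argue by induction on $n$, conditioning on $X_n$ and using the one-step transition probabilities $\nP(X_{n+1}=k \mid X_n=k-1)=\tfrac{k-1}{n+2}$ and $\nP(X_{n+1}=k\mid X_n=k)=\tfrac{n+2-k}{n+2}$; the two contributions telescope to $\tfrac{1}{n+2}$, and your handling of the endpoints $k=1$ and $k=n+2$ is right (in fact the ``missing'' term is zero in each case even if you formally keep it, since the relevant factor is $\tfrac{k-1}{n+2}=0$ or $\tfrac{n+2-k}{n+2}=0$). The text instead computes directly: it shows that every sequence of draws adding $k$ black balls in $n$ steps has the \emph{same} probability $\tfrac{k!(n-k)!}{(n+1)!}$, regardless of the order of the draws, and then multiplies by the $\binom{n}{k}$ orderings to get $\tfrac{1}{n+1}$. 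Your induction is shorter and more mechanical, and it makes transparent why the uniform distribution is preserved step by step. The text's path-counting argument costs a little more bookkeeping but exposes the exchangeability of the draw sequence --- the fact that order does not affect the probability --- which is the structural feature underlying the connection to De Finetti's theorem and the Bayesian interpretation developed later in the chapter. Both proofs are valid; which one is preferable depends on whether you want the quickest verification or the deeper structural insight.
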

\noindent At first glance this is surprising, since the positive-feedback nature of the urn does not seem consistent with a uniform distribution.  Are the rich not getting richer?  But this theorem says we are equally likely to have any possible count of black balls after any step of the game.  If we draw any vertical line on Figure \ref{Fig:PoylaUrn}, the theorem says the intersection of the ratio $\frac{X_n}{n+2}$ with the line will be uniformly-distributed among its possible values.
\begin{proof}
	Let $B_n = X_n-1$ be the number of black balls added to the urn after $n$ steps.  We wish to calculate $P(B_n = k)$, and we observe that there are many ways to add $k$ black balls in $n$ steps, such as
	\begin{multline*}
		\underbrace{B,B, \ldots, B}_k, \underbrace{R,R,\ldots, R}_{n-k} \quad \text{or} \quad B,R, \underbrace{B,B, \ldots, B}_{k-1}, \underbrace{R,R,\ldots, R}_{n-k-1}\\
		\text{or} \quad \underbrace{R,R, \ldots, R}_{n-k}, \underbrace{B,B, \ldots, B}_k.
	\end{multline*}
    In total there are $\binom{n}{k}$ ways to add $k$ balls in $n$ steps, and the surprising thing is that they all have the same probability.  For example, for immediately adding $k$ black balls, we have
	\begin{align*}
		\nP(B,B, \ldots, B, R, R, \ldots, R) &= \underbrace{\frac{1}{2} \cdot \frac{2}{3} \, \cdots \frac{k}{k+1}}_{k\text{ black}} \cdot \underbrace{\frac{1}{k+2} \cdot \frac{2}{k+3} \, \cdots \, \frac{n-k}{n+1}}_{n-k\text{ red}}\\
		&= \frac{k!(n-k)!}{(n+1)!}
	\end{align*}
	\begin{exercise}
		Show that we also have
		\begin{align*}
		\nP(B,R, \underbrace{B,B \ldots, B}_{k-1}, \underbrace{R, R, \ldots, R}_{n-k-1}) &= \frac{k!(n-k)!}{(n+1)!} \quad \text{and} \\
		\nP(R, R, \ldots, R, B,B, \ldots, B) &= \frac{k!(n-k)!}{(n+1)!}.
		\end{align*}
	\end{exercise}
	What is happening is that the denominator is always the same, since in each step we always add one ball, and the numerator is also always the same, since we always have the same factors appearing, albeit in different orders.  Thus for $k=0,1, \ldots, n$,
	\begin{align*}
		\nP(B_n = k) = \binom{n}{k} \frac{k!(n-k)!}{(n+1)!} = \frac{n!}{k!(n-k)!} \cdot \frac{k!(n-k)!}{(n+1)!} = \frac{1}{n+1},
	\end{align*}
	which is independent of $k$.  We therefore have, for $k=1,2, \ldots, n+1$, that
	\begin{align*}
		\nP(X_n = k) = \nP(B_n = k-1) = \frac{1}{n+1},
	\end{align*}
	as claimed.
\end{proof}

We can now prove that the limiting proportion $R$ of black balls is uniform in $[0,1]$.

\begin{figure}
\centering
\includegraphics[scale=0.75]{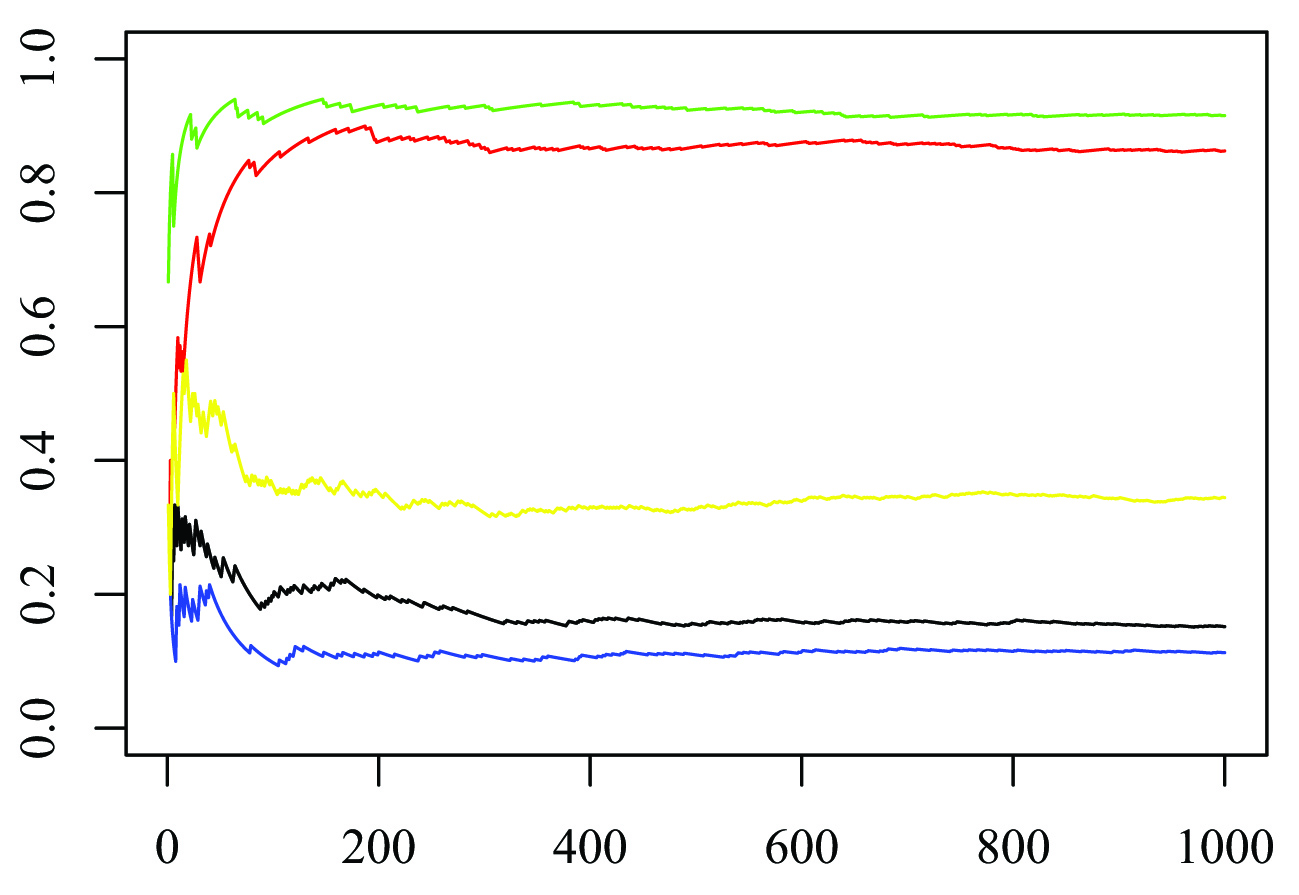}
\caption{{\small Five samples of 1000 steps of the P\'{o}lya urn, beginning with one black and one red ball.  The $y$-axis is the proportion of black balls $\frac{X_n}{n+2}$.  We numerically see that this proportion eventually stabilizes, limiting in a random proportion $P\in (0,1)$.  Theorem \ref{Thm:PolyaAsymptotics} gives the distribution of $P$.  Simulations by Celeste Zeng.  For more simulations, the R code behind them and further analysis, visit her github page at \url{https://celeste-zeng.github.io/Polya_Urn/}.}}
\label{Fig:PoylaUrn}
\end{figure}

\begin{theorem}\label{Thm:PolyaAsymptotics}
	The proportion $X_n/(n+2)$ of black balls converges in distribution to a $\Unif(0,1)$ random variable.  That is, for every $x \in [0,1]$,
	\begin{align}\label{Lim:Polya}
		\lim_{n \rightarrow \infty} \nP\left( \frac{X_n}{n+2} \leq x \right) = x.
	\end{align}
\end{theorem}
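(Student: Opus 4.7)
The plan is to leverage Theorem \ref{Thm:PolyaUrnpmf} directly: since $X_n$ is uniformly distributed on $\{1,2,\ldots,n+1\}$, computing the CDF of $X_n/(n+2)$ reduces to counting integers in an interval, and the limit then follows from elementary estimates. This is a soft convergence statement, so no deep probabilistic machinery is needed once we exploit the exact distribution of $X_n$.

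First I would fix $x \in [0,1]$ and observe
\[
\nP\!\left(\frac{X_n}{n+2} \leq x\right) = \nP(X_n \leq x(n+2)) = \frac{\#\{k \in \{1,2,\ldots,n+1\} : k \leq x(n+2)\}}{n+1},
\]
where the last equality uses that $X_n \sim \Unif\{1,2,\ldots,n+1\}$, so each of the $n+1$ equally likely values contributes $1/(n+1)$. The cardinality in the numerator is $\min\bigl(\lfloor x(n+2)\rfloor,\, n+1\bigr)$. For any $x \in [0,1)$ and $n$ large enough, $x(n+2) < n+1$, so this cardinality equals $\lfloor x(n+2) \rfloor$.

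Next I would apply the standard squeeze
\[
x(n+2) - 1 \;<\; \lfloor x(n+2) \rfloor \;\leq\; x(n+2),
\]
divide by $n+1$, and pass to the limit. Both bounds tend to $x$, so
\[
\lim_{n \to \infty} \nP\!\left(\frac{X_n}{n+2} \leq x\right) = x
\]
for each $x \in [0,1)$. For the boundary case $x=1$, I would just note that $X_n/(n+2) \leq 1$ always holds (since $X_n \leq n+1 < n+2$), so the probability equals $1 = x$. For $x \leq 0$, the probability is $0 = x$ at $x=0$ and $0$ for $x<0$, matching the $\Unif(0,1)$ CDF.

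The main obstacle is essentially bookkeeping around the floor function and the boundary cases $x=0,1$; there is no real analytic difficulty, since Theorem \ref{Thm:PolyaUrnpmf} has already done the heavy lifting by establishing the exact uniform distribution of $X_n$. The conceptually striking content of the theorem lies entirely in the earlier result that $X_n$ is uniform on $\{1,\ldots,n+1\}$ for every $n$; here we are simply rescaling a discrete uniform distribution and observing that it converges to a continuous uniform distribution.
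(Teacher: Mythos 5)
Your proposal is correct and follows essentially the same route as the paper: both invoke Theorem \ref{Thm:PolyaUrnpmf} to write the CDF as $\lfloor (n+2)x \rfloor/(n+1)$ and then squeeze away the floor function. The only difference is that you treat the boundary case $x=1$ (where the count must be capped at $n+1$) more carefully than the paper does, which is a minor but welcome refinement.
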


\begin{proof}
	We have by \eqref{Eq:PolyaUrnXn} that
	\begin{align}\label{Eq:PolyaUrnLargeN}
		\nP\left( \frac{X_n}{n+2} \leq x \right) = \nP(X_n \leq (n+2)x) = \frac{\lfloor (n+2)x \rfloor}{n+1},
	\end{align}
	where $\lfloor (n+2)x \rfloor$ is the ``floor'' of $(n+2)x$, which is the greatest integer less than or equal to $(n+2)x$.  For large values of $n$, we would like to replace the floor with $(n+2)x$ and say that it makes very little difference.  Indeed, observe that
	\begin{align*}
		\frac{\lfloor (n+2)x \rfloor}{(n+2)x} &= \frac{(n+2)x - \big((n+2)x - \lfloor (n+2)x \rfloor\big)}{(n+2)x}\\
		&= 1 - \frac{(n+2)x - \lfloor (n+2)x \rfloor}{(n+2)x} \rightarrow 1
	\end{align*}
	as $n\rightarrow \infty$ since $0 \leq |(n+2)x - \lfloor (n+2)x \rfloor|\leq 1$.  Hence sending $n\rightarrow \infty$ in \eqref{Eq:PolyaUrnLargeN} is the same as
	\begin{equation*}
		\lim_{n \rightarrow \infty} \frac{(n+2)x}{n+1} = x. \qedhere
	\end{equation*}
\end{proof}

You will work with a more general P\'{o}lya urn scheme, starting with $a$ black balls and $b$ red balls, in Problem \ref{Prob:B3.5}.


\subsection{The P\'{o}lya urn and Bayesian statistics}

P\'{o}lya urns arise naturally in \emph{Bayesian statistics},\footnote{See the Wikipedia page for a helpful introduction to the topic.} in the context of the following simple problem: if we are flipping a coin with unknown probability $p$ of heads, how do we estimate $p$?  If $Y_n$ is the count of heads after $n$ tosses, then a reasonable guess is $\widehat{p} := Y_n/n$, the proportion of heads.\footnote{Indeed, in the statistics terminology this is the \emph{maximum likelihood estimator}, or the value of $p$ which has highest probability of being correct, given $Y_n$ is binomial.}

But what if $p$ itself is random?  In Bayesian statistics one assumes that $p$ has its own specific distribution, the \emph{prior belief}.  If we assume that $p$ is a $\Unif(0,1)$ random variable, then the sequence of the number of heads $(Y_1, Y_2, \ldots)$ turns out to be the \emph{same Markov chain} as the count of added black balls $(B_1, B_2, \ldots)$ for the P\'{o}lya urn.  This is another compelling and beautiful result about the P\'olya urn.

\begin{theorem}\label{Thm:RandomCoinFlips}
	Let $U$ be a $\Unif(0,1)$ random variable.  Given that $U=p$, let $I_n$, $n=1,2,\ldots$, be iid Bernoulli random variables with probability of success $p$.  Let $Y_n := \sum_{k=1}^n I_k$ be the number of successes after $n$ trials. Then the process $(1+Y_n)_{n \geq 1}$ is a P\'{o}lya urn starting with one black and one red ball.
\end{theorem}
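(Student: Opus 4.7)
The plan is to establish that $(1+Y_n)_{n \geq 0}$ and the P\'olya urn $(X_n)_{n \geq 0}$ agree as stochastic processes by matching initial values and one-step transition probabilities at every stage. The initial check is immediate: $1+Y_0 = 1 = X_0$ (one black ball of two). Because both chains either stay put or increase by exactly $1$ at each step, everything reduces to computing $\nP(I_{n+1} = 1 \mid I_1, \ldots, I_n)$ in the Bayesian coin model and verifying that (i) it depends on the past only through $Y_n$, so that $(Y_n)$ is itself a Markov chain with respect to its own history, and (ii) its value matches the P\'olya transition $\nP(X_{n+1} = k+1 \mid X_n = k) = \tfrac{k}{n+2}$.

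First I would compute the joint probability mass of any particular flip sequence by conditioning on $U$ and integrating:
\begin{align*}
    \nP(I_1 = \epsilon_1, \ldots, I_n = \epsilon_n) = \int_0^1 p^{S_n}(1-p)^{n-S_n}\, dp = \frac{S_n!\,(n-S_n)!}{(n+1)!},
\end{align*}
where $S_n = \sum_{k=1}^n \epsilon_k$, and I use the Beta integral $\int_0^1 p^a(1-p)^b\, dp = a!\,b!/(a+b+1)!$. The salient feature is that this depends on the sequence only through its sum, which already echoes the computation in the proof of Theorem \ref{Thm:PolyaUrnpmf} showing that every ordering of $k$ blacks and $n-k$ reds in the urn has the same probability $k!(n-k)!/(n+1)!$.

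Using this formula twice gives the conditional probability directly: for any flip sequence in which $Y_n$ of the first $n$ tosses are ones,
\begin{align*}
    \nP(I_{n+1} = 1 \mid I_1, \ldots, I_n) = \frac{(Y_n+1)!\,(n-Y_n)!/(n+2)!}{Y_n!\,(n-Y_n)!/(n+1)!} = \frac{Y_n+1}{n+2}.
\end{align*}
This single identity does double duty: the right-hand side depends on the flip history only through $Y_n$, so $(Y_n)$ is Markov in its own filtration, and under the identification $X_n = 1+Y_n$ the transition probability $\nP(X_{n+1} = k+1 \mid X_n = k) = \tfrac{k}{n+2}$ is exactly reproduced. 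The complementary transition (staying put) then matches automatically, completing the equality of joint distributions.

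The subtle point — which I expect to be the main conceptual obstacle — is that $(Y_n)$ is \emph{not} Markov in the larger filtration that reveals $U$: conditional on $U$, each $I_{n+1}$ is independent Bernoulli$(U)$, so the past is informative only because it informs us about $U$. What rescues the Markov property in the unenlarged filtration is Beta--Bernoulli conjugacy: the posterior $U \mid (I_1, \ldots, I_n)$ is $\mathrm{Beta}(Y_n+1,\, n-Y_n+1)$, a distribution determined by $Y_n$ alone, whose mean $\tfrac{Y_n+1}{n+2}$ is precisely the P\'olya reinforcement fraction. The integral computation above silently performs this Bayesian update, but the conceptual message of the theorem is that P\'olya reinforcement and Beta--Bernoulli conjugacy are two faces of the same probabilistic coin.
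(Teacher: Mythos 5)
Your proof is correct, and its core is the same as the paper's: condition on $U$, integrate out the prior, and identify the transition probability as a ratio of two Beta-type integrals equal to $\frac{k+1}{n+2}$. Two differences are worth recording. First, you condition on the entire flip history $(I_1,\ldots,I_n)$ and observe that the answer depends on the past only through $Y_n$, whereas the paper conditions only on the event $\{Y_n=k\}$ and computes $\nP(Y_{n+1}=k+1\mid Y_n=k)$. Your version is the more careful one: to conclude that $(1+Y_n)$ \emph{is} a Markov chain with the P\'olya transition probabilities, one must check that the conditional law of $Y_{n+1}$ given the whole past depends only on $Y_n$, which is exactly what your sequence-level computation delivers and what the paper's count-level computation leaves implicit. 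Second, you quote the Beta integral $\int_0^1 p^a(1-p)^b\,dp = a!\,b!/(a+b+1)!$ outright, whereas the paper derives the special case $\int_0^1\binom{n}{k}p^k(1-p)^{n-k}\,dp = \frac{1}{n+1}$ from scratch via a generating-function identity in an auxiliary parameter $\lambda$ (equation \eqref{Eq:BayesianPolyaIntFormula}); the two are equivalent, and your route is shorter if the Beta integral is taken as known, while the paper's stays self-contained. Your closing observations --- that the sequence probabilities depend only on the sum, echoing the exchangeability computation in the proof of Theorem \ref{Thm:PolyaUrnpmf}, and that the whole argument is Beta--Bernoulli conjugacy in disguise --- are accurate and go beyond what the paper says.
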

The precise meaning of the conclusion is that $(1+Y_n)_{n \geq 1}$ is a Markov chain on $\mathbb{N}$ with the same transition probabilities as the P\'olya urn.  

This result is a consequence of De Finetti's Theorem,\footnote{A statement and proof may be found in \cite{Durrett}, for example.} which is sometimes called the ``fundamental theorem of Bayesian statistics.''  We will tackle the proof from a more elementary angle, though.

\begin{proof}
	Let $(X_n)_{n \geq 1}$ be the P\'{o}lya urn.  Note that
	\begin{align*}
		\nP(X_{n+1} = k+2 \, | \, X_n = k+1) = \frac{k+1}{n+2},
	\end{align*}  
	and thus we want to show
	\begin{align}\label{Eq:BayesianPolyaGoal}
		\nP(Y_{n+1} = k+1 \, | \, Y_n = k) &= \nP(1 + Y_{n+1} = k+2 \, | \, 1 +Y_n = k+1) \notag\\
		&= \frac{k+1}{n+2}.
	\end{align}
	In fact, this is all we need to show: since $Y_{n+1} - Y_n \in \{0,1\}$, this implies
	\begin{align*}
		\nP(Y_{n+1} = k \, | \, Y_n = k) = 1 - \frac{k+1}{n+2} = \nP(X_{n+1} = k+1 \, | \, X_n = {k+1}),
	\end{align*}
	completing the proof.
	
	How do we obtain \eqref{Eq:BayesianPolyaGoal}?  Start with the definition
	\begin{align}\label{Eq:BayesianPolya2}
		\nP(Y_{n+1} = k+1 \, | \, Y_n = k) = \frac{\nP(Y_{n+1}=k+1, Y_n = k)}{\nP(Y_n=k)},
	\end{align}
	and note that $\nP(Y_n=k)$ is obvious if we are given $U$:
	\begin{align*}
		\nP(Y_n = k \, | \, U=p) = \binom{n}{k}p^k (1-p)^{n-k},
	\end{align*}
	since $Y_n$ is $\Bin(n,p)$ given $U=p$.  The tower property of conditional expectations thus gives
	\begin{align*}
		\nP(Y_n = k) = \nE(\nP(Y_n = k \, | \, U)) = \int_0^1 \binom{n}{k}p^k (1-p)^{n-k} dp,
	\end{align*}
	where the expectation is taken with respect to $U$, and we are integrating against the density $f_U(p) \equiv 1$ of $U$.  We similarly have
	\begin{align*}
		\nP(Y_{n+1}=k+1, Y_n = k) = \int_0^1 \binom{n}{k}p^{k+1} (1-p)^{n-k} dp.
	\end{align*}
	Combining \eqref{Eq:BayesianPolyaGoal} and \eqref{Eq:BayesianPolya2}, we thus need to show
	\begin{align}\label{Eq:BayesianPolya3}
		\frac{\int_0^1 \binom{n}{k}p^{k+1} (1-p)^{n-k} dp}{\int_0^1 \binom{n}{k}p^k (1-p)^{n-k} dp} = \frac{k+1}{n+2}.
	\end{align}
	The pressing question, then, is how to evaluate integrals like $\int_0^1 \binom{n}{k}p^k (1-p)^{n-k} dp$.
	Two clever ideas come to the rescue: first, introduce a parameter $\lambda >0$, and then note that
	\begin{align*}
		\sum_{k=0}^n \binom{n}{k} \lambda^k p^k (1-p)^{n-k} = (\lambda p + 1-p)^n
	\end{align*}
	by the binomial theorem.  Now we have something that we can easily integrate on the right-hand side:
	\begin{align*}
	\sum_{k=0}^n \lambda^k \int_0^1 \binom{n}{k}  p^k (1-p)^{n-k}dp &= \int_0^1 (\lambda p + 1-p)^n dp\\ 
	&= \frac{1}{n+1} \frac{\lambda^{n+1} -1}{\lambda -1} = \frac{1}{n+1} \sum_{k=0}^n \lambda^k.
	\end{align*}
	Since $\lambda$ is arbitrary, the coefficients of $\lambda^k$ on both sides must agree:
	\begin{align}\label{Eq:BayesianPolyaIntFormula}
		\int_0^1 \binom{n}{k}  p^k (1-p)^{n-k}dp = \frac{1}{n+1}
	\end{align}
	for all $k=0,1,\ldots, n$.  Hence we have the denominator of \eqref{Eq:BayesianPolya3}.  But also the numerator, if we note
	\begin{align*}
		\int_0^1 \binom{n}{k}p^{k+1} (1-p)^{n-k} dp &= \frac{\binom{n}{k}}{\binom{n+1}{k+1}} \int_0^1 \binom{n+1}{k+1} p^{k+1} (1-p)^{n+1-(k+1)} dp\\
		&= \frac{\binom{n}{k}}{\binom{n+1}{k+1}}\cdot \frac{1}{n+2}
	\end{align*}
	by \eqref{Eq:BayesianPolyaIntFormula} with $n$ replaced by $n+1$ and $k$ by $k+1$.  Therefore,
	\begin{align*}
		\frac{\int_0^1 \binom{n}{k}p^{k+1} (1-p)^{n-k} dp}{\int_0^1 \binom{n}{k}p^k (1-p)^{n-k} dp} &= \frac{\frac{\binom{n}{k}}{\binom{n+1}{k+1}}\cdot \frac{1}{n+2}}{ \frac{1}{n+1}}\\
		&= \frac{n!}{k!(n-k)!} \cdot \frac{(k+1)!(n-k)!}{(n+1)!} \cdot \frac{n+1}{n+2}\\
		&= \frac{k+1}{n+2},
	\end{align*}
	which was exactly our goal.
\end{proof}
We close by re-interpreting the uniform limit of $\frac{X_n}{n+2}$ in Theorem \ref{Thm:PolyaAsymptotics} in light of this result.  We have, by Theorem \ref{Thm:RandomCoinFlips}, that 
\begin{align}\label{Eq:PolyaLimitCoinFlips}
    \lim_{n \rightarrow \infty} \frac{X_n}{n+2} = \lim_{n \rightarrow \infty}\frac{1 + Y_n}{n+2} = \lim_{n \rightarrow \infty}\frac{Y_n}{n} = \lim_{n \rightarrow \infty}\frac{1}{n} \sum_{k=1}^n I_k,
\end{align}
which is a Unif$(0,1)$ random variable by Theorem \ref{Thm:PolyaAsymptotics}.  Given the right-hand side of \eqref{Eq:PolyaLimitCoinFlips}, we can thus interpret Theorem \ref{Thm:PolyaAsymptotics} as a type of law of large numbers.  When $p$ is fixed, the Law of Large Numbers says that the right-hand side of \eqref{Eq:PolyaLimitCoinFlips} converges with probability 1 to the number $p$ (add reference?).  Theorem \ref{Thm:PolyaAsymptotics} says that when $p$ is Unif$(0,1)$, the limit still recovers the correct probability distribution.

\vfill
\pagebreak
\section*{Problems for chapter \ref{Chapter:ClassicalModels}}
\addcontentsline{toc}{section}{Problems for chapter \ref{Chapter:ClassicalModels}}

\begin{problem}\label{Prob:2Hitting}
Consider the random walk on $\{1,2,3,4,5\}$ with the following transition matrix:
\[
\begin{bmatrix}
\frac{1}{2} & \frac{1}{2} & 0 & 0 & 0\\
\frac{1}{2} & 0 & \frac{1}{2} & 0 & 0 \\
0 & \frac{1}{2} & 0 & \frac{1}{2} & 0 \\
0 & 0 & \frac{1}{2} & 0 & \frac{1}{2} \\
0 & 0 & 0 & \frac{1}{2} & \frac{1}{2}
\end{bmatrix}.
\]  
\begin{enumerate}[$(a)$]
\item Let $f_k=\nE_k(\tau_4)$ be the expected hitting time of $4$ starting at $k$. Find $f_k$ for $k=1,2,3,4,5$. 
\item Let $\tau$ be the first hitting time of $\{1,5\}$. Compute $\nE_3(\tau)$.
\end{enumerate}  
\end{problem}

\begin{problem}\label{Prob:A2Urn}  Consider an urn with balls of three colors: red, blue, and green. The total number of balls is $N$. Run the following Markov chain on the state space 
\begin{align*}
    \Omega=\{ (i,j,k) \in \mathbb{Z}^3:\; i\ge 0, j\ge 0, k\ge 0,\; i+j+k=N\}.
\end{align*} 
Pick a ball at random. If it is red, replace it with a blue or a green ball with equal probability. Similarly, if it is blue, replace it with a red or a green with equal probability. Finally, if it is green, replace it with a red or a blue with equal probability. Find a reversible stationary distribution for this chain.  (\emph{Hint:} Think of a standard named distribution that is a good guess for $\pi$. Then verify the DBEs to check for reversibility under $\pi$.) 

\end{problem}


\begin{problem}\label{Prob:A4.1} Consider the Gambler's ruin problem for the biased random walk on the line. That is, consider the chain $(X_0, X_1, \ldots)$ on the integers $\mathbb{Z}$ where the transition probabilities are $P(j, j+1)=p$ and $P(j, j-1)=1-p$, for some $0< p < 1$, $p \neq 1/2$. Fix integers $0 < n$ and let $\tau = \tau_{\{0,n\}}$ be the hitting time of $\{0, n\}$. \begin{enumerate}[$(a)$]
	\item Show that 
\[
\nP_k(X_\tau=n) = \frac{(q/p)^k - 1}{(q/p)^n - 1}, \quad \text{where}\; q=1-p.
\]
\item Show that
\begin{align*}
	\lim_{p \rightarrow 1/2} \frac{(q/p)^k - 1}{(q/p)^n - 1} = \frac{k}{n},
\end{align*}
and explain the significance of this limit in this context.
\end{enumerate}
\end{problem}


\begin{problem}\label{Prob:A4.5}  Consider the star graph that has one center vertex $0$ that is connected to $N$ arms of lengths $l_1, l_2, \ldots, l_N$, for positive integers $l_i$'s. See Figure \ref{Fig:StarGraphEx} for an example with $N=3$, $l_1=3, l_2=4$ and $l_3=1$. Suppose you start a random walk on this graph starting at the center $0$. What is the probability that it will reach the end of the first arm before it reaches the end of any other arm? Write it as a formula in $l_1, \ldots, l_N$.  Your formula should give $\frac{4}{19}$
 for the graph in Figure \ref{Fig:StarGraphEx}.
 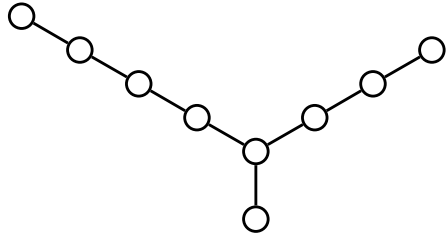
\begin{figure}
    \centering
    
    \scalebox{0.9}{
    \begin{tikzpicture}
    \tikzset{
        node/.style={circle, draw=black, very thick},
        arrow/.style={-{Latex[length=2mm]}, black},
        line/.style={black, very thick}
        }
    
    \node[node] (zero) at (0,0) {};
    
    \node[node] (one) at (0,-1) {};

    \node[node] (two) at (0.87,0.5) {};
    \node[node] (three) at (1.73,1) {};
    \node[node] (four) at (2.6,1.5) {};
    
    \node[node] (five) at (-0.87,0.5) {};
    \node[node] (six) at (-1.73,1) {};
    \node[node] (seven) at (-2.6,1.5) {};
    \node[node] (eight) at (-3.46,2) {};
     
    \draw[line] (zero) -- (one);
    \draw[line] (zero) -- (two);
    \draw[line] (zero) -- (five);
    \draw[line] (two) -- (three);
    \draw[line] (three) -- (four);
    \draw[line] (five) -- (six);
    \draw[line] (six) -- (seven);
    \draw[line] (seven) -- (eight);

    \end{tikzpicture}}
    
    \caption{\small A star-like graph considered in problem \ref{Prob:A4.5}.}
    \label{Fig:StarGraphEx}
\end{figure}

\end{problem}

\begin{problem}\label{Prob:B3.4} Consider the random walk on the $n$-cycle. Suppose the random walk starts at $1$. We are interested in the expectation of the cover time $\tau_n$ which is the first time when the walk has visited every vertex of the cycle.  

\begin{enumerate}[$(a)$]
\item Let $\tau_{n-1}$ be the first time when all but exactly one of the vertices of the $n$-cycle has been visited. That is, $X_{\tau_{n-1}}$ visits the last but one new vertex to be visited. Use Gambler's Ruin to show that 
\[
\nE\left( \tau_n - \tau_{n-1}\right)= n-1.
\] 
\emph{Hint:} Look at the set of all already visited vertices at $\tau_{n-1}$. Suppose $X_{\tau_{n-1}}=5$ and vertex $6$ is the last vertex to be yet visited. Then the random walk can either go to $6$ from $5$, or go all the way in the reverse direction and get to $6$ from $7$. 
 
\item Repeat the argument above to show that if $\tau_k$ is the stopping time which is the first time $k$ vertices in the cycle have been visited, then 
\[
\nE(\tau_k - \tau_{k-1})=k-1.
\]
\item Show that the expected cover time is given by the formula
\[
\nE(\tau_n)= \binom{n}{2}.
\]
\end{enumerate}
\end{problem}

\begin{problem}\label{Prob:Lec28.2025}
Consider again the random walk on the $n$-cycle ($n\ge 2$) and suppose that the random walk starts at $1$. Let $L$ be the last new vertex that the walker visits. Prove that $L$ is uniformly distributed over $\{2, \ldots, n\}$, i.e., $\nP(L=k)=\frac{1}{n-1}$, for $k\in \{2,3,\ldots, n\}$. Say $n=5$ and the random walker visits vertices in the following sequence $1,2,1,5,4,5,4,3$. Then $L=3$ is the last new vertex visited. (This is a surprising result since you'd expect vertices further away from $1$ to be more likely to be visited last.) 

\emph{Hint:} Consider the event $\{L=k\}$. This can happen in two disjoint ways. Either the walker visits $k-1$ before $k+1$, and then visits $k+1$ before $k$. Or, the walker visits $k+1$ before $k-1$ and then visits $k-1$ before $k$. Use Gambler's ruin to compute the two probabilities. 
\end{problem}

\begin{problem}\label{Prob:B3.5} Fix $a,b \in \mathbb{N}$ and consider a P\'olya urn starting with $a$ black balls and $b$ red balls. As usual, for each turn you pick a ball at random and then return it to the urn along with a ball with the same color. Let $X_n$ denote the number of black balls in the urn after the $n$th turn is completed. 
\begin{enumerate}[$(a)$]
\item Show that, for all $n=1,2,3,\ldots$, 
\[
\nE\left( \frac{X_n}{n+a+b} \mid X_{n-1} \right)= \frac{X_{n-1}}{n+a+b-1}.
\]
\item Use the above (or otherwise) to compute $\nE(X_n)$ for every $n$.  Your answer should only depend on $a$, $b$ and $n$.
\item Use the above (or otherwise) to show the amazing formula
\[
\nP(\text{$n$th pick is black})=\frac{a}{a+b}, \quad \text{for all $n\ge 1$}.
\]  
\item Show that, similarly, 
\[
\nP(\text{$n$th pick is black} \mid \text{first two picks are black})=\frac{a+2}{a+b+2}, \quad \text{for all $n\ge 3$}.
\]  
\end{enumerate}
\end{problem}

\begin{problem}\label{Prob:B3.6} Imagine the queue in front of an ice cream shop. Suppose there are currently $k$ people standing in the queue. In the next time period, the first person in the queue gets his/her ice cream and leaves with probability $1/2$. Independently, another person joins the queue with probability $1/2$. If the queue is currently empty, there is $1/2$ probability that someone will join the queue in the next time period. Suppose the shop can only accommodate at most a queue of $N$ people after which no more new customers are allowed to join the queue until someone leaves. 

\begin{enumerate}[$(a)$] 
\item Let $X_n$ be the number of people standing in the queue at time $n$. This is a Birth-and-Death chain. Find its state space and transition probabilities. Is it irreducible? Aperiodic? 
\item What is the probability that starting with a $k$ person queue, the shop will become full before it completely empties?    
\item Find the stationary distribution for this chain.  
\item Suppose $N=\infty$, i.e., there is no upper bound on the length of the queue. Consider a lazy random walk $\{ S_k,\; k=0,1,2,3,\ldots\}$ on the integers with transition probabilities
\[
p(j,j+1)=\frac{1}{4}, \; p(j,j-1)=\frac{1}{4}, \; p(j,j)=\frac{1}{2}.
\] 
Show that the absolute value process $\left\{ \abs{S_k},\; k=0,1,2,3, \ldots \right\}$ is the same Birth-and-Death chain as $X$ by find its transition probabilities. 
\item Find the expected time it will take from an empty queue ($0$ people) to become full ($N$ people) for the first time.  
\end{enumerate}
\end{problem}

\begin{problem}\label{Prob:A3.2} Consider a sequence of coin flips with $\nP(H)=p$, for some $0 < p < 1$. Define a Markov chain with state space $\{ TT,TH,HT,HH\}$ which shows the outcome of the previous toss and the current one. For example, $X_n=TH$ means that the $(n-1)$th toss was $T$ and the $n$th toss was $H$. 
\begin{enumerate}[$(a)$]
\item Start flipping the coin.  How many tosses, on average, does it take to get two heads in a row?  Use Markov chain methods.
\item As we flip, how many times do we expect to see $HT$ before getting two heads in a row? 
\end{enumerate}
\end{problem}

\vfill
\pagebreak


\chapter{Asymptotic behavior of Markov chains}\label{Ch:Asymptotics}

\section{Asymptotics of Markov chains}\label{Sec:AsymptoticsIntro}

Many problems in Chapter 1 asked you to compute high powers of a transition matrix $P$.  The idea was to help you develop some intuition for the long-term behavior of the chain.  This is a crucial point of analysis: after a large number of steps $n$, what is the probability distribution of $X_n$?  You may have noticed that often, although not always, high powers $P^n$ of $P$ became close to the matrix where every row is the stationary distribution $\pi$.  This striking fact suggests that the distribution of $X_n$ limits to nothing other than the chain's stationary distribution, irrespective of the starting state.

In this chapter we make this observation rigorous, and provide assumptions that ensure this occurs.  The conditions are simple and familiar: the chain must be irreducible and aperiodic.   Our first main theorem in this chapter, Theorem \ref{Thm:ConvToStationary}, says that such chains converge to their stationary distribution.  More than that, it says that the convergence occurs \emph{exponentially fast}.  This is why the limiting behavior is often apparent after a relatively small number of steps.

What does it mean, however, for the distribution of $X_n$ to \emph{converge} to a given distribution?  In order to make sense of this we need some way of describing how far two probability distributions are from each other.  To this end we use the \emph{total variance distance}, which we introduce and study in \S\ref{Sec:TotalVariation}.  

Prior to that, however, we consider in \S\ref{Sec:Lazy} how to handle the absence of one of the above conditions.  Namely, what if our chain is \emph{not} aperiodic? Recall the example of the 6-cycle, which has period 2.  We saw in Example \ref{Eg:Hexagon2} that the 50th and 51st powers of the transition matrix are
\begin{align*}
P^{50} &\approx        \left[
\begin{array}{cccccc}
 0.333 & 0 & 0.333 & 0 & 0.333 & 0 \\
 0 & 0.333 & 0 & 0.333 & 0 & 0.333 \\
 0.333 & 0 & 0.333 & 0 & 0.333 & 0 \\
 0 & 0.333 & 0 & 0.333 & 0 & 0.333 \\
 0.333 & 0 & 0.333 & 0 & 0.333 & 0 \\
 0 & 0.333 & 0 & 0.333 & 0 & 0.333 \\
\end{array}
\right]
\end{align*}
and
\begin{align*}
P^{51} &\approx \left[
\begin{array}{cccccc}
 0 & 0.333 & 0 & 0.333 & 0 & 0.333 \\
 0.333 & 0 & 0.333 & 0 & 0.333 & 0 \\
 0 & 0.333 & 0 & 0.333 & 0 & 0.333 \\
 0.333 & 0 & 0.333 & 0 & 0.333 & 0 \\
 0 & 0.333 & 0 & 0.333 & 0 & 0.333 \\
 0.333 & 0 & 0.333 & 0 & 0.333 & 0 \\
\end{array}
\right].
\end{align*}
There is no hope for a limiting distribution here over \emph{all} steps, because on each even step the chain can only be at half of the vertices, while on each odd step it can only be on the other half.  So, just like for the sequence of real numbers $\{(-1)^n\}$,  there is no limit.  

We have a simple work-around, though, which you have also already encountered in Problem \ref{Prob:A1.5}.  Making the chain \emph{lazy} turns it into an aperiodic chain, while crucially not altering the stationary distribution.  Hence by our upcoming  convergence theorem, the lazy version of the walk will still converge to stationarity.

\subsection{Lazy Markov chains}\label{Sec:Lazy}


\begin{definition}
	For a Markov chain $X$ with $n\times n$ transition matrix $P$, the \textbf{lazy version of $X$} is the Markov chain $\tilde{X}$ with transition matrix
	\begin{align}\label{Eq:LazyTransMatrix}
		\tilde{P} := \frac{1}{2}(I_n+ P),
	\end{align}
	where $I_n$ is the $n\times n$ identity matrix.
	\end{definition}
\begin{example}
    Let's think about the 6-cycle again, whose transition matrix is
\begin{align*}
   P= \begin{bmatrix}
        0 & 1/2 & 0 & 0 & 0 & 1/2\\
        1/2 & 0 & 1/2 & 0 & 0 & 0\\
        0 & 1/2 & 0 & 1/2 & 0 & 0\\
        0 & 0 & 1/2 & 0 & 1/2 & 0\\
        0 & 0 & 0 & 1/2 & 0 & 1/2\\
        1/2 & 0 & 0 & 0 & 1/2 & 0
    \end{bmatrix},
\end{align*}
and hence whose lazy version has transition matrix
\begin{align*}
   \tilde{P}= \begin{bmatrix}
        1/2 & 1/4 & 0 & 0 & 0 & 1/4\\
        1/4 & 1/2 & 1/4 & 0 & 0 & 0\\
        0 & 1/4 & 1/2 & 1/4 & 0 & 0\\
        0 & 0 & 1/4 & 1/2 & 1/4 & 0\\
        0 & 0 & 0 & 1/4 & 1/2 & 1/4\\
        1/4 & 0 & 0 & 0 & 1/4 & 1/2
    \end{bmatrix}.
\end{align*}
What has happened?  You can see the effect of adding the identity matrix is to make each diagonal entry $\tilde{P}_{jj}>0$.  That is, now we have the possibility of remaining at the same state for a unit of time.  In other words, loops are added in the graph.  We could say that we now have to flip a coin at the start of each step.  Heads, we stay at the same vertex.  Tails, we move according to the original transition probabilities.  
\end{example}
The lazy walk has the following properties, the most important of which are $(iii)$ and $(iv)$: our lazy walk is \emph{always} aperiodic, which is obvious from the loops, and furthermore we have not disrupted the stationary distribution.
\begin{lemma}
    The lazy random walk defined by \eqref{Eq:LazyTransMatrix} satisfies the following.
    \begin{enumerate}[$(i)$]
        \item $\tilde{P}$ is a transition matrix.
        \item If the original chain $X$ is irreducible, so is the lazy chain $\tilde{X}$.
        \item $\tilde{X}$ is aperiodic.
        \item If $\pi$ is stationary for $P$, $\pi$ is also stationary for $\tilde{P}$.
    \end{enumerate}
\end{lemma}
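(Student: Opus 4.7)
The plan is to verify each property directly from the definition $\tilde{P} = \frac{1}{2}(I_n + P)$, working component-wise when useful and otherwise using matrix algebra. None of the four items should require heavy machinery; the main payoff is (iv), and the only mildly interesting step is (ii).

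For (i), I would compute a generic entry: $\tilde{P}_{ij} = \tfrac{1}{2}\delta_{ij} + \tfrac{1}{2}P_{ij}$, which is a sum of two non-negative quantities and hence non-negative. Row sums then give $\sum_j \tilde{P}_{ij} = \tfrac{1}{2} + \tfrac{1}{2}\sum_j P_{ij} = \tfrac{1}{2}+\tfrac{1}{2} = 1$, since $P$ is stochastic. For (iv), I would observe that matrix multiplication distributes over the sum, so $\pi \tilde{P} = \tfrac{1}{2}(\pi I_n + \pi P) = \tfrac{1}{2}(\pi + \pi) = \pi$, using stationarity of $\pi$ for $P$ in the last step. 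For (iii), the key remark is that every diagonal entry satisfies $\tilde{P}(x,x) = \tfrac{1}{2} + \tfrac{1}{2}P(x,x) \geq \tfrac{1}{2} > 0$. Hence $1 \in T(x) := \{k : \tilde{P}^k(x,x) > 0\}$, forcing $\gcd T(x) = 1$, which is exactly the statement of aperiodicity.

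The only step with a little content is (ii). I would expand $\tilde{P}^k$ via the binomial theorem, which is valid because $I_n$ commutes with $P$:
\begin{align*}
\tilde{P}^k = \frac{1}{2^k}(I_n + P)^k = \frac{1}{2^k}\sum_{j=0}^{k} \binom{k}{j} P^j.
\end{align*}
Given states $x,y \in \Omega$, irreducibility of $X$ supplies a $k \geq 1$ with $P^k(x,y) > 0$. Then
\begin{align*}
\tilde{P}^k(x,y) = \frac{1}{2^k}\sum_{j=0}^{k}\binom{k}{j}P^j(x,y) \geq \frac{1}{2^k}\binom{k}{k}P^k(x,y) = \frac{1}{2^k}P^k(x,y) > 0,
\end{align*}
since every term in the binomial sum is non-negative. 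Hence $\tilde{X}$ is irreducible.

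The main obstacle, if any, is just checking that the binomial expansion in (ii) is legitimate — which it is because $I_n$ and $P$ commute — and being careful that the lower bound selects a single, positive summand rather than attempting to track the full sum. Everything else is a one-line computation, and the four items together complete the lemma.
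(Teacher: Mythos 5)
Your proof is correct, and items $(i)$, $(iii)$, and $(iv)$ run exactly as in the text: the row-sum computation for $(i)$, the observation that $\tilde{P}(x,x)\geq \tfrac12>0$ puts $1$ in the return-time set for $(iii)$, and the one-line linearity computation $\pi\tilde{P}=\tfrac12(\pi+\pi P)=\pi$ for $(iv)$ (which the text delegates to an exercise).

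The one place you genuinely diverge is $(ii)$. The text disposes of irreducibility with the informal remark that adding loops to the graph does not disconnect it, whereas you give an algebraic argument: expand $\tilde{P}^k=2^{-k}\sum_{j=0}^k\binom{k}{j}P^j$ (legitimate since $I_n$ commutes with everything) and bound below by the single non-negative summand $2^{-k}P^k(x,y)>0$. This is a real improvement in rigor, and the same inequality $\tilde{P}^k(x,y)\geq 2^{-k}P^k(x,y)$ could be obtained even more cheaply by induction from the entrywise bound $\tilde{P}(x,y)\geq\tfrac12 P(x,y)$, without invoking the binomial theorem at all. Either way, your version is the one you would want if you ever needed a quantitative statement (e.g., comparing $n$-step transition probabilities of the lazy and original chains), while the text's version buys brevity at the cost of leaving the reader to fill in why ``adding loops'' preserves connectivity at the level of the transition matrix.
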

\begin{proof}
For $(i)$, all the entries of $\tilde{P}$ are obviously non-negative.  So we just need the rows to sum to one.  Setting $\mathbf{1} := (1,1, \ldots, 1)$, the row vector of all ones, we have
\begin{align*}
	\tilde{P} \mathbf{1}^T = (\frac{1}{2}I + \frac{1}{2} P)\mathbf{1}^T &= \frac{1}{2} \mathbf{1}^T + \frac{1}{2} P \mathbf{1}^T = \frac{1}{2} \mathbf{1}^T + \frac{1}{2} \mathbf{1}^T = \mathbf{1}^T,
\end{align*}
as needed.  

For $(ii)$, we have added loops to the chain, which does not disconnect it. 

To see $(iii)$, observe that we can always return to a given state in one step.
\end{proof}
\noindent It is a good (and simple) exercise for you to prove part  $(iv)$.  
\begin{exercise}\label{Ex:LazyStationary}
	Suppose $P$ has a stationary distribution $\pi$.  Show that $\pi$ is also stationary for $\tilde{P}$.
\end{exercise}

How do hitting times change under $\tilde{P}$?  If you have to flip a coin and get heads to be allowed to move each step, how would that affect the number of steps you need to reach $A \subset \Omega$?  The following theorem answers this, but we encourage you to guess for yourself before reading it.  What do you intuitively think?

\begin{theorem}
	Let $X$ be an irreducible chain on $\Omega$, $A \subset \Omega$, and let $\tau_A$ be the hitting time of $A$.  Let $f_k := \nE_k(\tau_A)$ be the expected hitting time of $A$ under $P$ starting from $k$, and $\tilde{f}_k := \tilde{\nE}_k(\tau_A)$ the corresponding expectation under the lazy chain $\tilde{P}$.  Then
	\begin{align*}
		\tilde{f}_k = 2f_k \quad \text{ for all } k.
	\end{align*}
\end{theorem}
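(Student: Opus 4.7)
The plan is to use first-step analysis, following the pattern established in the proofs of Theorem \ref{Thm:GamblersRuinTimes} and Lemma \ref{Lemma:StationaryPositive}. The intuition is that the lazy chain only accomplishes a ``true'' step of the original chain when its auxiliary fair coin lands heads, which takes an expected $1/(1/2) = 2$ flips per effective step. So $\tilde{\tau}_A$ is morally a sum of $\tau_A$ independent Geo$(1/2)$ waiting times, suggesting $\tilde{f}_k = 2 f_k$.

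First I would derive recursions for both $f_k$ and $\tilde{f}_k$ by conditioning on the first step. For $k \in A$ both are zero. For $k \notin A$, the standard first-step argument (as in \eqref{Eq:GRHTAveraging}) yields $f_k = 1 + \sum_j P(k,j)\,f_j$, while for the lazy chain, using $\tilde{P} = \tfrac12 I + \tfrac12 P$ and the Markov property to ``restart'' after the first step,
\[
\tilde{f}_k \;=\; 1 + \tfrac12 \tilde{f}_k + \tfrac12 \sum_j P(k,j)\,\tilde{f}_j,
\]
which rearranges to $\tilde{f}_k = 2 + \sum_j P(k,j)\,\tilde{f}_j$. Multiplying the recursion for $f_k$ by $2$ then shows that $(2f_k)_{k\in\Omega}$ satisfies precisely the same recursion as $(\tilde{f}_k)$, with the same boundary value $2 f_k = 0 = \tilde{f}_k$ on $A$.

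To conclude I would invoke uniqueness: the recursion $g_k = 2 + \sum_{j\notin A} P(k,j)\,g_j$ for $k\notin A$ takes the matrix form $(I-Q)\,\mathbf{g} = 2\,\mathbf{1}$, where $Q$ is the submatrix of $P$ indexed by $\Omega\setminus A$. In the proof of Theorem \ref{Thm:ExpectedHittingTimes} we already established that $I - Q$ is invertible (with $(I-Q)^{-1} = \sum_{\ell\ge 0} Q^\ell$), so this linear system has a unique solution and hence $\tilde{f}_k = 2 f_k$ for every $k$.

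I anticipate the main subtlety will be cleanly justifying the recursion step for $\tilde{f}_k$, which requires invoking the Markov property for $\tilde{P}$ together with the observation that $\tilde{\tau}_A \geq 1$ when $k \notin A$, so the initial step contributes exactly $1$ to the hitting time whether the coin lands heads or tails. As an alternative (and perhaps more illuminating) route, I would sketch a coupling proof: construct $\tilde{X}_n = X_{N_n}$, where $N_n = \xi_1 + \cdots + \xi_n$ for iid Bernoulli$(1/2)$ random variables $(\xi_j)$ independent of $(X_k)$. Then $\tilde{\tau}_A = T_{\tau_A}$, where $T_m$ is the time of the $m$-th head in the coin sequence, and since $\tau_A$ is independent of the gap sequence $(T_m - T_{m-1})_{m\ge 1}$ with each gap having mean $2$, a Wald-type argument gives $\tilde{\nE}_k(\tau_A) = 2\,\nE_k(\tau_A)$ directly.
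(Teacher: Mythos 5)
Your proof is correct, and it reaches the result by a somewhat different route than the text. The book's proof is a two-line matrix computation: since Theorem \ref{Thm:ExpectedHittingTimes} gives $f_k$ as a row sum of $M=(I-Q)^{-1}$, one just notes that the lazy chain's matrix is $\tilde{M}=\bigl(I-\tfrac{1}{2}(I+Q)\bigr)^{-1}=\bigl(\tfrac{1}{2}(I-Q)\bigr)^{-1}=2M$, and the row sums double. Your recursion-plus-uniqueness argument is ultimately the same piece of linear algebra read probabilistically: your system $(I-Q)\mathbf{g}=2\,\mathbf{1}$ is exactly the statement that $\mathbf{g}=2M\mathbf{1}^T$, so the two proofs differ mainly in whether one computes $\tilde{M}$ directly or characterizes its row sums as the unique solution of the first-step equations. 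What your version buys is that it does not need the explicit formula \eqref{Eq:ExpectedHittingTime} as a black box, only the invertibility of $I-Q$; one small point to make explicit is that rearranging $\tilde{f}_k = 1 + \tfrac{1}{2}\tilde{f}_k + \tfrac{1}{2}\sum_j P(k,j)\tilde{f}_j$ presupposes $\tilde{f}_k<\infty$, which you should justify by noting that $\tilde{P}$ is itself irreducible so Theorem \ref{Thm:ExpectedHittingTimes} (or Lemma \ref{Lemma:StationaryPositive}) applies to it as well. Your coupling sketch, $\tilde{X}_n = X_{N_n}$ with $\tilde{\tau}_A = T_{\tau_A}$ and a Wald-type identity using the independence of $\tau_A$ from the coin sequence, is genuinely different from anything in the chapter and is arguably the most illuminating of the three, since it explains \emph{why} the factor is exactly $2$ rather than merely verifying it.
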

For simplicity we assume that $X$ is irreducible, although it is possible to make a more general statement. Our proof will use the following elementary fact.
\begin{exercise}
    Let $M$ be an invertible $n \times n$ matrix, and $c\neq 0$.  Show that $cM$ is invertible and that $(cM)^{-1} = c^{-1}M^{-1}$.
\end{exercise}
\begin{proof}
    Recall Theorem \ref{Thm:ExpectedHittingTimes} explicitly gives $f_k$ by the row-sum formula \eqref{Eq:ExpectedHittingTime}.  To find the times $\tilde{f}_k$, we simply need the matrix $\tilde{M}$ corresponding to $\tilde{P}$, which, recalling \eqref{Eq:ReturnTimeMMatrix}, is 
    \begin{align*}
        \tilde{M} = \Big(I - \frac{1}{2}(I+P)\Big)^{-1} = \Big(\frac{1}{2}(I - P)\Big)^{-1} = 2(I-P)^{-1} = 2M.
    \end{align*}
Thus the formula \eqref{Eq:ExpectedHittingTime} for $\tilde{\nE}_k(\tau_A)$ yields twice that for  $\nE_k(\tau_A)$.
\end{proof}

\subsection{Total variation distance of probability distributions}\label{Sec:TotalVariation}

We wish to say that the distribution
\begin{align*}
	P^n_{xk} := \nP(X_n = k \, | \, X_0 =x), \qquad k \in \{1,2,\ldots, N\}
\end{align*}
of the location of our chain after $n$ steps converges to some fixed probability distribution as $n \rightarrow \infty$.  However, we currently lack a precise meaning for the \emph{convergence} of a probability distribution.  We address this by introducing the \emph{total variation norm}, which gives us a way to measure the ``distance'' between two probability distributions.  Once we have this, we can say that a sequence of probability distributions $(\mu_0, \mu_1, \ldots )_{n \geq 0}$ converges to a distribution $q$ if the total variation between the $\mu_n$ and $q$ goes to zero.

\begin{definition}
	Let $\Omega = \{1,2, \ldots, n\}$ be a sample space with probability mass functions $p$ and $q$ on $\Omega$.  That is, $p$ and $q$ are functions from $\Omega$ to $[0,1]$ with $1 = \sum_{k=1}^np_k = \sum_{k=1}^n q_k$.  The \textbf{total variation (TV) distance} between $p$ and $q$ is
	\begin{align}\label{Def:TVNorm}
		\Norm{p-q}_{TV} := \frac{1}{2} \sum_{k=1}^n |p_k - q_k|.
	\end{align}
\end{definition}
\noindent Here $p_k = p(k)$ and $q_k = q(k)$ are, of course, the values $p$ and $q$ give to vertex $k \in \Omega$. We will interchangeably use ``TV-distance'' and ``TV-norm'' for $\|p-q\|_{TV}$.


The total variance distance satisfies some nice properties.  In particular, part $(iv)$ of the following lemma gives us helpful intuition: the TV-distance is the biggest discrepancy we can see when comparing the probability of the same event under both distributions.

\begin{lemma}\label{Lemma:TVNorm}
	Let $p,q$ and $r$ be probability distributions on $\Omega$.  The total variation norm has the following properties:
	\begin{enumerate}[$(i)$]
		\item It is a \emph{metric}.  That is, $\Norm{p-q}_{TV} \geq 0$, $\Norm{p-q}_{TV} = \Norm{q-p}_{TV}$, and 
		\begin{align*}
			\Norm{p-q}_{TV} \leq \Norm{p-q}_{TV} + \Norm{q-r}_{TV}.
		\end{align*}
		\item\label{Lemma:TVNormOneBound} $\Norm{p-q}_{TV} \leq 1$.
        \item 
        \begin{align}\label{Eq:TVBset}
        \|p-q\|_{TV} = \sum_{p_k\geq q_k}(p_k-q_k).
        \end{align}
		\item $|p_k - q_k| \leq 2 \Norm{p-q}_{TV}$ for all $k$.
		\item \label{Lemma:TVNormEventBound}\begin{align}\label{Eq:TVMax}
		    \Norm{p-q}_{TV} = \max_{A \in 2^\Omega}\,\big(\nP_p(A)-\nP_q(A)\big),
		\end{align}
		where $\nP_p(A)$ is the probability of the set $A$ under the distribution $p$, $\nP_q(A)$ its probability under $q$, and $2^\Omega$ the power set of $\Omega$.
	\end{enumerate}
\end{lemma}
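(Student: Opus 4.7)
The plan is to prove the five parts in the order $(i)$, $(iii)$, $(ii)$, $(iv)$, $(v)$, since part $(iii)$ is the structural identity from which everything else flows almost immediately. Part $(i)$ should be routine: the triangle inequality, symmetry, and nonnegativity for $\|\cdot\|_{TV}$ all descend from the corresponding properties of $|\cdot|$ on $\mathbb{R}$ by summing over $k$ and multiplying by $\tfrac12$.

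The key step is $(iii)$. I would split the defining sum \eqref{Def:TVNorm} according to the sign of $p_k - q_k$:
\[
\sum_{k=1}^n |p_k - q_k| \;=\; \sum_{p_k \geq q_k}(p_k - q_k) + \sum_{p_k < q_k}(q_k - p_k).
\]
Since both $p$ and $q$ sum to one, $\sum_k(p_k - q_k) = 0$, which forces the two sums on the right to be equal. Thus the total is exactly twice the first sum, and dividing by $2$ yields $(iii)$. Parts $(ii)$ and $(iv)$ are then quick corollaries: for $(ii)$, use the identity in $(iii)$ and bound $\sum_{p_k \geq q_k}(p_k - q_k) \leq \sum_{p_k \geq q_k} p_k \leq 1$; for $(iv)$, simply observe that an individual term of \eqref{Def:TVNorm} cannot exceed the whole sum, so $|p_k - q_k| \leq 2\|p-q\|_{TV}$.

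The main obstacle, modest as it is, is $(v)$, which requires exhibiting an explicit maximizing event. My candidate is the natural one suggested by $(iii)$, namely $B := \{k \in \Omega : p_k \geq q_k\}$. Achieving the bound is then free from $(iii)$, since
\[
\nP_p(B) - \nP_q(B) \;=\; \sum_{k \in B}(p_k - q_k) \;=\; \|p-q\|_{TV}.
\]
For the matching upper bound on an arbitrary $A \subseteq \Omega$, I would split
\[
\nP_p(A) - \nP_q(A) \;=\; \sum_{k \in A \cap B}(p_k - q_k) + \sum_{k \in A \setminus B}(p_k - q_k),
\]
then drop the second sum (whose summands are strictly negative by definition of $B$) and extend the first by adding the nonnegative terms indexed by $B \setminus A$, enlarging it to $\sum_{k \in B}(p_k - q_k) = \|p-q\|_{TV}$ by $(iii)$. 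This shows $B$ is a maximizer and gives \eqref{Eq:TVMax}.
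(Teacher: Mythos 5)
Your proposal is correct and follows essentially the same route as the paper's proof: both hinge on the set $B=\{k:p_k\geq q_k\}$, derive $(iii)$ from the fact that $\sum_k(p_k-q_k)=0$, obtain $(ii)$ and $(iv)$ as immediate corollaries, and prove $(v)$ by showing $B$ is a maximizer via the decomposition of an arbitrary $A$ into $A\cap B$ and $A\cap B^c$. The only difference is cosmetic — you isolate $(iii)$ up front, whereas the paper establishes it in passing while proving $(ii)$.
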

\noindent Note that the notation in \eqref{Eq:TVBset} means that the sum is over all elements $k$ such that $p_k \geq q_k$.  In \eqref{Eq:TVMax}, observe that $\max_{A \in 2^\Omega}\,\big(\nP_p(A)-\nP_q(A)\big) = \max_{A \in 2^\Omega}\,\big|\nP_p(A)-\nP_q(A)\big|$, since 
\begin{align*}
    \nP_p(A^c)-\nP_q(A^c) = \big(1 - \nP_p(A) \big) - \big( 1- \nP_q(A)\big) = \nP_q(A) - \nP_p(A).
\end{align*}

\begin{proof}
		For $(i)$, the first two properties are clear from the definition \eqref{Def:TVNorm}.  The third property follows from the fact that $|p_k - r_k| \leq |p_k - q_k| + |q_k - r_k|$, the familiar \emph{triangle inequality} for absolute values.
		
		For $(ii)$, set 
		\begin{align}\label{Eq:TVPfB}
		    B := \{ k \in \Omega \; : \; p_k \geq q_k\}.
		\end{align}
		Then $B^c = \{ k \in \Omega \; : \; p_k < q_k\}$, and we observe
		\begin{align}
		    \Norm{p-q}_{TV} &= \frac{1}{2} \sum_{k \in B} |p_k - q_k| + \frac{1}{2} \sum_{k \in B^c} |p_k - q_k| \notag\\
			&= \frac{1}{2} \sum_{k \in B} (p_k - q_k) - \frac{1}{2} \sum_{k \in B^c} (p_k - q_k) \label{Eq:TVPfStart}\\
			&= \frac{1}{2} (\nP_p(B) - \nP_q(B)) - \frac{1}{2} (\nP_p(B^c) - \nP_q(B^c)) \notag\\
			&= \frac{1}{2} (\nP_p(B) - \nP_q(B)) - \frac{1}{2} (1 - \nP_p(B) - 1 + \nP_q(B))\notag\\
			&= \nP_p(B) - \nP_q(B) \label{Eq:TVSetB}\\
			&\leq \nP_p(B) \leq 1. \notag
		\end{align}
Thus $(ii)$ holds, and we have also shown $(iii)$ in \eqref{Eq:TVSetB}.
  
		For $(iv)$, we note 
		\begin{equation*}
			|p_k - q_k| \leq \sum_{j=1}^n |p_j - q_j| \leq 2 \Norm{p-q}_{TV}. 
		\end{equation*}
		
Lastly, for $(v)$, we observe that by \eqref{Eq:TVSetB} the set $B$ in \eqref{Eq:TVPfB} satisfies
  \begin{align}\label{Eq:TVBOneDirection}
    \|p-q\|_{TV}=\nP_p(B) - \nP_q(B) \leq \max_{A \in 2^\Omega}\,\big(\nP_p(A)-\nP_q(A)\big).
  \end{align}
We next claim that  
\begin{align}\label{Ineq:TVBNeed}
    \nP_p(A)-\nP_q(A) \leq \nP_p(B)-\nP_q(B)
\end{align}
for any $A \in 2^\Omega$.  To see this, write 
\begin{align}
    \nP_p(A)&-\nP_q(A) \notag\\ 
    &= \nP_p(A\cap B) -\nP_q(A\cap B) + \nP_p(A \cap B^c) - \nP_q(A \cap B^c), \label{Ineq:TVProp4}
\end{align}
and then first observe that
\begin{align*}
    \nP_p(A\cap B) -\nP_q(A\cap B) \leq \nP_p(B) - \nP_q(B),
\end{align*}
as adding more elements of $B$ only increases the sum, by definition of $B$.  Also by definition of $B$,
\begin{align*}
    \nP_p(A \cap B^c) - \nP_q(A \cap B^c) \leq 0,
\end{align*}
and so applying these two observations to \eqref{Ineq:TVProp4} yields
\begin{align*}
    \nP_p(A)-\nP_q(A) \leq \nP_p(B)-\nP_q(B) + 0,
\end{align*}
as claimed.  Since $A \in 2^\Omega$ was arbitrary, this inequality also holds in the max,
\begin{align*}
    \max_{A \in 2^\Omega}\,\big(\nP_p(A)-\nP_q(A)\big) \leq \nP_p(B)-\nP_q(B),
\end{align*}
which yields equality in \eqref{Eq:TVBOneDirection} and thus completes the argument.
\end{proof}

Let's look at an explicit computation to see how the TV norm operates in practice.

\begin{example}\label{Eg:TVComputation}
    How close to being uniformly distributed is a walk on the 5-cycle after three steps?  That is, suppose we start a simple symmetric random walk on the 5-cycle at vertex 1.  What is the TV-norm between the resulting distribution $p$ after three steps from the uniform distribution
    \begin{align*}
        q := \Big( \frac{1}{5}, \frac{1}{5}, \frac{1}{5}, \frac{1}{5}, \frac{1}{5} \Big)?
    \end{align*}
    To answer this, we recall the transition matrix $P$ for the 5-cycle is 
    \begin{align*}
       P= \begin{bmatrix}
 0 & \frac{1}{2} & 0 & 0 & \frac{1}{2} \\
 \frac{1}{2} & 0 & \frac{1}{2} & 0 & 0 \\
 0 & \frac{1}{2} & 0 & \frac{1}{2} & 0 \\
 0 & 0 & \frac{1}{2} & 0 & \frac{1}{2} \\
 \frac{1}{2} & 0 & 0 & \frac{1}{2} & 0 \\
\end{bmatrix},
    \end{align*}
    and so the three-step transition matrix is 
    \begin{align*}
        P^3 = \begin{bmatrix}
 0 & \frac{3}{8} & \frac{1}{8} & \frac{1}{8} & \frac{3}{8} \\
 \frac{3}{8} & 0 & \frac{3}{8} & \frac{1}{8} & \frac{1}{8} \\
 \frac{1}{8} & \frac{3}{8} & 0 & \frac{3}{8} & \frac{1}{8} \\
 \frac{1}{8} & \frac{1}{8} & \frac{3}{8} & 0 & \frac{3}{8} \\
 \frac{3}{8} & \frac{1}{8} & \frac{1}{8} & \frac{3}{8} & 0 \\
\end{bmatrix},
    \end{align*}
    which shows that the distribution $p_3$ of $X_3$, given that $X_0=1$, is the first row of $P^3$,
    \begin{align*}
        p_3 = \Big( 0, \frac{3}{8}, \frac{1}{8}, \frac{1}{8}, \frac{3}{8}  \Big).
    \end{align*}
    Using the definition \eqref{Def:TVNorm}, we have that the TV-norm to the uniform distribution is thus
    \begin{align*}
        \|p_3-q\|_{TV} = \frac{1}{2}\Big( \Big|0 - \frac{1}{5}\Big| + \Big|\frac{3}{8}- \frac{1}{5}\Big| + \Big| \frac{1}{8}- \frac{1}{5}\Big| + \Big| \frac{1}{8}- \frac{1}{5}\Big| + \Big| \frac{3}{8}- \frac{1}{5} \Big|  \Big) = \frac{7}{20}.
    \end{align*}
    Note that we have $\|p_3-q\|_{TV} \leq 1$, as must be the case from Lemma \ref{Lemma:TVNorm}($\ref{Lemma:TVNormOneBound}$).  By part $(\ref{Lemma:TVNormEventBound})$ of the lemma, we also know that the difference between the probability that $X_3$ is in any collection  $B$ of vertices and the probability that a uniformly-chosen vertex $Y$ is in the same collection $B$ is at most $7/20=0.35$.  So, for instance, for the set of vertices $B=\{1,3,5\}$, we see
    \begin{align*}
        |\nP(X_3 \in B \, | \, X_0=1)-\nP(Y \in B)| &\leq \max_{A \subset V}|\nP(X_3 \in A \, | \, X_0=1)-\nP(Y \in A)|\\
        &= \|p_3-q\|_{TV}=0.35,
    \end{align*}
    where the first equality is by \eqref{Eq:TVMax}.  Equivalently,
    \begin{align*}
        \nP(Y \in B) - 0.35 \leq \nP(X_3 \in B \, | \, X_0=1) \leq \nP(Y \in B) + 0.35.
    \end{align*}
    So beyond giving us a way to measure distance between two probability distributions, we also see part of the power of the TV-norm is that it gives an upper bound on the difference in probability of the same event under the two distributions.
    
    We will see below in Theorem \ref{Thm:ConvToStationary} that as $n \rightarrow \infty$, the distribution $p_n$ of $X_n$ after $n$ steps converges to the uniform distribution $q$ on the vertices.
\end{example}

\subsubsection{Excursus}
We pause to make a couple of asides that we will not subsequently use but are important nevertheless.

\paragraph{Other norms on finite probability distributions.} There are other natural ways that we could assign a distance between $p$ and $q$.  If we think of $p = (p_1, \ldots, p_N)$ and $q=(q_1, \ldots, q_N)$ as vectors in $\mathbb{R}^N$, we could take the Euclidean distance
\begin{align}\label{Eq:TwoNorm}
    \Norm{p-q}_2 = \Big( \sum_{k=1}^N (p_k-q_k)^2 \Big)^{1/2},
\end{align}
for example, which is how far these vectors are from each other in $N$-dimensional space.  The TV-distance \eqref{Def:TVNorm} is not the same and usually gives a different number.  While the Euclidean norm is more geometrically natural, the TV-norm is probabilistically more suitable, as we get properties like Lemma \ref{Lemma:TVNorm} $(iv)$, which do not hold for the Euclidean norm.  

We could also replace the ``2'' in \eqref{Eq:TwoNorm} with any power $r \geq 1$ to obtain $\|p-q\|_r$, with $r=1$ corresponding to the TV-distance up to the factor of $1/2$.  All of these give distinct measurements of distance between $p$ and $q$, but there is a sense in which they are all ``comparable'' to the TV-distance.  More precisely, one can prove that for any $r\geq 1$ there exists a constant $C = C(r,N)$ such that 
\begin{align}\label{Ineq:EquivNorms}
    \frac{1}{C}\|p-q\|_r \leq \|p-q\|_{TV} \leq C\|p-q\|_r 
\end{align}
for any $p,q \in \mathbb{R}^N$.  In particular, this inequality shows that $\|p_n-q\|_{TV} \rightarrow 0$ as $n \rightarrow \infty$ if and only if $\|p_n-q\|_r \rightarrow 0$.  So with respect to convergence, we cannot tell the difference between the TV norm and any of the other $r$-norms.

We, however, are concerned with the \emph{rate} of convergence, which is to say, with how many steps it takes for distribution of the $n$th step of the chain to be close to the stationary distribution, as $N$ grows.  For example, we may want to know how long it takes for the simple random walk on the $N$-cycle to be within a small amount $\epsilon$ of stationarity, in terms of $N$.  Does the required number of steps grow like $N^2$?  Like $N \log(N)$?  Like $e^N$?  From this point of view, the different norm choices do all not behave the same, as the constant $C(r,N)$ in \eqref{Ineq:EquivNorms} can grow very quickly as $N$ increases.  So the selection of a specific norm \emph{does} matter when one wants to concretely estimate the needed number of steps, and we continue to exclusively use $\| \cdot \|_{TV}$ in what follows.  

\paragraph{The method of couplings.} It is hard to exactly compute the total variation distance between two probability distributions. For example, let $p$ denote the binomial $\Bin(n, 1/2)$ distribution and let $q$ be $\Bin(n,1/3)$. It is hard to exactly compute $\norm{p-q}_{TV}$ exactly even for such commonplace distributions. However, one may easily produce a good upper bound on $\norm{p-q}_{TV}$ using the very useful \textit{method of couplings}.  

A \textit{coupling} of two probability distributions $p$ and $q$ refers to any construction of two random variables $X$ and $Y$ on a common probability space such that $X$ has distribution $p$ and $Y$ has distribution $q$. What is implicit is the joint distribution of $(X,Y)$, the correct choice of which plays a crucial role. For example, when $p$ and $q$ are two binomials as above, a coupling can be a joint distribution of two independent random variables $X$ and $Y$, where $X\sim p$ and $Y\sim q$. However, this simple coupling is rarely useful. Here is a nontrivial coupling of the two binomials. Let $U_1, U_2, \ldots, U_n$ be iid random variables with distribution $\Unif(0,1)$. Let $X:=\sum_{i=1}^n \I\{U_i \le 1/2\}$ and $Y:= \sum_{i=1}^n \I\{U_i \le 1/3\}$. That is, $X$ counts how many of the $U_i$s are to the left of $1/2$ and $Y$ counts how many are to the left of $1/3$. Clearly, $X\sim \Bin(n,1/2)$ and $Y \sim \Bin(n,1/3)$, and $(X,Y)$ is a coupling of $p$ and $q$. Note that, $X$ and $Y$ are not independent since $X \ge Y$ always. That is precisely the point of this coupling as we shall see. 

The useful result is, for any coupling of $p$ and $q$, 
\begin{equation}\label{eq:tvbndcoupling}
\norm{p-q}_{TV} \le \nP(X \neq Y). 
\end{equation}
To see this, start from \eqref{Eq:TVMax}. For any event $A$, $\nP_p(A)-\nP_q(A)=\nP(X \in A) - \nP(Y \in A)$. Here we are using that $X \sim p$ and $Y \sim q$. By a union bound, 
\[
\begin{split}
\nP(X \in A) &\le \nP(X \in A, Y \in A) + \nP(X \in A, Y \notin A) \\
&\le \nP(Y \in A) + \nP(X \in A, Y \notin A).
\end{split}
\]
Thus, by rearranging terms,  
\[
\nP_p(A)-\nP_q(A)\le \nP(\{X \in A\}\cap\{ Y \notin A\}) \le \nP(X \neq Y). 
\]
Thus, $\norm{p-q}_{TV}=\max_A\left[\nP_p(A)-\nP_q(A) \right]\le \nP(X \neq Y)$. This proves \eqref{eq:tvbndcoupling}.

In fact, a much more powerful theorem is that there always exists some coupling such that \eqref{eq:tvbndcoupling} is an equality. That is, 
$\norm{p-q}_{TV} \le \inf\nP(X \neq Y)$, where the infimum is over all possible couplings of $p$ and $q$. This is a special case of the so-called Kantorovich duality. 

Let's see how \eqref{eq:tvbndcoupling} can be used to bound the TV distance between $p=\Bin(n,1/2)$ and $q=\Bin(n,1/3)$. Use the coupling described in the paragraph above \eqref{eq:tvbndcoupling}. From the construction it is clear that $X=Y$ if and only if none of the $U_i$s land in the interval $(1/3, 1/2]$. This happens with probability $(1 - (1/2-1/3))^n=(5/6)^n$. Thus
\[
P(X\neq Y)= 1- P(X=Y)= 1- \left( \frac{5}{6}\right)^n. 
\]
Thus $\norm{p-q}_{TV} \le 1- \left( \frac{5}{6}\right)^n$.

\subsection{The first key convergence theorem: exponential convergence to $\pi$}
For an irreducible and aperiodic chain with transition matrix $P$, consider
\begin{align*}
	P^n(x,k) = \nP(X_n = k \, | \, X_0 =x),
\end{align*}
the $(x,k)$-entry of the matrix $P^n$.  For a fixed $n$ and a fixed $x$, this is a pmf $p$ on $\Omega$, which we will write as 
\begin{align*}
	P^n(x, \cdot).
\end{align*}
The ``dot'' notation means we can plug in any element of $\Omega$ and
get the probability we are there after $n$ steps, starting from $x$.  Note that if $\Omega = \{1,2, \ldots, N\}$, then collecting these probabilities gives the pmf
\begin{align*}
	P^n(x, \cdot ) = (P^n(x,1), P^n(x,2), \ldots, P^n(x,N)) \in \mathbb{R}^N.
\end{align*}
For instance, we saw in Example \ref{Eg:TVComputation} that for the simple walk on a 5-cycle started from the first vertex,
\begin{align*}
    P^3(x, \cdot) = p = \Big( 0, \frac{3}{8}, \frac{1}{8}, \frac{1}{8}, \frac{3}{8}  \Big).
\end{align*}
Now that we have the total variation norm we can talk about the limit $P^n(x, \cdot)$ as $n \rightarrow \infty$, because we have a way to talk about how far probability distributions are from each other.  The following theorem says that the limiting distribution is the stationary distribution $\pi$.  Recall from Theorem \ref{Thm:ReturnTime} that irreducible chains have a unique stationary distribution.

\begin{theorem}\label{Thm:ConvToStationary}
    Let $X$ be an irreducible and aperiodic Markov chain with stationary distribution $\pi$.  Then there exist constants $C>0$ and $0 < \alpha < 1$ such that, for any initial state $x \in \Omega$,
	\begin{align}\label{Ineq:ConvToStationary1}
		\Norm{P^n(x, \cdot) - \pi }_{TV} \leq C \alpha^n.
	\end{align}
    Furthermore, as the right-hand side of \eqref{Ineq:ConvToStationary1} is independent of $x$, we have
	\begin{align}\label{Ineq:ConvToStationary2}
		\max_{x \in \Omega} \Norm{P^n(x, \cdot) - \pi }_{TV} \leq C \alpha^n.
	\end{align}
\end{theorem}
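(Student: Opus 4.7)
My plan is to use a Doeblin-type minorization argument: find a power $N$ for which $P^N$ has uniformly positive entries, and then exploit that to turn $P^N$ into a strict contraction in total variation distance.

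The first step, and what I expect to be the main obstacle, is a positivity lemma: there exists a single $N \in \mathbb{N}$ such that $P^N(x,y) > 0$ for \emph{all} $x, y \in \Omega$ simultaneously. Aperiodicity together with the number-theoretic fact mentioned after the definition of aperiodicity in the text (a subset of $\mathbb{N}$ closed under addition with gcd $1$ contains all sufficiently large integers) yields, for each state $x$, an integer $n_x$ with $P^n(x,x) > 0$ for every $n \geq n_x$. Irreducibility then produces, for each pair $(x,y)$, some $k_{xy}$ with $P^{k_{xy}}(x,y) > 0$. Concatenating paths via $P^m(x,y) \geq P^{m-k_{xy}}(x,x)\, P^{k_{xy}}(x,y)$ shows the entry is positive whenever $m \geq n_x + k_{xy}$, and finiteness of $\Omega$ lets me take a single $N$ dominating every $n_x + k_{xy}$.

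Setting $\delta := \min_{x,y} P^N(x,y) > 0$ and $\theta := \delta\, |\Omega| \in (0,1]$, and letting $u$ denote the uniform distribution on $\Omega$, the inequality $P^N(x,y) \geq \theta\, u(y)$ yields the Doeblin decomposition $P^N = \theta M + (1-\theta) Q$, where $M$ has every row equal to $u$ and $Q := (P^N - \theta M)/(1-\theta)$ is stochastic (the case $\theta = 1$ forces $P^N = M$ and $\pi = u$, making convergence trivial, so I will assume $\theta \in (0,1)$). The crucial observation is that for any probability row vectors $\nu, \nu'$, the identity $\nu M = \nu' M = u$ forces $(\nu - \nu') P^N = (1-\theta)(\nu - \nu') Q$. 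Combining this with the standard fact that every stochastic matrix is a contraction in TV distance — itself a short argument from Lemma \ref{Lemma:TVNorm}$(v)$ after splitting $\nu - \nu'$ into its positive and negative parts — gives $\Norm{\nu P^N - \nu' P^N}_{TV} \leq (1-\theta)\Norm{\nu - \nu'}_{TV}$.

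The conclusion follows by iteration. Taking $\nu = \delta_x$ and $\nu' = \pi$ and using $\pi P^N = \pi$, we get $\Norm{P^{kN}(x,\cdot) - \pi}_{TV} \leq (1-\theta)^k$ for every $k \geq 1$. For a general $n$, write $n = kN + r$ with $0 \leq r < N$; since $P^r$ is stochastic and hence non-expanding in TV, we obtain $\Norm{P^n(x,\cdot) - \pi}_{TV} \leq (1-\theta)^k \leq (1-\theta)^{n/N - 1}$, which is $C\alpha^n$ with $\alpha := (1-\theta)^{1/N} \in (0,1)$ and $C := (1-\theta)^{-1}$. Since the resulting bound is independent of $x$, inequality \eqref{Ineq:ConvToStationary2} is immediate from \eqref{Ineq:ConvToStationary1}. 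Once the positivity lemma is in hand, everything else is routine linear algebra plus the basic contraction property of stochastic matrices.
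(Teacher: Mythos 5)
Your proof is correct: the positivity lemma (a uniform $N$ with $P^N(x,y)>0$ for all $x,y$, obtained from aperiodicity via the number-theoretic fact quoted in the text plus irreducibility and finiteness of $\Omega$), the Doeblin decomposition $P^N=\theta M+(1-\theta)Q$ with $Q$ stochastic, the resulting contraction $\Norm{\nu P^N-\nu'P^N}_{TV}\leq(1-\theta)\Norm{\nu-\nu'}_{TV}$, and the iteration with $n=kN+r$ all check out. The paper itself omits the proof of Theorem \ref{Thm:ConvToStationary} and defers to \cite[\S 4.3]{Peres}; your argument is essentially the standard one given there, differing only in that Levin--Peres minorize $P^N(x,\cdot)$ by a multiple of $\pi$ rather than of the uniform distribution, a choice that is immaterial to the contraction estimate.
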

For instance, if $\alpha = 1/2$ and $x=2$ then this says
\begin{align*}
	0 \leq \Norm{P^n(2, \cdot) - \pi }_{TV} \leq \frac{C}{2^n} \rightarrow 0,
\end{align*}
which is exponentially-fast convergence.  So, Theorem \ref{Thm:ConvToStationary} is saying that \emph{irreducible and aperiodic Markov chains converge exponentially fast to their stationary distributions, regardless of where they are started}.  While we elect to omit the proof of this theorem, we proceed to study and digest its contents, as it is one of the crucial results for our text. The reader is encouraged to see \cite[\S4.3]{Peres} for a proof.

\subsubsection{Why the hypotheses in Theorem \ref{Thm:ConvToStationary}?}

We first remark that we should not be overly surprised by the hypotheses of irreducibility and aperiodicity in the theorem statement.  

First, why irreducibility?  The issue here is uniqueness of $\pi$.  We saw in Theorem \ref{Thm:ReturnTime} that irreducible chains have a unique stationary distribution $\pi$, and so there is a natural candidate for the limiting distribution of $X_n$ in such chains, namely, $\pi$.  But $\pi$ is not necessarily unique for non-irreducible chains, and so we don't necessarily have a unique candidate for the limit.  For example, consider the trivial chain on $\Omega=\{1,2\}$ with transition matrix $P=I_2$, the $2 \times 2$ identity matrix.  Here both $\pi_1 = (1,0)$ and $\pi_2=(0,1)$ are stationary for $P$,
\begin{align*}
    \pi_jP = \pi_j, \qquad j=1,2,
\end{align*}
and what $X_n$ limits to depends on the starting point.  The point of Theorem \ref{Thm:ConvToStationary}, however, is that we always see the \emph{same} distribution $\pi$ in the limit, irrespective of the starting position.  So it is natural to assume irreducibility.

We also saw above in the introduction \S\ref{Sec:AsymptoticsIntro} to this chapter that  aperiodicity is also a vital assumption, as otherwise the distribution of $X_n$ may oscillate between disjoint collections of states.  We proceed to study one more such example.

\begin{example}\label{Eg:PentHex}

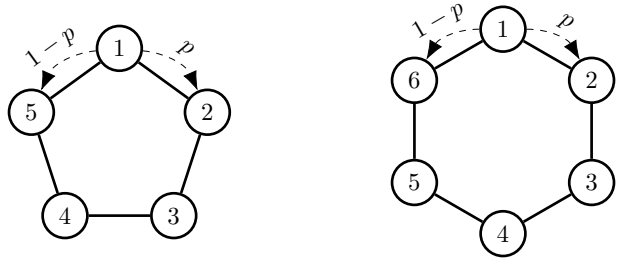
\begin{figure}
    \centering
    \scalebox{0.85}{
     \begin{tikzpicture}
\tikzset{
        node/.style={circle, draw=black, very thick, minimum size=7mm},
        arrow/.style={-{Latex[length=3mm]}, black},
        line/.style={black, very thick}
        }
    \node[node] (one) at (0,1) {1};
    \node[node] (two) at (1.376,0) {2};
    \node[node] (three) at (0.85065,-1.618) {3};
    \node[node] (four) at (-0.85065,-1.618) {4};
    \node[node] (five) at (-1.376,0) {5};
    
    \draw[line] (one) -- (two);
    \draw[line] (two) -- (three);
    \draw[line] (three) -- (four);
    \draw[line] (four) -- (five);
    \draw[line] (five) -- (one);
    
    \draw[arrow, dashed] (one) to[bend left] node[anchor=south, sloped] {$p$} (two);
    \draw[arrow, dashed] (one) to[bend right] node[anchor=south, sloped] {$1-p$} (five);
    
    \node[node] (one) at (6,1.3) {1};
    \node[node] (two) at (7.3856, 0.5) {2};
    \node[node] (three) at (7.3856,-1.1) {3};
    \node[node] (four) at (6,-1.9) {4};
    \node[node] (five) at (4.6144,-1.1) {5};
    \node[node] (six) at (4.6144,0.5) {6};
    
    \draw[line] (one) -- (two);
    \draw[line] (two) -- (three);
    \draw[line] (three) -- (four);
    \draw[line] (four) -- (five);
    \draw[line] (five) -- (six);
    \draw[line] (six) -- (one);
    \draw[arrow, dashed] (one) to[bend left, sloped] node[anchor=south] {$p$} (two);
    \draw[arrow, dashed] (one) to[bend right, sloped] node[anchor=south] {$1-p$} (six);
    \end{tikzpicture}}
    \caption{\small A random walk on a 5-cycle and a 6-cycle.  Even though the transition probabilities at any vertex are the same, only one has a distribution which converges to stationarity.}
    \label{Fig:PentHexWalks}
\end{figure}
	Consider a biased walk $(X_n)$ on the vertices of a 5-cycle $\Omega_1 = \{1, 2, \ldots, 5\}$ and another biased walk $(Y_n)$ on the vertices of a 6-cycle $\Omega_2 = \{1,2, \ldots, 6\}$, as in Figure \ref{Fig:PentHexWalks}.  The stationary distributions for these walks are both uniform,
	\begin{align*}
		\pi_1 = (1/5, 1/5, \ldots, 1/5) \quad \text{ and } \quad \pi_2 = (1/6, 1/6, \ldots, 1/6).
	\end{align*}
	Since the $(Y_n)$ chain has period 2, for a given $n$ we always have that $Y_n \in \{1,3,5\}$ or $Y_n \in \{2,4,6\}$, and so there is no possibility that $\Norm{P^n_Y(x, \cdot) - \pi_2}_{TV} \rightarrow 0$.  We thus again see the importance of the assumption of aperiodicity.
	
	On the other hand, the $(X_n)$ chain fulfills the assumptions of Theorem \ref{Thm:ConvToStationary}, and therefore its distribution converges exponentially fast to $\pi_1$.  For instance,
	\begin{align}\label{Ineq:FiveCycleConverges}
		\Norm{P^n_X(5, \cdot) - \pi_1 }_{TV} \leq C \alpha^n.
	\end{align}
	We considered the unbiased example $p=1/2$ in Example \ref{Eg:TVComputation}, and saw that $\|P_X^3(1,\cdot) - \pi_1\|_{TV} = 0.35$ (recall the biased and unbiased walks have the same stationary distribution).  The inequality \eqref{Ineq:FiveCycleConverges} says that this quickly gets \emph{much} smaller as $n$ increases.  For example, using a computer we find
	\begin{align*}
	    P_X^{30}(1,\cdot) \approx (0.2007,0.1994,0.2002,0.2002,0.1994  ),
	\end{align*}
	and thus using \eqref{Def:TVNorm} we compute
	\begin{align*}
	    \|P_X^{30}(1,\cdot) - \pi_1\|_{TV} \approx 0.0011.
	\end{align*}
	That is, our position $X_{30}$ after just 30 steps is extremely close to uniformly-random among the five vertices.
\end{example}

Even though the periodic 6-cycle example shows Theorem \ref{Thm:ConvToStationary} cannot possibly hold for chains with period $\geq 2$, we recall from Section \ref{Sec:Lazy} that this is not insurmountable.  We may simply form the corresponding lazy chain $\tilde{P} = \frac{1}{2} I + \frac{1}{2}P$ which is aperiodic and has the same stationary distribution $\pi$ as $P$ (recall Exercise \ref{Ex:LazyStationary}). Theorem \ref{Thm:ConvToStationary} then applies to $\tilde{P}$ and says
	\begin{align*}
		\max_{x \in \Omega} \Vert \tilde{P}^n_X(x, \cdot) - \pi \Vert_{TV} \leq C \alpha^n.
	\end{align*}
	
\begin{exercise}
    \begin{enumerate}[$(a)$]
        \item Form the lazy walk from the walk on the 6-cycle in Figure \ref{Fig:PentHexWalks}.  Update the diagram with the new transition probabilities.
        \item For $p=0.3$, write the corresponding transition matrix $\tilde{P}$ and use a computer to find $\tilde{P}^{25}, 
        \tilde{P}^{40}$ and $\tilde{P}^{55}$.  What do you observe?  How does this contrast with $P^{25}, P^{40}$ and $P^{55}$?
    \end{enumerate}
\end{exercise}
	
While Theorem \ref{Thm:ConvToStationary} gives us exponential converge to $\pi$, it does not say anything about the precise rate, or what $\alpha$ is in \eqref{Ineq:ConvToStationary1}.  We will take up that question when we discuss \emph{mixing times} in Section \ref{Sec:MixingTimes} below; see Theorem \ref{Thm:ExponentialRate}.

\subsubsection{Two perspectives on exponential convergence}

Now that we understand the hypothesis of Theorem \ref{Thm:ConvToStationary}, let's proceed to consider it from both a linear-algebraic and a probabilistic point of view, thinking in terms of \emph{functions} $f$ on our state space $\Omega = \{1,2,\ldots, N\}$.  This will also provide a natural bridge to our next convergence theorem.  

Up to this point we have not thought very much about functions $f$ on $\Omega$, but note that there is an exact correspondence between such functions and vectors $\mathbf{f} \in \mathbb{R}^N$.  Indeed, a function $f: \Omega \rightarrow \nR$ just assigns a number $f(i)$ to each vertex $i \in \Omega$, and so we can list these as a vector $\mathbf{f}=(f(1), f(2), \ldots, f(N)) \in \mathbb{R}^N$.  Conversely, a row vector $\mathbf{f}$ in $\mathbb{R}^N$ defines a function $f:\Omega \rightarrow \mathbb{R}$ by considering the value at state $i \in \Omega$ to be the $i$th component $\mathbf{f}_i$, $f(i):=\mathbf{f}_i$.  So functions $f:\Omega \rightarrow \mathbb{R}$ and vectors $\mathbf{f} \in \mathbb{R}^N$ are same.

Now let's consider Theorem \ref{Thm:ConvToStationary} in terms of linear algebra and the product $P^n \mathbf{f}^T$, which is the $N \times N$ matrix $P^n$ with the column vector $\mathbf{f}^T$ (recall that by default our vectors are row vectors, like $\pi$). Theorem \ref{Thm:ConvToStationary} says that, in particular, \emph{each row} of $P^n$ converges to $\pi$,
\begin{align}\label{Limit:PowersOfP}
	P^n = \begin{bmatrix}
			P^n(1, \cdot) \\
			P^n(2, \cdot)\\
			\vdots\\
			P^n(n, \cdot)
		\end{bmatrix} \underset{n \rightarrow \infty}{\longrightarrow}
		\begin{bmatrix}
			\pi \\
			\pi \\
			\vdots \\
			\pi
		\end{bmatrix} = \mathbf{1}^T \pi.
\end{align}
Returning to the 5-cycle, for instance, we see that
\begin{align*}
    P^{30} \approx \begin{bmatrix}
 0.2007 & 0.1994 & 0.2002 & 0.2002 & 0.1994 \\
 0.1994 & 0.2007 & 0.1994 & 0.2002 & 0.2002 \\
 0.2002 & 0.1994 & 0.2007 & 0.1994 & 0.2002 \\
 0.2002 & 0.2002 & 0.1994 & 0.2007 & 0.1994 \\
 0.1994 & 0.2002 & 0.2002 & 0.1994 & 0.2007
\end{bmatrix},
\end{align*}
and each row is manifestly close to $\pi = (0.2,0.2,0.2,0.2,0.2)$.  Given \eqref{Limit:PowersOfP}, we have the convergence
\begin{align*}
	P^n\mathbf{f}^T \rightarrow \mathbf{1}^T \pi \mathbf{f}^T.
\end{align*}
So, after many steps, multiplying $P^n$ by $\mathbf{f}^T$ is very close to multiplying $\mathbf{f}^T$ by the matrix where every row is $\pi$.  Looking at this row by row, we have 
\begin{align}\label{Limit:FuncConvergence}
    \sum_{j=1}^N P^n_{xj} f_j \rightarrow \sum_{j=1}^N \pi_j f_j.
\end{align}
for each $x \in \Omega$.

Let's secondly re-cast this in terms of probability.  Given a function $f: \Omega \rightarrow \mathbb{R}$, suppose we wish to find $\mathbb{E}_x(f(X_n))$.  That is, we start from $x$ and run the chain for a large number $n$ of steps, and then evaluate $f(X_n)$.  What is this value, on average? By definition,
\begin{align*}
	\nE_x(f(X_n)) &= \sum_{j=1}^N f(j) \, \nP_x(X_n=j) = \sum_{j=1}^N P^n_{xj}f(j)  \underset{n \rightarrow \infty}{\longrightarrow} \sum_{j=1}^N \pi_j f(j)
\end{align*}
as we noted above in \eqref{Limit:FuncConvergence}.  However, we also have that
\begin{align}\label{Eq:FunctionAverage}
    \sum_{j=1}^N \pi_j f(j) = \nE_\pi(f(X_0)),
\end{align}
the expectation of $f(X_0)$ when $X_0$ is distributed on $\Omega$ according to $\pi$.  We could also simply write this as $\nE_\pi(f)$.  Hence we see Theorem \ref{Thm:ConvToStationary} implies
\begin{align}\label{Eq:LimitOfAverages}
	\lim_{n \rightarrow \infty} \nE_x(f(X_n)) = \nE_\pi(f).
\end{align}
In words, the average of a function evaluated after many steps is basically its average against the stationary distribution.  This is intuitively clear, because the theorem says the distribution of $X_n$ is very close to $\pi$ after many steps.

Seeing things in this light leads us to our second main convergence theorem.  What if, instead of $\nE_x(f(X_n))$, we averaged over all the values $f(X_1), f(X_2), \ldots, f(X_n)$ up to the current state?  In other words, can we say anything about
\begin{align}\label{TempAvg}
	\lim_{n \rightarrow \infty}\frac{1}{n} \sum_{j=0}^{n-1} f(X_j)?
\end{align}

Notice the difference between these two limits.  In \eqref{Eq:LimitOfAverages}, we run the chain for many steps, and then take an expectation of $f(X_n)$.  This averaging yields a \emph{deterministic} number (why?), and so the limit in \eqref{Eq:LimitOfAverages} is the limit of a deterministic sequence.  In contrast, the expression in \eqref{TempAvg} is \emph{random}, and the sum up to a fixed $n$ varies each time we sample the Markov chain.  Our next main theorem, however, states that both these limits agree.

\subsection{The second key convergence theorem: ergodicity}

\begin{theorem}[Ergodic Theorem]\label{Thm:Ergodic}
	Let $X$ be an irreducible Markov chain on $\Omega$ and $f: \Omega \rightarrow \mathbb{R}$ a function.  Then, with probability 1,
	\begin{align}\label{Limit:Ergodic}
		\lim_{n \rightarrow \infty} \frac{1}{n} \sum_{j=0}^{n-1} f(X_j) = \nE_\pi(f),
	\end{align}
	where $\nE_\pi(f) = \nE_\pi(f(X_0))$ is as in \eqref{Eq:FunctionAverage}.
\end{theorem}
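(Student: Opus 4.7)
The plan is to prove the Ergodic Theorem by decomposing the chain's trajectory into independent excursions between visits to a fixed reference state, and then invoking the classical strong law of large numbers (SLLN) for iid random variables. Fix a state $x \in \Omega$; since the chain is irreducible and $\Omega$ is finite, Lemma \ref{Lemma:StationaryPositive} tells us $\nE_x(\tau_x^+) < \infty$, and we have a candidate limit via $\pi(x) = 1/\nE_x(\tau_x^+)$. By enlarging the probability space if necessary, we may assume $X_0 = x$: starting from any other state $y$, irreducibility guarantees we almost surely hit $x$ in finitely many steps, so only a finite (hence negligible under averaging) prefix of the sum changes.

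With $X_0=x$, define the successive return times $\tau^{(0)} := 0$ and $\tau^{(k+1)} := \min\{n > \tau^{(k)} : X_n = x\}$, and let $L_k := \tau^{(k)} - \tau^{(k-1)}$ and
\begin{align*}
    S_k := \sum_{j=\tau^{(k-1)}}^{\tau^{(k)}-1} f(X_j).
\end{align*}
By the (strong) Markov property applied at each return to $x$, the excursion pairs $(L_k,S_k)$ are iid. Moreover $\nE_x(L_1) = \nE_x(\tau_x^+) = 1/\pi(x)$, and
\begin{align*}
    \nE_x(S_1) = \nE_x\!\Big(\sum_{j=0}^{\tau_x^+-1} f(X_j)\Big) = \sum_{y\in\Omega} f(y)\,\nE_x(M_y) = \sum_{y\in\Omega} f(y)\,\tilde\pi(y),
\end{align*}
where $\tilde\pi(y) = \nE_x(M_y)$ is exactly the vector built in the proof of Theorem \ref{Thm:ReturnTimeLite}. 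Combining this with formula \eqref{Eq:piFormula2}, which says $\pi(y) = \tilde\pi(y)/\nE_x(\tau_x^+)$, we obtain the clean identity
\begin{align*}
    \nE_x(S_1) = \nE_x(\tau_x^+)\sum_{y\in\Omega} f(y)\pi(y) = \nE_x(\tau_x^+)\,\nE_\pi(f).
\end{align*}

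Next, let $N_n := \max\{k \geq 0 : \tau^{(k)} \leq n\}$, the number of returns to $x$ completed by time $n$. Applying the classical SLLN to the iid sequence $(L_k)$ gives $\tau^{(k)}/k = (L_1+\cdots+L_k)/k \to \nE_x(\tau_x^+)$ a.s., and a sandwiching argument using $\tau^{(N_n)} \leq n < \tau^{(N_n+1)}$ then yields $N_n/n \to 1/\nE_x(\tau_x^+) = \pi(x)$ a.s. Writing the partial sum as
\begin{align*}
    \frac{1}{n}\sum_{j=0}^{n-1} f(X_j) = \frac{N_n}{n}\cdot\frac{1}{N_n}\sum_{k=1}^{N_n} S_k + \frac{1}{n}\sum_{j=\tau^{(N_n)}}^{n-1} f(X_j),
\end{align*}
the last remainder term has at most $L_{N_n+1}$ summands each bounded by $\max_\Omega |f|$, and standard arguments show this vanishes almost surely when divided by $n$. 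Applying SLLN to $(S_k)$ and combining everything gives the limit $\pi(x)\cdot\nE_x(\tau_x^+)\cdot\nE_\pi(f) = \nE_\pi(f)$, as desired.

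The main conceptual obstacle is the clean identification $\nE_x(S_1) = \nE_x(\tau_x^+)\nE_\pi(f)$, but fortunately the excursion-counting machinery in the proof of Theorem \ref{Thm:ReturnTimeLite} supplies exactly this. The main technical obstacle is the rigorous use of the strong Markov property to guarantee that the excursions are genuinely iid, together with the careful handling of the (at most two) incomplete excursion fragments at the beginning and end, which in a fully formal write-up would require a short lemma stating that a finite initial segment and a single terminal excursion contribute $o(n)$ almost surely. Once these ingredients are in place, the theorem follows by combining the two SLLN applications as above.
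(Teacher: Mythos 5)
Your proof is correct and follows essentially the same route as the paper: decompose the trajectory into iid excursions between successive returns to a fixed state $x$, apply the strong law of large numbers to the excursion lengths and to the excursion sums, identify the mean excursion sum via the $\tilde\pi$ vector from the proof of Theorem \ref{Thm:ReturnTimeLite}, and finish with a sandwiching argument for the incomplete final excursion. The only cosmetic difference is that the paper first reduces to indicator functions $\I_i$ by linearity and runs the excursion argument for those, whereas you carry a general bounded $f$ through directly; the two presentations are interchangeable.
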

\noindent Let's start by making some remarks and parsing what the theorem is saying.
\begin{enumerate}[$(a)$]
    \item First, as just observed, the sum $\frac{1}{n} \sum_{j=0}^{n-1} f(X_j)$ is random; it changes each time we observe the first $n$ steps of the chain.  So the theorem is saying that, with probability 1, this random sequence converges to the fixed, deterministic number $\nE_\pi(f)$.
	\item We can also observe that the average on the left in \eqref{Limit:Ergodic} is \emph{temporal}, since it is the average over the first $n$ steps in the chain.  The average on the right, however, is \emph{spatial}, since it is the average, with respect to $\pi$, of the function over all the states in $\Omega$.  So the theorem says that the temporal average converges to the $\pi$-spatial average,
	\begin{align*}
		\lim_{n \rightarrow \infty} \underbrace{\frac{1}{n} \sum_{j=0}^{n-1} f(X_j)}_{ \textit{temporal average}} =  \underbrace{\sum_{j=1}^N \pi_j f(j).}_{\textit{spatial average}}
	\end{align*}
	\item There is also an analogy with the Law of Large Numbers, which states that an iid sequence $Y_0, Y_1, \ldots$ with $\nE(Y_j) = \mu$ satisfies
	\begin{align}\label{Limit:LLN}
		\lim_{n \rightarrow \infty} \frac{1}{n} \sum_{j=0}^{n-1} Y_j = \mu
	\end{align}
	with probability 1.  The similarity with \eqref{Limit:Ergodic} is striking; on both left-hand sides we have a random temporal average, and on both right-hand sides we have a deterministic mean.  We can therefore regard the Ergodic Theorem as a generalized law of large numbers for irreducible Markov chains.  In fact, one of Andrei Markov's (1856–1922) key motivations for studying the stochastic processes that are now named after him was to find a situation where a version of the Law of Large Numbers held but where the random variables were not independent.\footnote{See \cite{History} for more on the history of Markov and Markov chains.}  
	
		\item It is lastly helpful to compare and contrast this theorem with our first key convergence result, Theorem \ref{Thm:ConvToStationary}.  Note the hypothesis are different: while the latter requires the chain to be both irreducible and aperiodic, Theorem \ref{Thm:Ergodic} only requires irreducibility.  So the walk on the 6-cycle, for instance, satisfies this new averaging limit \eqref{Limit:Ergodic}, even though it does not satisfy the exponential convergence of \eqref{Ineq:ConvToStationary1}.

  \quad The conclusions of the two theorems are also different.  While Theorem \ref{Thm:ConvToStationary} gives a \emph{rate} of convergence, nothing of the sort is provided in Theorem \ref{Thm:Ergodic}.  All we know is that \emph{eventually} the temporal average is close to the spatial average; the theorem says nothing about precisely how long this takes.
\end{enumerate}

\begin{example}
	One of the things we can do with the ergodic theorem is compute the long-term proportion of time that we spend on a given vertex or collection of vertices. For instance, for the simple walk $X_j$ on the 6-cycle $\Omega = \{1,2,\ldots, 6\}$, what is the long-term proportion of time we spend on vertex 2?  
\begin{exercise}
    Intuitively reason out what the answer should be before reading further.
\end{exercise}
\noindent Let $f(x) = \I_2(x)$, the function that gives a zero if we don't feed it the 2nd vertex, but gives 1 when we do.  This is the \textbf{indicator function} for state 2.  In vector form  $\mathbf{f} = (f_j)_{j=1}^6 = (0,1,0,0,0,0)$. Theorem \ref{Thm:Ergodic} says
	\begin{align*}
		\lim_{n \rightarrow \infty} \frac{1}{n} \sum_{j=0}^{n-1} \I_2(X_j) &= \lim_{n \rightarrow \infty} \frac{\#(\text{visits to vertex } 2 \text{ through time } n-1)}{n}\\
		&= \lim_{n \rightarrow \infty} (\text{proportion of time $X_j$ spends at vertex 2})\\
		&= \nE_\pi f = \sum_{k=1}^6 \I_2( \pi_k) = \pi_2 = \frac{1}{6}.
	\end{align*}
	This is, of course, exactly what we would expect: in the long run, the simple walk should spend an equal proportion of time at any vertex.  The Ergodic Theorem gives us a way to rigorously show this.
\end{example}

\begin{exercise}
Suppose instead we have a biased walk $X_j$ on the 6-cycle, where the probabilities of CW and CCW steps are 0.9 and 0.1, respectively.  What is the long-term proportion of time that $X_j$ spends on vertex 2? 
\end{exercise}

\begin{exercise}
	Consider a simple symmetric random walk $Y_j$ on the vertices of the 13-cycle $\Omega := \{1,2,\ldots, 13\}$.  What is the long-term proportion of time that $X_j$ spends on vertices 5, 6 and 13?
\end{exercise}

\subsection{Proof of the Ergodic Theorem}

We need to show \eqref{Limit:Ergodic} holds for all functions $f: \Omega \rightarrow \mathbb{R}$, and we claim that it suffices to show it for the indicator functions 
\begin{align*}
	\I_i(x) = \begin{cases}
							1 & x=i,\\
							0 & x \neq i,
					\end{cases}, \qquad i=1,2, \ldots, N
\end{align*}
Indeed, we can write any function $f: \Omega \rightarrow \nR$ as 
\begin{align}\label{Eq:ErgodicSimp}
	f(x) = \sum_{i=1}^N f(i) \I_i(x),
\end{align}
as is easy to see by evaluating both sides at any $j \in \Omega$.    So, suppose for the moment that we have proved
\begin{align}\label{Limit:ErgodicIndicator}
	\lim_{n \rightarrow \infty} \frac{1}{n} \sum_{j=0}^{n-1} \I_i(X_j) = \nE_\pi(\I_i) = \pi_i
\end{align}
for each $i = 1,2, \ldots, N$.  Then, using the expression \eqref{Eq:ErgodicSimp}, we see
\begin{align*}
	\lim_{n \rightarrow \infty} \frac{1}{n} \sum_{j=0}^{n-1} f(X_j) &= \lim_{N \rightarrow \infty} \frac{1}{n} \sum_{j=0}^{n-1} \sum_{i=1}^N f(i) \I_i(X_j)\\
	&= \sum_{i=1}^N f(i) \Big( \lim_{n \rightarrow \infty} \frac{1}{n} \sum_{j=0}^{n-1}\I_i(X_j) \Big)\\
	&=  \sum_{i=1}^N f(i) \pi_i = \nE_\pi(f),
\end{align*}
where the second-to-last equality is our assumption \eqref{Limit:ErgodicIndicator}.  So, all we need to prove is \eqref{Limit:ErgodicIndicator}.

As an aside, note that we can re-cast everything we are doing in terms of linear algebra: the indicators $\I_i$ correspond to the standard basis vectors \begin{align*}
    \mathbf{e}_1 = (1,0,0,\ldots, 0), \;\mathbf{e}_2 = (0,1,0, \ldots, 0), \;\ldots, \;\mathbf{e}_N = (0,0, \ldots, 0,1),
\end{align*}
and \eqref{Eq:ErgodicSimp} writes the vector $\mathbf{f}$ as a linear combination of the $\mathbf{e}_j$'s.  What we just argued is that it suffices to prove the theorem for the basis vectors. 

We proceed to show \eqref{Limit:ErgodicIndicator} holds for any fixed $i=1,2, \ldots, N$.  Our main tool will be the Law of Large Numbers \eqref{Limit:LLN}, using a sequence of \emph{return times} to a given vertex.  Indeed, given $X_0 = x$, set $T_0  := 0$ and
\begin{align*}
	T_1  &:= \text{the first return time to $x$} \\
	&= \min \{\, t>0 \; : \; X_t = x \,\},\\
	T_2  &:= \text{the second return time to $x$}\\
	&= \min \{\, t>T_1  \; : \; X_t = x \,\},\\
	&\vdots\\
	T_k  &:= \text{the $k$th return time to $x$}\\
	& = \min \{\, t> T_{k-1}  \; : \; X_t = x \,\}.
\end{align*}
By the Markov property, the chain restarts each time we return to $x$, and so in particular,
\begin{align*}
	T_k  - T_{k-1} , \qquad k=1,2,\ldots
\end{align*}
is an iid sequence of random variables, each with the same distribution as $T_1$, and satisfying
\begin{align*}
    \nE_x(T_k - T_{k-1}) = \nE_x(T_1-T_0) = \nE_x(\tau_x^+) < \infty
\end{align*}
by Lemma \ref{Lemma:StationaryPositive}.  The Law of Large Numbers \eqref{Limit:LLN} therefore tells us 
\begin{align}\label{Lim:LLNTelescope}
    \lim_{n \rightarrow \infty} \frac{1}{n} \sum_{k=1}^n (T_k  - T_{k-1} ) = \nE_x(T_1 ).
\end{align}
The above is a telescopic sum, however, yielding
\begin{align*}
	\sum_{k=1}^n (T_k  - T_{k-1} ) = T_n  - T_0  = T_n ,
\end{align*}
and so \eqref{Lim:LLNTelescope} also says
\begin{align}\label{Lim:ErgodicPfTau}
	\nE_x(T_1 ) =\lim_{n \rightarrow \infty} \frac{1}{n} \sum_{k=1}^n (T_k  - T_{k-1} ) = \lim_{n \rightarrow \infty} \frac{T_n }{n}.
	\end{align}
Now back to our indicator $\I_i$.  Let $S_k$ be the number of visits to $i$ between the $(k-1)$th and $k$th return of the chain to $x$,
\begin{align}\label{Eq:ErgodicPfS_k}
	S_k := \sum_{j= T_{k-1} }^{T_k -1} \I_i(X_j).
\end{align}
As with the hitting times, the Markov property implies that $S_1, S_2, \ldots$ is an i.i.d. sequence, and so invoking the Law of Large Numbers again yields
\begin{align}\label{Lim:ErgodicPf1}
	\lim_{n\rightarrow \infty} \frac{1}{n} \sum_{k=1}^n S_k &= \nE(S_1) = \nE_x(\# \text{ visits to $i$ before returning to $x$}) .
\end{align}
But recalling our formula \eqref{Eq:piFormula2} from the proof of Theorem \ref{Thm:ReturnTimeLite},
\begin{align*}
	\pi_i = \frac{\nE_x(\# \text{ visits to $i$ before returning to $x$})}{\nE_x(T_1 )},
\end{align*}
\eqref{Lim:ErgodicPf1} says
\begin{align}\label{Lim:ErgodicPfSjLimit}
	\lim_{n\rightarrow \infty} \frac{1}{n} \sum_{j=1}^n S_j = \pi_i \nE_x(T_1 ).
\end{align}
The clever idea is to combine all of this as follows.  We have by \eqref{Eq:ErgodicPfS_k} that
\begin{align*}
	\sum_{j=0}^{T_n  -1}\I_i(X_j) = \sum_{j=1}^n S_j,
\end{align*}
since we are counting all the visits to state $i$ up to the $n$th return to $x$.  Hence
\begin{align*}
	\frac{1}{T_n }\sum_{j=0}^{T_n  -1}\I_i(X_j) = \frac{1}{T_n }\sum_{j=1}^n S_j = \frac{\frac{1}{n}\sum_{j=1}^n S_j}{\frac{T_n }{n}} \underset{n \rightarrow \infty}{\longrightarrow} \frac{\pi_i \nE_x(T_1 )}{\nE_x(T_1 )} = \pi_i
\end{align*}
by \eqref{Lim:ErgodicPfTau} and \eqref{Lim:ErgodicPfSjLimit}.  Hence we have
\begin{align}\label{Lim:AlmostErgodicIndicator}
    \lim_{n \rightarrow \infty}\frac{1}{T_n }\sum_{j=0}^{T_n  -1}\I_i(X_j) = \pi_i,
\end{align}
which is almost our desired limit \eqref{Limit:ErgodicIndicator}.  To finish the argument, we simply note that for any $T_n  \leq m \leq T_{n+1} $,
\begin{align*}
    \sum_{j=0}^{T_n  -1}\I_i(X_j) \leq \sum_{j=0}^{m -1}\I_i(X_j) \leq \sum_{j=0}^{T_{n+1}  -1}\I_i(X_j)
\end{align*}
since the number of visits is non-decreasing, and therefore
\begin{align}\label{Ineq:Sandwich}
    \frac{1}{T_{n+1} }\sum_{j=0}^{T_n  -1}\I_i(X_j) \leq \frac{1}{m}\sum_{j=0}^{m -1}\I_i(X_j) \leq \frac{1}{T_n }\sum_{j=0}^{T_{n+1}  -1}\I_i(X_j).
\end{align}
Note carefully the indices.  We would like to replace $1/T_{n+1} $ on the left with $1/T_n $, and similarly replace $1/T_n $ on the right with $1/T_{n+1} $, and then invoke \eqref{Lim:AlmostErgodicIndicator}.  For the left, we observe
\begin{align*}
    \frac{1}{T_{n+1} } = \frac{T_n }{T_{n+1} } \cdot \frac{1}{T_n } \overset{d}{=} \frac{T_n }{T_{n}  + T_1 } \cdot \frac{1}{T_n } = \frac{1}{1+ \frac{T_1 }{T_n }} \cdot \frac{1}{T_n } \underset{n \rightarrow \infty}{\longrightarrow} 1 \cdot  \frac{1}{T_n }
\end{align*}
since $T_n  \rightarrow \infty$ and $T_1 $ is fixed, and where ``$\overset{d}{=}$'' means equal in distribution.  Thus, with probability 1, $1/T_{n+1} $ is very close to $1/T_n $ for large $n$, and similarly for $1/T_n $ and $1/T_{n+1} $ for the right-hand side.  Our inequality \eqref{Ineq:Sandwich} therefore sandwiches $\frac{1}{m}\sum_{j=0}^{m -1}\I_i(X_j)$ between quantities that are very close to $\pi_i$ by \eqref{Lim:AlmostErgodicIndicator}, which yields the desired limit \eqref{Limit:ErgodicIndicator}. \qed

\section{Mixing times}\label{Sec:MixingTimes}

One of the primary uses of Markov chains is to simulate a random sample from a given distribution $\pi$, a topic we study in depth in the next chapter.  We can now begin to see how this can work, though, via Theorem \ref{Thm:ConvToStationary}: if we run an irreducible and aperiodic chain for a large number of steps $n$, \eqref{Ineq:ConvToStationary1} says that 
\begin{align}\label{Ineq:MixingIntro}
	\frac{1}{2} \sum_{j=1}^N |\nP(X_n=j) - \pi_j)| < C \alpha^n.
\end{align}
In other words, $X_n$ is very close to $\pi$ in distribution, and so $X_n$ is itself an approximate sample of $\pi$.  

In practice it is important to quantify how close we are to $\pi$.  For instance, suppose we want to ensure that 
\begin{align}\label{Ineq:MixingIntro2}
	\sum_{j=1}^n |\nP(X_N=j) - \pi_j)| < \frac{1}{100}.
\end{align}
How many steps $n$ must we take?  In this section we will see that for reversible chains, we can answer this in terms of the eigenvalues and eigenvectors of the transition matrix $P$.

We will prove in the following sections that the exponential rate of convergence $\alpha$ in \eqref{Ineq:MixingIntro} is controlled by the second-largest eigenvalue of $P$ (see Theorem \ref{Thm:ExponentialRate} below), and we will give an upper bound for how many steps we need to reach a given threshold of error, as in \eqref{Ineq:MixingIntro2}.

In \eqref{Ineq:MixingIntro} and \eqref{Ineq:MixingIntro2} we did not specify a starting distribution, as in $\nP_x(X_n=j)$.  The point is that we want a bound that is irrespective of the starting distribution.


\subsection{Definition of mixing time}

We begin by giving a name to first time when we achieve an error threshold as in \eqref{Ineq:MixingIntro2}. Let $X_n$, $n=0,1,\ldots$, be an irreducible and aperiodic chain with transition matrix $P$, and taking values in $\Omega = \{1,2, \ldots, N\}$, as usual.  Theorem \ref{Thm:ConvToStationary} tells us that
\begin{align*}
	\max_{x \in \Omega} \Norm{P^n(x, \cdot) - \pi }_{TV} \underset{n \rightarrow \infty}{\longrightarrow} 0.
\end{align*}	
\begin{definition}[Mixing time]
	For $0 < \epsilon <1$, the \textbf{$\epsilon$-mixing time} $t_{\text{mix}}(\epsilon)$ is the minimal time $n \in \mathbb{N}$ such that
	\begin{align*}
		\max_{x \in \Omega} \Norm{P^n(x, \cdot) - \pi }_{TV} \leq \epsilon.
	\end{align*}
\end{definition}
\noindent So the mixing time $t_{\text{mix}}(\epsilon)$ tells us the earliest time $n$ when we are guaranteed to be at most $\epsilon$-far away from the stationary distribution in total variation, no matter what state $x$ we begin at.  Usually we will not be able to exactly compute $t_{\text{mix}}(\epsilon)$, but upper bounds will suffice in practice.  We will need linear algebra techniques to obtain upper bounds, and so we first turn our attention to developing (or reviewing, depending on your background) some tools for dealing with powers of matrices.

\subsection{The spectral decomposition}
Recall that a square matrix $M \in \mathbb{R}^{N \times N}$ is orthogonally diagonalizable if we have the \textbf{spectral decomposition}
\begin{align}\label{Eq:SpectralDecomp}
	M = U^T \Lambda U
\end{align} 
for some matrix $U^T$ with orthonormal columns and a diagonal matrix 
\begin{align*}
	\Lambda = \text{diag}(\lambda_1, \lambda_2, \ldots, \lambda_N) =  \begin{bmatrix}
		\lambda_1 & 0 & \cdots & 0 \\
		0 & \lambda_2 & \cdots & 0\\
		\vdots & & \ddots & \vdots \\
		0 & 0 & \cdots & \lambda_N 
	\end{bmatrix}.
\end{align*}
In particular, $M^T = U^T\Lambda U= M$.  Comment on iff

The $\lambda_j$'s, which may not all be distinct, are the eigenvalues of $M$, and the columns $u_j^T$ of $U^T$ are the corresponding right eigenvectors,
\begin{align}\label{Eq:SpectralDecomp3}
	Mu_j^T = \lambda_j u_j^T
\end{align}
for all $j$.\footnote{Recall our convention is that vectors are row vectors, and hence a column vector is written via the transpose.}  Note that this is exactly what you get if you multiply both sides of \eqref{Eq:SpectralDecomp} by $U^T$ and compare the columns. 

Let's comment on a possible point of confusion: note from \eqref{Eq:SpectralDecomp3} that the spectral decomposition gives us the  \emph{right}-eigenvalues and \emph{right}-eigenvectors of $M$, while up to this point we have primarily been interested in solving $\pi P = \pi$ for the \emph{left}-eigenvector $\pi$ of $P$ with \emph{left}-eigenvalue 1.  This distinction does not matter for the eigenvalues: the characteristic polynomial of a matrix $M$ and its transpose $M^T$ are the same, which implies the left- and right-eigenvalues of $M$ are identical.  However, the left- and right-\emph{eigenvectors} for a fixed eigenvalue $\lambda$ are generally not identical for non-symmetric matrices.  See Exercise \ref{Ex:LinearAlgebra1} below. 

If we multiply out the right-hand side of \eqref{Eq:SpectralDecomp}, we find
\begin{align}\label{Eq:SpectralDecomp2}
	 M = \sum_{j=1}^N \lambda_j u_j^T u_j,
\end{align}
which implies
\begin{align}\label{Eq:SpectralPowers}
	M^k = \sum_{j=1}^N \lambda_j^k u_j^T u_j
\end{align}
by orthogonality and the fact that the $u_j$ are unit vectors.  It is a good idea to convince yourself of these last two assertions.

\begin{exercise}\label{Ex:LinearAlgebra1}
    Show that \eqref{Eq:SpectralDecomp2} follows from \eqref{Eq:SpectralDecomp}, and derive \eqref{Eq:SpectralPowers} from \eqref{Eq:SpectralDecomp2} using induction on $k$.
\end{exercise}

\begin{exercise}\label{Ex:LinearAlgebra2}
\begin{enumerate}[$(a)$]
    \item Show that the left- and right- eigenvalues of a matrix are identical.  That is, for any given $\lambda \in \mathbb{R}$, there is some non-zero row vector $v$ such that $v P = \lambda v$ if and only if there is a non-zero column vector $w^T$ such that $Pw^T = \lambda w^T$.  (\emph{Hint}: use the characteristic polynomial.)
    \item The stationary distribution $\pi$ for the simple walk on the graph in Figure \ref{Fig:FirstGraph1} is given in Example \ref{Example:FirstGraphStationaryDist}.  We know $\pi$ is a left-eigenvector for $P$ with $\lambda=1$.  Show that $\pi^T$ is \emph{not} a right-eigenvector for $P$ for any $\lambda$. 
\end{enumerate}
\end{exercise}

\subsection{The Perron-Frobenius Theorem and convergence rate for symmetric $P$}\label{Sec:PF}

If we want to understand the high powers of a symmetric matrix $P$, it is clear from \eqref{Eq:SpectralPowers} that we need to understand the eigenvalues $\lambda$ of $P$.  We already know that 1 is an eigenvalue.  This is clear from the existence of the stationary distribution $\pi$ satisfying $\pi P = \pi$, and also from the equation $P \mathbf{1}^T = \mathbf{1}^T$, where $\mathbf{1} = (1,1,\ldots, 1)$ is the vectors of one's in $\mathbb{R}^N$.  

Let's think further about a specific example.  If $P$ is symmetric, then the columns of $P$ sum to 1 and $\pi = (\frac{1}{N}, \frac{1}{N}, \ldots, \frac{1}{N}) = \frac{1}{N}\mathbf{1}$ by Exercise \ref{Ex:ColsAddToOne}$(a)$.  Hence $\pi^T = \frac{1}{N} \mathbf{1}^T$ is also a right-eigenvector of $P$.  Noting that $\| \pi\|_2 = \frac{1}{\sqrt{N}}$ (reference excurus), we see the unit-vector version of $\pi$ is $u_1 = \sqrt{N}\pi$, and \eqref{Eq:SpectralPowers} then says
\begin{align}\label{Eq:SpectralSymmetric}
	P^n = N \pi^T \pi + \sum_{j=2}^N \lambda_j^n u_j^T u_j.
\end{align}
If all the remaining eigenvalues are smaller in absolute value than 1,
\begin{align}\label{Eq:PEigenvalues1}
	-1 < \lambda <1 \quad \text{ for } j =2,3, \ldots, N,
\end{align}
then $\lambda_j^n \rightarrow 0$, and taking the limit in \eqref{Eq:SpectralSymmetric} yields
\begin{align}
	\lim_{n \rightarrow \infty} P^n &= N \pi^T \pi + 0 \notag\\
	&= \begin{bmatrix}
		\frac{1}{N} & \frac{1}{N} & \cdots & \frac{1}{N} \\
		\frac{1}{N} & \frac{1}{N} & \cdots & \frac{1}{N}\\
		\vdots & & \ddots & \vdots \\
		\frac{1}{N} & \frac{1}{N} & \cdots & \frac{1}{N}
	\end{bmatrix}. \label{Eq:PSymmetricLimit}
\end{align}

This is thus the simplest situation: a one-dimensional eigenspace for $\lambda=1$, and all other eigenvalues satisfying \eqref{Eq:PEigenvalues1}.  The next theorem tells us this is always the case when our chain is irreducible and aperiodic.
\begin{theorem}[Perron-Frobenius]\label{Thm:PF}
	Let $P$ be a transition matrix.
	\begin{enumerate}[$(i)$]
		\item If $\lambda$ is an eigenvalue of $P$, then $|\lambda| \leq 1$.
		\item If $P$ is irreducible, then the eigenspace corresponding to $\lambda_1 =1$ has dimension 1.
		\item If $P$ is irreducible and aperiodic, then $-1$ is not an eigenvalue of $P$.
	\end{enumerate}
\end{theorem}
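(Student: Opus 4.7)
The proof of all three parts rests on a common ``maximum modulus'' technique applied to eigenvectors of $P$, combined with the fact that the rows of $P$ are nonnegative and sum to one. For part $(i)$, let $\lambda$ be any eigenvalue with right eigenvector $v$ (possibly complex), and choose an index $i$ where $|v_i| = \max_j |v_j| =: M > 0$. Then
\begin{align*}
    |\lambda| M \;=\; |(Pv)_i| \;=\; \Big| \sum_j P_{ij} v_j \Big| \;\leq\; \sum_j P_{ij} |v_j| \;\leq\; M \sum_j P_{ij} \;=\; M,
\end{align*}
so $|\lambda| \leq 1$.

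For part $(ii)$, assume $P$ is irreducible and let $v$ be a real right eigenvector with $Pv = v$; pick an $i$ with $v_i = \max_j v_j =: M$. Then $M = v_i = \sum_j P_{ij} v_j$, and since $v_j \leq M$ while $P_{ij} \geq 0$ with $\sum_j P_{ij} = 1$, equality forces $v_j = M$ whenever $P_{ij} > 0$. Iterating this step along positive-probability paths — which by irreducibility connect $i$ to every other state — gives $v_j = M$ for every $j \in \Omega$, so $v$ is a scalar multiple of $\mathbf{1}^T$. For a complex eigenvector one decomposes into real and imaginary parts (each itself a real $1$-eigenvector) to extend the conclusion, and so the $1$-eigenspace is one-dimensional.

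The main work, and main obstacle, lies in part $(iii)$, which combines the previous idea with the periodicity structure. Suppose for contradiction that $P$ is irreducible and aperiodic but $-1$ is an eigenvalue; since $-1$ is real and $P$ has real entries we may take a real eigenvector $v$ with $Pv = -v$. Set $M := \max_j |v_j|$ and $S_\pm := \{\, j \in \Omega : v_j = \pm M \,\}$. For any $i \in S_+$ we have $v_i = M$ and $(Pv)_i = -M$, so $\sum_j P_{ij} v_j = -M$; since each $v_j \geq -M$ and $\sum_j P_{ij} = 1$, equality forces $v_j = -M$ for every $j$ with $P_{ij} > 0$. Thus from $S_+$ the chain transitions in one step only into $S_-$, and symmetrically from $S_-$ only into $S_+$. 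This makes $S_+ \cup S_-$ forward-closed, so by irreducibility it must equal all of $\Omega$. But then any path from an $i \in S_+$ back to itself must alternate between $S_+$ and $S_-$ and so have even length, giving $P^k(i,i) = 0$ for every odd $k$ and forcing the period at $i$ to be a multiple of $2$. This contradicts aperiodicity and completes the proof.
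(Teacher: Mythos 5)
Your proof is correct, and it takes a genuinely different --- and more self-contained --- route than the text. The book proves only part $(iii)$, and does so by invoking Theorem \ref{Thm:ConvToStationary} (exponential convergence of $P^n$ to the matrix of rows $\pi$, itself stated without proof): if $Pv^T=-v^T$ then $P^nv^T=(-1)^nv^T$, and comparing the limits along even and odd $n$ forces $v=-v$, hence $v=\mathbf{0}$. Parts $(i)$ and $(ii)$ are explicitly left unproved there. You instead run a single elementary maximum-principle argument through all three parts: the row-stochasticity of $P$ bounds $|\lambda|$ by $1$; the equality case of that bound, propagated along positive-probability paths, shows any $1$-eigenvector is constant (this is essentially the argument the book gives much later for Theorem \ref{Thm:HarmonicConstant}, that harmonic functions on irreducible chains are constant); and for $(iii)$ the same equality analysis applied to a $(-1)$-eigenvector produces the two-class structure $S_+\leftrightarrow S_-$, which forces every return time to be even and contradicts aperiodicity. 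What your approach buys is logical independence: it needs nothing beyond the definitions of irreducibility and period, whereas the book's proof of $(iii)$ rests on a convergence theorem it does not prove. What the book's route buys is brevity and a nice illustration of how convergence to stationarity constrains the spectrum. The only cosmetic point worth tightening is the case split at the start of $(iii)$: you tacitly assume $S_+\neq\emptyset$, but if only $S_-$ were nonempty the same one-step argument would send its states into the empty set $S_+$, which is impossible since every row of $P$ sums to one, so both sets are automatically nonempty; a sentence to that effect would make the argument airtight.
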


\begin{proof}
	Parts $(i)$ and $(ii)$ are a version of the \emph{Perron-Frobenius Theorem},  a result in linear algebra that we will not prove here.  We will argue for $(iii)$, however.(Add reference, see Wikipedia)  Suppose $P$ is irreducible and aperiodic, and $Pv^T = -v^T$ for some column vector $v^T$.  We need to show that $v = \mathbf{0} = (0,0,\ldots, 0)$.  Note that $Pv^T = -v^T$ implies $P^nv^T = (-1)^nv^T$.  However, by Theorem \ref{Thm:ConvToStationary}, we know that \emph{each row} of $P^n$ converges to $\pi$ (recall the discussion around \eqref{Limit:PowersOfP}), and so 
\begin{align}\label{Lim:MatrixLimitExists}
	P^n v^T \underset{n \rightarrow \infty}{\longrightarrow} \begin{bmatrix}
			\pi \\
			\pi \\
			\vdots \\
			\pi
		\end{bmatrix} v^T = \begin{bmatrix}
			\nE_\pi(v) \\
			\nE_\pi(v) \\
			\vdots \\
			\nE_\pi(v)
		\end{bmatrix}.
\end{align}
So, if we take the limit only using even $n$, we have
\begin{align*}
	\begin{bmatrix}
			\nE_\pi(v) \\
			\nE_\pi(v) \\
			\vdots \\
			\nE_\pi(v)
		\end{bmatrix} = \lim_{\substack{n \rightarrow \infty\\ n \text{ even}}}P^n v^T =  \lim_{\substack{n \rightarrow \infty\\ n \text{ even}}} v^T = v^T,
\end{align*}
while if we go along odd $n$, we find
\begin{align*}
		\begin{bmatrix}
			\nE_\pi(v) \\
			\nE_\pi(v) \\
			\vdots \\
			\nE_\pi(v)
		\end{bmatrix} = \lim_{\substack{n \rightarrow \infty\\ n \text{ odd}}}P^n v^T =  \lim_{\substack{n \rightarrow \infty\\ n \text{ odd}}} (-1)^nv^T = -v^T.
\end{align*}
However, both of these limits are the same by \eqref{Lim:MatrixLimitExists}, and so $v = -v$ implying $v =\mb{0}$, as claimed. We conclude that $P$ does not have an eigenvalue of $-1$.
\end{proof}
Order the eigenvalues of $P$ as a  decreasing sequence $\lambda_1 \geq \lambda_2 \geq \cdots \geq \lambda_n$ and repeat them according to multiplicity (so if the eigenspace of $\lambda_1$ has dimension 2, then we would have $\lambda_1=\lambda_2 > \lambda_3$).  The Perron-Frobenius theorem then says 
\begin{align*}
	\lambda_1 = \underbrace{1 > \lambda_2}_{\text{if irreducible}} \geq \lambda_3 \geq \cdots \geq \underbrace{\lambda_{n} > -1}_{\text{if irreducible \& aperiodic}}
\end{align*}
\noindent In particular, powers $P^n$ of the matrix satisfy the limit \eqref{Eq:PSymmetricLimit} when $P$ is symmetric, irreducible and aperiodic, an observation we collect into the following theorem.
\begin{theorem}\label{Thm:ExponentialRateSymP}
	Let $P$ be a symmetric transition matrix for an irreducible and aperiodic chain, with eigenvalues $\lambda_1=1 > \lambda_2\geq \lambda_3\geq \cdots \geq \lambda_N >-1$.  Then the exponential rate of convergence $\alpha$ in Theorem \ref{Thm:ConvToStationary} is given by
	\begin{align}\label{MaxLambda2}
		\max_{j \geq 2} |\lambda_j|.
	\end{align}
\end{theorem}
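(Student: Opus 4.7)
The plan is to apply the spectral decomposition \eqref{Eq:SpectralPowers} directly and split off the contribution of the top eigenvalue, which will recover $\pi$, leaving a sum controlled by $\lambda_* := \max_{j\geq 2}|\lambda_j|$. First, since $P$ is symmetric its columns sum to one and Exercise \ref{Ex:ColsAddToOne} gives $\pi=\tfrac{1}{N}\mathbf{1}$, so the unit eigenvector for $\lambda_1=1$ is $u_1=\tfrac{1}{\sqrt{N}}\mathbf{1}=\sqrt{N}\,\pi$, and one computes $u_1^T u_1 = N\pi^T\pi$, the $N\times N$ matrix whose every row equals $\pi$. The Perron–Frobenius Theorem \ref{Thm:PF} then guarantees (using irreducibility and aperiodicity) that $\lambda_* \in [0,1)$, which will be the key input driving the exponential decay.

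Next I would take \eqref{Eq:SpectralSymmetric} and isolate the tail:
\begin{align*}
P^n - N\pi^T\pi \;=\; \sum_{j=2}^N \lambda_j^n\, u_j^T u_j.
\end{align*}
Reading this entrywise at $(x,k)$ gives $P^n(x,k)-\pi(k)=\sum_{j=2}^N \lambda_j^n\, u_j(x)u_j(k)$. Pulling out the largest possible modulus and applying Cauchy--Schwarz yields
\begin{align*}
|P^n(x,k)-\pi(k)| \;\leq\; \lambda_*^n \sum_{j=2}^N |u_j(x)||u_j(k)| \;\leq\; \lambda_*^n\Big(\sum_{j=2}^N u_j(x)^2\Big)^{1/2}\Big(\sum_{j=2}^N u_j(k)^2\Big)^{1/2}.
\end{align*}
Because $U$ is orthogonal, the rows of $U$ are unit vectors; concretely $\sum_{j=1}^N u_j(x)^2 = 1$ for every $x$, so each factor above is at most $1$. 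Hence $|P^n(x,k)-\pi(k)| \leq \lambda_*^n$ uniformly in $x,k$, and summing over $k$ using \eqref{Def:TVNorm} produces
\begin{align*}
\|P^n(x,\cdot)-\pi\|_{TV} \;\leq\; \tfrac{N}{2}\lambda_*^n,
\end{align*}
which is exactly the bound \eqref{Ineq:ConvToStationary1} with $C=N/2$ and $\alpha = \lambda_*$.

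The main obstacles will be two bookkeeping points rather than a deep idea. The first is correctly identifying $u_1^T u_1$ with $N\pi^T\pi$ to peel off the $j=1$ contribution as the limiting rank-one matrix $\tfrac{1}{N}J$; this is where symmetry (giving uniform $\pi$) is essential and where a careless transpose convention would propagate errors. The second is the orthonormality bookkeeping that lets Cauchy--Schwarz yield a clean constant: one must remember that it is the \emph{rows} of $U$ (equivalently the columns of $U^T$, whose columns are the $u_j^T$) that are unit vectors because $U U^T = I$, and this is what makes $\sum_j u_j(x)^2 = 1$. Once those are in hand the argument is purely computational and gives the advertised $\alpha = \max_{j\geq 2}|\lambda_j|$ with no use of probability beyond the identification of the $\lambda_1=1$ term.
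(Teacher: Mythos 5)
Your proof is correct and follows essentially the same route as the paper: the spectral decomposition \eqref{Eq:SpectralSymmetric}, the identification of the $\lambda_1=1$ term with $N\pi^T\pi$, and Perron--Frobenius to get $\max_{j\geq 2}|\lambda_j|<1$. In fact your version is more complete than the paper's, which merely asserts that the tail $\sum_{j\geq 2}\lambda_j^n u_j^Tu_j$ decays exponentially at rate $\max_{j\geq 2}|\lambda_j|$; your Cauchy--Schwarz step with the row-orthonormality $\sum_j u_j(x)^2=1$ turns that assertion into the explicit bound $\|P^n(x,\cdot)-\pi\|_{TV}\leq \tfrac{N}{2}\lambda_*^n$ matching \eqref{Ineq:ConvToStationary1}.
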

\noindent  Note that even though $\lambda_2 \geq \lambda_j$ for $j \geq 2$, note that $\max_{j \geq 2} |\lambda_j|$ is not necessarily $|\lambda_2|$ since the $\lambda_j$ can be negative.
\begin{proof}
	We have seen that the spectral theorem gives
	\begin{align*}
		P^n &= N \pi^T \pi + \sum_{j=2}^N \lambda_j^n u_j^T u_j.
	\end{align*}
	The first matrix $N\pi^T\pi$ is the constant $N\times N$ matrix with entries $1/N$.  By Theorem \ref{Thm:PF}, $|\lambda_j|<1$ for $j =2,3, \ldots, N$, and hence 
	\begin{align*}
		\sum_{j=2}^N \lambda_j^n u_j^T u_j \underset{n \rightarrow \infty}{\longrightarrow}0
	\end{align*}
	exponentially fast as $n \rightarrow \infty$, with rate controlled by \eqref{MaxLambda2}.  Each row of what is left, the matrix $N\pi^T\pi$, is the stationary distribution.
\end{proof}

The Perron-Frobenius theorem also allows us to prove uniqueness of the stationary distribution $\pi$ for irreducible Markov chains.  Recall that in Theorem \ref{Thm:ReturnTimeLite} we proved the \emph{existence} of $\pi$ for any irreducible chain, along with the explicit formula
\begin{align}\label{Eq:StationaryFormulaAgain}
    \pi(x) = \frac{1}{ \nE(\tau_x  \, | \, X_0 = x)}.
\end{align}
What remains to finish the proof of Theorem \ref{Thm:ReturnTime} is to show that \emph{any} stationary distribution satisfies the formula \eqref{Eq:StationaryFormulaAgain}.

\begin{proof}[Proof of Theorem \ref{Thm:ReturnTime}]
    Let $\rho$ be any stationary distribution.  Then $\rho P = \rho$ and so $\rho$ is a left-eigenvector of $P$ corresponding to $\lambda_1=1$.  Note that the left- and right-eigenspaces for $\lambda_1$ have identical dimension; this follows from the argument in Exercise \ref{Ex:LinearAlgebra2}$(a)$, as the characteristic polynomials for $P$ and $P^T$ are identical. Hence Theorem \ref{Thm:PF} $(ii)$ says that the left-eigenspace for $\lambda_1$ is spanned by the $\pi$ given by \eqref{Eq:StationaryFormulaAgain}.  In particular, there exists $c \in \mathbb{R}$ such that $\rho = c\pi$, and so summing the entries $\rho_j$ yields
    \begin{align*}
        \sum_{j=1}^N \rho_j = c \sum_{j=1}^N \pi_j = c.
    \end{align*}
    Since $\rho$ is a probability distribution, $c=1$ and so $\rho = \pi$.
\end{proof}

\begin{exercise}
	Consider the simple symmetric random walk $(X_n)$ on the 7-cycle $\Omega = \{1,2,3,4,5,6,7\}$.  Write out the transition probability matrix $P$, and use a computer to take large powers of $P$.  Numerically verify that
	\begin{align*}
		\lim_{n \rightarrow \infty} P^n = 
		\begin{bmatrix}
		1/7 & 1/7 & 1/7 & 1/7 & 1/7 &1/7 &1/7 \\
		1/7 & 1/7 & 1/7 & 1/7 & 1/7 &1/7 &1/7\\
		1/7 & 1/7 & 1/7 & 1/7 & 1/7 &1/7 &1/7\\
		1/7 & 1/7 & 1/7 & 1/7 & 1/7 &1/7 &1/7\\
		1/7 & 1/7 & 1/7 & 1/7 & 1/7 &1/7 &1/7\\
		1/7 & 1/7 & 1/7 & 1/7 & 1/7 &1/7 &1/7\\
		1/7 & 1/7 & 1/7 & 1/7 & 1/7 &1/7 &1/7
	\end{bmatrix}.
	\end{align*}
	What is the exponential rate of convergence?
\end{exercise}

\subsection{Convergence rate for reversible $P$}

Having \emph{symmetric} transition probabilities $P$ in Theorem \ref{Thm:ExponentialRateSymP} is a very restrictive condition.  Can we say anything about the convergence rate for aperiodic, irreducible and reversible $P$ that are not symmetric? The answer is yes, via a heavy dose of linear algebra (as we are dealing with high powers of the transition matrix $P$, there is no other route).  We will find that the rate of convergence in Theorem \ref{Thm:ExponentialRateSymP} also holds in this case.

Suppose $P$ is reversible with respect to $\pi$,
\begin{align}\label{Eq:DBEMix}
	\pi_x P_{xy} = \pi_y P_{yx}, \qquad x,y \in \Omega.
\end{align}
If $\pi$ is not uniform, then $P$ is not symmetric, and we can't immediately apply the spectral decomposition as above.  However, if we build the right symmetric matrix out of $P$, using reversibility, we can still gain useful information.  

To that end, we define diagonal matrices
\begin{align*}
	D_\pi := \text{diag}(\pi_1, \pi_2, \ldots, \pi_n) =  \begin{bmatrix}
		\pi_1 & 0 & \cdots & 0 \\
		0 & \pi_2 & \cdots & 0\\
		\vdots & & \ddots & \vdots \\
		0 & 0 & \cdots & \pi_n 
	\end{bmatrix}
\end{align*}
and $D_\pi^p := \text{diag}(\pi_1^p, \pi_2^p, \ldots, \pi_n^p)$ for some power $p$, and consider
\begin{align*}
	A := D_\pi^{1/2} P D_\pi^{-1/2}.
\end{align*}
Noting that the $(x,y)-$component of $A$ is 
\begin{align}\label{Eq:Axy}
	A_{xy} = \frac{\sqrt{\pi_x}}{\sqrt{\pi_y}}P_{xy},
\end{align}
we claim that $A$ is symmetric, $A_{xy} = A_{yx}$. 

\begin{exercise}
Convince yourself of \eqref{Eq:Axy}, and use \eqref{Eq:DBEMix} and \eqref{Eq:Axy} to show that $A_{xy} = A_{yx}$.
\end{exercise}
Symmetric matrices have the spectral decomposition \eqref{Eq:SpectralDecomp2}, and so
\begin{align}\label{Eq:SpectralA}
	A = D_\pi^{1/2} P D_\pi^{-1/2} = \sum_{j=1}^N \lambda_j u_j^T u_j
\end{align}
for some eigenvalues $\lambda_j$ and corresponding orthonormal eigenvectors $u_j^T$.  Note that when you multiply out $ D_\pi^{1/2} P D_\pi^{-1/2}$ with itself $k$ times, the inner $D_\pi^{\pm 1/2}$ matrices cancel, and hence
\begin{align*}
	A^k = D_\pi^{1/2} P^k D_\pi^{-1/2},
\end{align*}
and so we see by \eqref{Eq:SpectralPowers} that
\begin{align*}
	A^k = D_\pi^{1/2} P^k D_\pi^{-1/2} = \sum_{j=1}^N \lambda_j^k u_j^T u_j.
\end{align*}
We interested in $P^kD_\pi^{-1}$, and the above shows
\begin{align}
	P^k D_\pi^{-1} = D_\pi^{-1/2} A^k D_\pi^{-1/2} &=  \sum_{j=1}^N \lambda_j^k D_\pi^{-1/2} u_j^T u_j D_\pi^{-1/2} \notag\\
	&= \sum_{j=1}^N \lambda_j^k (D_\pi^{-1/2})^T u_j^T u_j D_\pi^{-1/2}\notag \\
	&= \sum_{j=1}^N \lambda_j^k (u_j D_\pi^{-1/2})^T u_j D_\pi^{-1/2} \notag \\
	&= \sum_{j=1}^N \lambda_j^k v_j^T v_j \label{Eq:Pk}
\end{align}
for $v_j := u_j D_\pi^{-1/2}$. Note that the $v_j^T$ are eigenvalues of $P$ with eigenvalue $\lambda_j$, since
\begin{align*}
	Pv_j^T = P D_\pi^{-1/2} u_j^T &= D_\pi^{-1/2}AD_\pi^{1/2} D_\pi^{-1/2} u_j^T\\
	 &= D_\pi^{-1/2}A u_j^T = \lambda_j D_\pi^{-1/2} u_j^T = \lambda_j v_j^T.
\end{align*}
In particular, by Theorem \ref{Thm:PF} $(ii)$, the eigenvector $v_1^T$ for $\lambda_1 =1$ must be a multiple of the constant vector $\mathbf{1}$ ($P$ is a transition matrix, so $\mathbf{1}^T$ is an eigenvector, and by Theorem \ref{Thm:PF}, the eigenspace is one-dimensional).  So $v = (c,c,\ldots, c)$ for some $c$.

We claim, however, that $c=\pm 1$, i.e. $v= \pm \mathbf{1}$.  Indeed, we have
\begin{align*}
	v_1 = (c,c, \ldots, c) = u_1D_\pi^{-1/2} = \left(\frac{u_{11}}{\sqrt{\pi_1}}, \frac{u_{12}}{\sqrt{\pi_2}}, \ldots, \frac{u_{1N}}{\sqrt{\pi_N}}\right),  
\end{align*}
and so $u_{1j} = c\sqrt{\pi_j}$ for each $j$.  However, $u_1$ is a unit vector, as that's what the spectral decomposition gave us for $A$ in \eqref{Eq:SpectralA}.  Therefore
\begin{align*}
	1 = \Norm{u_1}^2 = \sum_{j=1}^N u_{1j}^2 = c^2 \sum_{j=1}^N \pi_j = c^2 \cdot 1,
\end{align*}
and so $c = \pm 1$, as claimed.

Now suppose that $P$ is irreducible and aperiodic.  Picking out the $(x,y)-$entry of \eqref{Eq:Pk}, we have
\begin{align}
	(P^k D_\pi^{-1})_{xy} = \frac{P^k_{xy}}{\pi_y} = \sum_{j=1}^N \lambda_j^k v_{j,x} v_{j,y} &= \lambda_1 v_{1,x}v_{1,y} + \sum_{j=2}^N \lambda_j^k v_{j,x} v_{j,y} \notag \\
	&= 1 \cdot 1 + \sum_{j=2}^N \lambda_j^k v_{j,x} v_{j,y}. \label{Eqn:ExponentialRateEntry}
\end{align}
By Theorem \ref{Thm:PF} $(i)$ and $(iii)$, we know $|\lambda_j|<1$ for $j=2,3,\ldots, N$, and thus taking limits in the above yields
\begin{align}\label{Eqn:PPowersEntryLimit}
	\lim_{k \rightarrow \infty} \frac{P^k_{xy}}{\pi_y} = 1.
\end{align}
We have recovered our convergence result, Theorem \ref{Thm:ConvToStationary}.  Note that \eqref{Eqn:PPowersEntryLimit} is independent of $x$, as we would expect from \eqref{Ineq:ConvToStationary2}.  Furthermore, we see from \eqref{Eqn:ExponentialRateEntry} that the exponential rate of convergence is again controlled by the largest of $\lambda_2, \lambda_3, \ldots, \lambda_N$.  In other words, we have proved the following extension of Theorem \ref{Thm:ExponentialRateSymP} to more general $P$.

\begin{theorem}\label{Thm:ExponentialRate}
	Let $P$ be the transition matrix of an irreducible, aperiodic and reversible chain with eigenvalues $1, \lambda_2, \lambda_3, \ldots, \lambda_N$.  Then the exponential rate of convergence $\alpha$ in Theorem \ref{Thm:ConvToStationary} is given by
	\begin{align*}
		\max_{j \geq 2} |\lambda_j|.
	\end{align*}
\end{theorem}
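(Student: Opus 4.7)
The plan is to read off the rate directly from the spectral expansion that the excerpt has already assembled in \eqref{Eq:Pk}--\eqref{Eqn:ExponentialRateEntry}. All the linear-algebraic legwork (symmetrizing $P$ to $A = D_\pi^{1/2} P D_\pi^{-1/2}$ using the DBEs, invoking the spectral theorem for $A$, and rewriting $P^k D_\pi^{-1}$ in terms of the eigenvectors $v_j = u_j D_\pi^{-1/2}$) is already done. What remains is to turn the entrywise identity
\begin{align*}
    P^k_{xy} - \pi_y \; = \; \pi_y \sum_{j=2}^N \lambda_j^k \, v_{j,x}\, v_{j,y}
\end{align*}
into an exponential upper bound on $\|P^k(x,\cdot)-\pi\|_{TV}$ whose base is $\lambda_* := \max_{j\geq 2} |\lambda_j|$.

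The first step is a pointwise bound. Using $|\lambda_j|\leq \lambda_*$ for $j\geq 2$, followed by Cauchy--Schwarz in the sum over $j$,
\begin{align*}
    \bigl|P^k_{xy} - \pi_y\bigr|
    \;\leq\; \pi_y\,\lambda_*^k \sum_{j=2}^N |v_{j,x}|\,|v_{j,y}|
    \;\leq\; \pi_y\,\lambda_*^k \Bigl(\sum_{j=2}^N v_{j,x}^2\Bigr)^{1/2}\Bigl(\sum_{j=2}^N v_{j,y}^2\Bigr)^{1/2}.
\end{align*}
The crucial simplification is that $v_{j,x} = u_{j,x}/\sqrt{\pi_x}$ and the $u_j$ form an orthonormal basis of $\mathbb{R}^N$, which gives $\sum_{j=1}^N u_{j,x}^2 = 1$ and hence $\sum_{j=2}^N v_{j,x}^2 \leq 1/\pi_x$. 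So $|P^k_{xy}-\pi_y| \leq \lambda_*^k \sqrt{\pi_y}/\sqrt{\pi_x}\cdot \sqrt{\pi_y}=\lambda_*^k\pi_y/\sqrt{\pi_x\pi_y}$, i.e.
\begin{align*}
    |P^k_{xy}-\pi_y|\;\leq\;\lambda_*^k\,\sqrt{\pi_y/\pi_x}.
\end{align*}

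Next I would sum over $y$ and apply Cauchy--Schwarz once more against the uniform measure to get $\sum_y \sqrt{\pi_y}\leq \sqrt{N}$, producing
\begin{align*}
    \|P^k(x,\cdot)-\pi\|_{TV} \;=\; \tfrac{1}{2}\sum_y|P^k_{xy}-\pi_y| \;\leq\; \frac{\sqrt{N}}{2\sqrt{\pi_x}}\,\lambda_*^k.
\end{align*}
Taking the max over the (finitely many) starting states $x$ yields $\max_x \|P^k(x,\cdot)-\pi\|_{TV}\leq C\lambda_*^k$ with $C = \tfrac{1}{2}\sqrt{N/\min_x \pi_x}$, which is finite by Lemma~\ref{Lemma:StationaryPositive}. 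This shows $\alpha$ can be taken to be $\lambda_*$. To see that no smaller base works, one chooses $x = y$ achieving $v_{j_0,x}^2 \neq 0$ for a $\lambda_{j_0}$ with $|\lambda_{j_0}|=\lambda_*$ and reads off from the exact expansion that the leading term in $P^k_{xx}-\pi_x$ decays exactly like $\lambda_*^k$, so no bound of the form $C \alpha^k$ with $\alpha<\lambda_*$ can hold.

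The main obstacle is the passage from the entrywise bound to a uniform bound in $x$: the naive estimate $|P^k_{xy}-\pi_y|\leq \lambda_*^k\sum_j |v_{j,x}v_{j,y}|$ a priori blows up in the size of the $v_j$'s, and the route back to a controllable constant is precisely the orthonormality of the $u_j$'s together with the rescaling $v_j = u_j D_\pi^{-1/2}$. The factor $1/\sqrt{\pi_x}$ is unavoidable but harmless for a finite chain, and it is exactly what Lemma~\ref{Lemma:StationaryPositive} guarantees we can afford.
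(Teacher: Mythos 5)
Your proof is correct and follows the same route as the paper: both start from the symmetrized spectral expansion $P^k_{xy}/\pi_y = 1 + \sum_{j\ge 2}\lambda_j^k v_{j,x}v_{j,y}$ of \eqref{Eqn:ExponentialRateEntry} and read off the rate from the largest of $|\lambda_2|,\ldots,|\lambda_N|$. In fact you go further than the text, which simply asserts from that display that the rate is controlled by $\max_{j\ge 2}|\lambda_j|$; your Cauchy--Schwarz and orthonormality argument supplies the explicit total-variation bound with constant $C=\tfrac{1}{2}\sqrt{N/\min_x\pi_x}$, which is a genuine completion of the step the paper leaves implicit. One small caution on your optimality sketch: if several eigenvalues share the modulus $\lambda_*$ with opposite signs, their contributions to $P^k_{xx}-\pi_x$ can cancel for some parities of $k$, so you should restrict to even $k$, where the contribution is $\lambda_*^k\sum_{j:\,|\lambda_j|=\lambda_*}v_{j,x}^2>0$, to conclude that no base smaller than $\lambda_*$ can work.
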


\subsection{The relaxation time}\label{Sec:relax}

Motivated by Theorems \ref{Thm:ExponentialRate} (and its more restrictive counterpart \ref{Thm:ExponentialRateSymP}), we set
\begin{align*}
	\lambda^* := \max_{j \geq 2} |\lambda_j|,
\end{align*}
the largest of the non-identity eigenvalues of $P$.  We define the \emph{absolute spectral gap} as $\gamma^* := 1 - \lambda^*$.

\begin{definition}
	The \textbf{relaxation time} $t_{\text{rel}}$ is the reciprocal of the absolute spectral gap,
	\begin{align*}
		t_{\text{rel}} = \frac{1}{\gamma^*} = \frac{1}{1- \lambda^*} = \frac{1}{1-\max_{j \geq 2} |\lambda_j| }
	\end{align*}
\end{definition}
The name comes from the following theorem, which should seem plausible in light of the exponential decay we've seen in \eqref{Eqn:ExponentialRateEntry}.  (We won't give a proof, however.)  

Recall that the \emph{mixing time} $t_{\text{mix}}(\epsilon)$ is the minimal time $t$ we need to guarantee that our chain is $\epsilon$-close to stationarity, regardless of where we start.  In other words, it's the minimal $t$ such that
	\begin{align*}
		\max_{x \in \Omega} \Norm{P^t(x, \cdot) - \pi }_{TV} < \epsilon.
	\end{align*}
\begin{theorem}\label{Thm:MixingTimeBounds}
	Let $P$ be irreducible, aperiodic and reversible with respect to $\pi$.  Let $\pi_m$ be the smallest of the entries of $\pi$,
	\begin{align*}
		\pi_m := \min_{j =1,2, \ldots, N} \pi_j.
	\end{align*} 
	Then, for any $0 < \epsilon <1$, 
	\begin{align}\label{Ineq:MixingTimeBounds}
		(\tr - 1)\log \left( \frac{1}{2\epsilon} \right) \leq \tm(\epsilon) \leq \log \left(  \frac{1}{\epsilon \pi_m}\right) \tr.
	\end{align}
\end{theorem}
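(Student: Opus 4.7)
The plan is to leverage the spectral decomposition established in \eqref{Eqn:ExponentialRateEntry}, which under $\pi$-reversibility gives
\begin{align*}
\frac{P^k(x,y)}{\pi_y} - 1 \;=\; \sum_{j=2}^N \lambda_j^k\, v_{j,x} v_{j,y},
\end{align*}
where the vectors $v_j = u_j D_\pi^{-1/2}$ are $\pi$-orthonormal (that is, $\sum_y \pi_y v_{i,y} v_{j,y} = \delta_{ij}$) because the $u_j$ themselves are orthonormal. Both the upper and lower bounds will flow from this single identity, which packages the entire deviation of $P^k(x,\cdot)$ from $\pi$ into powers of $\lambda_j$.

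For the upper bound I would first pass from TV to a weighted $\ell^2$-bound via Cauchy–Schwarz with weights $\pi_y$:
\begin{align*}
4\,\|P^k(x,\cdot) - \pi\|_{TV}^2 \;\leq\; \sum_y \pi_y\left(\frac{P^k(x,y)}{\pi_y} - 1\right)^{\!2}.
\end{align*}
Expanding the right-hand side using the spectral identity, the cross terms collapse by $\pi$-orthonormality of the $v_j$, leaving $\sum_{j \geq 2} \lambda_j^{2k} v_{j,x}^2 \leq (\lambda^*)^{2k} \sum_{j \geq 2} v_{j,x}^2 \leq (\lambda^*)^{2k}/\pi_x \leq (\lambda^*)^{2k}/\pi_m$, where the final step uses $\sum_j v_{j,x}^2 = 1/\pi_x$ (a consequence of $UU^T = I$). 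This gives $\|P^k(x,\cdot) - \pi\|_{TV} \leq (\lambda^*)^k/(2\sqrt{\pi_m})$. To convert to a mixing-time estimate, I impose this is $\leq \epsilon$, take logarithms, and use the elementary inequality $-\log(\lambda^*) \geq 1 - \lambda^* = 1/\tr$, yielding that $k \geq \tr \log\!\big(1/(2\epsilon \sqrt{\pi_m})\big)$ suffices. This bound is then weaker than the claimed $\tr\log(1/(\epsilon \pi_m))$ because $2\sqrt{\pi_m} \geq \pi_m$ always, so the stated form follows.

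For the lower bound, the idea is to select a witness direction that realizes the slowest decay. I would pick an eigenvalue $\lambda_*$ with $|\lambda_*| = \lambda^*$ and a real right eigenvector $f$ of $P$ (existence of a real eigenvector comes from the symmetry of the auxiliary matrix $A$); $f$ is automatically $\pi$-orthogonal to the constant vector $\mathbf{1}$ since $\mathbf{1}$ spans the $\lambda_1 = 1$ eigenspace, so $\nE_\pi(f) = 0$. Choosing $x$ with $|f(x)| = \|f\|_\infty$ and comparing $(P^k f)(x) = \lambda_*^k f(x)$ with $\nE_\pi(f) = 0$ through the representation $(P^k f)(x) - \nE_\pi(f) = \sum_y (P^k(x,y) - \pi_y) f(y)$ gives
\begin{align*}
(\lambda^*)^k \|f\|_\infty \;\leq\; 2 \|f\|_\infty \, \|P^k(x,\cdot) - \pi\|_{TV},
\end{align*}
so $\|P^k(x,\cdot) - \pi\|_{TV} \geq (\lambda^*)^k/2$. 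Requiring $k = \tm(\epsilon)$ forces $(\lambda^*)^k \leq 2\epsilon$; inverting via $-\log(\lambda^*) \leq (1-\lambda^*)/\lambda^* = \gamma^*/(1-\gamma^*)$ gives $1/(-\log \lambda^*) \geq \tr - 1$, hence $\tm(\epsilon) \geq (\tr - 1)\log(1/(2\epsilon))$.

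The main obstacle I expect is the bookkeeping of the $\pi$-weighted inner product: correctly identifying the $v_j$ as $\pi$-orthonormal eigenvectors of $P$, verifying that cross terms in the expansion vanish, and collapsing $\sum_j v_{j,x}^2 = 1/\pi_x$. A secondary subtlety in the lower bound is justifying that the chosen eigenvector $f$ may be taken real and $\pi$-orthogonal to $\mathbf{1}$; both facts rest on the symmetry of $A = D_\pi^{1/2} P D_\pi^{-1/2}$ established in Section \ref{Sec:relax}. Once these algebraic ingredients are in place, the two key analytic inputs are just the matched pair of inequalities $1 - \lambda^* \leq -\log(\lambda^*) \leq (1-\lambda^*)/\lambda^*$, which convert between the eigenvalue gap and the relaxation time.
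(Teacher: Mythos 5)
Your argument is correct, but there is no in-paper proof to compare it against: the text explicitly omits the proof of Theorem \ref{Thm:MixingTimeBounds} and defers to \cite{Peres}. What you have written is precisely the standard spectral proof from that reference, and it is the natural completion of the machinery the paper already assembles in \S\ref{Sec:PF}--\S\ref{Sec:relax}: your starting identity is exactly \eqref{Eqn:ExponentialRateEntry}, your $v_j = u_j D_\pi^{-1/2}$ are the paper's real eigenvectors of $P$, and the two algebraic facts you flag as the main bookkeeping ($\pi$-orthonormality of the $v_j$ and $\sum_j v_{j,x}^2 = 1/\pi_x$) do follow from orthogonality of $U$ as you claim. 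The Cauchy--Schwarz passage from total variation to the $\pi$-weighted $\ell^2$ norm, the collapse of the cross terms, the bound by $(\lambda^*)^{2k}/\pi_m$, and the matched pair $1-\lambda^* \leq -\log \lambda^* \leq (1-\lambda^*)/\lambda^*$ are all correct; so is the witness-eigenfunction lower bound, where the $\pi$-orthogonality of $f$ to $\mathbf{1}$ (hence $\nE_\pi(f)=0$) is exactly where reversibility is used. Two cosmetic points worth recording in a final write-up: the logarithmic manipulations presuppose $\lambda^* > 0$, but when $\lambda^* = 0$ one has $\tr = 1$ and both inequalities in \eqref{Ineq:MixingTimeBounds} hold trivially; and $\tm(\epsilon)$ is an integer while your sufficient threshold $\tr \log\bigl(1/(2\epsilon\sqrt{\pi_m})\bigr)$ need not be, so one should formally pass through a ceiling --- the slack $\tr\log(2/\sqrt{\pi_m}) \geq \log(2\sqrt{2}) > 1$ (using $\pi_m \leq 1/2$ for $N \geq 2$) absorbs it and still yields the stated upper bound. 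Neither affects the substance of the proof.
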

Note what this theorem says practically: if we know the eigenvalues of $P$ (and we can figure these out on a computer), and hence the relaxation time $\tr$, we have bounds \eqref{Ineq:MixingTimeBounds} on the number of steps we need to run the chain to ``mix'' $\epsilon$-close to stationarity.  In other words, we have control on  how many steps we have to take in order to gain any level of precision in sampling our stationary distribution.  Note that the logarithm in \eqref{Ineq:MixingTimeBounds} is base $e$, or the natural logarithm.    

The proof, unfortunately, is beyond the scope of our text; see \cite[Ch.4]{Peres}.


\section{Two examples of mixing times}

Let's work out the bounds in Theorem \ref{Thm:MixingTimeBounds} in a couple of concrete examples.

\subsection{Random walk on an $n$-cycle}

\begin{figure}
\centering
\includegraphics[scale=0.32]{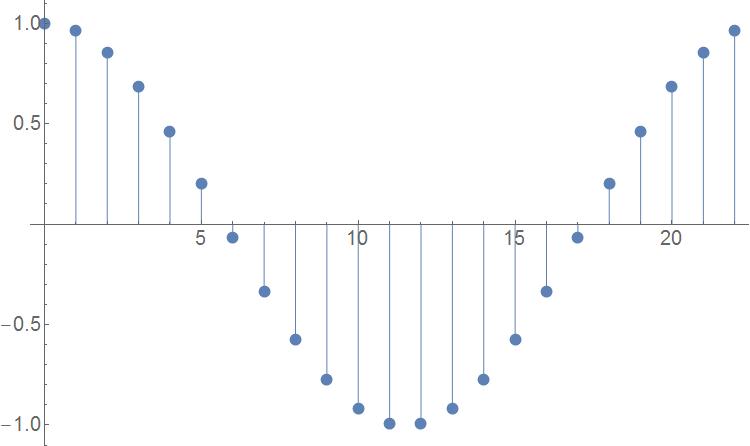}
\caption{{\small The eigenvalues $\lambda_j$ for the transition matrix $P$ of the simple walk on the 23-cycle.}}
\label{Fig:23CycleEigen}
\end{figure}

Consider a random walk on the $n$-cycle $\Omega = \{0,1,\ldots, n-1\}$.  The transition matrix has eigenvalues $\lambda_0, \lambda_1, \ldots, \lambda_{n-1}$, where we claim
\begin{align}\label{Eq:ncycleEigenvalues}
	\lambda_j = \cos\Big( \frac{2\pi j}{n} \Big), \qquad j = 0,1, \ldots, n-1.
\end{align}
See Figure \ref{Fig:23CycleEigen} the case of the $n=23$.  One way we can prove that these are the eigenvalues is to explicitly give the eigenvectors.  We claim that each $\lambda_j$ has an eigenvector $f_j^T \in \mathbb{R}^n$ that satisfies the formula
\begin{align}\label{Eq:ncycleEigenvector}
	f_j(k) = \cos\Big( \frac{2\pi j}{n} k\Big), \qquad k = 0,1,\ldots,n-1.
\end{align}
In particular, when $j=0$ we recover the eigenvalue $\lambda_0 =1$ and the corresponding (right) eigenvector is $f_0^T = \mathbf{1}^T$.
\begin{exercise}
	Prove \eqref{Eq:ncycleEigenvector} by showing that, for each $j$, the $k$th entry of $Pf_j^T$ is the same as the $k$th entry of $\lambda_j f_j^T$, $k=0,1,\ldots, n-1$.  Hint: use \eqref{Eq:ncycleEigenvalues}, the $k$th row of $P$ (which only has two non-zero entries) and the trig formula 
	\begin{align*}
		\cos(a)\cos(b) = \frac{1}{2}(cos(a+b) + \cos(a-b)).
	\end{align*}
\end{exercise}

Does the $n$-cycle have an eigenvalue of $-1$?  From \eqref{Eq:ncycleEigenvalues}, we would need 
\begin{align*}
	\frac{2\pi j}{n} = \pi \quad \Leftrightarrow \quad j = \frac{n}{2},
\end{align*}
which is possible for integer $j$ if and only if $n$ is even.  Note that this consistent with the Perron-Frobenius Theorem, Theorem \ref{Thm:PF}$(iii)$: aperiodic chains do not have $-1$ as an eigenvalue, and for the $n$-cycle aperiodicity is equivalent to $n$ being odd.  Since our mixing time bounds \eqref{Ineq:MixingTimeBounds} are for aperiodic chains, we restrict our attention to odd $n$.

We can read off the second-largest eigenvalue from our formula \eqref{Eq:ncycleEigenvalues}.  Indeed, note that the argument of cosine is between $0$ and $2\pi$, and so the second-largest value is when we are the second-closest to 0 or $2\pi$, yielding
\begin{align*}
	\lambda^* = \max_{j \neq 0} |\lambda_j| = \lambda_1 = \cos\Big( \frac{2\pi}{n} \Big) = \lambda_{n-1}=\cos\Big( \frac{2\pi(n-1)}{n} \Big),
\end{align*}
and so the spectral gap is $\gamma^* = 1 - \cos( 2\pi/n)$ and the relaxation time is
\begin{align}\label{Eq:nCycleRelax}
    \tr = \frac{1}{1-\cos(2\pi/n)}.    
\end{align}
Since $\cos(x) = 1 - \frac{1}{2}x^2 + O(x^4)$ for $x$ close to 0, we find
\begin{align*}
	\gamma^* \approx 1 - \left( 1- \frac{1}{2} \cdot\frac{4\pi^2}{n^2}\right) = \frac{2\pi^2}{n^2}
\end{align*}
for large $n$, and thus $\tr = \frac{1}{\gamma^*} \approx \frac{n^2}{2\pi^2}$.  The stationary distribution is uniform, $\pi = (1/n, 1/n, \ldots, 1/n)$, and so we have the upper bound
\begin{align}\label{Ineq:nCycleRelaxUpper}
	\tm(\epsilon) \leq \log \left(  \frac{1}{\epsilon \pi_m}\right) \tr \approx \log\left( \frac{n}{\epsilon} \right) \frac{n^2}{2\pi^2}.
\end{align}
If we run the chain for this many steps, we are guaranteed to be past the $\epsilon$-mixing point to stationarity.  The lower bound is similarly
\begin{align*}
    (\tr-1)\log\Big(\frac{1}{2\epsilon} \Big) \approx \left( \frac{n^2}{2\pi^2}-1\right)\log\Big(\frac{1}{2\epsilon}\Big)
\end{align*}
If we only run for fewer than this many steps, we know that we have not reached $\epsilon$-mixing.

For both the upper and lower bounds, we do not have to use the Taylor series expansion for cosine; we can simply use the exact formula \eqref{Eq:nCycleRelax} for the relaxation time instead.  The point of using the Taylor series is to see how the bounds grow as $n$ increases, which becomes explicit in \eqref{Ineq:nCycleRelaxUpper} but is harder to immediately see if we use \eqref{Eq:nCycleRelax}.

\begin{exercise}
	You are running a simple symmetric random walk $(X_n)$ on the vertices of a 23-gon.  Start the chain from any given vertex.  Approximately how many steps $N$ should you run to ensure that the distribution of $X_N$ is within $1/1000$ of the uniform distribution on the vertices of the 23-gon, in total variation norm?  What is the minimum possible number of steps we could use?
\end{exercise}

\begin{exercise}
	Answer the same questions for the 5-cycle.
\end{exercise}

\subsection{Random walk on the hypercube}

Consider the random walk on the vertices of the hypercube $\{-1,1\}^N$ in $\mathbb{R}^N$.  If $N=1$, this is a walk on the two vertices of a line segment, if $N=2$, on the four vertices of a square, if $N=3$, on the eight vertices of the cube.  See Figure \ref{Fig:hypercube} for the case of $N=3$ for the hypercube $\{0,1\}^3$.  (We will use $\{-1,1\}^N$ instead because our formulas work out particularly nicely.)
 
Are these walks aperiodic?  You were asked to compute the number of edges for the hypercube in $\nR^N$ in Exercise \ref{Ex:Hypercube}. This is always an even number, and it is not too hard to see that the walk always has period 2, just like the walk on the $n$-cycle when $n$ is even.  For example, when $N=2$ we have the 4-cycle.  We therefore consider the lazy chain with transition matrix $\tilde{P} = \frac{1}{2}(I + P)$ to get an aperiodic walk.  

What happens to our eigenvalues?  Note that if $\lambda$ is an eigenvalue for $P$ with eigenvector $v$, then 
\begin{align}\label{Eq:LazyLambda}
	\tilde{\lambda} = \frac{1}{2}(1+\lambda)
\end{align}
is an eigenvalue for $\tilde{P}$ with eigenvector $v$.  Indeed, 
\begin{align*}
	\tilde{P}v = \frac{1}{2}\Big(v + Pv\Big) = \frac{1}{2}\Big(v + \lambda v\Big)= \Big(\frac{1}{2}+ \frac{1}{2} \lambda \Big) v,
\end{align*}
as claimed.  But if $-1 \leq \lambda \leq 1$, as is the case for transition matrices, then $0 \leq \frac{1}{2}(1 + \lambda) \leq 1$, and so all the eigenvalues of $\tilde{P}$ are positive.  In particular, there is no -1 eigenvalue, which is again as we expect from Theorem \ref{Thm:PF} $(iii)$ once we consider the lazy walk.

We start the eigenvalues and eigenvectors for $P$, before switching to the lazy walk via \eqref{Eq:LazyLambda}.  The vertices of our hypercube are all vectors $\omega = (\omega_1, \omega_2, \ldots, \omega_N)$, where each $\omega_j = \pm 1$.  Each vertex $\omega$ has a corresponding row and column on $P$, and we can describe the eigenvectors $v$ in terms of their components $v(\omega)$ for each of the $2^N$ vertices $\omega$.  

We can parametrize the eigenvalues the eigenvectors in terms of the $2^N$ subsets $J$ of $\{1,2,\ldots, N\}$.  For each $J \subset \{1,2,\ldots, N\}$, there is an eigenvalue
\begin{align*}
	\lambda_J = 1 -\frac{2 \cdot \#(J)}{N},
\end{align*}
with corresponding eigenvector $v_J$, which has the $2^N$ components
\begin{align}\label{Eq:HypercubeEigenvectors}
	v_J(\omega) = \prod_{j \in J} \omega_j
\end{align}
for each vertex $\omega \in \{-1,1\}^N$.  

An example will clarify, and so let's return to $N=3$.  Here the vertices are
\begin{align*}
	\omega_1 &= (1,1,1),\\
	\omega_2 &= (-1,1,1),\\
	\omega_3 &= (1,-1,1),\\
		&\vdots\\
	\omega_8 &= (-1,-1,-1).
\end{align*}	
Our statement about the eigenvalues and eigenvectors says that if we pick $J = \{1,3\} \subset \{1,2,3\}$, for instance, we get a corresponding eigenvalue
\begin{align*}
	\lambda_{\{1,3\}} = 1- \frac{2 \cdot 2}{3} = - \frac{1}{3},
\end{align*}
with a formula \eqref{Eq:HypercubeEigenvectors} for the eight entries of $v_{\{1,3\}}$,
\begin{align*}
	v_{\{1,3\}}(\omega_1) &= \omega_{11} \cdot \omega_{13} = 1 \cdot 1 = 1,\\
	v_{\{1,3\}}(\omega_2) &=\omega_{21} \cdot \omega_{23} = -1 \cdot 1 = -1,\\
		&\vdots\\
	v_{\{1,3\}}(\omega_8) &= \omega_{81} \cdot \omega_{83} = (-1)(-1) = 1.
\end{align*}	

Moving to the lazy walk $\tilde{P}$ for aperiodicity, we have eigenvalues
\begin{align*}
	\tilde{\lambda}_J = \frac{1}{2}(1+\lambda_J) = 1 - \frac{\#(J)}{N} \in [0,1]
\end{align*}
for each subset $J \subset \{1,2,\ldots, N\}$.  The largest is therefore $\lambda = 1$ for $J = \emptyset$, and the second largest is $\lambda^* = 1 - \frac{1}{N}$ for any singleton set $J$.  Thus the spectral gap is $\gamma^* = 1 - (1- \frac{1}{N}) = \frac{1}{N}$, and our relaxation time is $\tr = \frac{1}{\gamma^*} = N$.  It is not too hard to see that the stationary distribution is again uniform, $\pi = (\frac{1}{2^N}, \ldots, \frac{1}{2^N})$, and so our estimate \eqref{Ineq:MixingTimeBounds} says an upper bound for mixing is
\begin{align}
	\tm(\epsilon) \leq \log \left(  \frac{1}{\epsilon \pi_m}\right) \tr &= \log \left(  \frac{2^N}{\epsilon}\right) N = N^2 \log\left( \frac{2}{\sqrt[N]{\epsilon}} \right). \label{Eq:HypercubeMix}
\end{align}
For instance, if we are in $\mathbb{R}^3$ and want to guarantee that our walk is within a millionth of the stationary distribution $(1/8, 1/8, \ldots, 1/8)$ in TV-norm, we need to run for at most
\begin{align*}
	3^2 \log(2 \cdot \sqrt[3]{10^6}) <48
\end{align*}
steps. Pretty fast, right?  That's an example of the exponential convergence \eqref{Ineq:ConvToStationary1} to stationarity.

When $\epsilon \leq 1$ (as we want for small deviations from stationarity), 
\begin{equation}\label{eq:tmixhypercube}
    \tm(\epsilon) \leq N^2 \log\left( \frac{2}{\sqrt[N]{\epsilon}} \right) \leq N^2 \log\left( \frac{2}{\epsilon} \right),
\end{equation}
and so we see for fixed $\epsilon$ that our upper bound grows like $N^2$.  This is slower than for the $N$-cycle, where our upper bound \eqref{Ineq:nCycleRelaxUpper} grows like $\log(N)N^2$.  In other words, we need fewer steps to mix in a high-dimensional hypercube than in a large $N$-cycle.

Even \eqref{eq:tmixhypercube} itself is off by an order of magnitude. It is known that, in fact, $\tm(\epsilon) \approx \frac{1}{4}N\log N + N \log\left(\frac{1}{\epsilon} \right)$, which grows like $N\log N$, for fixed $\epsilon$. The culprit is that the inequality \eqref{Ineq:MixingTimeBounds} itself is not sharp. Getting sharp mixing time estimates for Markov chains is an active topic of modern research. See the textbook \cite{Peres}.

\begin{exercise}
    Give an intuitive explanation for why the random walk on the hypercube $\{-1,1\}^N$ mixes faster than the the random walk on the $N$-cycle. 
\end{exercise}

\vfill
\pagebreak
\section*{Problems for chapter \ref{Ch:Asymptotics}}
\addcontentsline{toc}{section}{Problems for chapter \ref{Ch:Asymptotics}}


\begin{problem}\label{Prob:TVID}
    Show that the TV-distance satisfies
    \begin{align*}
        1 - \|p-q\|_{TV} = \sum_{k=1}^n p_k \wedge q_k.
    \end{align*}
\end{problem}

\begin{problem}\label{Prob:A4.4} Toss a fair coin repeatedly. Let $S_n$ be the number of heads after $n$ tosses. Show that there is a limiting value for the proportion of times that $S_n$ is divisible by $7$, and compute the value for this limit. (\emph{Hint}: Find an appropriate Markov chain that models this phenomenon.)
\end{problem}

\begin{problem}\label{Prob:B4.1} Consider the complete graph $K_n$ on $n$ vertices with loops. That is, $\Omega=\{1,2,\ldots, n\}$ and every edge $\{i,j\}$ is present, including $\{i,i\}$. A random walker starts from a vertex and randomly jumps to any other vertex with equal probability. The transition matrix of the chain is  
\[
P= \frac{1}{n}\mathbf{1}^T \mathbf{1},
\]  
where $\mathbf{1}$ is the row vector of all ones. 
\begin{enumerate}[$(a)$]
\item Show that $P$ has exactly one eigenvalue that is one, and all other eigenvalues are zeroes. 
\item Find the absolute spectral gap and the relaxation time for this chain. Estimate an upper bound on the mixing time $t_{\textbf{mix}}(\epsilon)$ using the relaxation time.  
\item Show that $t_{\textbf{mix}}(\epsilon) =1$ for any $ 0 \leq \epsilon < 1 - 1/n$.  
\end{enumerate}  
\end{problem}

\begin{problem}\label{Prob:B4.2} (Random walks on edges) Consider a connected graph $G=(V,E)$. Instead of the simple symmetric random walk on vertices of this graph, we consider a walk on the \emph{edges}. For an edge $e=\{x,y\}$ between two vertices $x$ and $y$, think of two \textit{directed edges}, $x\rightarrow y$ and $y \rightarrow x$, depending on whether you go from $x$ to $y$ or the other direction. Let $\Omega$ be the set of all directed edges $x\rightarrow y$ and $y\rightarrow x$, where $\{x,y\}$ is an edge in the graph. Run a Markov chain with the following rule: if you are currently at an edge $x\rightarrow y$, then pick uniformly at random one of the neighbors of $y$ (including $x$). Call this neighbor $z$ and jump to the edge $y\rightarrow z$. 
\begin{enumerate}[$(a)$]
\item Is this new RW irreducible? 
\item In the long run, what fraction of time do you see the new random walk visiting a directed edge $x\rightarrow y$? Use ergodicity of the simple symmetric RW on the vertices of the graph to answer this question.
\item Use part $(b)$ to identify the stationary distribution of the new RW on the directed edges. 
\item Is the new RW on the directed edges reversible with respect to the stationary distribution? Why or why not?
\item Compare and contrast your results with the simple walk on the vertices of the graph.
\end{enumerate}
\end{problem}

\begin{problem}\label{Prob:A3.1} Consider $\Omega=\{0,1\}^n$, the hypercube in dimension $n$. Consider the following Markov chain $\{ X_0, X_1, X_2, \ldots \}$ on $\Omega$. Suppose the current state is $w=(w_1, w_2, \ldots, w_n)$. In the next step turn $w$ to the new vector $(w_0, w_1, w_2, \ldots, w_{n-1})$, where $w_0$ is uniformly chosen to be $0$ or $1$. That is, drop $w_n$, shift all coordinates by one step to the right and add a random $0$ or $1$ at the beginning.

\begin{enumerate}[$(a)$] 
\item Is this chain irreducible? Why or why not?   
\item Find its stationary distribution $\pi$. 
\item Suppose $X_0=\mathbf{0}=(0,0,\ldots, 0)$, the vector of all zeros. Find the \emph{exact} total variation distance between $P^k(\mathbf{0}, \cdot)$ and $\pi$ as a function of $k$.
\item Find the smallest $k$ when $\max_x\norm{P^k(x, \cdot)- \pi}_{\mathrm{TV}}\le 1/2$. 
\item Find the smallest $k$ when $\max_x\norm{P^k(x, \cdot)- \pi}_{\mathrm{TV}}=0$.
\item Describe its time-reversed chain by computing its transition probabilities. 
\end{enumerate}
\end{problem}


\begin{problem}\label{Prob:A3.3}  For parameters $0 < p,q < 1$ consider the following stochastic matrix
\[
P=\begin{bmatrix}
1-p & p \\
q & 1-q
\end{bmatrix}
\]
Consider a Markov chain with state space $\Omega=\{1,2\}$ and transition matrix $P$. 
The stationary distribution for this chain is $\pi=(q/(p+q), p/(p+q))$.

\begin{enumerate}[$(a)$]
\item Show that this chain is reversible. 
\item Find the eigenvalues of $P$.
\item Find the relaxation time $t_{\mathrm{rel}}$ of this chain and an upper bound on the mixing time $t_{\mathrm{mix}}(\epsilon)$ for $\epsilon=\frac{1}{10}$ using the relaxation time.
\end{enumerate}
\end{problem}

\begin{problem}\label{Prob:A3.4} Consider again the Markov chain from Problem \ref{Prob:A3.3}. Let $\mu_n$ denote $\nP_1(X_n=1)$ and define 
\[
\Delta_n = \mu_n - \frac{q}{p+q}.
\]
\begin{enumerate}[$(a)$]
\item Show that the following recursion holds: 
\[
\Delta_{n+1} = (1-p-q) \Delta_n.
\]
\item Compute the total variation distance 
\[
\norm{P^n(1, \cdot) - \pi}_{\mathrm{TV}}.
\]
\item Compute the mixing time $t_{\mathrm{mix}}(\epsilon)$ for $\epsilon=\frac{1}{10}$ and compare with the upper bound obtained in Problem \ref{Prob:A3.3}.
\end{enumerate}
\end{problem}

\begin{problem}\label{Prob:hw42025}
Consider the star graph where a central vertex $1$ is connected to vertices $2,\ldots,n$, for some $n\ge 3$. Consider the \textbf{lazy} random walk on this graph. Let $\tilde{P}$ denote the transition matrix of this lazy random walk. 

\begin{enumerate}[(a)]
\item Show that the eigenvalues of $\tilde{P}$ are $1$ and $0$ with multiplicity one and $1/2$ with multiplicity $n-2$.  
\item Find the absolute spectral gap and the relaxation time for this chain. Estimate an upper bound on the mixing time $t_{\textbf{mix}}(\epsilon)$ using the relaxation time.  
\end{enumerate}
\end{problem}

\begin{problem}\label{Prob:final2025}
Let $\Omega=\{1,2,\ldots, n\}$. Consider a Markov chain on the state space $\Omega$ with the following transition probability matrix 
\[
P= \frac{1}{2} I +  \frac{1}{2n}\mathbf{1}^T \mathbf{1},
\]  
where $I$ is the $n \times n$ identity matrix and $\mathbf{1}$ is the row vector of all ones. 
\begin{enumerate}[(a)]
\item Can you describe the transition behavior of the Markov chain in words?
\item Show that $P$ has exactly one eigenvalue that is one, and all other eigenvalues are $1/2$. 
\item Find the absolute spectral gap and the relaxation time for this chain. Estimate an upper bound on the mixing time $t_{\textbf{mix}}(\epsilon)$ using the relaxation time.  
\item Argue that $t_{\textbf{mix}}(\epsilon) \le \log_2(1/\epsilon)$ for any $0< \epsilon < 1$.  
\end{enumerate}  
\end{problem}

\begin{problem}\label{Prob:A3.5}  Consider a random walk $X_t$, $t=0, 1,2,3,\ldots$, on the hypercube $\Omega=\{0, 1\}^N$. For $w$ in the hypercube let $\abs{w}:=\sum_{i=1}^N w_i$ denote the number of ones in $w$. 
\begin{enumerate}[(a)]
\item In the long run, what fraction of time does the random walker have exactly $k$ many ones, for $k=0, 1,2,\ldots, N$? 
\item Let $s_t= \abs{X_t}$, and let $\bar{s}_t$ be the empirical average $t^{-1} \sum_{i=0}^{t-1} s_i$.  Compute
\begin{align*}
    \lim_{t \rightarrow \infty} \bar{s}_t.
\end{align*}
\item Evaluate the limit of the empirical variance,
\[
\lim_{t\rightarrow \infty} \frac{1}{t}\sum_{i=0}^{t-1} \left( s_i - \bar{s}_t\right)^2. 
\] 
\end{enumerate}
\end{problem}

\begin{problem}\label{Prob:A3.6} Consider the Ehrenfest urn model with $N\geq 2$ balls in two urns $A$ and $B$. Start with $a$ balls in urn $A$. At each step, pick a ball at random and switch its urn. Let $X_n$ be the number of balls in urn $A$ after $n$ steps, and let $Y_n=N-2X_n$ be the difference of the number of balls in urn $B$ and urn $A$. 

\begin{enumerate}[$(a)$]
\item Show that 
\[
\nE(Y_{n+1} \mid Y_n)= Y_n\left( 1 - \frac{2}{N}\right). 
\]
\item Compute $\nE(Y_n)$ for each $n$ as a function of $X_0=a$ and show that $\lim_{n\rightarrow\infty} \nE(Y_n)=0$. 
\item Let $\tau_N$ be the first time that all $N$ balls are in urn $A$ (i.e., first hitting time of $N$ for $X_n$). Find $\nE_{N-1}(\tau_N)$. (Hint: There is a reason we are not asking for $\nE_k(\tau_N)$ for all $k$.)
\item Start with $X_0=N-2$. Before $\tau_N$, there will be a \textit{last time} that urn $A$ will have $(N-2)$ balls. That is, consider the (random) time 
\[
L_{N-2}= \max\left\{ 0\le t \le \tau_N: X_t=N-2 \right\}.
\]
Find $\nE_{N-2}(L_{N-2})$.  (Hint: compute $\nE_{N-2}(\tau_N)$). 

\item The stationary distribution of the Ehrenfest model is Bin$(N, 1/2)$. Given a number $0<p<1$, $p\neq 1/2$, modify the Ehrenfest model suitably so that the new Markov chain has a stationary distribution Bin$(N,p)$.
\end{enumerate}
\end{problem}


\chapter{Monte Carlo Methods}\label{Ch:MC}


\section{An introduction to sampling algorithms}\label{sec:samplingintro}


Very often working statisticians will need to \textit{simulate} random variables on their computers. Our computers can only generate a random number uniformly distributed in the unit interval $(0,1)$. In reality, computers do slightly worse since they can only generate pseudo-random numbers that will pass statistical tests for randomness but are actually deterministic. It is a fascinating philosophical discussion to distinguish between true randomness in nature, such as those arising from quantum mechanics, and those generated by pseudo-random generating algorithms, such as the linear congruential generator (look it up!), but in practice this difference is mostly inconsequential (as they say, ``If you can't tell, does it matter?''). But one thing is certain, that humans left to their own devices are terrible at trying to simulate a random sequence of zeroes and ones in their mind. 

Let's assume that you have a \textit{random number generator}, i.e. a computer that generates an i.i.d. sequence of perfect Uni$(0,1)$ random variables. The problem of simulations is to use this sequence to create other random variables and stochastic processes that are required in your model. The objective is to see the output your model produces when the simulated random variables are fed to it. You can then, say, compare the results with real data to verify the efficacy of your model, or perhaps, predict a future outcome. 

Thus, simulation is an enormous big business and a number of standard algorithms are frequently used. The simplest example is to simulate a coin toss with a probability of H given by $0<p<1$. Here is a solution. Generate a Uni$(0,1)$ distributed random number $U$ using your generator. If $U<p$, declare $X=1$ (or $X=H$), otherwise, if $U\ge p$, declare $X=0$ (or $X=T$). It is easy to see that $X$ is a Bernoulli$(p)$ random variable. In other words, I have simulated the outcome of a coin toss with probability $p$ of Heads. 

More generally, the easiest case is the simulation of one-dimensional random variables with explicitly invertible cumulative distribution functions.  

Let's start with a basic observation. Suppose $X$ is a random variable with a cumulative distribution function (cdf) $F$. Recall that the cdf is a function from $\rr$ to $[0,1]$ given by 
\[
F(t)=P\left(X\le t \right).  
\]
Assume that $F$ is continuous, i.e., $X$ is a continuous random variable. Since $X$ is a real-valued, and $F$ acts on real numbers, one gets a new transformed random variable $U=F(X)$. 

I want to warn you that the following statement is nonsense: ``Since, $F(t)=P\left(X\le t \right)$, then $F(X)=P(X\le X)=1$''. Instead, let's see what we mean by an example. Consider $X$ to be an Exp$(1)$ random variable. Then 
\[
F(t)= \begin{cases}
0,\; \text{if $t\le 0$}, \\
1- e^{-t}, \; \text{if $t > 0$}.
\end{cases}
\]
In this case $U=1-e^{-X}$, since $X$ is always positive. Note that $F$ is strictly increasing on $(0, \infty)$, where it can be inverted. In fact, verify that, for $u>0$, $F^{-1}(u)=-\log(1-u)$.

\begin{theorem}\label{Thm:inversecdf} Assume that $F$ is continuous and strictly increasing at all $t$ such that $0< F(t) < 1$. Hence the inverse function $F^{-1}$ is well-defined. Then, the random variable $U=F(X)$ is distributed as Uni$(0,1)$. Conversely, if $U$ is a Uni$(0,1)$ random variable, then $X=F^{-1}(U)$ has cdf $F$.  
\end{theorem}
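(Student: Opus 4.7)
The plan is to prove both directions by computing cumulative distribution functions directly, using the fact that $F$ is continuous and strictly increasing on $\{t : 0 < F(t) < 1\}$, so $F^{-1}$ is a well-defined, strictly increasing function from $(0,1)$ onto that set.

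First I would handle the forward direction: show that $U := F(X)$ has the $\Unif(0,1)$ cdf. Since $0 \le F(X) \le 1$, we automatically have $P(U \le u) = 0$ for $u < 0$ and $P(U \le u) = 1$ for $u \ge 1$, so the only content is at $u \in [0,1]$. For $u$ in the open interval $(0,1)$, the key step is the equivalence
\[
\{F(X) \le u\} \;=\; \{X \le F^{-1}(u)\},
\]
which is where strict monotonicity of $F$ gets used (it guarantees the ``$\Rightarrow$'' direction: if $F(X) \le u$, then since $F$ is strictly increasing, $X$ cannot exceed $F^{-1}(u)$). Taking probabilities and using $F(F^{-1}(u)) = u$ (which follows from continuity of $F$) gives $P(U \le u) = u$, which is the $\Unif(0,1)$ cdf.

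For the converse direction I would run the analogous computation. If $U \sim \Unif(0,1)$ and $X := F^{-1}(U)$, then for any $t \in \mathbb{R}$,
\[
P(X \le t) \;=\; P(F^{-1}(U) \le t) \;=\; P(U \le F(t)) \;=\; F(t),
\]
where the middle equality again uses strict monotonicity (so applying $F$ to both sides of $F^{-1}(U) \le t$ preserves the inequality), and the last equality uses that $F(t) \in [0,1]$ together with the uniform distribution of $U$.

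I do not expect any real obstacle here; the argument is essentially a bookkeeping exercise in CDFs. The one subtlety worth being careful about is the domain of $F^{-1}$: strictly speaking $F^{-1}$ is only defined on $(0,1)$ (or on the interval where $0 < F(t) < 1$), so in the forward direction one has to dispatch the edge cases $u \in \{0,1\}$ and values of $u$ outside $[0,1]$ separately, and in the converse direction one must note that $U \in (0,1)$ almost surely so $F^{-1}(U)$ is almost surely defined. These are minor but should be mentioned explicitly for a careful write-up.
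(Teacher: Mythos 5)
Your proposal is correct and follows essentially the same route as the paper's proof: both directions are handled by converting the event through $F$ or $F^{-1}$ and reading off the cdf, with the identities $P(F(X)\le a)=P(X\le F^{-1}(a))=F(F^{-1}(a))=a$ and $P(F^{-1}(U)\le t)=P(U\le F(t))=F(t)$. Your extra care about the edge cases $u\in\{0,1\}$ and the domain of $F^{-1}$ is a reasonable refinement of details the paper leaves implicit, but it does not change the argument.
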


\begin{proof}
    Let us compute the cdf of $U=F(X)$. Since $F$ has a range between zero and one, clearly $0\le U\le 1$. Pick an $0<a<1$. Since $F$ is invertible with an inverse $F^{-1}$,  
    \[
    P(U\le a)=P\left( F(X) \le a \right)=P\left( X \le F^{-1}(a)\right)=F\left( F^{-1}(a) \right)=a.
    \]
Thus, the cdf of $U$ coincides with that of Uni$(0,1)$, and, hence, by the uniqueness of cdf, $U$ must be uniformly distributed over $(0,1)$.  

Conversely, let $U$ be a Uni$(0,1)$ random variable. Define $X=F^{-1}(U)$. Then 
\[
P(X\le t)= P\left( F^{-1}(U) \le t \right)= P(U\le F(t))= F(t). 
\]
The final equality is due to the cdf of $U$ and since $0\le F(t)\le 1$.
\end{proof}

The assumption in the previous theorem can be relaxed a bit by simply defining an inverse when $F$ is continuous but perhaps not strictly increasing. We will not prove the following statement (but you are welcome to try). Theorem \ref{Thm:inversecdf} continues to hold for all continuous $F$ if we define $F^{-1}(t)=\inf\left\{ x: \; F(x) \ge t \right\}$, the left-continuous generalized inverse of $F$. 

Theorem \ref{Thm:inversecdf} gives us our first algorithm to simulate. Suppose we wish to simulate a random variable $X$ with a cdf $F$ whose inverse $F^{-1}$ is explicitly computable. Then, to get a sample of $X$, simply ask your computer to output a Uni$(0,1)$ random number $U$, and declare $X=F^{-1}(U)$. This is called the inverse cdf method of sampling.

\begin{exercise}
    Describe how you can use the inverse cdf method to sample any Exp$(\lambda)$ random variable for $\lambda>0$. Write a program to execute it. 
\end{exercise}

Unfortunately, the inverse cdf method does not work for some standard distributions. One prominent example is the standard normal whose cdf (and also the inverse cdf) is not explicit. Before we move on to more generally applicable methods of sampling, let us generalize the inverse cdf method to sample discrete random variables. 

Suppose $X$ is a discrete random variable taking values in $\NN$ with a probability mass function $\pi$. That is, $P(X=i)=\pi_i$, $i=1,2,\ldots$, with $\pi_i\ge 0$ and $\sum_{i=1}^\infty \pi_i=1$. Here's a method to simulate a sample of $X$: divide up the closed unit interval $[0,1]$ in successive pieces of sub-intervals of length $\pi_i$, as in Figure \ref{Fig:sampleddiscrete}. Generate a uniformly distributed random number $U$ in $(0,1)$ and note in which sub-interval $U$ falls. Call $X$ to be the number of sub-intervals to the left of $U$, including the one in which it falls. That is, if $U\le \pi_1$, declare $X=1$. If $\pi_1 < U \le \pi_1 + \pi_2$, declare $X=2$. Generally, if 
\begin{equation}\label{eq:stickbreaking}
\sum_{i=1}^{k-1} \pi_i < U \le \sum_{i=1}^{k} \pi_i,
\end{equation}
declare $X=k$. Clearly $X$ takes values in natural numbers. 

We claim that $X$ has pmf $\pi$. To see this, note from \eqref{eq:stickbreaking}, for any $k=1,2,\ldots$,
\[
P(X=k)=P\left(\sum_{i=1}^{k-1} \pi_i < U \le \sum_{i=1}^{k} \pi_i \right)= \sum_{i=1}^{k} \pi_i - \sum_{i=1}^{k-1} \pi_i= \pi_k. 
\]
This completes the proof.

    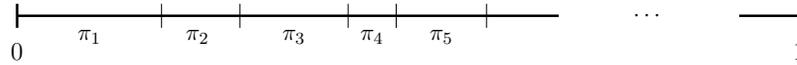
\begin{figure}
        \centering
            \scalebox{0.8}{
    \begin{tikzpicture}
    \tikzset{
        node/.style={circle, draw=black, very thick, minimum size=4mm},
        line/.style={black, very thick}
    }
    \draw[line] (0,0) -- (9,0);
    \draw[line] (12,0) -- (13,0);
    \draw[line] (0,-0.2) -- (0,0.2);
    \draw[line] (13,-0.2) -- (13,0.2);
    
    \draw (2.4,-0.2) -- (2.4,0.2);
    \draw (3.7,-0.2) -- (3.7,0.2);
    \draw (5.5,-0.2) -- (5.5,0.2);
    \draw (6.3,-0.2) -- (6.3,0.2);
    \draw (7.8,-0.2) -- (7.8,0.2);
    
    \node (dots1) at (10.5,0) {$\cdots$};
    \node (pi1) at (1.2,-0.35) {$\pi_1$};
    \node (pi2) at (3,-0.35) {$\pi_2$};
    \node (pi3) at (4.6,-0.35) {$\pi_3$};
    \node (pi4) at (5.9,-0.35) {$\pi_4$};
    \node (pi5) at (7.05,-0.35) {$\pi_5$};
    
    \node (zero) at (0,-0.6) {\large $0$};
    \node (one) at (13,-0.6) {\large $1$};
    
    
    \end{tikzpicture}}
        \caption{\small Sampling from a discrete distribution $\pi$.}
        \label{Fig:sampleddiscrete}
    \end{figure}

\subsection{Rejection sampling} Rejection sampling (sometimes called acceptance-rejection sampling) enables you sample from a probability distribution $\pi$, assuming that you can generate a sample from another probability distribution $p$. Both $p$ and $\pi$ could be either probability mass functions or probability density functions (pdf), as long as the following assumption holds.  

\begin{assumption}\label{Asmp:boundedratio}
$\pi(x)=0$ if $p(x)=0$ and for some $M>0$ the ratio $\pi(x)/p(x)\le M$, if $p(x)>0$.  
\end{assumption}

Note that, one can take $M= \max_{x: p(x)>0}\frac{\pi(x)}{p(x)}$ if this quantity is finite. But you don't have to. Any number larger than this maximum will also do as $M$. The point is to find an $M$ such that the ratio $0<\pi(x)/Mp(x)<1$, whenever the ratio is well-defined. There is also an implicit assumption here: for any $x$ one can compute the quantity $\pi(x)/p(x)$ explicitly.   

The algorithm for rejection sampling to get a sample from $\pi$ under Assumption \ref{Asmp:boundedratio} goes like this.
\smallskip

\noindent\textbf{Step 1.} Generate a sample $Y$ from $p$.
\\
\textbf{Step 2.} Generate an independent Uni$(0,1)$ random variable $U$. 
\\
\textbf{Step 3.} If $U \le \frac{\pi(Y)}{Mp(Y)}$, declare $X=Y$. Stop and return the output.
Otherwise, discard the sample $Y$. Go back to Step 1. 
Continue until you stop, i.e., an output $X$ is generated. $X$ is clearly a random variable.

\begin{theorem}
The distribution of the final output $X$ is $\pi$. 
\end{theorem}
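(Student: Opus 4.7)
The plan is to compute the distribution of the output $X$ directly by conditioning on the trial at which acceptance first occurs. Since the independent trials of Steps 1--3 are iid, the algorithm is effectively a geometric number of failed trials followed by a single accepted one, and I expect the probability $\pi(x)/(Mp(x))$ to interact with $p(x)$ exactly so that the ``$p$'' factors cancel.

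First, I would compute the per-trial acceptance probability
\begin{align*}
q := \nP\left( U \le \tfrac{\pi(Y)}{Mp(Y)} \right).
\end{align*}
Conditioning on $Y$ and using independence of $U$ and $Y$ together with $U \sim \text{Unif}(0,1)$,
\begin{align*}
q = \sum_{y:\, p(y)>0} p(y)\cdot \frac{\pi(y)}{Mp(y)} = \frac{1}{M}\sum_{y} \pi(y) = \frac{1}{M},
\end{align*}
where the restriction $p(y)>0$ is harmless because Assumption \ref{Asmp:boundedratio} forces $\pi(y)=0$ whenever $p(y)=0$. In the continuous case the sum is replaced by an integral, but the computation is identical. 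In particular $q>0$, so the algorithm terminates with probability $1$ (the number of trials is $\text{Geo}(1/M)$), and $X$ is well-defined.

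Next, for a fixed $x$ I would compute $\nP(X=x)$ by summing over the trial $n$ at which acceptance occurs. Letting $(Y_n, U_n)$ denote the iid trial pairs and $A_n = \{U_n \le \pi(Y_n)/(Mp(Y_n))\}$ the acceptance events, independence across trials gives
\begin{align*}
\nP(X=x) = \sum_{n=1}^{\infty} \nP(A_1^c, \ldots, A_{n-1}^c,\, Y_n=x,\, A_n) = \sum_{n=1}^{\infty} (1-q)^{n-1}\, \nP(Y_1=x,\, A_1).
\end{align*}
The key computation is the single-trial joint probability
\begin{align*}
\nP(Y_1=x, A_1) = p(x)\cdot \frac{\pi(x)}{Mp(x)} = \frac{\pi(x)}{M},
\end{align*}
valid when $p(x)>0$, and equal to $0$ when $p(x)=0$ (in which case $\pi(x)=0$ as well, so the equality still holds). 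Summing the geometric series,
\begin{align*}
\nP(X=x) = \frac{\pi(x)}{M} \sum_{n=1}^{\infty} (1-q)^{n-1} = \frac{\pi(x)}{M}\cdot \frac{1}{q} = \pi(x),
\end{align*}
which is what we wanted. In the continuous case, the identical argument applied to $\nP(X \in B)$ for any Borel set $B$ (with $\nP(Y_1 \in B, A_1) = \int_B \pi(y)/M\, dy$) shows that $X$ has density $\pi$.

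The only subtle point, and the place I would be most careful, is the book-keeping around the sets $\{p(x)=0\}$: Assumption \ref{Asmp:boundedratio} is exactly what is needed so that the ratio $\pi(Y)/(Mp(Y))$ is well-defined on the support of $p$ and so that no mass of $\pi$ is ``lost'' outside this support. Apart from that, the proof is essentially the telescoping observation that a geometric sum of failure probabilities times a single-trial success probability equals the conditional success distribution.
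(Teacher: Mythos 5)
Your proof is correct, but it takes a different route from the paper's. The paper defines the acceptance indicator $I=\I\{U\le \pi(Y)/Mp(Y)\}$ and argues in one step that the output satisfies $\nP(X=x)=\nP(Y=x\mid I=1)$; Bayes' rule then gives $\nP(X=x)=\pi(x)/(M\,\nP(I=1))$, and the normalization $\sum_x \nP(X=x)=1$ forces $M\,\nP(I=1)=1$ without ever computing $\nP(I=1)$ directly. You instead make the repeated-trial structure explicit: you compute the per-trial acceptance probability $q=1/M$ by conditioning on $Y$, decompose $\nP(X=x)$ over the trial at which acceptance first occurs, and sum the geometric series $\sum_{n\ge 1}(1-q)^{n-1}\cdot \pi(x)/M=\pi(x)$. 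What your approach buys is that termination with probability one and the $\mathrm{Geo}(1/M)$ run-time are established as part of the proof rather than left as asides (the paper only asks the reader to ``convince yourself'' of the geometric run time), and the identification of $X$ with $Y$ conditioned on acceptance is derived rather than asserted. What the paper's approach buys is brevity: it sidesteps both the infinite sum and the computation of $\nP(I=1)$ via the normalization trick. Your care with the set $\{p(x)=0\}$ matches the role of Assumption~\ref{Asmp:boundedratio} in the paper, and your remark about the continuous case is a harmless extension the paper also waves at by restricting to pmfs ``for simplicity.''
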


\begin{proof}
Suppose $Y$ is distributed according to $p$. For simplicity, we are going to assume that $p$ (and $\pi$) is a pmf. Let $I$ be the indicator random variable $I=1\{ U\le \pi(Y)/Mp(Y) \}$. Then, note that $X$ is only outputted when $I=1$. Thus 
\[
P(X=x)= P\left( Y=x \mid I=1 \right).
\]
(By the way, ask yourself, why is $P(X=x)\neq P\left( Y=x, I=1\right)$).  

Thus
\[
\begin{split}
P(X=x)&= \frac{P(Y=x, I=1)}{P(I=1)}= \frac{P(Y=x) P(I=1 \mid Y=x)}{P(I=1)}\\
&=\frac{p(x)P\left( U \le \pi(x)/Mp(x) \right)}{P(I=1)}\\
&= \frac{p(x) \pi(x)/ Mp(x)}{P(I=1)}= \frac{\pi(x)}{M P(I=1)}.
\end{split}
\]
At this point, we will be done if we show that $MP(I=1)=1$. But this has to be so. Just add over $x$ on both sides to get 
\[
1= \sum_x P(X=x)= \sum_{x} \frac{\pi(x)}{M P(I=1)}= \frac{\sum_x \pi(x)}{MP(I=1)}  = \frac{1}{M P(I=1)}.
\]
Thus $MP(I=1)=1$, and we are done.
\end{proof}

Note that the probability of \textit{acceptance}, i.e., $P(I=1)$, is exactly $\frac{1}{M}$. Since this quantity must be at most one, $M\ge 1$ always.  
$M=1$ if and only if $p=\pi$, when the algorithm is not needed. So $M>1$ in all nontrivial cases. 
The larger the value of $M$ is, the more frequently the algorithm will reject. That is, the run time of the algorithm grows linearly with $M$. In fact, convince yourself that the number of times the algorithm runs through Steps 1--3 before finally accepting a sample is Geo$(1/M)$.      

\begin{example}
Let's see an example. Let $\pi$ be the density 
\[
\pi(x)=30x^2(1-x)^2,\quad 0< x< 1.
\]
This is the beta$(3,3)$ probability density. Take $p$ to be the Uni$(0,1)$ density. Then, clearly $p(x)=0$ if $x$ is outside $(0,1)$, and, for such an $x$, $\pi(x)=0$ as well. When $x\in (0,1)$, 
\[
\frac{\pi(x)}{p(x)}= \frac{30x^2(1-x)^2}{1} = 30 (x(1-x))^2\le 30 \left(\frac{1}{4} \right)^2= \frac{30}{16}=\frac{15}{8}.
\]
The only inequality above follows from the bound $x(1-x)\le \frac{1}{4}$, for all $0\le x\le 1$ (why?). Thus, we can take $M=15/8$. 

Thus the rejection sampling algorithm to generate a sample from beta$(3,3)$ follows these steps. 

\noindent\textbf{Step 1.} Generate a Uni$(0,1)$ random variable $Y$.\\
\textbf{Step 2.} Generate an independent Uni$(0,1)$ random variable $U$. \\
\textbf{Step 3.} If 
\[
U\le \frac{30 Y^2(1-Y)^2}{15/8}= 16Y^2(1-Y)^2,
\]
output $X=Y$ as your desired sample and stop. Otherwise, discard $Y$ and $U$, and go back to Step 1. 

When the algorithm stops, your output $X$ is a sample from beta$(3,3)$. 
\end{example}

\begin{example}
For a second example, let's consider a geometric probability problem. Suppose I wish to sample uniformly from the region $D \subseteq [0,1]^2$. See Figure \ref{Fig:regionD}. 

\begin{figure}
    \centering
    \scalebox{0.85}{\begin{tikzpicture}
    \draw (-2,2) -- (2,2) -- (2,-2) -- (-2,-2) -- (-2,2);
    \node (one) [label=above:{\large$1$}] at (-2,2) {};
    \node (zero) [label=below:{\large$0$}] at (-2,-2) {};
    \node (oneb) [label=below:{\large$1$}] at (2,-2) {};
    \node (D) [label=below:{\LARGE$D$}] at (0,0) {};
    \draw  plot[smooth, tension=.7] coordinates {(-0.3,0.6) (0,0.9) (0.3,1.2)
    (0.6,1.5) (.9,1.5) (1.2,1.2) (1.2,0.3) (1.2,-1.2) (0,-1.8) (-1.2,-1.2) (-0.3,0.6)};
    \end{tikzpicture}}
    
    \caption{\small How can we sample a point uniformly from the region $D$?}
    \label{Fig:regionD}
\end{figure}
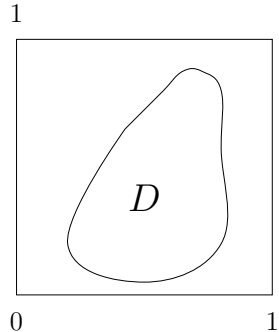

So, $\pi$ is the uniform density over $D$, i.e., $\pi(x)= 1/\mathrm{Area}(D)$, for $x\in D$, and zero outside. What is an easy density to sample from? A natural choice is Uni$\left([0,1]^2 \right)$, since this is just sampling a pair of independent Uni$(0,1)$ random variables $Y_1, Y_2$ and representing them as a vector $Y=(Y_1, Y_2)$. 

Thus $p(y)=1$, for $y=(y_1, y_2)\in [0,1]^2$. Hence, 
\[
\frac{\pi(y)}{p(y)} \le \frac{1}{\mathrm{Area}(D)}. 
\]
Hence, we can take $M=1/\mathrm{Area}(D)$, assuming that we can compute it. Thus
\[
\frac{\pi(y)}{M p(y)}= \begin{cases}
\frac{1/\mathrm{Area}(D)}{1/\mathrm{Area}(D)}=1, \quad \text{if $y\in D$},\\
\frac{0}{1/\mathrm{Area}(D)}=0, \quad \text{otherwise}.
\end{cases}
\]
Note that this ratio is either zero or one. So, if $Y$ is sampled from $p$ and $U$ is independently sampled from Uni$(0,1)$, then $U\le \pi(Y)/Mp(Y)$ if and only if $Y\in D$.

Hence, the rejection sampling algorithm is quite simple here. Generate a vector $Y=(Y_1, Y_2)$, where $Y_1, Y_2$ are independent Uni$(0,1)$ random variables. If $Y$ is in $D$, accept the sample and declare $X=Y$. Otherwise generate a fresh sample of $Y$. Repeat until you get a sample that lies in $D$. 

As you can guess from here, the smaller $\mathrm{Area}(D)$ is, the longer it takes to generate a sample from $\pi$. This becomes a serious problem in high-dimensions. In the next section we will see how Markov chains combined with the rejection algorithm can lead to much more efficient sampling algorithms that work even in high-dimensions.
\end{example}

\begin{exercise}
    Describe how you will sample from the following density using rejection sampling
    \[
    \pi(x)= \frac{e^{-x}}{1-e^{-1}},\quad 0\le x\le 1,
    \]
    and zero otherwise. Give at least two choices for the density $p$ that you can use to simulate $\pi$. 
\end{exercise}

\begin{exercise}
Given a random number generator, describe any method to sample Geo$(p)$ distribution for any $0<p<1$. Explain why the rejection sampling method would fail if we try to sample $\pi=\text{Poi}(1)$, using $p=\text{Geo}(1/2)$.  
\end{exercise}

\section{Markov Chain Monte Carlo}


Consider the set $\Omega$ of all permutations of the set $\{1,2,\ldots, n\}$. How many permutations are there? $n!$, a number so large that it grows faster than all exponentials. There is a classic estimate, called Stirling's approximation, that shows $n!\approx \sqrt{2\pi}n^{n+1/2} e^{-n}$. Thus, $\Omega$ might be finite but it is gigantic even for moderate values of $n$, say $n=15$. The problem is to simulate a uniformly distributed random permutation from this set. I will describe two solutions to this problem, one not using Markov chains, and another using a Markov chain. 
\medskip

\noindent\textbf{Solution 1.} Generate $n$ i.i.d. Uni$(0,1)$ random variables $U_1, U_2, \ldots, U_n$. Sort them from the smallest to the largest values. That is, let $U_{(1)}=\min_{1\le i\le n} U_i$, denote the smallest of the values, $U_{(2)}$ be the second smallest, and so on, until $U_{(n)}=\max_{1\le i \le n} U_i$ is the largest value. For example, if our $U_i$'s are, to three decimal places, 
\begin{align*}
    U_1 = 0.520, \quad U_2 = 0.512, \quad U_3 = 0.765, \quad U_4=0.428,
\end{align*}
then
\begin{align*}
    U_{(1)} = 0.428, \quad U_{(2)} = 0.512, \quad U_{(3)} = 0.520, \quad U_{(4)}= 0.765.
\end{align*}
This sorting generates a permutation $\sigma$: the value of $\sigma_1$ is the \textit{rank} of $U_1$, that is, the index $i$ such that $U_{(i)}=U_{1}$ (why is $i$ unique?). Similarly, $\sigma_2$ is the rank of $U_2$, the index $i$ such that $U_{(i)}=U_{2}$, and so on. In our example, our permutation $\sigma = (\sigma_1, \sigma_2, \sigma_3, \sigma_4)$ is
\begin{align*}
    \sigma = (3,2,4,1).
\end{align*}
In general, the vector $\sigma=(\sigma_1, \sigma_2, \ldots, \sigma_n)$ is a random element of $\Omega$, and it follows from symmetry that the distribution of $\sigma$ must be uniform over all permutations. 

The complexity of this algorithm is therefore determined by the complexity of the sorting algorithm used. Common sorting algorithms typically have a time complexity of the order of $n\log n$. This is, of course, a much much smaller number than the size of $\Omega$ itself, which is $n!$. Our next algorithm is a Markov chain based algorithm which also achieves similar time-complexity.  

\medskip

\noindent\textbf{Solution 2.} The goal is to create a reversible Markov chain on $\Omega$ that moves by ``small steps''. There are many such candidate Markov chains which are often visualized best by card shuffling. We will talk about a particularly nice one called the \textit{random transposition} chain. 

Imagine a deck of cards marked $1$ through $n$ on top of each other. The goal is to shuffle these cards. Although many of you have shuffled cards in real life, in mathematics this has a precise meaning. The goal of a card shuffling technique is to arrange the cards randomly such that every arrangement of the $n$ cards is equally likely to appear from top to bottom. Clearly every arrangement from top to bottom of the $n$ cards corresponds to a permutation and vice versa. 


Now, given a deck of cards, here is one method of shuffling. Pick two random numbers $I$ and $J$ independently from $\{1,2,\ldots, n\}$. Suppose $I=i, J=j$. Pick the $i$th card from the top and swap its position with the $j$th card from the top. There is a trivial case, if $I=J$, when the deck does not change at all. Otherwise the deck changes by a ``small step'' due to the swapping of the two cards. Repeat this procedure, by sampling a fresh pair of $I, J$ independently. 

It is clear that this creates a Markov chain on $\Omega$, given by the arrangement of the cards. In fact, if $I=i, J=j$, you hop from one permutation to another by multiplying with the transposition $(i,j)$. There is a result in combinatorics which says that every permutation can be obtained as a product of transpositions. It follows that this Markov chain is irreducible. Since $P(I=J)=1/n>0$, the chain is aperiodic as well.

Notice that, if you go from one arrangement $\sigma$ to another arrangement $\sigma'\neq \sigma$ by swapping two cards, you can also go from $\sigma'$ to $\sigma$ by swapping the same pair of cards again (with the same probability $1/n^2$). Thus, the Markov chain is reversible and its unique stationary distribution is the uniform distribution over $\Omega$. \emph{Et volia!}    

Note that each step of the Markov chain simply requires generating two (discrete) uniform random variables. Convince yourself that this can be done easily by a random number generator. Thus, equipped with a random number generator, one can run this Markov chain for as many steps as one wishes. How many steps? The following is a deep result (see \cite{diaconis1988group}) that is proved by using an area of mathematics called \textit{representation theory}.

\begin{theorem}
For $c>0$, let $k=\frac{1}{2}n\log n + cn$. Then, 
\[
\max_{\sigma \in \Omega} \norm{P^k(\sigma, \cdot) - \mathrm{Uni}}_{TV} \le ae^{-2c}, 
\]
for some universal constant $a$ that does not depend on $n$ or $c$. 
\end{theorem}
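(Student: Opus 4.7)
The plan is to prove this via non-commutative Fourier analysis on the symmetric group $S_n$, following the classical argument of Diaconis and Shahshahani. First, I would observe that the random transposition walk is a \emph{random walk on a group} driven by the probability measure $Q$ on $S_n$ with $Q(e) = 1/n$ and $Q(\tau) = 2/n^2$ for each of the $\binom{n}{2}$ transpositions $\tau$. In particular, $P^k(\sigma, \cdot)$ is the convolution $Q^{*k}$ translated by $\sigma$, and since $Q$ is invariant under conjugation (transpositions form a conjugacy class) the total variation distance $\|P^k(\sigma,\cdot) - \mathrm{Uni}\|_{TV}$ does not depend on $\sigma$. Hence it suffices to bound $\|Q^{*k} - U\|_{TV}$, where $U$ is the uniform distribution.

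The main tool will be the Diaconis--Shahshahani upper bound lemma: for any probability measure $Q$ on a finite group $G$,
\begin{equation*}
4\,\|Q - U\|_{TV}^2 \le \sum_{\rho \neq \mathbf{1}} d_\rho \,\mathrm{Tr}\!\left(\widehat{Q}(\rho)\,\widehat{Q}(\rho)^*\right),
\end{equation*}
where the sum runs over nontrivial irreducible representations $\rho$ of $G$, $d_\rho$ is the dimension of $\rho$, and $\widehat{Q}(\rho) = \sum_g Q(g)\rho(g)$. This inequality follows from Parseval's identity on $G$ and is the non-commutative analog of the Fourier $L^2$-bound for TV distance. Applied with $Q^{*k}$ in place of $Q$, and using $\widehat{Q^{*k}}(\rho) = \widehat{Q}(\rho)^k$, I would obtain
\begin{equation*}
4\,\|Q^{*k} - U\|_{TV}^2 \le \sum_{\rho \neq \mathbf{1}} d_\rho \,\mathrm{Tr}\!\left(\widehat{Q}(\rho)^{2k}\right).
\end{equation*}

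Next I would exploit that $Q$ is a class function: since $\widehat{Q}(\rho)$ commutes with every $\rho(g)$, Schur's lemma forces $\widehat{Q}(\rho) = r(\rho) I_{d_\rho}$, and a short computation using the character $\chi_\rho$ gives the explicit eigenvalue
\begin{equation*}
r(\rho) = \frac{1}{n} + \left(1 - \frac{1}{n}\right) \frac{\chi_\rho(\tau)}{d_\rho},
\end{equation*}
where $\tau$ is any transposition. Hence $\mathrm{Tr}(\widehat{Q}(\rho)^{2k}) = d_\rho\, r(\rho)^{2k}$, and the bound reduces to estimating
\begin{equation*}
\sum_{\lambda \vdash n,\ \lambda \neq (n)} d_\lambda^{\,2}\, r(\lambda)^{2k},
\end{equation*}
indexed by partitions $\lambda$ of $n$, where $d_\lambda$ and $\chi_\lambda(\tau)/d_\lambda$ are given by the hook-length formula and the Frobenius formula respectively. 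This is the main obstacle of the proof: one needs the delicate combinatorial identity
\begin{equation*}
\frac{\chi_\lambda(\tau)}{d_\lambda} = \frac{1}{n(n-1)}\sum_{i}\bigl[\lambda_i(\lambda_i - 2i+1) - (\lambda_i')(\lambda_i' - 2i+1)\bigr],
\end{equation*}
so that the ``second-largest'' eigenvalue, coming from $\lambda = (n-1,1)$, equals $1 - 2/n$, and the remaining $r(\lambda)$ are all smaller in absolute value by a controlled amount.

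Finally, I would plug $k = \frac{1}{2}n\log n + cn$ into $d_\lambda^{\,2} r(\lambda)^{2k}$ and show that the $\lambda = (n-1,1)$ term contributes at most $e^{-2c}(1+o(1))$, since $d_{(n-1,1)} = n-1$ and $(1-2/n)^{2k} \approx e^{-4k/n} = e^{-2\log n - 4c}$, yielding $(n-1)^2 e^{-2\log n - 4c} \approx e^{-4c}$. The hard and technical part will be bounding the contributions from partitions $\lambda$ with $d_\lambda$ large (roughly, those with many small parts), where one must combine the hook-length estimates with the sharper bound $|\chi_\lambda(\tau)/d_\lambda| \le 1 - C/n$ that degrades as $d_\lambda$ grows; the surprising cancellation that makes the entire sum comparable to just the leading term is the heart of Diaconis--Shahshahani's argument. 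Assembling these estimates produces a universal constant $a$ with $4\|Q^{*k} - U\|_{TV}^2 \le a^2 e^{-4c}$, completing the proof.
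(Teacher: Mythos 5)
The paper does not actually prove this theorem: it states the result, remarks that it is deep, and refers the reader to Diaconis's monograph \cite{diaconis1988group}. Your outline is precisely the Diaconis--Shahshahani argument from that reference, so you have identified the right (indeed the standard) route: reduce to the convolution power $Q^{*k}$ using translation invariance on the group, apply the upper bound lemma $4\norm{Q^{*k}-U}_{TV}^2 \le \sum_{\rho\ne\mathbf{1}} d_\rho\,\mathrm{Tr}(\widehat{Q}(\rho)^{2k})$, invoke Schur's lemma to see that $\widehat{Q}(\rho)$ is scalar because transpositions form a conjugacy class, and reduce to a sum over partitions. Your leading-order computation is also correct: $d_{(n-1,1)}=n-1$, $r((n-1,1))=1-2/n$, and with $k=\tfrac12 n\log n+cn$ this term contributes order $e^{-4c}$ to the sum of squares, hence order $e^{-2c}$ to the TV distance.

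Two caveats. First, what you have written is a roadmap rather than a proof. The step you defer as ``the hard and technical part''---showing that $\sum_{\lambda\vdash n,\,\lambda\ne(n)} d_\lambda^2\, r(\lambda)^{2k}$ is dominated, uniformly in $n$, by its $(n-1,1)$ term---is where essentially all of the work in Diaconis--Shahshahani lives. It requires grouping partitions by the length of the first row (and, by duality, the first column), a monotonicity lemma comparing $r(\lambda)$ across partitions, dimension bounds of the form $d_\lambda\le\binom{n}{j}d_{\tilde\lambda}$ for the truncated partition $\tilde\lambda$, and a delicate summation; invoking ``surprising cancellation'' is not a substitute for carrying this out, and the universal constant $a$ only emerges from that analysis. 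Second, your displayed character-ratio formula is off by a factor of two: the Frobenius formula is $\chi_\lambda(\tau)/d_\lambda=\frac{1}{n(n-1)}\sum_j\lambda_j(\lambda_j-2j+1)$, and the analogous sum for the conjugate partition $\lambda'$ is the \emph{negative} of this one, so subtracting it as you do doubles the value. This does not affect your check for $(n-1,1)$, whose eigenvalue you computed independently, but it would corrupt the general estimates if propagated.
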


In other words, $t_{\text{mix}}(\epsilon)$ is about $\frac{1}{2}n\log n$ (plus a multiple of $n$ depending on $\epsilon$, which is of a smaller order). Thus, this random procedure will give you an approximately uniformly distributed permutation at about the same order of efficiency as the sorting algorithm. 

This is our first example of Markov Chain Monte Carlo or MCMC for short, a workhorse of modern data science. MCMC is a procedure where one uses a Markov chain to generate a sample from a specified distribution.  

Let's start with a situation very similar to the above algorithm. Suppose $\Omega$ is a large finite set, the operative word being large, and we wish to sample an element of $\Omega$ uniformly at random. Create a connected graph with vertex set $\Omega$ and a relatively small number of edges per vertex. If possible, make the graph regular. Then the lazy random walk on this graph is reversible with respect to the uniform distribution. Run this random walk until it mixes, and the terminal value will be an approximate sample from the uniform distribution over $\Omega$. If you want multiple samples, run the entire chain multiple times, independently.

In the example of random transpositions, $\Omega$ is the set of permutations. The graph is the following. Two permutations $\sigma$ and $\sigma'$ are neighbors, if one can be obtained by multiplying the other by a transposition $(i,j)$. This is a regular graph where every vertex has degree exactly $n(n-1)/2$. You can verify that the random transposition chain is similar to a lazy random walk on this graph. 

How do I create a Markov chain to simulate from a distribution that is not uniform? There are as many choices as your creativity. But there are some choices that are standard and popular for various reasons. The next section describes the grand-daddy of such Markov chains that runs by combining a random walk with rejection sampling at each step. This is the famous Metropolis algorithm.

\section{The Metropolis-Hastings algorithm}\label{Sec:Metropolis}

Suppose $\Omega$ is a finite state space. Assume that we can run an irreducible Markov chain on $\Omega$ with a symmetric transition matrix $q(x,y)=q(y,x)$. The stationary distribution is uniform over $\Omega$, and hence, after running this chain for a large number of steps, the terminal value is approximately a sample from the uniform distribution. But what if we are interested in sampling from another probability distribution $\pi$ on $\Omega$ that is not uniform? This question led to one of the top five impactful algorithms of the 20th century, the Metropolis-Hastings algorithm. For brevity, we will refer to this algorithm as simply Metropolis.

The Metropolis algorithm creates a reversible Markov chain $X$ on $\Omega$ with distribution $\pi$. Suppose currently $X_t=x$. Let us describe the probability $P(X_{t+1}=y \mid X_t=x)$. 

\noindent\textbf{Step 1.} Generate a step from the symmetric transition matrix $q$ (which is called the \textit{base chain}). That is, create an auxiliary random variable $Y_{t+1}$ such that $$P\left( Y_{t+1}=y \mid X_t=x \right)=q(x,y).$$

\noindent\textbf{Step 2.} Suppose $Y_{t+1}=y$. If $\pi(y) \ge \pi(x)$, \textit{accept} this step and declare $X_{t+1}=y$. 

Otherwise, $\pi(y) < \pi(x)$, in which case you accept this step with probability $\pi(y)/\pi(x)$. That is to say, generate a coin toss with probability of $H$ given by $\pi(y)/\pi(x)$. If the coin lands Heads, declare $X_{t+1}=y$, otherwise, if the coin lands Tails, \textit{reject} the step by keeping $X_{t+1}=X_t=x$.

\begin{theorem}\label{Thm:MetroSymm}
$X_t,\; t=0,1,2,\ldots$ is a reversible Markov chain with stationary distribution $\pi$.
\end{theorem}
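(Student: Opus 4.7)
The plan is to verify the detailed balance equations (DBEs) for the Metropolis chain with respect to $\pi$, and then invoke Theorem \ref{Thm:DBEConsequences} to immediately conclude both that $\pi$ is stationary and that the chain is reversible under $\pi$. This is the cleanest route, since the DBEs are a local, pairwise condition that exactly mirrors the accept/reject mechanism of the algorithm.

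First I would write down the one-step transition probabilities $P(x,y)$ of $X$ explicitly. For $y \neq x$, the chain moves from $x$ to $y$ exactly when the proposal chain suggests $y$ (probability $q(x,y)$) \emph{and} the proposal is accepted. Setting $a(x,y) := \min\{1, \pi(y)/\pi(x)\}$ for the acceptance probability, this gives
\begin{align*}
    P(x,y) = q(x,y) \, a(x,y), \qquad y \neq x,
\end{align*}
while $P(x,x)$ collects the ``stay'' mass from both the proposal $q(x,x)$ and all the rejected proposals to other states. Verifying that the rows of $P$ sum to one is then a one-line check.

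The main step is to verify that $\pi(x) P(x,y) = \pi(y) P(y,x)$ for all $x,y \in \Omega$. The case $y = x$ is trivial. For $y \neq x$, by symmetry between $x$ and $y$ we may assume without loss of generality that $\pi(y) \geq \pi(x)$, so that $a(x,y) = 1$ and $a(y,x) = \pi(x)/\pi(y)$. Using the symmetry hypothesis $q(x,y) = q(y,x)$, we compute
\begin{align*}
    \pi(x) P(x,y) &= \pi(x) q(x,y) \cdot 1 = \pi(x) q(x,y), \\
    \pi(y) P(y,x) &= \pi(y) q(y,x) \cdot \frac{\pi(x)}{\pi(y)} = \pi(x) q(x,y),
\end{align*}
and the two expressions agree. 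The case $\pi(x) > \pi(y)$ is symmetric. This is the key calculation, and it illustrates \emph{why} the accept/reject probability is defined the way it is: the ratio $\pi(y)/\pi(x)$ is precisely the factor needed to convert the symmetric $q$ into transition probabilities that satisfy the DBEs against $\pi$.

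I expect no serious obstacle; the only subtlety is recognizing that one never needs to compute $P(x,x)$ explicitly, since the DBE at $(x,x)$ is automatic, and that the argument is clean because $q$ is symmetric (so the $q$-factors cancel and only the $\pi$-ratio survives). With the DBEs in hand, Theorem \ref{Thm:DBEConsequences} gives both conclusions of the theorem at once, finishing the proof. As a final remark, I would note that irreducibility of $X$ itself is not automatic from irreducibility of $q$ alone (one also needs $\pi(x) > 0$ for all $x$, which is typically built into the setup), but this is not asserted in the statement of Theorem \ref{Thm:MetroSymm} and so need not be addressed.
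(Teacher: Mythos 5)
Your proof is correct and follows essentially the same route as the paper: write $P(x,y)=q(x,y)a(x,y)$ with $a(x,y)=\min\{1,\pi(y)/\pi(x)\}$, verify the detail balance equations using the symmetry of $q$ (the paper does this via the identity $\pi(x)a(x,y)=\min(\pi(x),\pi(y))=\pi(y)a(y,x)$, which is equivalent to your WLOG case split), and conclude via Theorem \ref{Thm:DBEConsequences}. The only difference is presentational—the paper first \emph{derives} $a(x,y)$ as the maximal acceptance probability compatible with the DBEs before verifying it, whereas you take the formula as given—but the key calculation is identical.
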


\begin{proof}
Assume, for simplicity, that $q(x,x)=0$ for all $x$. Suppose for every distinct pair $(x,y)\in \Omega \times \Omega$, I have a probability $a(x,y)$ (between zero and one) to accept the move $Y_{t+1}=y$ from $X_t=x$. That is, 
\[
P\left( Y_{t+1}=y \mid X_t=x\right)= q(x,y), \; P\left(X_{t+1}=y \mid X_t=x, Y_{t+1}=y \right)= a(x,y), 
\]
while
\[
P\left(X_{t+1}=x \mid X_t=x, Y_{t+1}=y \right)= 1- a(x,y).
\]
Thus the transition probabilities for the chain $X$ are 
\[
\begin{split}
    p(x,y)= q(x,y)a(x,y), \; y\neq x,\quad  p(x,x)= 1 - \sum_{y} q(x,y) a(x,y). 
\end{split}
\]

What kind of choices for the function $a(\cdot, \cdot)$ will lead to $p$ satisfying the DBEs for the distribution $\pi$, 
\begin{equation}\label{eq:dbemetro}
\pi(x) p(x,y)= \pi(y) p(y,x), 
\end{equation}
for all $x\neq y$?
Suppose \eqref{eq:dbemetro} holds, then, for all $x\neq y$, $\pi(x)q(x,y)a(x,y)=\pi(y)q(y,x)a(y,x)$. Thus, necessarily
\[
\pi(x) a(x,y) = \pi(y) a(y,x).
\]
Since $0\le a(x,y)\le 1$, we must have $\pi(x)a(x,y)\le \pi(x)$. On the other hand, from the last display, $\pi(x) a(x,y)=\pi(y) a(y,x)\le \pi(y)$, since $0\le a(y,x)\le 1$. Therefore we conclude that
\[
\begin{split}
\pi(x) a(x,y) &\le \min\left(\pi(x), \pi(y) \right),\; \text{or, equivalently,}\\
a(x,y) &\le \min\left(1, \frac{\pi(y)}{\pi(x)} \right).
\end{split}
\]

It is in our interest to maximize $a(x,y)$ while maintaining \eqref{eq:dbemetro} in order to minimize run time. Thus, the largest possible value of $a(x,y)$ we can take is $a(x,y)=\min\left(1, \pi(y)/\pi(x) \right)$. In other words, 
\eq\label{eq:whatisa}
a(x,y)=\begin{cases}
1, & \text{if $\pi(y) \ge \pi(x)$},\\
\pi(y)/\pi(x), & \text{if $\pi(y) < \pi(x)$}.
\end{cases}
\en
Verify directly that this formula for $a(x,y)$ satisfies \eqref{eq:dbemetro}: 
\[
\begin{split}
\pi(x)a(x,y)&= \pi(x) \min\left(1, \pi(y)/\pi(x) \right)= \min(\pi(x), \pi(y))\\
&= \pi(y) \min\left( \pi(x)/\pi(y), 1 \right) = \pi(y) a(y,x).
\end{split}
\]
This completes the proof. 
\end{proof}

In Theorem \ref{Thm:MetroSymm}, the base chain is symmetric and we wish to simulate from a non-uniform distribution $\pi$. But, what about the case when it is easy to simulate from a non-uniform distribution, but we wish to simulate from $\pi$ equal to the uniform distribution on $\Omega$. Here is a such an example: suppose we are given a graph that is not regular, i.e., vertices do not have a constant degree. It is easy to run a random walk on this graph, but the stationary distribution will not be uniform. Recall that the stationary distribution at a vertex $v$ is in fact $\deg(v)/2\abs{E}$, where $\deg(v)$ is the degree of the vertex $v$ and $\abs{E}$ is the total number of edges in the graph. How can I modify this random walk to obtain a uniform pick from the vertex set? The generalized version of the Metropolis algorithm allows you to do just that.    

\textbf{Metropolis algorithm for a general base chain.} Suppose $q(x,y)$ is the transition probability from $x$ to $y$ for an arbitrary irreducible Markov chain on the finite state space $\Omega$. Let $\pi$ be any probability distribution on $\Omega$. The Metropolis chain is given by the following steps. Suppose, currently, $X_t=x$. 

\noindent\textbf{Step 1.} Generate a step from $q$, i.e., let $Y_{t+1}$ be given by
\[
P\left(Y_{t+1}=y \mid X_t=x \right)= q(x,y). 
\]

\noindent\textbf{Step 2.} Suppose $Y_{t+1}=y$. Accept $X_{t+1}=y$ with probability 
\begin{equation}\label{eq:whatisa2}
a(x,y)=\min\left(1, \frac{\pi(y) q(y,x)}{\pi(x) q(x,y)} \right).
\end{equation}
Otherwise, reject the sample and declare $X_{t+1}=X_t=x$.

\begin{theorem}
$(X_t,\; t=0,1,2,\ldots)$ is a Markov chain that is reversible with stationary distribution $\pi$. 
\end{theorem}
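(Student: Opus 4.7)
The plan is to verify the detailed balance equations (DBEs) $\pi(x) p(x,y) = \pi(y) p(y,x)$ for the transition matrix $p$ of $(X_t)$, and then invoke Theorem \ref{Thm:DBEConsequences} to conclude that $\pi$ is stationary and that the chain is reversible. This will exactly parallel the proof of Theorem \ref{Thm:MetroSymm} but with the asymmetric acceptance ratio \eqref{eq:whatisa2}.

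First I would write down the transition matrix $p$ explicitly: for $y \neq x$,
\begin{align*}
    p(x,y) = q(x,y)\, a(x,y),
\end{align*}
and $p(x,x) = 1 - \sum_{y \neq x} p(x,y)$, and check that this is a legitimate transition matrix (nonnegativity is clear since $0 \leq a(x,y) \leq 1$; the self-loop probability is nonnegative because we only subtracted off genuine transition probabilities). I will assume that $\pi(x) > 0$ for all $x$, which is harmless: on the support of $\pi$ the chain is well defined, and we may restrict $\Omega$ to $\{x : \pi(x) > 0\}$ if necessary (as justified by Lemma \ref{Lemma:StationaryPositive} when the resulting chain is irreducible).

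The main step is verifying DBE for a fixed pair $x \neq y$. The $x = y$ case is automatic, and if $q(x,y) = q(y,x) = 0$ then both sides are zero. So consider $x \neq y$ with, say, $q(x,y) > 0$; if additionally $q(y,x) = 0$, then the formula \eqref{eq:whatisa2} gives $a(x,y) = 0$ (taking the ratio to be zero since its numerator vanishes), so both $p(x,y)$ and $p(y,x)$ vanish and DBE holds trivially. Assume now that both $q(x,y), q(y,x) > 0$. The key algebraic identity is
\begin{align*}
    \pi(x)\, q(x,y)\, a(x,y) \;=\; \min\bigl(\pi(x)q(x,y),\ \pi(y)q(y,x)\bigr),
\end{align*}
which is manifestly symmetric in the roles of $x$ and $y$. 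To see this identity, split into two cases: if $\pi(y)q(y,x) \geq \pi(x)q(x,y)$, then $a(x,y) = 1$ by \eqref{eq:whatisa2}, so the left-hand side equals $\pi(x)q(x,y)$, which is exactly the minimum; the opposite case is handled symmetrically, using $a(x,y) = \pi(y)q(y,x)/(\pi(x)q(x,y))$. By the symmetry of $\min$, the identity yields
\begin{align*}
    \pi(x)\, p(x,y) \;=\; \pi(x)\, q(x,y)\, a(x,y) \;=\; \pi(y)\, q(y,x)\, a(y,x) \;=\; \pi(y)\, p(y,x),
\end{align*}
which is the DBE.

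The main obstacle is really just notational bookkeeping around the edge cases ($q(x,y) = 0$ or one of the two probabilities being zero) and being precise about the acceptance convention when the ratio in \eqref{eq:whatisa2} is $0/0$; the algebraic heart of the argument is the one-line case split above. Once DBE is established, Theorem \ref{Thm:DBEConsequences} immediately gives that $\pi$ is a stationary distribution for $p$ and that the Metropolis chain is reversible with respect to $\pi$, completing the proof.
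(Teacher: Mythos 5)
Your proof is correct and follows exactly the route the paper intends: the paper omits this proof, stating it is "exactly the same as before" (i.e., as Theorem \ref{Thm:MetroSymm}), and your identity $\pi(x)q(x,y)a(x,y)=\min\bigl(\pi(x)q(x,y),\,\pi(y)q(y,x)\bigr)$ is precisely the natural generalization of the symmetric-case computation $\pi(x)a(x,y)=\min(\pi(x),\pi(y))=\pi(y)a(y,x)$, followed by the same appeal to the detail balance equations. Your careful handling of the degenerate cases ($q(x,y)=0$, vanishing ratios) is a welcome addition but does not change the substance of the argument.
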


Since the proof is exactly the same as before and we omit it (fill in the details, based on the previous proof, if you need to convince yourself). Let us instead go back to our problem of sampling uniformly from a non-regular graph. There, $q$ is the transition probability of the simple random walk. Thus 
\[
q(x,y)= \frac{1}{\deg(x)}1\{y\sim x\}. 
\]
Take $\pi$ to be the uniform distribution. Thus $\pi(x)=\frac{1}{\abs{\Omega}}$, where $\abs{\Omega}$ is the size of $\Omega$, the vertex set. Thus, 
\[
a(x,y)= \min\left(1, \frac{1/\abs{\Omega}}{1/\abs{\Omega}} \frac{1/\deg(y)}{1/\deg(x)} \right)1\{y \sim x\}= \min\left( 1, \frac{\deg(x)}{\deg(y)}\right)1\{y \sim x\}.
\]
Thus, the Metropolis algorithm will run in the following manner. Suppose the chain is currently at vertex $x$. Choose one of the neighbors of $x$ uniformly at random. Say, the chosen neighbor is $y$. If $\deg(y)\le \deg(x)$, the chain jumps to $y$. Otherwise, if $\deg(y) > \deg(x)$, the chain jumps to $y$ with probability $\deg(x)/\deg(y)$ and stays at $x$ with probability $1- \deg(x)/\deg(y)$.

Thus the chain tends to move towards lower degree vertices to counterbalance the tendency of the simple random walk to move towards higher degree vertices. This changes the stationary distribution of the simple random walk, which gives more mass to vertices of higher degree, to the uniform distribution over all vertices. That's it! That's the secret!

\section{Sampling from the Gibbs distribution}


The acceptance probabilities appearing in \eqref{eq:whatisa} and \eqref{eq:whatisa2} only depend on the ratio $\pi(y)/\pi(x)$. This simple observation has an enormous advantage in practice. Let us demonstrate via an example. 

Suppose we want to generate a random graph (a social network, say) between $N$ individuals. Here the vertices are given by the set $V=\{1,2,\ldots, N\}$ and there is an edge $\{i,j\}$ if individuals $i$ and $j$ are friends of each other. How many possible such graphs are there? There are $N(N-1)/2$ many possible pairs of friends. Each such pair could be friends or could not be friends. This tells us there are $2^{N(N-1)/2}$ many possible graphs with vertices $V$. Let $\Omega$ denote the set of all such possible graphs. Thus $\abs{\Omega}=2^{N(N-1)/2}$, growing super-exponentially with $N$. Any respectable social network should have at least thousands of users which gives you an idea of how large $\Omega$ can be.   

A natural probability distribution on $\Omega$ is the uniform distribution. There is a nice way to generate a sample from this uniform distribution, called the Erd\H{o}s-R\'enyi random graph model. For each possible pair $\{i,j\}$, toss a fair coin. If the coin is H, draw the edge, and if the coin turns T, don't draw the edge. I will leave it to you to convince yourself that the resulting random graph is distributed uniformly on $\Omega$. The uniform distribution is also the stationary distribution of the Markov chain on $\Omega$ where at each step you choose a random pair $\{i,j\}$. If there is an edge $\{i,j\}$, remove that edge, while if there is no such edge, draw that edge. As you can see this creates a symmetric transition matrix and the resulting Markov chain has a uniform stationary distribution. In fact, the Markov chain is nothing but the random walk on the hypercube in dimension ${N(N-1)/2}$.    

Real social networks are not uniformly distributed. If A is a friend of B and B is a friend of C, then it is quite likely that $A$ is a friend of $C$. This dependence is not captured in the Erd\H{o}s-R\'enyi model where the three edges $\{A,B\}, \{B, C\}$ and $\{C, A\}$ are independent. How would one go about modeling such random graphs? 

One commonly proposed solution is the following. Given a graph $G=(V, E)$, count the number of triangles in the graph. That is, let \[
\Delta(G)=\#\left\{ \{i,j,k\}:\; i \sim j, j\sim k, k\sim i\right\}. \]
For a constant $\beta$, create a probability distribution on $\Omega$ given by the formula 
\[
\pi_\beta(G)=\frac{e^{\beta \Delta(G)}}{\sum_{G' \in \Omega} e^{\beta G'}}=\frac{e^{\beta \Delta(G)}}{Z_\beta}, 
\]
where $Z_\beta=\sum_{G' \in \Omega} e^{\beta G'}$, where the sum is taken over all possible graphs in $\Omega$. $Z_\beta$ is called the normalizing constant and is not explicitly computable.

Clearly $\pi_\beta$ is a probability mass function on $\Omega$. How does it behave? If $\beta >0$, the larger $\Delta(G)$ is, the bigger is the weight $e^{\beta \Delta(G)}$. Thus, for $\beta >0$, $\pi_\beta$ favors graphs with more triangles. In fact, the larger $\beta>0$ is, graphs with more triangles become more likely under $\pi_\beta$. 

On the other hand, if $\beta <0$, the weight $e^{\beta \Delta(G)}$ gets smaller as $\Delta(G)$ gets larger. Thus, $\pi_\beta$ favors graphs with fewer triangles. When $\beta=0$, $\pi_\beta$ is simply the uniform distribution on $\Omega$. 

Models such as $\pi_\beta$ are called Gibbs distributions and are commonly used to incorporate dependency structures in data. Originally, such models come from a field called statistical physics where $1/\beta$ is called the temperature. The primary difficulty in trying to sample from $\pi_\beta$ is the non-computability of the normalizing constant $Z_\beta$. 

Fortunately, Metropolis does not require $Z_\beta$. Take the base chain to be the symmetric chain on $\Omega$ described above. You can turn this chain aperiodic by choosing $I$ and $J$ i.i.d. uniformly at random from $\{1,2,\ldots, N\}$. If $I=J$, stay where you are. Otherwise if $I=i\neq J=j$, then erase the edge $\{i,j\}$ if it exists, or add it if it does not. Thus $G$ can jump to $G'$ if and only if these two graphs differ by at most one edge, and $q(G, G')= q(G', G)= 2/N^2$.

In order to modify this chain according to Metropolis, we compute $a(G, G')$ from \eqref{eq:whatisa}, Notice 
\[
\frac{\pi(G')}{\pi(G)}= \frac{e^{\beta \Delta(G')}/ Z_\beta}{e^{\beta \Delta(G)}/ Z_\beta}= e^{\beta(\Delta(G') - \Delta(G))}. 
\]
This ratio does not depend on the normalizing constant at all!

Recall we are interested in the case of $\beta >0$ (favors more triangles). Thus, $\pi(G') \ge \pi(G)$ if and only if $\Delta(G') \ge \Delta(G)$. Thus, if you add an edge (thereby increasing the number of triangles), jump from $G$ to $G'$. If you remove an edge (thus possibly decreasing the number of triangles), jump from $G$ to $G'$ with probability $e^{\beta(\Delta(G') - \Delta(G))}$. This Markov chain run for a large number of steps will be approximately distributed according to $\pi_\beta$. 

\section{Gibbs sampling.}

So far from Section \ref{sec:samplingintro} we learned that simulating one-dimensional probability distributions are often easy. What if we can simulate a multidimensional probability distribution $\pi(x_1, \ldots, x_n)$ by reducing the problem to one dimension? This is the idea behind Gibbs sampling which is also called Glauber dynamics in certain contexts. I want to warn about the nomenclature: Gibbs sampling is a Markov chain Monte Carlo algorithm while the Gibbs distribution introduced in the last section is a probability distribution. They are both named after the American physicist Josiah Willard Gibbs (1839--1903), one of the founders of the field called \textit{statistical mechanics}. 

We describe the Gibbs sampling algorithm below for a joint pmf, but a similar algorithm works for a joint pdf as well. So, let $\pi$ be a pmf in $\rr^n$. Given a vector $x=(x_1, \ldots, x_n)\in \rr^n$, let
\[
x^{i}=\left(x_1, \ldots, x_{i-1}, x_{i+1}, \ldots, x_n \right), \quad i=1,2,\ldots, n. 
\]
Thus $x^i \in \rr^{n-1}$ is simply the vector $x$ whose $i$th coordinate is dropped. Consider now the conditional pmf, under $\pi$, of $X_i$ at $x_i$, given the rest of the values $X_1=x_1, \ldots, X_{i-1}=x_{i-1}, X_{i+1}=x_{i+1}, \ldots, X_n=x_n$, 
\[
\pi^i_{x^i}(y):=P\left( X_i=y \mid X^i = x^i \right), \quad i=1,2,\ldots, n. 
\]
These are $n$ one-dimensional conditional distributions. Assume that it is possible to sample from each of these, no matter the value of the conditioned variables. We will combine these distributions to run an MCMC algorithm.

\medskip

\nin\tbf{Gibbs sampling algorithm.} Start from an arbitrary vector $X(0)$. Suppose $X(t)=x=(x_1, \ldots, x_n)$. Pick an independent random variable $I$ uniformly distributed among $\{1,2,\ldots, n\}$. Suppose $I=i$. \textit{Update} the $i$th coordinate by drawing a sample from the conditional distribution $\pi^i_{x^i}$. Let the value of this fresh sample be $y$. Then, define $X(t+1)$ by  
\[
X_i(t+1)=y, \; \text{and}\; X_j(t+1)= X_j(t), \; \text{for all $j\neq i$}.
\]
Repeat. 

\medskip

\begin{theorem}
The Markov chain $X$ is reversible with respect to the stationary distribution $\pi$. 
\end{theorem}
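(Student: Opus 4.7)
The plan is to verify the detailed balance equations $\pi(x)P(x,y)=\pi(y)P(y,x)$ and invoke Theorem \ref{Thm:DBEConsequences}, which gives both stationarity and reversibility in one shot. By construction, a single step of the Gibbs chain changes at most one coordinate, so $P(x,y)=0$ whenever $x$ and $y$ differ in two or more coordinates; in that case detailed balance is trivial, as is the case $x=y$. Only the case where $x$ and $y$ agree in all but one coordinate requires work.

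Suppose $x$ and $y$ differ only in coordinate $i$, so that $x^i=y^i$ while $x_i\neq y_i$. The transition has two stages: first the coordinate $i$ is selected (probability $1/n$), then the new value $y_i$ is drawn from $\pi^i_{x^i}$. Hence
\[
P(x,y)=\frac{1}{n}\,\pi^i_{x^i}(y_i).
\]
The central step will be to expand the conditional pmf using its definition,
\[
\pi^i_{x^i}(y_i)=\frac{\pi(y_i,x^i)}{\sum_z \pi(z,x^i)}=\frac{\pi(y)}{\pi_{X^i}(x^i)},
\]
where $\pi_{X^i}$ denotes the marginal pmf of $X^i$ under $\pi$, and I have used that $(y_i,x^i)=y$ since $x^i=y^i$.

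Putting this together,
\[
\pi(x)P(x,y)=\frac{1}{n}\cdot\frac{\pi(x)\pi(y)}{\pi_{X^i}(x^i)}.
\]
By exactly the same computation with the roles of $x$ and $y$ reversed,
\[
\pi(y)P(y,x)=\frac{1}{n}\cdot\frac{\pi(y)\pi(x)}{\pi_{X^i}(y^i)}.
\]
The key observation—and really the only content of the argument—is that $x^i=y^i$, so the two marginals $\pi_{X^i}(x^i)$ and $\pi_{X^i}(y^i)$ are literally the same number, and the two expressions coincide. This is the symmetry that makes the method work: conditioning on ``everything except the $i$th coordinate'' treats $x$ and $y$ identically whenever they differ only in that coordinate.

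I do not anticipate any serious obstacle; the only minor care is in bookkeeping the marginal and conditional pmfs so as not to confuse $\pi(y)$ (the joint) with $\pi^i_{x^i}(y_i)$ (the one-dimensional conditional). Once detailed balance is established, Theorem \ref{Thm:DBEConsequences} immediately yields both that $\pi$ is stationary for the Gibbs chain and that the chain is reversible with respect to $\pi$, completing the proof.
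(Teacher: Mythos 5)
Your proof is correct and follows essentially the same route as the paper's: verify the detailed balance equations for states differing in a single coordinate and invoke Theorem \ref{Thm:DBEConsequences}. The only cosmetic difference is that the paper writes the joint as marginal times conditional, $\pi(x)=P(X^i=x^i)\,\pi^i_{x^i}(x_i)$, while you divide the joint by the marginal to express the conditional; the two identities are the same, and both arguments hinge on the same observation that $x^i=y^i$ makes the expression symmetric in $x$ and $y$.
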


\begin{proof}
Let's check Detail Balance Equations (DBE). We only consider one case, $I=1$. The rest are similar. Suppose $x$ and $x'$ are two vectors that differ only in the first coordinate. That is $x=(z,x_2, \ldots, x_n)$ and $x'=(y, x_2, \ldots, x_n)$. Then $X(t)=x, X(t+1)=x'$ is only possible if $I=1$. Thus 
\[
\pi(x)p(x, x')=\pi(x)P(I=1)\pi^1_{x^1}(y)=\pi(x)\frac{1}{n}\pi^1_{x^1}(y). 
\]
But, by definition of conditional pmf, $\pi(x)= P(X^1=x^1)\pi^1_{x^1}(z)$. Thus 
\[
\pi(x)p(x, x')=\frac{1}{n} P(X^1=x^1)\pi^1_{x^1}(z)\pi^1_{x^1}(y)= \pi(x')p(x', x),
\]
where the last equality follows by symmetry. This verifies the DBEs and we are done! Convince yourself that this chain is irreducible and aperiodic.
\end{proof}

Let us do an example. 

\begin{example}
Fix a natural number $N$. Let $S_N$ denote the set of natural numbers $(m,k)$ such that $m+k\le N$. That is 
\[
S_N = \left\{(m,k):\; m\ge 1, \; k\ge 1,\; m+k \le N \right\}.
\]
Let us use Gibbs sampling to pick a uniformly distributed sample $(X,Y)$ from $S_N$. 

In order to do this we have to identify the two conditional distributions. Clearly, if $X=m$, then $Y$ can be any of $\{1,2\ldots, N-m\}$ with equal probability. Thus the conditional distribution of $Y$, given $X=m$, is Uni$\{1,\ldots, N-m\}$, i.e., 
\[
P\left(Y=k\mid X=m \right)=\frac{1}{N-m}, \quad k=1,2, \ldots, N-m.
\]
Exactly in the same way, the conditional distribution of $X$, given $Y=k$, is Uni$\{1, \ldots, N-k\}$. Hence, Gibbs sampling proceeds by picking a random integer $I$ uniformly from the pair $\{0,1\}$. If $I=0$, update the current value of $X$ by sampling a uniform natural number between $1$ and the $N-k$, where $k$ is the current value of $Y$. If $I=1$,update the current value of $Y$ by sampling a uniform natural number between $1$ and the $N-m$, where $m$ is the current value of $X$. Repeat several times to get an approximate sample from the uniform distribution over $S_N$.  

Notice here that the algorithm can be made slightly more efficient. If $I=0$ twice in a row, we are updating the value of $X$ twice successively while keeping the current value of $Y$. This means that the first update is thrown away and the only the second update is retained. This is a waste of computational resources. Thus, it is more efficient to not choose $I$ at random at all but rather to alternate between $0$ and $1$. That is, $I=0$ in the first step, followed by $I=1$, followed by $I=0$, followed by $I=1$, and so on. This is a common variant of the Gibbs sampling algorithm.  
\end{example}

\begin{example}
For our second example let us assume that Gibbs sampling works for continuous densities (it does). Sample from the joint density 
\[
f(p,q,r)= \frac{1}{W} pr,\quad 0<p<q<1,\quad 0<r<1,
\]
where $W$ is the normalizing constant. $W$ is unknown and I urge you to keep it that way and not compute it. It is unimportant for Gibbs sampling.  

Let $(X,Y,Z)$ be random variables with joint density $f$. We need to compute the three conditional densities of $X$, given $Y=q,Z=r$, $Y$, given $X=p, Z=r$, and $Z$, given $X=p, Y=q$. 

Let us start from the conditional density of $Y$ at $q$ given the other two. Note that the joint density $f$ does not involve $q$ explicitly in the formula. Thus 
\[
f_{Y}(q \mid X=p, Z=r)=c, \; p< q < 1,
\]
where $c$ is a constant that does not depend on $q$. This is only possible if this conditional density is uniform over the interval $[p,1]$. Since uniform densities are explicit, $c$ must be $1/(1-p)$. Thus  
\[
f_{Y}(q \mid X=p, Z=r)=\frac{1}{1-p}, \; p< q < 1,
\]

Similarly, consider $f_Z(r\mid X=p, Y=q)$. The joint density $f$ as a function of $r$ is simply a constant times $r$. Thus 
\[
f_Z(r\mid X=p, Y=q)= cr,\; 0 < r< 1,
\]
for some normalizing constant $c$. It is easy to see by integrating the above that $c= 2$. Thus 
\[
f_Z(r\mid X=p, Y=q)= 2r,\; 0 < r< 1,
\]
which is a beta$(2,1)$ density and does not depend on $p$ and $q$. Thus $Z$ is independent of $(X,Y)$.

Finally
\[
f_X(p \mid Y=q, Z=r)= c p, \quad 0 < p < q.
\]
By integrating both sides we must get one. Thus 
\[
c\int_0^q p dp = 1,
\]
or that $c=2/q^2$. Note that, although both $f_X(\cdot \mid Y=q, Z=r)$ and $f_Z(\cdot \mid X=p, Y=q)$ are linear functions of the argument, they have very different supports where they are positives. This is why $Z$ is independent from $X,Y$ while $X$ is dependent on $Y$. Nevertheless, with these three explicit densities at hand one can run Gibbs sampling as usual. 
A little bit of efficiency can be gained by separately generating $Z$, which is independent, while simulating only $(X,Y)$ via Gibbs sampling.

\end{example}

\section{Stochastic optimization}


Suppose $G=(V, E)$ is a finite graph where, without loss of generality, we take the set of vertices $V=[n]$. Suppose there is a function $f:V\rightarrow \rr$. We wish to find the minimum of the function $\min_{v \in V}f(v)$ and the minimizing vertex $v^*$ such that $f(v^*)= \min_{v\in V} f(v)$. You might think what is the big problem here. After all I have a bunch of numbers and I can simply go through them one by one until I find the minimum. The problem is that $n$ could be a very large number, let's say in the order of millions. Evaluating the function at every vertex and sorting them is computationally expensive. One might also consider a greedy ``gradient descent'' algorithm. Start from any vertex $v_0$ and evaluate $f(v_0)$. Now look at the neighbors of $v_0$. Among those neighbors find the vertex $v_1$ that minimizes $f$.      

\begin{figure}  
\begin{center}
\begin{tikzpicture}[scale=0.9]
\tikzset{
        node/.style={circle, draw=black, very thick, minimum size=7mm},
        arrow/.style={->, black, very thick},
        line/.style={black, very thick}
        }
    \node[node] (one) [label=above:{f(1)=101}] at (1.5,1.5) {1};
    \node[node] (two) [label=left:{f(2)=25}] at (0,0) {2};
    \node[node] (three) [label=below:{f(3)=4}] at (1.5,-1.5) {3};
    \node[node] (four) [label=below:{f(4)=2}] at (3,0) {4};
    \node[node] (five) [label=below:{f(5)=10}] at (4.5,-1.5) {5};
    \node[node] (six) [label=above:{f(6)=33}] at (4.5,1.5) {6};
    \node[node] (seven) [label=right:{f(7)=1}] at (6,1.5) {7};
    \node[node] (eight) [label=right:{f(8)=30}] at (6,-1.5) {8};
    
    \draw[line] (one) -- (two);
    \draw[line] (one) -- (four);
    \draw[line] (two) -- (three);
    \draw[line] (two) -- (four);
    \draw[line] (three) -- (four);
    \draw[line] (four) -- (five);
    \draw[line] (four) -- (six);
    \draw[line] (five) -- (six);
    \draw[line] (five) -- (eight);
    \draw[line] (six) -- (seven);
\end{tikzpicture}
\end{center} 
\caption{\small Optimization of a function over a graph.}
\label{Fig:optgraph}
\end{figure}
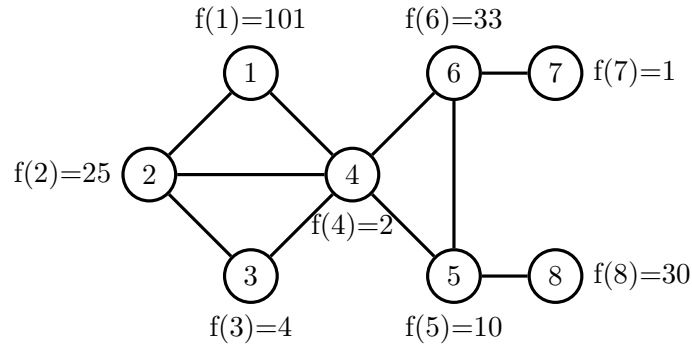

This doesn't quite work due to local minimum. Take the example of the graph shown in Figure \ref{Fig:optgraph}. Suppose $v_0$ is vertex $8$. It has only one neighbor $5$ with a lower $f$-value, $f(5)=10 < f(8)=30$. So you jump to $v_1=5$. $v_1$ has two neighbors $4$ and $6$. The minimum value of $f$ among them is achieved at $4$ and $f(4)=2< f(5)=10$. Thus $v_2=2$. Now we have nowhere to go since all the neighbors of $v_2$ have a higher value of $f$ making $v_2$ a local minimum of the function $f$. However, the absolute minimum of $f$ is at vertex $7$ which our gradient descent algorithm cannot reach. To avoid getting trapped in local minimums we will add randomness by creating a probability distribution on the vertices.

For a parameter $\lambda\ge 0$, consider the so-called \textit{Gibbs probability distribution at temperature $T=1/\lambda$}:
\[
\pi_\lambda(i)= \frac{1}{Z_\lambda} e^{-\lambda f(i)}, \quad i \in V=[n].
\]
Here $Z_\lambda$ is called the normalizing constant given by 
\[
Z_\lambda = \sum_{j=1}^n e^{-\lambda f(j)}.
\]
Note that this is the unique choice of $Z_\lambda$ that turns $\pi_\lambda$ to be a probability mass function. 

The idea comes from statistical physics (hence the concept of temperature). When $\lambda=0$, i.e., the temperature $T=\infty$ (very hot), 
\[
Z_0=\sum_{j=1}^n e^{-0 \cdot f(i)}= n, \quad \pi_0(i) = \frac{1}{n}, 
\]
making $\pi_0$ the uniform distribution on $[n]$. On the other hand, when the temperature gets very cold (i.e., $\lambda\rightarrow \infty$ or $T\rightarrow 0$), $\pi_\lambda$ puts almost its entire mass on the minimizer of $f$. To see what I mean, assume that $f$ has a unique minimizer, say vertex $1$. That is $f(1) < f(v)$ for every vertex $v\neq 1$. Then, rewrite $\pi_\lambda$ as follows 
\[
\pi_\lambda(i) = \frac{1}{Z_\lambda} e^{-\lambda f(i)}= \frac{e^{-\lambda f(i)}}{\sum_{j=1}^n e^{-\lambda f(j)}}= \frac{e^{-\lambda (f(i) - f(1))}}{\sum_{j=1}^n e^{-\lambda (f(j) - f(1))}}. 
\]
Consider the denominator $\sum_{j=1}^n e^{-\lambda (f(j) - f(1))}$. Either $j=1$, in which case the corresponding term $e^{-\lambda (f(1) - f(1))}=e^{0}=1$ or $j > 1$, in which case $f(j) - f(1) > 0$, and hence $\lim_{\lambda \rightarrow \infty} e^{-\lambda(f(j) - f(1))}=0$. Thus, summing up over all $j$, we get 
\[
\lim_{\lambda \rightarrow \infty} \sum_{j=1}^n e^{-\lambda (f(j) - f(1))}=1. 
\]
Applying the same logic to the numerator of $\pi_\lambda$, we get 
\[
\lim_{\lambda\rightarrow \infty} \pi_\lambda(i) = \begin{cases}
1, \quad \text{if $i=1$},\\
0, \quad \text{if $i > 1$}. 
\end{cases}
\]
In other words the probability distribution $\pi_\lambda$ converges to the trivial distribution that puts mass one at the minimizer of $f$. Notice that the same proof shows that if $f$ has more than one minimizer, then $\pi_\lambda$, as $\lambda \rightarrow \infty$, converges to the uniform distribution on the set of minimizers of $f$. 

The idea of stochastic optimization of $f$ is that instead of greedily minimizing $f$, one looks to sample from the probability distribution $\pi_\lambda$ for a large value of $\lambda$. Will such a sample always give us the minimizer? No, not unless $\lambda=\infty$. But with high probability it will sample a vertex whose $f$ values is close to the minimum. So the next question is how do we sample from $\pi_\lambda$ and one possible answer is via the Metropolis algorithm. 

\nin\tbf{Hill Climb algorithm for stochastic optimization.} Fix a base chain, say simple symmetric random walk on the graph. Thus the transition probability for the base chain is given by 
\[
q(x,y)= \frac{1}{\deg(x)}, \; \text{if $y$ is a neighbor of $x$, and zero otherwise.}
\]
Recall that this is a reversible Markov chain but not symmetric unless the graph is regular. 

Now run Metropolis with this base chain. Suppose, currently $X_t=x$, pick $Y_{t+1}$ uniformly from one of the neighbors of $x$. Then accept this choice as $X_{t+1}$ with probability 
\[
\begin{split}
\min\left(\frac{\pi_\lambda(y) q(y,x)}{\pi_\lambda(x) q(x,y)}, 1 \right)&= \min\left( \frac{e^{-\lambda f(y)}/ \deg(y)}{e^{-\lambda f(x)}/\deg(x)}, 1 \right)\\
&= \min\left( \frac{\deg(x)}{\deg(y)}e^{-\lambda(f(y) - f(x))} ,1\right).    
\end{split}
\]
Otherwise, remain where you are by declaring $X_{t+1}=x$. 

In the special case of regular graphs, $\deg(x)=\deg(y)$ and the algorithm becomes simpler. Accept $y$ uniformly chosen among neighbors of $x$ with probability $\min\left(e^{-\lambda(f(y) - f(x))}, 1 \right)$. Thus, if $f(y) < f(x)$, you'd always accept $y$ and hence decrease $f$ by doing so. But if $f(y) > f(x)$, you may still accept $x$ with probability $e^{-\lambda(f(y) - f(x))}$. This step lets you escape the dreaded local minima which cannot be avoided in a gradient descent. 


\nin\tbf{Simulated Annealing.} But what if I insist on getting the actual minimum instead of a sample from a near-minimum with high probability? A variation of the hill climb algorithm called simulated annealing lets you do that. The idea is that you grow $\lambda=\lambda(t)$ \textit{slowly} with time $t$, typically of the order of $\log(t)$. There is a theorem that, under suitable assumptions, the Markov chain converges to the minimizer of $f$ but there are plenty of other ad hoc choices that appear to work in practice. See the paper by Dimitris Bertsimas and John Tsitsiklis titled Simulated Annealing that appeared in the journal \textit{Statistical Science}, volume 8, number 1, pages 10-15, 1993.

\pagebreak
\section*{Problems for chapter \ref{Ch:MC}}
\addcontentsline{toc}{section}{Problems for chapter \ref{Ch:MC}}

\begin{problem}\label{Prob:BadBase}  Suppose you are trying to sample from some distribution $\pi$ on a sample space $\Omega$ via the Metropolis-Hastings algorithm.
\begin{enumerate}[$(a)$]
\item Define your base chain to have transition probabilities $q(x,y) = \pi(y)$.  What are your acceptance probabilities and the transition probabilities for the metropolis chain with this base chain?
\item What is the mixing time for metropolis chain with this choice of $q$? 
\item Explain why this is not the approach we take in \S\ref{Sec:Metropolis}, and why this is not used in practice.
\end{enumerate}
\end{problem}

\begin{problem}\label{Prob:B4.5} We wish to find a reversible Markov chain on the hypercube $\Omega=\{0,1\}^N$ with a stationary distribution that favors vertices with more ones than zeroes in the following sense. For $w$ in $\Omega$, let $\abs{w}=\sum_{i=1}^N w_i$ denote the number of ones in $w=(w_1, \ldots, w_N)$. Define the probability distribution
\[
\pi(w)= \frac{\abs{w}^2}{Z}, \quad \text{where} \quad Z=\sum_{v\in \Omega} \abs{v}^2
\] 
is the normalizing constant. 
\begin{enumerate}[$(a)$]
    \item Describe a reversible Markov chain with stationary distribution $\pi$ by writing its transition probabilities.  Simplify your expressions for the transition probability $p(w_1,w_2)$ so that they are only in terms of $|w_1|$.
    \item Explicitly verify that your new chain satisfies the DBEs for $\pi$.
\end{enumerate} 
\end{problem}

\begin{problem}\label{Prob:B4.6}  For each problem clearly describe the conditional distribution of each coordinate given the others. Then describe the procedure for running Gibbs sampling to sample from the joint distribution. Assume that Gibbs sampling works for continuous densities as well as discrete distributions. \emph{Guess} the conditional from the structure of the joint distribution. \emph{Avoid} doing integration as much as possible. Use your knowledge of the all the named one dimensional distributions/ densities.  
\begin{enumerate}[$(a)$]
\item Sample from the joint density 
\[
f(x,y,z)= \frac{1}{W}, \quad 0 < x < y < z < 1, 
\]
and zero elsewhere. Here $W$ is the normalizing constant. 
\item Sample from the mixed joint pmf/pdf: 
\[
p(n,t)= \frac{1}{Z} (1-p)^{n-1} n e^{-nt}, \quad t >0, \; n=1,2,\ldots,
\]
where $Z$ is the normalizing constant. 
\end{enumerate}
\end{problem}

\begin{problem}\label{Prob:A4.2} For a number $s>1$, consider the Riemann zeta function $\zeta(s)= \sum_{n=1}^\infty \frac{1}{n^s}$. This number is finite and therefore results in a probability distribution on integers
\[
\pi(n) = \frac{1}{n^s \zeta(s)}, \quad n\ge 1.
\]
Develop a MCMC scheme based on Metropolis algorithm to sample a random integer according to the following probability distributions.
\begin{enumerate}[$(a)$] 
\item The conditional distribution of $\pi(\cdot \mid X\le N)$. That is,
\[
P(X=n)= \frac{\pi(n)}{\sum_{i=1}^N \pi(i)},\quad n=1,2,\ldots, N.
\]
Explicitly give the transition probabilities $p(j,k)$ for your new chain.
\item The entire distribution $\pi$ on $\NN$.  Explicitly give the transition probabilities $p(j,k)$ for your new chain and verify that the chain satisfies the DBEs for $\pi$.
\end{enumerate}
\end{problem}

\begin{problem}\label{Prob:A4.3}  Consider the joint density
\[
f(x,y,z)=\frac{1}{W} e^{-xyz-x-2y-3z}, \quad x>0,\; y>0, \; z>0,
\]
where $W$ is the normalizing constant. 
\begin{enumerate}[$(a)$]
\item Clearly identify each of the three conditional densities (i) $X$, given $Y,Z$; (ii) $Y$, given $X,Z$; and (iii) $Z$, given $X,Y$, in terms of standard named distribution (such as normal, exponential, geometric, Poisson etc.). (\emph{Hint}: Do not try to integrate.)

\item Describe how you will use Gibbs sampling to generate a sample from the joint density 
\end{enumerate}
\end{problem}

\begin{problem}\label{Prob:A4.6} For each problem clearly describe the conditional distribution of each coordinate given the others. Then describe the procedure for running Gibbs sampling to sample from the joint distribution. Assume that Gibbs sampling works for continuous densities as well as discrete distributions. \textbf{Guess} the conditional from the structure of the joint distribution. \textbf{Avoid} doing integration as much as possible. Use your knowledge of the all the named one dimensional distributions/ densities.  
\begin{enumerate}[$(a)$]
\item Sample from the mixed joint pmf/pdf: 
\[
f(p,n)= p(1-p)^{n-1}, \quad 0< p < 1, \; n=1,2,\ldots.
\]
You will need one integration to describe the density of $p$ given $N=n$. 

\item Sample from the joint density:
\[
f(p,q,r)= \frac{1}{Z} pqr, \quad 0< p,q,r < 1.
\] 
You will need one easy integration. What is special about this joint density?  Why do you actually not need Gibb's algorithm to sample from this?
\end{enumerate}
\end{problem}

\begin{problem}\label{Prob:AB5.2} Consider the following joint distributions of random variables. Describe how you will sample from them using \textit{any method} of sampling. 
\begin{enumerate}[$(a)$]
\item $(X_1, X_2, \ldots, X_n)$ is Mult$\left( N, 1/n, \ldots, 1/n\right)$.  Explain why using Gibb's sampling with only updating coordinate at a time will not work.  
\item $\left( X, Y\right)$ is a standard bivariate normal with means $0$, variances $1$, and correlation $\rho$. 
\end{enumerate}
\end{problem}

\begin{problem}\label{Prob:AB5.4} Consider a connected graph $G=(V,E)$ with $V=\{1,2,\ldots,n\}$. For $-\infty< \alpha < \infty$, consider a probability distribution 
\[
\pi^\alpha(x) = \frac{1}{Z_\alpha}\left( \mathrm{deg}(x) \right)^\alpha,
\]
where $Z_\alpha$ is the normalizing constant. Describe a reversible Markov chain with stationary distribution $\pi^\alpha$. Describe in words how $\pi^\alpha$ is qualitatively different for different values of $\alpha$.
\end{problem}

\begin{problem}\label{Prob:AB5.5} Let $\Omega=\{1,2,\ldots, n\} \times \{1,2,\ldots, n\}$ be the $n\times n$ grid. For $\beta \ge 0$, consider the probability distribution
\[
\pi^\beta(i,j)= \frac{1}{Z}\exp\left( - \abs{i-j}^\beta \right) ,\quad \text{for $(i,j)\in \Omega$}. 
\]
Describe a reversible Markov chain with stationary distribution $\pi^\beta$. Describe in words how $\pi^\beta$ is different from the uniform distribution on the grid. 
\end{problem}

\begin{problem}\label{Prob:AB5.6} Let $\Omega=\{1,2,\ldots,n\}$. Define a probability distribution 
\[
\pi(i)=\frac{1}{Z} i(n+1-i), \quad 1\le i \le n,
\]
where $Z$ is the normalizing constant. Describe any method to get a sample from this probability distribution.
\end{problem}

\chapter{Martingales and harmonic functions}\label{Ch:Martingales}


\section{Martingales: intuition, definition and first examples}


In this chapter we take up the analysis of certain special functions of a Markov chain $(X_n)_{n \geq 0}$.  We define a new stochastic process $(Y_n)_{n \geq 0}$ as a function of $(X_n)_{n \geq 0}$, and we will see that when $(Y_n)$ has a certain averaging property it yields rich information about the original chain itself.  We will call these new processes \emph{martingales}.    While martingales can seem abstract in the beginning, the underlying intuition is simple and the large number of concrete examples in our exposition will make them accessible.  Martingales are an important stepping stone to more advanced topics in probability, and they are utilized in a number of deep proofs in the modern theory.  While much of this will fall beyond the scope of our text, we hope you will nevertheless begin to appreciate the value of martingales in this chapter. The intellectual effort you may need to expend to internalize these new concepts is well worth the effort.

\subsection{Adapted processes and martingales}

Let $(X_n)_{n \geq 0}$ be a Markov chain.  The new process $(Y_n)_{n \geq 0}$ that we will build from $(X_n)$ will be such that, for each $n$, $Y_n$ is a function of the chain history $X_0, X_1, \ldots, X_n$ through the first $n$ steps as well as the step index $n$. That is, 
\begin{align}\label{Eq:AdaptedProcess}
    Y_n = f(n,X_0,X_1, \ldots, X_n) = f_n(X_0, X_1, \ldots, X_n).
\end{align}
Since $X_1, \ldots, X_n$ are random, $Y_n$ is a new random variable, and $(Y_n)_{n\geq 0}$ is thus a new stochastic process built from our original $(X_n)$. Note that the function $f_n$ defining $Y_n$ can change with each $n$.  We call a stochastic process $(Y_n)_{n \geq 0}$ satisfying \eqref{Eq:AdaptedProcess} for all $n\geq 0$ \textbf{adapted} to $(X_n)$.  This means that if we know the values of $X_0, X_1, \ldots, X_n$, we know the value of $Y_n$, as is clear from the formula \eqref{Eq:AdaptedProcess} (we assume the functions $f_n$ are given).  

\begin{example}
    Suppose $(X_n)_{n \geq 0}$ is the simple symmetric random walk on $\mathbb{Z}$.  Examples of adapted processes on $(X_n)$ include $Y_n = X^2_n$ and, say, $Y_n = X_n - X_{n-1} + X_{n-2} + \ldots + (-1)^n X_0$.
\end{example}

While all the functions $(Y_n)$ of chains $(X_n)$ we will consider are adapted processes, we are particularly interested in a subclass of adapted processes which possess a special averaging property.

\begin{definition}\label{Def:Martingale}
    A process $Y = (Y_n)_{n \geq 0}$ adapted to the Markov chain $(X_n)_{n \geq 0}$ is a \textbf{martingale} if
    \begin{align}\label{Eq:Martingale}
        \nE(Y_{n} \, | \, X_0, X_1, \ldots, X_{n-1}) = Y_{n-1}, \quad \text{ for each $n\geq 1$.}
    \end{align}
\end{definition}

\noindent So, given all the information up to step $n-1$, we expect the next value $Y_{n}$ of our martingale to be identical to the current value $Y_{n-1}$. The key intuition here is that martingales are thus ``fair games.''  If $(Y_n)$ were gambling winnings, for instance, this says that our expected net change in each stage is zero; on average we neither win nor lose anything on each turn.  

\subsection{Examples of martingales}\label{SubSec:MartingaleEg}
Let's carefully walk through four examples to get a better sense of what Definition \ref{Def:Martingale} is all about.

\begin{example}\label{Eg:FirstMartingale}
    Consider a simple symmetric random walk $(X_n)_{n \geq 0}$ on $\mathbb{Z}$ starting from $X_0=0$.  We claim 
    \begin{align}\label{Eq:MartEg1}
        Y_n = X_n
    \end{align}
    is itself a martingale.  Here our functions $f_n$ in \eqref{Eq:AdaptedProcess} are very simple,
    \begin{align*}
        f_n(X_0,X_1, \ldots, X_n) = f_n(X_n) = X_n,
    \end{align*}
    which is the \emph{projection} of the random vector $(X_0,X_1,\ldots, X_n)$ to its last coordinate.  While this function is not terribly exciting, it is instructive to see that it does satisfy the desired averaging property \eqref{Eq:Martingale}.  Indeed, we have
    \begin{align*}
        \nE(Y_{n+1} \, | \, X_0, X_1, \ldots, X_n) &= \nE(X_{n+1} \, | \, X_0, X_1, \ldots, X_n)\\
        &= \nE(X_{n+1} \, | \, X_n)\\
        &= \frac{1}{2}(X_n+1) + \frac{1}{2}(X_n-1)\\
        &= X_n = Y_n,
    \end{align*}
    where we have used the Markov property in the second line.  We conclude that $(X_n)$ itself is a martingale.  This should not be surprising: we can think of $(X_n)$ as a game where you are flipping a fair coin and win a dollar for tossing heads and lose a dollar for tails, which is fair.  
\end{example}

    Other martingales, however, can be less obvious.

\begin{example}\label{Eg:SecondMartingale}
    Let $(X_n)_{n \geq 0}$ be the simple symmetric random walk on $\mathbb{Z}$ again, and consider the process $(Y_n)_{n \geq 0}$ defined by
    \begin{align}\label{Eq:MartEg2}
        Y_n = X_n^2-n.
    \end{align}
    In terms of coin flipping, in this game your total earnings after $n$ turns is the square of the difference $X_n$ of heads and tails flipped, minus the number of flips. 
    
    Our process $(Y_n)$ is certainly adapted: the function $f(n,X_0,\ldots, X_n)$ is given by the right-hand side of \eqref{Eq:MartEg2}.  It is not immediately obvious, however, if this is a martingale.  Let's see what we can do with the expectation \eqref{Eq:Martingale} in the definition of a martingale:
    \begin{align*}
         \nE(Y_{n+1} \, | \, X_0, X_1, \ldots, X_n) &= \nE(X_{n+1}^2-(n+1) \, | \, X_0, X_1, \ldots, X_n)\\
        &= \nE(X_{n+1}^2-(n+1) \, | \, X_n)\\
        &= \nE(X_{n+1}^2 \, | \, X_n) -n-1\\
        &= \frac{1}{2}(X_n+1)^2 + \frac{1}{2}(X_n-1)^2 -n-1\\
        &= X_n^2-n = Y_n,
    \end{align*}
    where  we have again used the Markov property in the second line.  Thus we do have a fair game and $(Y_n)$ is indeed a martingale.  
    
\end{example}
    Both the martingales in Examples \ref{Eg:FirstMartingale} and \ref{Eg:SecondMartingale} have nice applications: you will use them in Problems \ref{Prob:5Opt1} and \ref{Prob:5Opt2}, respectively, to re-derive the gambler's ruin hitting probability and time formulas from  \S\ref{Sec:GamblersRuin} and \S\ref{Sec:GamblersRuinTime}.  That is, these martingales give us a completely different tool to arrive at the same formulas.  Problem \ref{Prob:ExpMartingale} explores two other martingales for the simple random walk, in particular, the important \emph{exponential martingale}.

\begin{example}\label{Eg:PolyaMartingale}
    Let's re-visit the P\'{o}lya urn of \S \ref{Sec:PolyaUrn} with $a$ black balls and $b$ red balls.  Recall that in each step of the chain, you pick a uniformly-random ball from the urn and then return it to the urn with one additional ball of the same color.  Let $X_n$ be the number of black balls after $n$ steps.  In Problem \ref{Prob:B3.5} we saw that
    \begin{align*}
        \nE\Big(\, \frac{X_{n+1}}{n+1+a+b} \, \Big| \, X_{n} \Big) = \frac{X_{n}}{n + a + b}.
    \end{align*}
    Hence if 
    \begin{align}\label{Eq:PolyaMartingale}
        Y_n = \frac{X_n}{n+a+b},    
    \end{align}
    we have $\nE(Y_{n+1} | X_n) = Y_n$, showing $(Y_n)$ is a martingale.  Note that $Y_n$ is just the proportion of blacks balls after $n$ steps, and thus we see that this proportion is a ``fair game'' of the P\'{o}lya urn.
\end{example}
\begin{exercise}
    Show that the proportion of red balls is also a martingale.
\end{exercise}

\begin{example}\label{Eg:HarmonicMartingale1}
    Consider the simple symmetric random walk $(X_n)_{n \geq 0}$ on the graph $G = (V,E)$ of Figure \ref{Fig:HarmonicEg}.  Suppose we start with 
    \begin{align}\label{Eq:HarmonicMartingale1Start}
        X_0 = a
    \end{align}
    and we run the walk until the first time $\tau$ that we reach a gray vertex, $\tau = \min\{n \; : \; X_n \in \{0,1\}\}$.  What is $\nP(X_\tau = 1)$?  This is a ``gambler's ruin'' flavor of question, but now no longer for the simple walk on $\mathbb{Z}$.  Martingales, it turns out, give us a slick method to answer this question.
\begin{figure}
    \centering
    \scalebox{0.75}{
        \begin{tikzpicture}
    \tikzset{
        node/.style={circle, draw=black, very thick, minimum size=7mm},
        bnode/.style={circle, draw=black, fill=gray!30, very thick, minimum size=7mm},
        line/.style={black, very thick}
    }
    
    \node[node] (a) at (0,0) {$a$};
    \node[node] (b) at (2,0) {$b$};
    \node[node] (c) at (4,0) {$c$};
    \node[node] (d) at (6, 0) {$d$};
    \node[node] (e) at (8, 0) {$e$};
    \node[bnode] (zero) at (2, 1.5) {0};
    \node[bnode] (one) at (6, -1.5) {1};
    
    \draw[line] (a) -- (b);
    \draw[line] (b) -- (c);
    \draw[line] (c) -- (d);
    \draw[line] (d) -- (e);
    \draw[line] (b) -- (zero);
    \draw[line] (d) -- (one);
    
    \end{tikzpicture}}
    \caption{\small A graph $G$ with seven vertices.  Defining a martingale for the simple symmetric random walk on $G$ can give information about the hitting probabilities of the shaded vertices.}
    \label{Fig:HarmonicEg}
\end{figure}
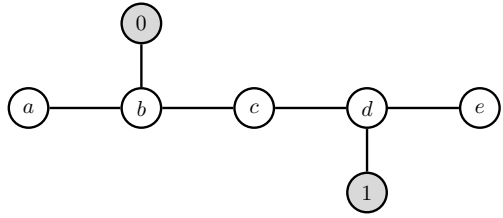
  
  We start by defining a function $h:V \rightarrow \mathbb{R}$ on the vertices as in Figure \ref{Fig:HarmonicEg2}.  That is, $h(a)=h(b) = 0.25$, $h(c)=0.5$, $h(d)=h(e)=0.75$, and $h(0)=0$, $h(1)=1$. We claim that the new process defined by the composition 
\begin{align}\label{Def:GraphHarmonicMart}
    Y_n := h(X_n)
\end{align}
is a martingale while $X_n$ is in the white vertices $\{a,b,c,d,e\}$.  From \eqref{Def:GraphHarmonicMart} it is clear that $Y_n$ is adapted (once we know $X_n$ we know the value of $Y_n$), and we just need to check the averaging property.  Suppose, for instance, that $X_n=b$.  Then
\begin{align}
        \nE(Y_{n+1} \, | \, X_0, X_1, \ldots, X_n) &= \nE(h(X_{n+1}) \, | \, X_n=b)\notag\\
        &= \frac{1}{3}(h(a) + h(0) + h(c)) \label{Eq:MartingaleHarmonicAvg}\\
        &= \frac{1}{3}(0.25 + 0 + 0.5) = 0.25 = h(b) = h(X_n) = Y_n.\notag
\end{align}
Hence the averaging property holds here, and we can similarly check that it holds at all the vertices $a,b,c,d$ and $e$ (as you should verify).  Thus $(Y_n)_{n \geq 0}$ is a martingale so long as $n < \tau$.\footnote{Note that this last restriction is necessary: the averaging property does not hold at vertices 0 and 1.  For example, if $X_n=0$, then $X_{n+1}=b$, and so 
\begin{align*}
    \nE(Y_{n+1}|X_n=0) = h(b) =0.25 \neq 0 = h(0) = Y_n.
\end{align*}
So we only have a martingale so long as the walk remains in the non-shaded vertices.}

The power of martingales becomes evident when we observe the following: taking expectations in the equation $\nE(Y_{n+1} \, | \, X_0, X_1, \ldots, X_n) = \nE(Y_n)$ yields 
\begin{align*}
    \nE(Y_{n+1}) = \nE(\nE(Y_{n+1} \, | \, X_0, X_1, \ldots, X_n)) = \nE(\nE(Y_n)) = \nE(Y_n).
\end{align*}
Since we similarly have $\nE(Y_{n} \, | \, X_0, X_1, \ldots, X_{n-1}) = \nE(Y_{n-1})$, we again take expectations and see $\nE(Y_{n}) = \nE(Y_{n-1})$, and hence that $\nE(Y_{n+1}) = \nE(Y_n) = \nE(Y_{n-1})$.  Repeating this over and over shows that the expectations are constant (on average, the value of our martingale is always the same!), and so, in particular, 
\begin{align}\label{Eq:HarmonicWalkEg1}
    \nE(Y_{n+1}) = \nE(Y_0) = h(X_0) = h(a) = 0.25
\end{align}
by \eqref{Eq:HarmonicMartingale1Start}, whenever $X_n \in \{a,b,c,d,e\}$.  But eventually $X_{n+1}$ will be either vertex 0 or 1 for the first time (i.e. $\tau=n+1$), and so we have
\begin{align*}
    \nE(Y_{n+1}) = \nE(h(X_{\tau})) &= h(0)\nP(X_\tau =0\,|\, X_0=a) + h(1)\nP(X_\tau =1\,|\, X_0=a)\\
    &= 0 + 1 \cdot \nP(X_\tau =1\,|\, X_0=a),
\end{align*}
by definition of the random variable $h(X_\tau)$. However, since $\nE(Y_{n+1}) = 0.25$ by \eqref{Eq:HarmonicWalkEg1}, we have answered our question: $\nP(X_\tau =1 \, | \, X_0=a) = 0.25 = h(a)$.  This martingale enables us to immediately compute the hitting probability of vertex 1 before 0, starting from $a$; it is simply the initial value $Y_0$.\footnote{While we know $\nE(Y_n) = \nE(Y_0)$ for any fixed, \emph{deterministic} time $n$, $\tau$ is a \emph{random} time, and so there is a slight issue in immediately concluding $\nE(Y_0) = \nE(Y_\tau)$.  This is indeed the case, though, and we will make this argument rigorous below in \S\ref{Sec:Optional}.}

Similarly, we can repeat this argument for starting at other vertices to see that $h(v) = \nP(X_\tau = 1 \, | \, X_0=v)$ for all $v \in \{a,b,c,d,e\}$.  So a martingale solves the gambler's ruin problem for our new graph!  Our new martingale $h(X_n)$ shows that our hitting probabilities are the values of $h$ in Figure \ref{Fig:HarmonicEg2} as our starting position $X_0$ varies.  (Note these values pass the sanity check that the probabilities for $a$ and $b$ must be equal, as well as those for $d$ and $e$ (why?).  Furthermore, by symmetry, we should have $1/2$ probability if we start at $c$.)

But where on earth did this function $h$ come from? The answer is that we had to build a function that satisfies the averaging property in \eqref{Eq:MartingaleHarmonicAvg} at each of $\{a,b,c,d,e\}$.  That leads to system of five linear equations with a unique solution, which then yields the martingale $Y_n = h(X_n)$.  Functions with this averaging property are called \textbf{harmonic}, and we will study them in much greater depth in \S\ref{Sec:Harmonic} below.
    
The graph in this example is relatively simple, but this martingale idea extends to \emph{any} connected graph, and thus gives a \emph{universal} approach for the generalized gambler's ruin hitting probability problem.  We thus obtain an exciting generalization of the work we did in \S\ref{Sec:GamblersRuin}.

\begin{figure}[]
    \centering
    \scalebox{0.75}{
        \begin{tikzpicture}
    \tikzset{
        node/.style={circle, draw=black, very thick, minimum size=7mm},
        bnode/.style={circle, draw=black, fill=gray!30, very thick, minimum size=7mm},
        inode/.style={minimum size=7mm},
        line/.style={black, very thick}
    }
    
    \node[node] (a) at (0,0) {$a$};
    \node[node] (b) at (2,0) {$b$};
    \node[node] (c) at (4,0) {$c$};
    \node[node] (d) at (6, 0) {$d$};
    \node[node] (e) at (8, 0) {$e$};
    
    \node[inode, label=-90:0.25] (al) at (0,0) {};
    \node[inode, label=-90:0.25] (bl) at (2,0) {};
    \node[inode, label=-90:0.5] (cl) at (4,0) {};
    \node[inode, label=90:0.75] (dl) at (6, 0) {};
    \node[inode, label=90:0.75] (el) at (8, 0) {};
    
    \node[bnode, label=90:0] (zero) at (2, 1.5) {0};
    \node[bnode, label=-90:1] (one) at (6, -1.5) {1};
    
    \draw[line] (a) -- (b);
    \draw[line] (b) -- (c);
    \draw[line] (c) -- (d);
    \draw[line] (d) -- (e);
    \draw[line] (b) -- (zero);
    \draw[line] (d) -- (one);
    
    \end{tikzpicture}}
    \caption{\small Defining a function $h$ on the vertices that yields a martingale $Y_n = h(X_n)$.  We conclude from the martingale property that the probability we hit vertex 1 before 0 starting from vertex $x$ is precisely $h(x)$.}
    \label{Fig:HarmonicEg2}
\end{figure}
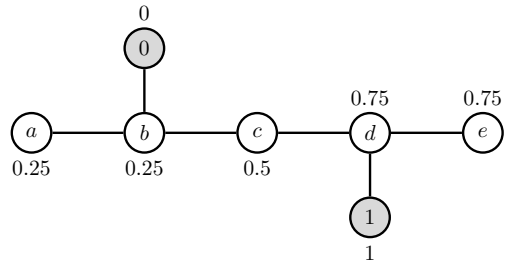
\end{example}

\subsection{Martingale expectations}

The argument in Example \ref{Eg:HarmonicMartingale1} which yielded $\nE(Y_{n}) = \nE(Y_0)$ for all $n$ works for any martingale, as it only uses the averaging property \eqref{Eq:Martingale}.  We formalize this as a useful theorem statement.  
\begin{theorem}\label{Thm:MartingaleAverage}
    If $(Y_n)_{n \geq 0}$ is a martingale, adapted to $(X_n)_{n\geq 0}$, then for all $n \geq 0$,
    \begin{align}\label{Eq:MartingaleAverage}
        \nE(Y_n) = \nE(Y_0),
    \end{align}
    and furthermore
\begin{align}\label{Eq:ConditionalIncrementZero}
    \nE(Y_{n+1}-Y_n \, | \, X_0, \ldots, X_n) = 0.
\end{align}
\end{theorem}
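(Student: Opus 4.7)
The plan is to prove the two claims in the reverse order stated, since the conditional identity \eqref{Eq:ConditionalIncrementZero} is essentially a one-line consequence of the definition, while \eqref{Eq:MartingaleAverage} then follows by taking expectations and iterating.

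For \eqref{Eq:ConditionalIncrementZero}, I would start from the martingale definition \eqref{Eq:Martingale} applied at time $n+1$, which gives $\nE(Y_{n+1} \mid X_0, \ldots, X_n) = Y_n$. Since $Y_n$ is adapted to $(X_0, \ldots, X_n)$ (it is a function of these variables by \eqref{Eq:AdaptedProcess}), conditioning on $X_0, \ldots, X_n$ treats $Y_n$ as a known constant, so $\nE(Y_n \mid X_0, \ldots, X_n) = Y_n$. Linearity of conditional expectation then yields
\begin{align*}
    \nE(Y_{n+1} - Y_n \mid X_0, \ldots, X_n) = \nE(Y_{n+1} \mid X_0, \ldots, X_n) - Y_n = Y_n - Y_n = 0.
\end{align*}

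For \eqref{Eq:MartingaleAverage}, the key tool is the tower property of conditional expectation, namely $\nE(\nE(Z \mid W)) = \nE(Z)$. Applying this to the martingale identity $\nE(Y_n \mid X_0, \ldots, X_{n-1}) = Y_{n-1}$ and taking the (unconditional) expectation of both sides gives
\begin{align*}
    \nE(Y_n) = \nE\big(\nE(Y_n \mid X_0, \ldots, X_{n-1})\big) = \nE(Y_{n-1}).
\end{align*}
I would then argue by induction on $n$: the base case $n=0$ is trivial, and the inductive step is exactly the displayed identity, which together with the inductive hypothesis $\nE(Y_{n-1}) = \nE(Y_0)$ yields $\nE(Y_n) = \nE(Y_0)$.

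There is no genuine obstacle here; both conclusions follow mechanically once one invokes (a) the definition of martingale, (b) the fact that adapted processes can be pulled out of the corresponding conditional expectation, and (c) the tower property. The only minor point to flag for the reader is that we are using the tower property with conditioning on a finite collection of random variables $(X_0, \ldots, X_{n-1})$, which is the elementary version and does not require any measure-theoretic machinery in our finite state-space setting.
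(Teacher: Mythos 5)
Your proof is correct and follows essentially the same route as the paper: the constant-expectation identity is obtained by taking expectations in the martingale property and inducting (the tower property), and the zero-increment identity follows from linearity of conditional expectation together with the fact that the adapted $Y_n$ can be pulled out of the conditioning on $X_0,\ldots,X_n$. The only cosmetic difference is that you prove the two claims in the opposite order, which changes nothing.
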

Property \eqref{Eq:MartingaleAverage} is commonly referred to as martingales having a constant expectation, while property \eqref{Eq:ConditionalIncrementZero} is referred to as martingale increments having zero conditional expectations.

\begin{proof}
    The argument for \eqref{Eq:MartingaleAverage} is as above in Example \ref{Eg:HarmonicMartingale1}: take expectations in \eqref{Eq:Martingale} and use induction.  To see \eqref{Eq:ConditionalIncrementZero}, observe that
\begin{align*}
    \nE(Y_{n+1}-Y_n \, | \, X_0, \ldots, X_n) &= \nE(Y_{n+1}\, | \, X_0, \ldots, X_n) - \nE(Y_n \, | \, X_0, \ldots, X_n)\\
    &= Y_n - \nE(Y_n \, | \, X_0, \ldots, X_n)
\end{align*}
since $(Y_n)$ is a martingale.  In the remaining expectation, we are given $X_0, \ldots, X_n$, and since $Y_n = f_n(X_0,\ldots, X_n)$ is adapted, this means we know the value of $Y_n$ itself.  Hence  $\nE(Y_n \, | \, X_0, \ldots, X_n) = Y_n$, and the above difference is zero.
\end{proof}
\begin{exercise}
    Show that martingale increments are uncorrelated. That is, if $(Y_n)$ is a martingale and $n_1 < n_2 $, then
    \begin{align*}
        \nE\big( (Y_{n_2+1}-Y_{n_2})(Y_{n_1+1}-Y_{n_1})  \big) = 0.
    \end{align*}
\end{exercise}
Let's close by seeing what Theorem \ref{Thm:MartingaleAverage} says about our example martingales from Section \ref{SubSec:MartingaleEg}.
\begin{itemize}
    \item In Example \ref{Eg:FirstMartingale} we saw that the simple random walk $(X_n)$ on $\mathbb{Z}$ starting from $0$ is itself a martingale, and thus it has constant expectation
    \begin{align*}
        \nE(X_n) = \nE(X_0) = 0
    \end{align*}
    for any $n$.  Of course this is not the only way to see this, as we could argue from symmetry, write $X_n$ as a sum of $n$ i.i.d. random variables, or attempt an explicit computation using the pdf in Lemma \ref{Lemma:SRWpdf}.  
    
    \item Similarly, we also see that (perhaps surprisingly) the process $Y_n = X_n^2-n$ in Example \ref{Eg:SecondMartingale} has constant average
    \begin{align*}
        \nE(Y_n) = \nE(Y_0) = \nE(X_0^2-0)= 0.
    \end{align*}
    Hence $\nE(X_n^2) = n$ for all $n$, and we thus have the nice consequence that the simple random walk after $n$ steps has variance
    \begin{align*}
        \Var(X_n) = \nE(X^2) - \nE(X)^2 = n-0 = n.
    \end{align*}
    Again, this is nothing profound, as $X_n$ is the sum of $n$ i.i.d. random variables, each with variance 1.  The point is that we get this with almost no work from the martingale.
    \item For the P\'{o}lya urn martingale $(Y_n)$ of Example \ref{Eg:PolyaMartingale}, we have
    \begin{align*}
        \nE(Y_n) = \nE(Y_0) = \frac{a}{a+b},
    \end{align*}
    the initial proportion of black balls.  Here the consequence is quite surprising: the mean proportion of black (or red) balls in the P\'{o}lya urn is always the same. 
\end{itemize}

\begin{exercise}
    Explain what Theorem \ref{Thm:MartingaleAverage} says about the martingale $h(X_n)$ of Example \ref{Eg:BuildHarmonic1}.
\end{exercise}

\section{Adapted processes, martingales from eigenvalues and eigenvectors, and the Markov operator}

Now that we have an initial taste for martingales through some preliminary examples and properties, we delve further into the theory.  We first revisit adapted processes and see an important example as well as a non-example.  Next, we ask if there are any reliable methods for constructing martingales.  We will answer affirmatively and see, perhaps surprisingly, that we can use the eigenvalues and eigenvectors of the probability transition matrix to build them.  (We will see even more robust methods later in this chapter, via harmonic functions, but this gives us a preliminary method.)  Finally, we step back and define the \textbf{Markov operator} which averages a fixed function on our vertices over one step of the chain, yielding a new function.  The Markov operator naturally arises when we consider the averaging property \eqref{Eq:Martingale} in the definition of a martingale.

\subsection{More on adapted processes}

Martingales are a very special subclass of all adapted processes $Y_n$ to $(X_n)$, and one should keep in mind that not all adapted processes are martingales, and furthermore that not all processes $(Y_n)$ are even adapted.  

\begin{example}\label{Eg:NonMartingale}
We first consider an example of an adapted process which is not a martingale.  Consider a proper subset $D \subsetneq \Omega$ and let $\tau$ be the hitting time of $D^c$,
\begin{align}\label{Eq:HittingTimeTau}
    \tau = \tau_{D^c} = \min\{ n \geq 0 \, : \, X_n \in D^c\},
\end{align}
and now define the process $(Y_n)_{n\geq 0}$ by
\begin{align*}
    Y_n = \I_{\{\tau \leq n\}}
\end{align*}
First, we show $(Y_n)$ is adapted.  This is intuitively clear, because the hitting time \eqref{Eq:HittingTimeTau} only depends on the first $n$ states, and so $Y_n$ also only depends on these states; knowledge of $(X_0, X_1, \ldots, X_n)$ determines $Y_n$.  For a formal proof, we verify the definition by constructing the functions $f_n$ in \eqref{Eq:AdaptedProcess}.  Indeed, writing
\begin{align*}
    \I_{D^c}(x) = \begin{cases} 
        1 & x \in D^c,\\
        0 & x \in D,
    \end{cases}
\end{align*}
we then have
\begin{align*}
    Y_n &= \I_{\{X_0 \in D^c \text{ or } X_1 \in D^c \text{ or } \cdots \text{ or } X_n \in D^c\}}\\
    &= \max\{\I_{D^c}(X_0), \I_{D^c}(X_1), \ldots, \I_{D^c}(X_n)\}\\
    &= f_n(X_0, X_1, \ldots, X_n),
\end{align*}
proving $(Y_n)$ is adapted.

However, $(Y_n)$ is not a martingale.  One way to see this is by writing
\begin{align*}
    \nE(Y_{n+1} \, &| \, X_0, \ldots, X_n)\\
    &= \nE(\I_{\{\tau \leq n+1\}} \, | \, X_0, \ldots, X_n)\\
    &= \nP(\tau \leq n+1 \,|\, X_0, \ldots, X_n)\\
    &= \I_{\{X_0 \in D^c \text{ or } \cdots \text{ or } X_n \in D^c\}} + \I_{\{X_0 \in D, \ldots, X_n \in D\}}\nP(X_{n+1} \in D^c \, | \, X_n).
\end{align*}
Since $Y_n$ equals just the first term in this sum, we do not always have
\begin{align*}
    \nE(Y_{n+1} \, &| \, X_0, \ldots, X_n) = Y_n,
\end{align*}
and so $(Y_n)$ is not a martingale. 
\end{example}

In fact, there is an easier way to see that $(Y_n)$ is not a martingale. For every realization of the process $(X_n)$, the path of $(Y_n)$ is decreasing in $n$. $Y_n=1$ for $n=0,1,2,\ldots, \tau$, and then, for all $n > \tau$, $Y_n=0$. Such a decreasing process cannot be a martingale since it cannot have constant expectation. The next exercise asks you to craft a rigorous argument based on this observation. 

\begin{exercise}\label{Ex:NotAMartingale}
    Use Theorem \ref{Thm:MartingaleAverage} to give another argument for why the process $(Y_n)$ in Example \ref{Eg:NonMartingale} is not a martingale.
\end{exercise}


\begin{example}\label{Eg:NonAdapted}
    Another process we could consider is
    \begin{align*}
        Z_n := \inf\{m > n\; : \; X_m \in D \},
    \end{align*}
    the \emph{next} time that our walk lies within $D$.  Is this adapted process?  While $(Z_n)$ is a function of $(X_n)$ and $n$, it clearly does not only depend on the first $n$ steps of the walk.  We need to know future steps of the walk to determine $Z_n$.  Thus $(Z_n)_{n \geq 0}$ is not adapted, and since martingales are always adapted, in particular it is not a martingale.
\end{example}

\subsection{Martingales from eigenvalues and eigenvectors}

Given an irreducible Markov chain on $\Omega = \{1,2,\ldots, N\}$ with $N \times N$ transition matrix $P$, recall from Theorem \ref{Thm:PF} that the eigenspace for eigenvalue $\lambda_1=1$ of $P$ is one-dimensional, and that all the other eigenvalues $\lambda_j$ satisfy $|\lambda_j|<1$.  Moreover, we know a left eigenvector of $\lambda_1$ is $\pi$,
\begin{align*}
    \pi P = \pi,
\end{align*}
and a right eigenvector is the column vector $\mathbf{1}^T = (1,1,\ldots,1)^T$,
\begin{align*}
    P \mathbf{1}^T = \mathbf{1}^T.
\end{align*}
(Note this is an example of different left- and right-eigenspaces for the same eigenvalue when $\pi$ is not uniform.)  The only use we've seen for the other eigenvalues, so far, is that the spectral gap $\gamma^* = 1 - \max_{2 \leq n \leq N}|\lambda_j|$ gives information about the mixing time for the chain; if the second-largest eigenvalue is close to 1 in absolute value, the chain is ``close'' to reducible and mixing takes longer, as we saw in \S\ref{Sec:MixingTimes}.  

As we alluded to in the introduction to this section, though, another use for the other eigenvalues and eigenvectors is to construct martingales, as laid out in the following theorem.

\begin{theorem}\label{Thm:BuildMartingale}
    Let $(X_n)_{n \geq 0}$ be an irreducible Markov chain with transition matrix $P$.  If $\lambda \neq 0$ is a eigenvalue of $P$ with right-eigenvector $v^T$, then the process
    \begin{align}\label{Eq:EvalEvecMartingale}
        Y_n = \lambda^{-n} v(X_n)
    \end{align}
    is a martingale.
\end{theorem}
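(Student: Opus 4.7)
The plan is to verify the two defining properties of a martingale from Definition \ref{Def:Martingale}: adaptedness and the one-step averaging property. Adaptedness is immediate, since $Y_n = \lambda^{-n} v(X_n)$ is literally a function of $n$ and $X_n$ alone, so it certainly has the form $f_n(X_0, X_1, \ldots, X_n)$ required by \eqref{Eq:AdaptedProcess}. Note that we need $\lambda \neq 0$ so that $\lambda^{-n}$ makes sense; this is the only role of that hypothesis.

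The heart of the argument is checking \eqref{Eq:Martingale}. I would compute
\begin{align*}
    \nE(Y_{n+1} \mid X_0, X_1, \ldots, X_n)
    &= \lambda^{-(n+1)} \, \nE(v(X_{n+1}) \mid X_0, \ldots, X_n) \\
    &= \lambda^{-(n+1)} \, \nE(v(X_{n+1}) \mid X_n),
\end{align*}
where in the first equality I pull out the deterministic factor $\lambda^{-(n+1)}$ and in the second I invoke the Markov property. Then, writing out the conditional expectation as a sum over the possible next states, I would use the eigenvalue/eigenvector relation $P v^T = \lambda v^T$ evaluated at the $X_n$-th row:
\begin{align*}
    \nE(v(X_{n+1}) \mid X_n)
    = \sum_{y \in \Omega} P(X_n, y)\, v(y)
    = (P v^T)(X_n)
    = \lambda \, v(X_n).
\end{align*}
Combining these gives $\nE(Y_{n+1} \mid X_0, \ldots, X_n) = \lambda^{-(n+1)} \cdot \lambda \, v(X_n) = \lambda^{-n} v(X_n) = Y_n$, which is exactly the martingale property.

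There is no real obstacle here; the theorem is essentially a direct translation of the eigenvector equation into a conditional expectation identity, with the factor $\lambda^{-n}$ inserted precisely to cancel the extra $\lambda$ produced by averaging $v$ over one step of the chain. The only subtlety worth flagging explicitly is that the eigenvalue $\lambda$ (and hence the components of $v$) could in principle be negative or even complex, since $P$ is generally not symmetric; in our setting, however, this causes no difficulty as the computation above goes through verbatim, and irreducibility guarantees by Theorem \ref{Thm:PF} that $\lambda$ lies in $[-1,1]$ so that $\lambda^{-n}$ at least stays bounded away from $0$ in magnitude whenever $\lambda \neq 0$.
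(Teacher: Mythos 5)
Your proof is correct and follows essentially the same route as the paper's: adaptedness is noted, the Markov property reduces the conditioning to $X_n$, and the eigenvector relation $Pv^T=\lambda v^T$ evaluated at the $X_n$-th row cancels the extra factor of $\lambda$. The closing remarks about $\lambda\neq 0$ and the Perron--Frobenius bound are fine additions but not needed for the argument.
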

\noindent So the setting is that we have a non-zero row vector $v \in \mathbb{R}^N$ which satisfies $Pv^T = \lambda v^T$. The notation $v(X_n)$ means that we choose the $X_n$-component of $v$ (or of $v^T$ - of course either way gives us the same component).  The theorem says that when we consider the random component $v(X_n)$ of our eigenvector, scaled by $\lambda^{-n}$, we obtain a martingale.  The scaling factor $\lambda^{-n}$ is entirely deterministic, as $\lambda$ is a fixed eigenvalue, and so the only random part in the definition of $Y_n$ is which component of the eigenvector we consider.  

It will be instructive to consider an example before diving into the proof.

\begin{example}\label{Eg:EhrenfestMartingale}
    Recall the Ehrenfest urn model of \S \ref{Sec:EhrenfestUrn}: we have $B$ identical balls split up between two urns, and at each step we pick one of the $B$ balls uniformly at random and switch its urn.  If $X_n \in \{0,1,\ldots, B\}$ is the number of balls in the first urn, then we have the transition probabilities
    \begin{align}\label{Eq:EhrenfestTrans}
        P(k,k+1) = \frac{B-k}{B}, \qquad P(k,k-1) = \frac{k}{B},
    \end{align}
    and $P(k,j) = 0$ when $j \notin \{k-1,k+1\}$.  For example, if $B=5$, our $6 \times 6$ transition matrix $P$ is
    \begin{align}\label{Eq:EhrenfestMatrixEg}
        P = \begin{bmatrix}
            0 & 1 & 0 & 0 & 0 & 0\\
            1/5 & 0 & 4/5 & 0 & 0 & 0\\
            0 & 2/5 & 0 & 3/5 & 0 & 0\\
            0 & 0 & 3/5 & 0 & 2/5 & 0\\
            0 & 0 & 0 & 4/5 & 0 & 1/5\\
            0 & 0 & 0 & 0 & 1 & 0
        \end{bmatrix}
    \end{align}
    Can we find an eigenvector-eigenvalue pair for $P$ to build a martingale?  Consider the vector $v = (B, B-2, B-4, \ldots, -B)$, which entry-wise is $v(k) = B-2k$ for $k \in \{0,1,\ldots, B\}$. In the above case $B=5$ above we have 
    \begin{align*}
        v = (5, 3, 1, -1, -3, -5).
    \end{align*}
    We claim that $v^T$ is a right eigenvector for $P$ with eigenvalue $\lambda = 1 - 2/B$.  
    \begin{exercise}
        Verify that this holds for the $B=5$ case, and then verify the general case.
    \end{exercise}
    \noindent We highly suggest you do this computation first on your own, but let us walk through it so you can check your work. We show the claimed identity $Pv^T = \lambda v^T$ entry-wise.  Indeed, for the first row of $P$, we have
    \begin{align*}
        (Pv^T)(1) = 1 \cdot v^T(2) = B-2 = \Big(1 - \frac{2}{B} \Big)B = \lambda v^T(1),
    \end{align*}
    as needed.  The computation for the last row $N=B+1$ is similar; for a general row $k \in \{2, \ldots, B\}$, we use \eqref{Eq:EhrenfestTrans} and obtain
    \begin{align*}
        (Pv^T)(k) &=  \frac{k}{B} \cdot v^T(k-1) + \frac{B-k}{B}\cdot v^T(k+1)\\
        &= \frac{k}{B} \cdot (B-2(k-1)) + \frac{B-k}{B}\cdot (B-2(k+1))\\
        &= \frac{1}{B}(B^2 - 2B(k+1)+4k) = \Big(1 - \frac{2}{B} \Big)(B-2k) = \lambda v^T(k),
    \end{align*}
    as claimed.  We conclude $v^T$ is indeed a right eigenvector for $P$ with eigenvalue $1-2/B$.
    
    Now we can build our martingale.  According to Theorem \ref{Thm:BuildMartingale}, the process
    \begin{align}\label{Eq:EhrenfestZn}
        Z_n := \lambda^{-n}v(X_n) &= \Big(1-\frac{2}{B} \Big)^{-n} (B-2X_n)
    \end{align}
    is a martingale (why we are calling it $Z_n$ instead of $Y_n$ will be apparent in a moment).  Note that 
    \begin{align}\label{Eq:EhrenfestEVect}
        v(X_n) = B-2X_n = B-X_n - X_n
    \end{align}
    is the difference of the number of balls in the second urn and the first.  Does that look familiar?  We saw in Problem \ref{Prob:A3.6} that the process $v(X_n)$, there called $Y_n$, satisfies
    \begin{align}\label{Eq:EhrenfestProblemReview}
        \nE(v(X_{n+1})\,|\,v(X_n)) = \nE(v(X_n)\,|\,X_n) = \Big(1 -\frac{2}{B}\Big)v(X_n),
    \end{align}
    where the second equality holds because knowing $v(X_n)$ is equivalent to knowing $X_n$ by \eqref{Eq:EhrenfestEVect}.  We can use this to verify that $Z_n$ is, indeed, a martingale.  We observe
    \begin{align*}
        \nE(Z_{n+1}|X_0, \ldots, X_n) &= \nE(Z_{n+1} \, | \, X_n)\\
        &= \Big(1-\frac{2}{B} \Big)^{-(n+1)}\nE(v(X_{n+1}) \,|\, X_n)\\
        &= \Big(1-\frac{2}{B} \Big)^{-(n+1)}\Big(1 -\frac{2}{B}\Big)v(X_n)\\
        &= \Big(1-\frac{2}{B} \Big)^{-n}v(X_n) = Z_n.
    \end{align*}
    Here for the first equality we use the Markov property, for the second we use the definition of $Z_n$ in \eqref{Eq:EhrenfestZn}, and for the third we use \eqref{Eq:EhrenfestProblemReview}.  We conclude $(Z_n)$ has the needed averaging property and so is indeed a martingale.
    
    Of course, once we know $v^T$ is a right eigenvector of $P$ with eigenvalue $\lambda$, we immediately know that the process defined by \eqref{Eq:EhrenfestZn} is a martingale.  The point here is that we can also verify this by hand using our earlier computations, which is satisfying and helps make everything more believable.
    
   This example illustrates the power of Theorem \ref{Thm:BuildMartingale}.  If you are ``playing'' the Ehrenfest urn with your friends (undoubtedly a common occurrence), how would you create a fair game?  The theorem says that the difference in balls between the two urns, scaled by $(5/3)^n$ if there are 5 total balls, is a fair game.  This is not super intuitive and would probably be hard to arrive at entirely on your own.
    
\end{example}
\begin{exercise}
    Explain what Theorem \ref{Thm:MartingaleAverage} says about the martingale \eqref{Eq:EhrenfestZn}.
\end{exercise}
\noindent Having an example under our belts, we proceed with the proof of Theorem \ref{Thm:BuildMartingale}.
\begin{proof}
    We recall our process is $Y_n = \lambda^{-n}v(X_n)$, where $\lambda$ is an eigenvalue of $P$ with corresponding right eigenvector $v^T$.  First, note that it is clear that $(Y_n)$ is adapted: given $X_n$, we know what $Y_n$ is because we have $\lambda$ and $v$.  To check the averaging property \eqref{Eq:Martingale}, we compute
    \begin{align}
        \nE(Y_{n+1} \, | \, X_0, \ldots, X_n) &= \nE(\lambda^{-(n+1)}v(X_{n+1}) \, | \, X_0, \ldots, X_n) \notag\\
        &= \lambda^{-(n+1)}\nE(v(X_{n+1}) \, | \, X_0, \ldots, X_n) \notag\\
        &= \lambda^{-(n+1)}\nE(v(X_{n+1}) \, | \, X_n) \label{Eq:MartingaleCheckAverage}\\
        &= \lambda^{-(n+1)}\sum_{j=1}^N v(j)P(X_n,j), \notag
    \end{align}
    where we used the Markov property in \eqref{Eq:MartingaleCheckAverage}.  But notice that the sum in the last line is the dot product of $v^T$ with the row of $P$ corresponding to $X_n$, which is exactly the $X_n$-entry of the matrix-vector product $Pv^T$.  Since $v^T$ is an eigenvector, we thus have
    \begin{align*}
        \lambda^{-(n+1)}\sum_{j=1}^N v(j)P(X_n,j)         &= \lambda^{-(n+1)} (\lambda v^T)(X_n)\\
        &= \lambda^{-n} v^T(X_n) = Y_n.
    \end{align*}
    We conclude $(Y_n)$ is a martingale.
\end{proof}

\subsection{The Markov operator $P$}
Given the averaging property of martingales, it is probably not hard to believe that we will frequently encounter steps like \eqref{Eq:MartingaleCheckAverage} (if you didn't read that proof, you should do so now).  The following formalism will help.  Let $f: \Omega \rightarrow \mathbb{R}$ be a function on our sample space.  We can simply think of $f$ as a row vector in $\mathbb{R}^N$, $f = (f_1, f_2, \ldots, f_N)$, with $f_j$ the value of $f$ at state $j$.  Given a chain $(X_n)$ with transition matrix $P$, we define the \textbf{Markov operator} 
\begin{align}\label{Eq:MarkovOperatorP}
	P: \mathbb{R}^N \rightarrow \mathbb{R}^N
\end{align}
as 
\begin{align}
	(Pf)(x) :=& \nE(f(X_{n+1}) \, | \, X_n=x) \label{Eq:MarkovOp1}\\
	=& \sum_{j=1}^N P(x,j)f_j \label{Eq:MarkovOp3}\\
	=& (Pf^T)(x), \label{Eq:MarkovOp2}
\end{align}
the $x$-entry of the matrix-vector product $Pf^T$.  Here \eqref{Eq:MarkovOp1} is the definition of the Markov operator, and \eqref{Eq:MarkovOp3} is the computation of the conditional expectation using the transition matrix.  

So what is happening?  We see the Markov operator takes a starting function $f$ and outputs a new function $Pf$ on our space, where the value of $Pf$ at each state $x$ is the $P$-average \eqref{Eq:MarkovOp3} of the values of $f$ of all adjacent states in the chain.  In other words, the Markov operator is averaging out our function according to $P$.  If we start at state $x$ and win $f_j$ dollars each time we reach state $j$, $Pf(x)$ computes our expected one-step winnings. 

\begin{example}
    Suppose we have the function $f=(-2,0,3,6,1,1)$ on the Ehrenfest urn with $B=5$ balls.  This means, for instance, that if there are two balls in the first urn, we ``win'' \$3.  If there are no balls, we lose \$2.  Applying the Markov operator means multiplying by the transition matrix $P$,
    \begin{align*}
        Pf = \begin{bmatrix}
            0 & 1 & 0 & 0 & 0 & 0\\
            1/5 & 0 & 4/5 & 0 & 0 & 0\\
            0 & 2/5 & 0 & 3/5 & 0 & 0\\
            0 & 0 & 3/5 & 0 & 2/5 & 0\\
            0 & 0 & 0 & 4/5 & 0 & 1/5\\
            0 & 0 & 0 & 0 & 1 & 0
        \end{bmatrix} \begin{bmatrix}  -2 \\ 0 \\ 3 \\ 6 \\ 1 \\ 1\end{bmatrix} = \begin{bmatrix} 0 \\ 2 \\ 3.6 \\ 2.2 \\ 5 \\ 1 \end{bmatrix},
    \end{align*}
    and we have averaged $f$ according to $P$.  So, for example, if we start with four balls in the first urn, we expect to win \$5 in one step.
\end{example}

\begin{exercise}
    What happens if you keep on averaging?  Compute $P^2f$, $P^3f$, $P^4f$ (use a computer if needed).  Does $P^nf$ appear to have a limit as $n \rightarrow \infty$? 
\end{exercise}

So concretely, the Markov operator is rather simple,  as it is just the matrix multiplication of $P$ with $f^T \in \mathbb{R}^N$.  We can also view it abstractly, though, as a function on functions (!), since its input is a function $f$ and its output a new function $Pf$.  We will interchangeably switch between these perspectives, and you should keep both in mind.  We observe that the concrete perspective immediately gives the following lemma.

\begin{lemma}
	The Markov operator $P$ is linear.  That is, given two functions $f,g : \Omega \rightarrow \mathbb{R}$ and $\alpha, \beta \in \mathbb{R}$, the function $P(\alpha f + \beta g)$ on $\Omega$ is the same as the function $\alpha (Pf) + \beta (Pg)$ on $\Omega$.
\end{lemma}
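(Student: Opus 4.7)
The plan is to exploit the concrete description of the Markov operator as left-multiplication by the matrix $P$, for which linearity is a standard fact from linear algebra. Two functions $f,g:\Omega\to\mathbb{R}$ correspond to vectors $f^T,g^T\in\mathbb{R}^N$, and the function $\alpha f+\beta g$ corresponds to the vector $\alpha f^T+\beta g^T$. Since $(Pf)(x)$ is defined in \eqref{Eq:MarkovOp2} as the $x$-th entry of the matrix-vector product $Pf^T$, it suffices to verify the identity pointwise at an arbitrary state $x\in\Omega$.

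Concretely, I would fix $x\in\Omega$ and compute
\begin{align*}
\big(P(\alpha f+\beta g)\big)(x) &= \sum_{j=1}^N P(x,j)\,(\alpha f+\beta g)_j\\
&= \sum_{j=1}^N P(x,j)\bigl(\alpha f_j+\beta g_j\bigr)\\
&= \alpha\sum_{j=1}^N P(x,j)f_j + \beta\sum_{j=1}^N P(x,j)g_j\\
&= \alpha (Pf)(x) + \beta (Pg)(x),
\end{align*}
using only the definition \eqref{Eq:MarkovOp3} and the linearity of finite sums. Since this holds for every $x\in\Omega$, the two functions $P(\alpha f+\beta g)$ and $\alpha(Pf)+\beta(Pg)$ agree everywhere on $\Omega$, which is exactly what linearity of the operator asserts.

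There is essentially no obstacle here: the result is a direct translation of the distributivity of matrix-vector multiplication over vector addition and scalar multiplication. If I wanted to give an alternative derivation emphasizing the probabilistic definition \eqref{Eq:MarkovOp1}, I could instead invoke the linearity of conditional expectation, writing
\begin{align*}
\bigl(P(\alpha f+\beta g)\bigr)(x) &= \nE\bigl((\alpha f+\beta g)(X_{n+1})\,\big|\,X_n=x\bigr)\\
&= \alpha\,\nE(f(X_{n+1})\mid X_n=x) + \beta\,\nE(g(X_{n+1})\mid X_n=x)\\
&= \alpha(Pf)(x) + \beta(Pg)(x),
\end{align*}
which makes the conceptual content transparent: averaging a linear combination of functions against the same conditional distribution returns the same linear combination of their averages. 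Either route suffices, and I would likely present the matrix-based computation as the primary proof and mention the conditional-expectation version as a remark.
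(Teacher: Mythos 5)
Your proof is correct and follows the same route as the paper, which likewise reduces the statement to the linearity (distributivity) of matrix-vector multiplication; you simply write out the component-wise sum explicitly where the paper leaves it as ``clear.'' The additional remark via linearity of conditional expectation is a valid and pleasant alternative, but the core argument is identical.
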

\begin{proof}
    Since we can consider the function $P(\alpha f + \beta g)$ as the column vector $P(\alpha f^T + \beta g^T)$, the result is clear by the linearity of matrix multiplication.
\end{proof}

We conclude with the following remark as an aside.  If $P$ is a linear operator on \emph{functions} via right-multiplication, what objects does it naturally act upon via left-multiplication? Recall that if $\mu = (q_1, \ldots, q_N)$ is a probability distribution on $\Omega$, then so is $\mu P$.  More concretely, if $X_0 \sim \mu$, then $\mu P$ is the distribution of $X_1$ (review the beginning of \S \ref{Sec:Stationary} if you are rusty here).  Thus, while $P$ takes functions $f$ to new functions $Pf$ through right-multiplication, it is a linear operator on \emph{probability distributions} on $\Omega$ through left-multiplication.

\section{Harmonic functions}\label{Sec:Harmonic}

We saw in the previous section how to construct martingales using eigenvalues and eigenvectors of $P$.  In this section we begin developing another method, using \emph{harmonic functions}.  Harmonic functions have a rich structure and we will spend the remainder of the chapter studying them and their relation to martingales.

\subsection{Space-time harmonic and harmonic}
All of our examples of martingales so far, \eqref{Eq:MartEg1}, \eqref{Eq:MartEg2}, \eqref{Eq:PolyaMartingale}, \eqref{Def:GraphHarmonicMart} and \eqref{Eq:EvalEvecMartingale}, have been functions of the form 
\begin{align}\label{Eq:MartLastState}
    Y_n = f_n(X_n) = f(n,X_n).
\end{align}
That is, even though the definition of a martingale permits $Y_n$ be a function of \emph{all} the preceeding states $X_0,X_1, \ldots, X_n$, so far our examples have only used the most-recent value $X_n$.  Seeing that processes of this form \eqref{Eq:MartLastState} are evidently common, we begin by asking when they are martingales.

It is obvious that such $Y_n$ are adapted, and so we just have to see when they satisfy the averaging property.  We compute
\begin{align}
	\nE(Y_{n+1} \, | \, X_0, \ldots, X_n) &= \nE(f_{n+1}(X_{n+1}) \, | \, X_0, \ldots, X_n)
	\notag\\
	&= \nE(f_{n+1}(X_{n+1}) \, | \, X_n) \notag\\
	&= (Pf_{n+1})(X_n). \label{Eq:STHarmonicMotivation}
\end{align}
That is, the conditional expectation is the average payout by $f_{n+1}$ after taking one step from $X_n$.  So in order for $(Y_n)$ to be a martingale, we need 
\begin{align*}
	\nE(Y_{n+1} \, | \, X_0, \ldots, X_n)=(Pf_{n+1})(X_n) = Y_n = f_n(X_n)
\end{align*}
for all $n$ and any state $X_n$.  As an identity of functions, this says $Pf_{n+1} = f_n$, and we make the following definition.

\begin{definition}
	A function $f: \mathbb{Z}_{\geq 0} \times \Omega \rightarrow \mathbb{R}$ is \textbf{space-time harmonic} if, for all $n \geq 0$,
	\begin{align}\label{Def:STHarmonic}
		Pf_{n+1} = f_n,
	\end{align}
	where $f_n(x) := f(n,x)$.  
\end{definition}
Note that this definition is closely related to our notion of a ``fair game,'' as component-wise it says $Pf_{n+1}(x) = f_n(x)$ for all $x \in \Omega$.  That is, if our payout at stage $n$ and state $x$ is $f_n(x)$, then our expected payout after taking one random step in our chain is exactly $f_n(x)$, what we began with.  That is, the process $Y_n=f_n(X_n)$ is a martingale. We record this for future reference as a theorem.



\begin{theorem}\label{Thm:STHarmonicMartingale}
    If $f:\mathbb{Z}_{\geq 0} \times \Omega \rightarrow \mathbb{R}$ is space-time harmonic, then the process $(Y_n)$ defined by 
    \begin{align*}
        Y_n := f(n,X_n) = f_n(X_n)
    \end{align*}
    is a martingale.
\end{theorem}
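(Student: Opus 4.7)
The plan is to prove this by directly checking the two defining features of a martingale from Definition \ref{Def:Martingale}: the adaptedness and the one-step conditional averaging property. Adaptedness is essentially immediate since $Y_n = f_n(X_n)$ is a function of $X_n$ and $n$ alone, which in particular is of the form $g_n(X_0, X_1, \ldots, X_n) = f_n(X_n)$. So the entire content of the theorem lives in the averaging property, and the heart of the argument is simply to retrace the calculation in \eqref{Eq:STHarmonicMotivation} which motivated the definition of space-time harmonic in the first place.

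Concretely, I would first write
\begin{align*}
\nE(Y_{n+1} \mid X_0, \ldots, X_n) = \nE(f_{n+1}(X_{n+1}) \mid X_0, \ldots, X_n),
\end{align*}
then invoke the Markov property to reduce the conditioning to just $X_n$, obtaining $\nE(f_{n+1}(X_{n+1}) \mid X_n)$. By the definition \eqref{Eq:MarkovOp1} of the Markov operator, this last conditional expectation, evaluated at $X_n = x$, is exactly $(Pf_{n+1})(x)$, so the conditional expectation equals $(Pf_{n+1})(X_n)$ as a random variable. Finally, the space-time harmonic hypothesis \eqref{Def:STHarmonic} says $Pf_{n+1} = f_n$ as functions on $\Omega$, so evaluating at $X_n$ gives $(Pf_{n+1})(X_n) = f_n(X_n) = Y_n$, which is the desired martingale identity.

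There is no real obstacle here — the proof is essentially a bookkeeping exercise. The only mild subtlety worth flagging explicitly is the transition from ``conditioning on the full history $X_0, \ldots, X_n$'' to ``conditioning on $X_n$ alone,'' which uses the Markov property in the standard way. Once that reduction is made, each subsequent step is just an invocation of a definition: the Markov operator, and then space-time harmonicity. I would aim to make each of these three invocations explicit so the reader can see why the definition of space-time harmonic was set up exactly as it was, namely to make this chain of equalities close up at the end.
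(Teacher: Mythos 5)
Your proof is correct and follows exactly the route the paper takes: the computation you describe is precisely the chain of equalities in \eqref{Eq:STHarmonicMotivation}, followed by the substitution $Pf_{n+1}=f_n$ from \eqref{Def:STHarmonic}, which is what Exercise \ref{Ex:ProveSTHarmonicMartingale} asks for and what the paper writes out explicitly for the special case in Theorem \ref{Thm:HarmonicMartingale}. Nothing is missing; the adaptedness remark and the explicit flagging of the Markov-property reduction are exactly the right points to make explicit.
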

\begin{exercise}\label{Ex:ProveSTHarmonicMartingale}
    Prove Theorem \ref{Thm:STHarmonicMartingale} by showing $\nE(Y_{n+1}\,|\,X_0,\ldots,X_n) = Y_n$ (this suffices for the proof since $(Y_n)$ is clearly adapted).
\end{exercise}

\begin{example}\label{Eg:SpaceTimeHarmonic}
    We have already seen many example of space-time harmonic functions.
    \begin{itemize}
        \item In Example \ref{Eg:SecondMartingale}, we saw that $X_n^2 -n$ is a martingale for the simple symmetric random walk $X_n$ on $\mathbb{Z}$.  Here the underlying space-time harmonic function is $f_n(x) = x^2-n$.  The fact that $Pf_{n+1} = f_n$ is exactly the computation carried out in that example.
        \item For the P\'{o}lya urn, Example \ref{Eg:PolyaMartingale}, we saw that the proportion $X_n/(n+a+b)$ of black balls is a martingale.  Here the space-time harmonic function is $f_n(x) = x/(n+a+b)$.  
        \item Our first martingale, Example \ref{Eg:FirstMartingale}, was based on the elementary observation that the simple walk $X_n$ on $\mathbb{Z}$ is itself a martingale.  We can also consider this as coming from a space-time harmonic function (albeit not a very interesting one), namely, $f_n(x) = x$ for all $n,x$.  Thus there is no dependence on $n$ and the function is always the same, the identity function $f(x)=x$.
        
        We can also view Example \ref{Eg:HarmonicMartingale1} as having an underlying space-time harmonic function, namely $f_n(x) = h(x)$, where again there is no dependency upon $n$.
    \end{itemize}
\end{example}

\begin{exercise}
    What is the underlying space-time harmonic function $f_n$ for the Ehrenfest urn martingale $Z_n$ of Example \ref{Eg:EhrenfestMartingale}?
\end{exercise}


As in the last bullet point in Example \ref{Eg:SpaceTimeHarmonic}, the simplest sub-class of space-time harmonic functions are those where the sequence $f_0, f_1, \ldots$ is constant in $n$; that is, where $f_n$ is just a fixed function $f:\Omega \rightarrow \mathbb{R}$ for all $n$. Even though it is an easier case, this turns out to correspond to an extremely important class of functions, the \emph{harmonic} functions.

\begin{definition}
	A function $h: \Omega \rightarrow \mathbb{R}$ is \textbf{harmonic} on $\Omega$ (with respect to $P$) if $Ph = h$, which is to say,
	\begin{align}\label{Def:Harmonic}
		(Ph)(x) = h(x)
	\end{align}
	for all $x \in \Omega$.  We say $h$ is \textbf{harmonic on a subset $D \subset \Omega$} if \eqref{Def:Harmonic} holds for all $x \in D$.
\end{definition}
Note that \eqref{Def:Harmonic} says
\begin{align}\label{Eq:Harmonic2}
	h(x) = \sum_{j=1}^N P(x,j)h(j) = (Ph)(x).
\end{align}
That is, when we start at $x$ and average the ``pay out'' $h(j)$ over one random step in our chain, the result is exactly the same as our starting value $h(x)$. We call this the \textbf{mean-value property} at $x$, and \eqref{Def:Harmonic} says that harmonic functions are precisely those functions on $\Omega$ which have the mean-value property at all $x \in \Omega$. From the abstract operator point of view, \eqref{Def:Harmonic} says $h$ is a \emph{fixed point} of the Markov operator $P$; applying the Markov operator gives the same function.

\begin{exercise}\label{Ex:HarmonicEigenvector}
	Interpret \eqref{Def:Harmonic} through the linear algebra lens.  What does it translate to in terms of the transition matrix $P$ and the column vector $h^T$?
\end{exercise}

Just as we can build martingales through space-time harmonic functions, Theorem \ref{Thm:STHarmonicMartingale}, so we also can through individual harmonic functions.  Composing any chain $(X_n)$ with a harmonic function $h$ creates a ``fair game'' $(h(X_n))$ since the value of a harmonic function equals its average over the next step in the chain, \eqref{Def:Harmonic}.

\begin{theorem}\label{Thm:HarmonicMartingale}
	If $(X_n)$ is a Markov chain on $\Omega$ and $h: \Omega \rightarrow \mathbb{R}$ is harmonic, then the process $(Y_n)$ defined by 
	\begin{align*}
		Y_n = h(X_n)
	\end{align*}
	is a martingale.
\end{theorem}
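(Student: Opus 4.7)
The plan is to recognize this theorem as an immediate corollary of Theorem \ref{Thm:STHarmonicMartingale}, since a harmonic function $h$ can be viewed as a (trivially) space-time harmonic function $f_n := h$ that happens to be constant in the time variable. So the first step will be to verify that the constant-in-time sequence $f_n = h$ satisfies the space-time harmonicity condition \eqref{Def:STHarmonic}: we need $Pf_{n+1} = f_n$, which becomes $Ph = h$, and this is precisely the defining equation \eqref{Def:Harmonic} of a harmonic function. Once this observation is in hand, Theorem \ref{Thm:STHarmonicMartingale} immediately gives that $Y_n = f_n(X_n) = h(X_n)$ is a martingale.

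If instead one prefers a self-contained argument, rather than invoking Theorem \ref{Thm:STHarmonicMartingale}, the plan is to directly check the two conditions in the definition of a martingale. First, $(Y_n)$ is adapted because $Y_n = h(X_n)$ is a function of $X_n$ alone (the function $f_n$ in \eqref{Eq:AdaptedProcess} is just $(x_0, \ldots, x_n) \mapsto h(x_n)$). Second, for the averaging property, I would compute
\[
\nE(Y_{n+1} \,|\, X_0, \ldots, X_n) = \nE(h(X_{n+1}) \,|\, X_0, \ldots, X_n) = \nE(h(X_{n+1}) \,|\, X_n) = (Ph)(X_n),
\]
where the second equality is the Markov property and the third is the definition \eqref{Eq:MarkovOp1} of the Markov operator. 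Since $h$ is harmonic, $(Ph)(X_n) = h(X_n) = Y_n$, which completes the verification.

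There is really no main obstacle here: the theorem is essentially a restatement of the definition of harmonicity in the probabilistic language of martingales. The only conceptual step is recognizing that harmonicity is exactly the condition needed for the one-step conditional expectation to return the current value of $h(X_n)$, and this is already baked into the mean-value property \eqref{Eq:Harmonic2}. The computation above, in fact, mirrors exactly the derivation \eqref{Eq:STHarmonicMotivation} that motivated the space-time harmonic definition in the first place, specialized to the case where $f$ does not depend on $n$.
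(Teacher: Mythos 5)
Your proposal is correct and matches the paper's proof essentially verbatim: the text likewise notes that the result is a special case of Theorem \ref{Thm:STHarmonicMartingale} with $f_n \equiv h$, and then reviews the same direct verification (adaptedness, the Markov property, and $(Ph)(X_n) = h(X_n)$). Nothing is missing.
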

This theorem is already proved because it is a special case of Theorem \ref{Thm:STHarmonicMartingale}, where the space time harmonic function $f_n$ is just given by the harmonic function $h$ at every step $n$.  However, because martingales of this form are especially important, we review the steps.  We hope the following is very similar to what you already did for Exercise \ref{Ex:ProveSTHarmonicMartingale}.
\begin{proof}
    By definition, $(Y_n)$ is adapted; once we know $X_n$ we know the value of $Y_n$.  So we check the averaging property, noting
    \begin{align*}
        \nE(Y_{n+1} \, | \, X_0, \ldots, X_n) &= \nE(h(X_{n+1}) \, | \, X_n)\\
        &= \sum_{j=1}^N P(X_n,j)h(j)\\
        &= (Ph)(X_{n}) = h(X_n) = Y_n,
    \end{align*}
    where the second-to-last equality uses the harmonicity of $h$.  We conclude that $h(X_n)$ is a martingale.
\end{proof}

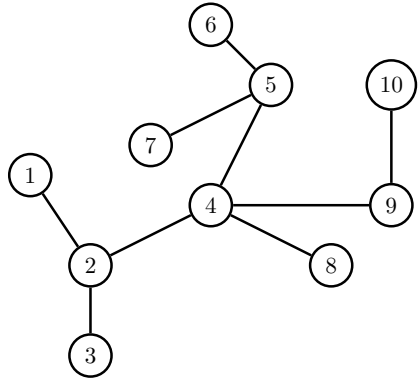
\begin{figure}
    \centering
         \scalebox{0.8}{\begin{tikzpicture}
        \tikzset{
            node/.style={circle, draw=black, very thick, minimum size=7mm},
            arrow/.style={->, black, very thick},
            line/.style={black, very thick}
            }
        \node[node] (one) at (4,0) {8};
        \node[node] (two) at (5,1) {9};
        \node[node] (three) at (3,3) {5};
        \node[node] (four) at (0,0) {2};
        \node[node] (five) at (2,4) {6};
        \node[node] (six) at (1,2) {7};
        \node[node] (seven) at (-1,1.5) {1};
        \node[node] (eight) at (0,-1.5) {3};
        \node[node] (nine) at (5,3) {10};
        \node[node] (x) at (2,1) {4};
        
        \draw[line] (one) -- (x) -- (two) -- (nine);
        \draw[line] (x) -- (three) -- (five);
        \draw[line] (three) -- (six);
        \draw[line] (x) -- (four) -- (seven);
        \draw[line] (four) -- (eight);
    \end{tikzpicture}}
    \caption{\small What are the harmonic functions $h$ on this graph?}
    \label{Fig:HarmonicEg3}
\end{figure}

\begin{example}
	Let's concretely look at what \eqref{Def:Harmonic} means for the graph in Figure \ref{Fig:HarmonicEg3}, where the transition matrix $P$ corresponds to the simple symmetric random walk on the graph.  Since $P(x,j) = 1/\deg(x)$, \eqref{Eq:Harmonic2} says that each function value $h(x)$ must equal the average of all its neighboring values.  Hence,
	\begin{align*}
		h(1) &= h(2),\\
        h(2) &= \frac{1}{3}(h(1) + h(3) + h(4) ),\\
        h(3) &= h(2),\\
        h(4) &= \frac{1}{4}(h(2) + h(8) + h(9) + h(5)),\\
        h(5) &= \frac{1}{3}(h(6) + h(7) + h(4) ),
	\end{align*}
	and so on.  One easy candidate would be a constant function, $h(x) \equiv c$ for all $x \in \Omega$ and some $c \in \mathbb{R}$.  It is less obvious how to form non-constant examples, or whether such $h$ even exist.
\end{example}

\begin{exercise}\label{Ex:NonConstantHarmonic}
	Construct a non-constant harmonic function $h$ for the 3-cycle with matrix $P$ given by the simple symmetric random walk, or show that no such $h$ exists.
\end{exercise}

What you see in Exercise \ref{Ex:NonConstantHarmonic} is characteristic of all harmonic functions for irreducible Markov chains.

\begin{theorem}\label{Thm:HarmonicConstant}
    If $P$ is the transition matrix for an irreducible Markov chain on $\Omega = \{1, \ldots, N\}$ and $h$ is harmonic with respect to $P$, then $h$ is constant.
\end{theorem}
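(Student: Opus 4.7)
The plan is to prove a \emph{maximum principle}: a harmonic function on an irreducible chain must attain its maximum everywhere. Since $\Omega$ is finite, the function $h$ attains its maximum value $M := \max_{x \in \Omega} h(x)$, so the set $S := \{x \in \Omega : h(x) = M\}$ is nonempty. The goal will be to show $S = \Omega$, and the symmetric argument applied to $-h$ (which is also harmonic by linearity of $P$) then shows $h$ is constant. Actually, once $S = \Omega$ we immediately conclude $h \equiv M$, so we do not even need the $-h$ step.

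First I would fix any $x^* \in S$ and write out the harmonicity condition
\[
    M = h(x^*) = (Ph)(x^*) = \sum_{y \in \Omega} P(x^*, y) h(y).
\]
Since $P(x^*, \cdot)$ is a probability distribution on $\Omega$ and every $h(y) \leq M$, the equality above forces $h(y) = M$ for every $y$ with $P(x^*, y) > 0$. In other words, every one-step neighbor of $x^*$ lies in $S$.

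The second step is to iterate. A clean way is to note that $Ph = h$ implies $P^k h = h$ for every $k \geq 1$, which one proves by a one-line induction using linearity of $P$. Applying this to $x^*$ gives
\[
    M = h(x^*) = (P^k h)(x^*) = \sum_{y \in \Omega} P^k(x^*, y) h(y),
\]
and the same convex-combination argument forces $h(y) = M$ for every $y$ with $P^k(x^*, y) > 0$.

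The final step is to invoke irreducibility: for every $y \in \Omega$ there exists some $k = k(y)$ with $P^k(x^*, y) > 0$. Hence $h(y) = M$ for every $y \in \Omega$, so $S = \Omega$ and $h$ is constant. I do not anticipate a serious obstacle here; the only point that requires care is the justification that $P^k h = h$ for all $k$ (which follows directly from $Ph = h$ by induction and linearity of $P$) and the observation that irreducibility, as defined earlier in the text, is precisely the statement that such a $k$ exists for every target state $y$.
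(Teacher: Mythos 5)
Your proof is correct, and it follows the same underlying idea as the paper's: a maximum principle, where the set on which $h$ attains its maximum is shown to absorb the whole state space via irreducibility. The difference is in scope and execution. The paper only gives the ``proof idea'' for the special case of a simple symmetric random walk on a graph: it takes a vertex $x_0$ attaining the maximum $M$, uses the mean-value property to force $h\equiv M$ on the neighbors of $x_0$, and then iterates outward edge by edge, appealing to connectivity of the graph. Your argument handles an arbitrary irreducible transition matrix $P$ in one stroke by observing that $Ph=h$ implies $P^kh=h$ for all $k$, so that the convex-combination argument applies to the distribution $P^k(x^*,\cdot)$, and irreducibility (in exactly the form defined in the text, namely that for each $y$ there is some $k$ with $P^k(x^*,y)>0$) finishes the propagation without any graph structure. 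What your route buys is a complete proof of the theorem as actually stated, rather than of a special case; what the paper's route buys is a more visual, geometric picture of the maximum ``spreading along edges,'' which is pedagogically useful but strictly less general. The one step you flag as needing care, $P^kh=h$, is indeed immediate by induction and linearity, so there is no gap.
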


\begin{proof}
    We give the proof idea for when the chain is a simple symmetric random walk on a graph $G = (V,E)$.  Suppose $h$ is harmonic on $G$, and let $x_0$ be a vertex such that
    \begin{align}\label{Eq:HarmonicMax}
        h(x_0) = M := \max_{x \in V} h(x).
    \end{align}
    By harmonicity, we also have
    \begin{align}
        h(x_0) &= \frac{1}{\deg(x_0)} \sum_{y \sim x_0}h(y) \label{Eq:HarmonicMaxAvg}\\
        &\leq \frac{1}{\deg(x_0)} \sum_{y \sim x_0} M = M, \label{Eq:HarmonicMax2}
    \end{align}
    since each $h(y) \leq M$ by \eqref{Eq:HarmonicMax}.  If we had some $y \sim x_0$ such that $h(y) < M$, then the inequality in \eqref{Eq:HarmonicMax2} would be strict, yielding $M=h(x_0) < M$, a contradiction.  Hence $h(y) \equiv M$ for all $y \sim x_0$.  That is, $h$ has the same (maximum) value on $x_0$ and all its neighbors.
    
    Now we iterate on each of these adjacent vertices, taking further steps away from $x_0$: the same argument shows that $h$ must also have the same value on all of \emph{their} neighbors (see Figure \ref{Fig:HarmonicConstantProof}).  Since the Markov chain is irreducible, $G$ is connected, and so after finitely-many iterations of this argument we see that $h(x) \equiv M$ for all $x \in V$. 
\end{proof}

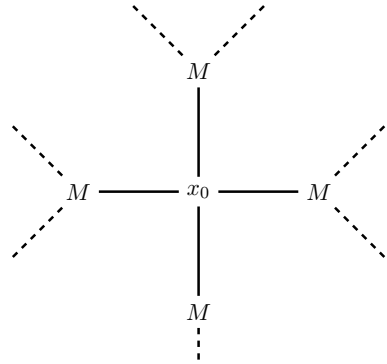
\begin{figure}
    \centering
    \scalebox{0.8}{
        \begin{tikzpicture}
    \tikzset{
            node/.style={draw=black, very thick, minimum size=7mm},
            arrow/.style={->, black, very thick},
            line/.style={black, very thick}
            }
        \node (X0) at (0,0) {$x_0$};
        \node (M1) at (2,0) {$M$};
        \node (M2) at (-2,0) {$M$};
        \node (M3) at (0,2) {$M$};
        \node (M4) at (3.25,-1.25) {};
        \node (M5) at (3.25,1.25) {};
        \node (M6) at (-3.25,-1.25) {};
        \node (M7) at (-3.25,1.25) {};
        \node (M8) at (-1.25,3.25) {};
        \node (M9) at (1.25,3.25) {};
        \node (M10) at (0,-2) {$M$};
        \node (M11) at (0,-3) {};
        
        \draw[line] (X0) -- (M1);
        \draw[line] (X0) -- (M2);
        \draw[line] (X0) -- (M3);
        \draw[line, dashed] (M1) -- (M4);
        \draw[line, dashed] (M1) -- (M5);
        \draw[line, dashed] (M2) -- (M6);
        \draw[line, dashed] (M2) -- (M7);
        \draw[line, dashed] (M3) -- (M8);
        \draw[line, dashed] (M3) -- (M9);
        \draw[line] (X0) -- (M10);
        \draw[line,dashed] (M10) -- (M11);
    \end{tikzpicture}
    }
    \caption{\small The proof idea for Theorem \ref{Thm:HarmonicConstant}: since $h$ attains its maximum $M$ at $x_0$, and it is also the average of all of its neighbors, all the neighbors must likewise have value $M$.  Then move out one step and apply this same argument to all $y \sim x_0$.  Iterating shows that $h$ is constant on $\Omega$.}
    \label{Fig:HarmonicConstantProof}
\end{figure}

\noindent As the last line of the proof suggests, the irreducibility assumption in Theorem \ref{Thm:HarmonicConstant} is necessary.  

\begin{exercise}
    Construct a non-constant harmonic function $h$ on the disconnected graph in Figure \ref{Fig:HexagonPlusLine}.
\end{exercise}

\subsection{A bit of dirty laundry}
Harmonic functions and Theorem \ref{Thm:HarmonicConstant} enable us to (finally) finish our self-contained proof that an irreducible Markov chain has a unique stationary distribution, Theorem \ref{Thm:ReturnTime}.  Recall that in Theorem \ref{Thm:ReturnTimeLite} we proved that any irreducible chain has at least one stationary distribution $\pi$, given by the formula
\begin{align}\label{Eq:StationaryFormulaAgain2}
    \pi(x) = \frac{1}{ \nE(\tau_x^+ \, | \, X_0 = x)}.
\end{align}
We used the Perron-Frobenius theorem in \S \ref{Sec:PF} to prove the uniqueness of $\pi$, completing the proof of Theorem \ref{Thm:ReturnTime}.  The unsatisfying part, though, is that we did not fully prove the Perron-Frobenius theorem.  Since we desire a self-contained argument, we proceed to show that any stationary distribution is, in fact, given by \eqref{Eq:StationaryFormulaAgain2}.

\begin{proof}[Proof of Theorem \ref{Thm:ReturnTime}]
    Any stationary distribution $\rho$ is a left-eigenvector of $P$ for $\lambda_1=1$, $\rho P = \rho$.  So it suffices to show that the dimension of the left-eigenspace $L_1$ for $\lambda_1$ is one, since then we must have
    \begin{align*}
        L_1 = \{ c \pi \, | \, c \in \mathbb{R}\}
    \end{align*}
    for $\pi$ defined by \eqref{Eq:StationaryFormulaAgain2}. As $c\pi$ is only a probability measure when $c=1$, we have uniqueness.  Hence we show $\dim(L_1)=1$.
    
    It follows from the fact that any matrix is similar to its transpose that the dimension of $L_1$ is the same as the dimension of the \emph{right} eigenspace $R_1$ for $\lambda_1=1$.  If $v^T\in R_1$, then
    \begin{align*}
        Pv^T = v^T,
    \end{align*}
    and $v^T$ is thus harmonic on $\Omega$ with respect to $P$ (compare Exercise \ref{Ex:HarmonicEigenvector}).  Hence by Theorem \ref{Thm:HarmonicConstant}, $v^T$ is a constant function/vector.  In other words,
    \begin{equation*}
        v^T = \begin{pmatrix}
            c\\c\\ \vdots \\c
        \end{pmatrix} = 
        c \begin{pmatrix}
            1\\1\\ \vdots \\1
        \end{pmatrix} \in \text{span}\{ \mathbf{1}^T\},
    \end{equation*}
    and $R_1$ is one-dimensional.
\end{proof}

Note again the difference between the left- and right-eigenspaces for $\lambda=1$ with respect to our transition matrix $P$.  The left eigenspace is spanned by the stationary distribution $\pi$ for $P$, whereas the right eigenspace consists of the harmonic functions on all of $\Omega$ (which are all constant functions by Theorem \ref{Thm:HarmonicConstant}).  

\section{Harmonic functions on subsets $D \subsetneq \Omega$}\label{Sec:HarmonicSubsets}

We learned in the previous section that we can build martingales using space-time harmonic and harmonic functions.  Namely, if $f_n$ is space-time harmonic for our chain, then $Y_n := f_n(X_n)$ is a martingale, and if $h$ is harmonic, then $Z_n := h(X_n)$ is also a martingale.

You might be wondering why even bother to mention that $h(X_n)$ is a martingale, when the only harmonic functions on an irreducible chain are constant by Theorem \ref{Thm:HarmonicConstant}.  The reason is that if we relax the condition that $h$ is harmonic on \emph{all} of $\Omega$ and instead just focus on a proper subset $D \subsetneq \Omega$, the situation dramatically changes.  We study such harmonic functions in this section and the next section.  We will see this class of harmonic functions is quite rich, and we will work towards classifying all such harmonic functions.

Recall that we say $h:\Omega \rightarrow \mathbb{R}$ is harmonic on $D \subsetneq \Omega$ if it has the mean-value property 
\begin{align}\label{Eq:Harmonic3}
    (Ph)(x) = \sum_{j=1}^N P(x,j)h(j) = h(x)
\end{align}
for all $x \in D$.  In particular, note that we have no requirement on the values of $h$ at states $y$ outside of $D$.  Even though $h$ is defined for $y \in D^c$, no averaging property needs to hold there.

\begin{example}\label{Eg:Harmonic4Cycle}
    To see that this makes a difference, consider the simple symmetric random walk on the 4-cycle in Figure \ref{Fig:HarmonicSubsetEg}, and take $D$ to be vertices 1 and 2.  That is, $h$ only must satisfy the mean-value property at vertices 1 and 2.  We (arbitrarily) assign the values $h(3)=1$ and $h(4)=0$ for the two vertices outside of $D$.  What values does $h$ take on $D$? From \eqref{Eq:Harmonic3}, we must have
    \begin{align*}
        h(1) &= \frac{1}{2}(h(2) + h(4)) = \frac{1}{2} h(2),\\
        h(2) &= \frac{1}{2}(h(1) + h(3)) = \frac{1}{2}(h(1) + 1).
    \end{align*}
    Solving this linear system yields the (unique) values $h(1) = 1/3$ and $h(2) = 2/3$.  So our function $h$ is harmonic on $D$ but is no longer constant, even though the 4-cycle is irreducible.

    It is also instructive to note that the mean value property \emph{does not} hold at vertices $y \in \Omega \backslash D = \{3,4\}$.  Indeed, we find the average at $x=3$ is
    \begin{align*}
        \frac{1}{2}(h(2)+h(4)) = \frac{1}{3} \neq 1=h(3),
    \end{align*}
    and at $x=4$ we have
    \begin{align*}
        \frac{1}{2}(h(1) + h(3)) = \frac{2}{3} \neq 0=h(4).
    \end{align*}
    This is not any problem, though, as we only asked $h$ to be harmonic on $D$.
    
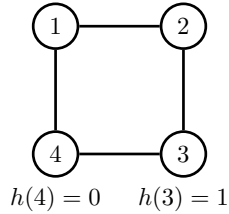
\begin{figure}
    \centering
    \scalebox{0.85}{
        \begin{tikzpicture}
    \tikzset{
        node/.style={circle, draw=black, very thick, minimum size=7mm},
        line/.style={black, very thick}
    }
    
    \node[node] (one) at (0,0) {1};
    \node[node] (two) at (2,0) {2};
    \node[node] (three) [label=below:{$h(3)=1$}] at (2,-2) {3};
    \node[node] (four) [label=below:{$h(4)=0$}] at (0, -2) {4};
    
    \draw[line] (one) -- (two);
    \draw[line] (two) -- (three);
    \draw[line] (three) -- (four);
    \draw[line] (four) -- (one);
    \end{tikzpicture}}
    \caption{\small Set $D=\{1,2\}$ and assign values 0 and 1 for $h$ at $x=4$ and $x=3$, respectively.  What are $h$'s values at vertices 1 and 2, if $h$ is harmonic there?}
    \label{Fig:HarmonicSubsetEg}
\end{figure}
\end{example}

This example immediately raises a number of questions. How can we think about such harmonic functions?  Is there any meaning to the specific values for $h$ in this example?  Do we have a method for building other harmonic functions on $D$?  Can we describe all such functions?  We spend the remainder of this section and the next section answering these questions.

As a staring point of our analysis, write $\Omega$ as the disjoint union $\Omega = D \cup D^c$, and note that we can partition the transition matrix $P$ in terms of vertices in $D$ and vertices outside of $D$,
\begin{align*}
    P = \begin{pNiceMatrix}[first-row,first-col]
        & D & D^c       \\
    D   & A   & B \\
    D^c & C   & Q  \\
\end{pNiceMatrix}
\end{align*}
If $\#\Omega = N$ and $\#D=m$, then, for instance, $A$ is the $m \times m$ submatrix of probabilities for starting and ending in $D$, and $B$ is the $m \times (N-m)$ submatrix of probabilities for starting in $D$ and leaving $D$.

Now consider tweaking $P$ to
\begin{align}\label{Eq:TildeP}
    \tilde{P} = \begin{pNiceMatrix}[first-row,first-col]
        & D & D^c       \\
    D   & A   & B \\
    D^c & \mathbf{0}   & I  \\
\end{pNiceMatrix}.
\end{align}
Here $\mathbf{0}$ is a $(N-m) \times m$ matrix of all zeros, and $I$ is the $(N-m)\times (N-m)$ identity matrix.  It is not immediately obvious why we would want to do this, but we claim that $\tilde{P}$ is still a transition matrix.  Clearly $\tilde{P}(x,y) \geq 0$ for all $x,y \in \Omega$, and so we need to verify that each row sums to one.  A moment's thought shows this is obvious: if $x \in D$, then
\begin{align*}
    \sum_{j=1}^N \tilde{P}(x,j) = \sum_{j=1}^N P(x,j) = 1,
\end{align*}
while if $x \notin D$, then the only non-zero entry in row $x$ of $\tilde{P}$ is that coming from the identity matrix block, which is 1.  That is,
\begin{align*}
    \sum_{j=1}^N \tilde{P}(x,j) = \tilde{P}(x,x) = 1,
\end{align*}
and so $\tilde{P}$ is indeed a transition matrix.  

What is the Markov chain $\tilde{X}$ corresponding to $\tilde{P}$?  Since $\tilde{P}(x,y) = P(x,y)$ for $x \in D$, $\tilde{X}$ behaves the same as $X$ when in $D$.  However, for $x \in D^c$,
\begin{align*}
    \nP(\tilde{X}_{n+1}=y \, | \, \tilde{X}_n =x) = \I_{\{x=y\}},
\end{align*}
and so $\tilde{X}$ is permanently fixed at the first vertex it reaches outside of $D$.  In other words, $\tilde{P}$ is the transition matrix for the chain which is \emph{absorbed in $D^c$}.  Another way to write this is \begin{align}\label{Eq:AbsorbedChain}
    \tilde{X}_n = X_{n \wedge \tau},
\end{align}
where $\tau = \tau_{D^c}$ is the hitting time of $D^c$ and \begin{align*}
    n \wedge \tau := \min\{n, \tau\}.
\end{align*}
So \eqref{Eq:AbsorbedChain} says that $\tilde{X}$ is the same as $X$ until we reach $D^c$, at which point $\tilde{X}$ becomes frozen in the same state. Our modified transition matrix $\tilde{P}$ thus turns each $y\in D^c$ into an absorbing boundary state for the Markov chain.

What we are really interested in, though, is harmonic functions.  How do harmonic functions with respect to $\tilde{P}$ relate to harmonic functions on $D$ for the original chain?  The following lemma tells us that harmonic functions on $D$ for $P$ become harmonic \emph{on all of $\Omega$} with respect to $\tilde{P}$.

\begin{lemma}\label{Lemma:HarmonicEverywhere}
    If $h$ is a harmonic function on $D$ with respect to $P$, then $h$ is harmonic on all of $\Omega$ with respect to the modified transition matrix $\tilde{P}$ in \eqref{Eq:TildeP}.
\end{lemma}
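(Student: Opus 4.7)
The plan is to verify the defining mean-value property $(\tilde{P}h)(x) = h(x)$ separately on the two pieces of the partition $\Omega = D \cup D^c$, using the block structure of $\tilde{P}$ in \eqref{Eq:TildeP}. This is fundamentally a bookkeeping argument: the modification from $P$ to $\tilde{P}$ only alters rows indexed by $D^c$, so harmonicity on $D$ is preserved verbatim, while the new rows are designed to make harmonicity on $D^c$ automatic.

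First I would handle the case $x \in D$. The key observation is that the top block of $\tilde{P}$ is exactly $[A \mid B]$, which coincides with the top block of $P$. Hence for any $x \in D$ and any $y \in \Omega$ we have $\tilde{P}(x,y) = P(x,y)$, so
\[
(\tilde{P}h)(x) = \sum_{y \in \Omega} \tilde{P}(x,y) h(y) = \sum_{y \in \Omega} P(x,y) h(y) = (Ph)(x) = h(x),
\]
where the last equality uses the hypothesis that $h$ is harmonic on $D$ with respect to $P$.

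Next I would handle the case $x \in D^c$. Here the block structure of $\tilde{P}$ gives $\tilde{P}(x,y) = \I_{\{x = y\}}$ (zero row into $D$, identity row into $D^c$). Thus
\[
(\tilde{P}h)(x) = \sum_{y \in \Omega} \tilde{P}(x,y) h(y) = h(x),
\]
so the mean-value property holds trivially at $x$. Note that this case is precisely the reason we replaced the $C$ and $Q$ blocks by $\mathbf{0}$ and $I$: it forces harmonicity on $D^c$ for free, regardless of what values $h$ happens to take on $D^c$.

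Combining the two cases gives $(\tilde{P}h)(x) = h(x)$ for every $x \in \Omega$, which is the definition of $h$ being harmonic on all of $\Omega$ with respect to $\tilde{P}$. There is no real obstacle here; the content of the lemma is really a definitional check, and its value lies in setting up the reduction to globally harmonic functions for a new (absorbing) chain, which will be the leverage used in the subsequent classification results.
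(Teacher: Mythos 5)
Your proof is correct and is exactly the argument the paper intends: the text leaves the lemma as an exercise asking the reader to verify the mean-value property for all $x \in \Omega$ with $P$ replaced by $\tilde{P}$, and your two-case check (rows in $D$ unchanged, rows in $D^c$ reduced to the identity) is that verification. Nothing is missing.
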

We encourage you to think through what is happening here through the following exercises.

\begin{exercise}
    Explain why the lemma is intuitively clear.
\end{exercise}

\begin{exercise}
    Prove the lemma by showing \eqref{Eq:Harmonic3} holds for all $x \in \Omega$ when $P$ is replaced by $\tilde{P}$.
\end{exercise}

\begin{exercise}
    We had a non-constant harmonic function $h$ on $D$ in Example \ref{Eg:Harmonic4Cycle}, and Lemma \ref{Lemma:HarmonicEverywhere} says that this $h$ is harmonic on all of $\Omega$ with respect to $\tilde{P}$.  Why is this not a contradiction of Theorem \ref{Thm:HarmonicConstant}?
\end{exercise}
    

So we now understand harmonic functions on $D \subset \Omega$ are harmonic on all of $\Omega$ for the absorbing chain $X_{n \wedge \tau}$.  We still do not understand, however, how to build these harmonic functions, other than solving a linear system of equations (as in Example \ref{Eg:Harmonic4Cycle}).  Is there a more systematic approach?  The following theorem says one method is to take averages of function values upon hitting $D^c$.

\begin{theorem}\label{Thm:BuildHarmonic}
    Let $P$ be a transition matrix on $\Omega$ and let $D \subsetneq \Omega$ be a proper subset of states.  Let $\tau = \tau_{D^c}$ be the hitting time of $D^c$ and let $f: D^c \rightarrow \mathbb{R}$ be any function.   Then the function $h: \Omega \rightarrow \mathbb{R}$ defined as
    \begin{align}\label{Eq:BuildHarmonic}
        h(x) := \nE_x(f(X_\tau)), \qquad x \in \Omega,
    \end{align}
    is harmonic with respect to $P$ on $D$ and satisfies $h(x) = f(x)$ for all $x \in D^c$.
\end{theorem}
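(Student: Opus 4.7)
My plan is to verify the two claims separately, with the boundary condition being essentially immediate and the harmonicity on $D$ coming from conditioning on the first step and invoking the Markov property.

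For the boundary condition, I would start by observing that if $x \in D^c$, then starting from $X_0 = x$ we already have $X_0 \in D^c$, so by the definition of the hitting time, $\tau = 0$ almost surely under $\nP_x$. Therefore $h(x) = \nE_x(f(X_0)) = f(x)$. This takes care of the claim that $h$ agrees with $f$ on $D^c$.

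Next, I would show harmonicity on $D$ by a ``first-step analysis.'' Fix $x \in D$. Since $x \notin D^c$, we have $\tau \geq 1$ almost surely under $\nP_x$, so conditioning on the value of $X_1$ is meaningful. Write
\begin{align*}
    h(x) = \nE_x(f(X_\tau)) = \sum_{y \in \Omega} \nE_x\big(f(X_\tau) \,\big|\, X_1 = y\big)\, P(x,y).
\end{align*}
The key step is to identify $\nE_x(f(X_\tau) \,|\, X_1 = y)$ with $h(y)$ for every $y$. I would split into two cases: if $y \in D^c$, then the chain hits $D^c$ at time $1$, so $X_\tau = y$ and the conditional expectation equals $f(y) = h(y)$ by the boundary identity already established. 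If $y \in D$, then $\tau \geq 2$, and by the Markov property the remaining path $(X_1, X_2, \ldots)$ conditioned on $X_1 = y$ is distributed as a chain started afresh at $y$; under this shifted chain the hitting time of $D^c$ has the same distribution as $\tau$ under $\nP_y$, so the conditional expectation equals $\nE_y(f(X_\tau)) = h(y)$. Combining the two cases,
\begin{align*}
    h(x) = \sum_{y \in \Omega} P(x,y)\, h(y) = (Ph)(x),
\end{align*}
which is exactly the mean-value property \eqref{Def:Harmonic} at $x$, proving that $h$ is harmonic on $D$.

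The main subtlety I expect to navigate carefully is the application of the Markov property to justify $\nE_x(f(X_\tau) \,|\, X_1 = y) = \nE_y(f(X_\tau))$ when $y \in D$. Since $\tau$ is a random (hitting) time rather than a deterministic index, one has to recognize that conditional on $X_1 = y$, the ``residual'' hitting time $\tau - 1$ is the hitting time of $D^c$ for the shifted chain $(X_{1+k})_{k \geq 0}$, which by the Markov property is a copy of the original chain started at $y$. This is the same kind of first-step decomposition used in the proof of Theorem~\ref{Thm:GamblersRuin}, so apart from being careful with the case $y \in D^c$ (where $\tau - 1 = 0$ and the identification is trivial), no new machinery is needed.
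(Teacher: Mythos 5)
Your proposal is correct and follows essentially the same route as the paper: both arguments establish $h=f$ on $D^c$ from $\tau=0$ and then prove harmonicity on $D$ by conditioning on the first step and using the Markov property to identify $\nE_x(f(X_\tau)\mid X_1=y)$ with $h(y)$. The only cosmetic difference is that the paper first carries out the first-step decomposition for indicator functions $f=\I_y$ and then extends to general $f$ by linearity of expectation, whereas you handle general $f$ directly with the case split $y\in D^c$ versus $y\in D$; both hinge on the same key identity.
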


\noindent In this context we often say that $h$ is harmonic on $D$ with \emph{boundary values} $f$.

The intuition here is to think of $f$ as the ``pay-out'' function: we start by running the chain from some $x \in D$, and we are playing the game that we ``earn'' $f(y)$ dollars if we first exit $D$ at state $y \in D^c$. Theorem \ref{Thm:BuildHarmonic} says that the function $h(x)$ which gives our average earnings from starting at $x$ is harmonic on $D$.

\begin{proof}
    It is clear from the definition \eqref{Eq:BuildHarmonic} that $h(x)=f(x)$ for $x \in D^c$, since $\tau=0$ for such $x$.  
    
    To show $h$ is harmonic elsewhere, we need to show \eqref{Eq:Harmonic3} holds in $D$.  Suppose first that $f:D^c \rightarrow \mathbb{R}$ is an indicator of a single vertex $y \in D^c$,
    \begin{align*}
        f(z) = \I_y(z) =  \begin{cases}
            1 & z = y,\\
            0 & z \neq y.
        \end{cases}
    \end{align*}
    Then $f(X_\tau) = \I_y(X_\tau)$, and to show that $\nE_x(\I_y(X_\tau))$ is harmonic we condition on the first step of the walk.  Indeed, for $x \in D$,
    \begin{align*}
        h(x) = \nE_x(\I_y(X_\tau)) &= \nP_x(X_\tau = y)\\
        &= \sum_{j=1}^N \nP_x(X_\tau =y \, | \, X_1=j)\nP_x(X_1=j)\\
        &= \sum_{j=1}^N \nP_j(X_\tau = y) P(x,j)\\
        &= \sum_{j=1}^N \nE_j(\I_y(X_\tau)) P(x,j)\\
        &= \sum_{j=1}^N h(j) P(x,j),
    \end{align*}
    which is exactly the mean-value property, showing $h$ is harmonic on $D$ as claimed.
    
    For functions $f:D^c \rightarrow \mathbb{R}$ which are not necessarily indicators, we can write $f$ as a linear combination of indicators,
    \begin{align*}
        f(z) = \sum_{j \in D^c} f(j) \I_j(z).
    \end{align*}
    Then our function $h$ is 
    \begin{align*}
        h(x) = \nE_x(f(X_\tau)) = \sum_{j \in D^c} f(j) \nE_x(\I_j(X_\tau))
    \end{align*}
    since expectation is linear.  By our above argument, each of the functions $\nE_x(\I_j(X_\tau))$ in this sum is harmonic on $D$.  Since harmonic functions are closed under linear combinations (see Problem \ref{Ex:LinearCombosHarmonic}), we conclude that $h$ is itself harmonic.
\end{proof}

\begin{exercise}\label{Ex:Harmonic4Cycle}
    Revisit the 4-cycle in Figure \ref{Fig:HarmonicSubsetEg} with $D = \{1,2\}$ and the boundary values $f$ given by $f(3)=1$ and $f(4)=0$.  Explicitly compute $\nE_x(f(X_\tau))$ for $x \in D$.  How does this compare with the function $h$ in Example \ref{Eg:Harmonic4Cycle}?
\end{exercise}


Let's see Theorem \ref{Thm:BuildHarmonic} at work in a couple of examples.

\begin{example}\label{Eg:BuildHarmonic1}
    Consider the simple symmetric random walk $(X_n)$ on $\{0,1,\ldots, N\}$ with $D=\{1,\ldots, N-1 \}$ and let $\tau$ be the hitting time of $D^c = \{0,N\}$.  Suppose we start the walk at some $x \in D$.  What is 
    \begin{align*}
        \nP_x(X_\tau = N)?
    \end{align*}
    We already know the answer, of course, from our considerations of gambler's ruin in \S \ref{Sec:GamblersRuin}; Theorem \ref{Thm:GamblersRuin} says $\nP_x(X_\tau = N) = x/N$.  The new insight we gain from Theorem \ref{Thm:BuildHarmonic} is that this function 
    \begin{align*}
        h(x) = \nP_x(X_\tau = N) = \nE_x(\I_N(X_\tau))
    \end{align*}
    is actually harmonic.  While the theorem guarantees this, it is also easy to explicitly verify: indeed, for $x \in D$,
    \begin{align*}
        (Ph)(x) &= \frac{1}{2}h(x-1) + \frac{1}{2}h(x+1)\\
        &= \frac{x-1}{2N} + \frac{x+1}{2N}\\
        &= \frac{x}{N} = h(x),
    \end{align*}
    and hence $Ph=h$.  Note also that
    \begin{align*}
        h(0) = 0 = \I_N(0) \quad \text{ and } h(N) = 1 = \I_N(N),
    \end{align*}
    and so $h$ agrees with the boundary values given by $\I_N$ on $D^c$.
\end{example}

\begin{example}
    Just as at the end of the proof of Theorem \ref{Thm:BuildHarmonic}, we can use linearity to compute our expected ``earnings'' $\nE_x(f(X_\tau))$ for other ``pay-out'' functions $f$ on $D^c$.  For the same chain as in Example \ref{Eg:BuildHarmonic1}, consider the function $f:D^c \rightarrow \mathbb{R}$ given by 
    \begin{align*}
        f(0) = -10 \quad \text{ and } \quad f(N) = 40.
    \end{align*}
    That is, we lose 10 dollars if we first exit $\{1,\ldots, N-1\}$ at $0$, and win 40 dollars if we exit at $N$.  What is our expected earnings $\tilde{h}(x) = \nE_x(f(X_\tau))$ now?  We have
    \begin{align*}
        \tilde{h}(x) &= \nE_x(f(X_\tau))\\
        &= -10 \cdot \nP_x(X_\tau =0) + 30 \cdot \nP_x(X_\tau = N)\\
        &= -10 \cdot (1-h(x)) + 30 \cdot h(x)\\
        &= -10\Big(1 - \frac{x}{N}\Big) + \frac{30x}{N} = -10 + \frac{40x}{N}.
    \end{align*}
    It is easy to explicitly verify that this is a harmonic function on $D$ which satisfies $\tilde{h}(0) =-10$ and $\tilde{h}(N)=40$.
\end{example}

In Problem \ref{Prob:HarmonicZInterval} you will characterize all harmonic functions on intervals of the integers for the simple random walk on $\mathbb{Z}$.

\section{Optional sampling and harmonic functions}\label{Sec:Optional}

We are well on our way to understanding all harmonic functions on $\Omega = \{1,\ldots, N\}$ with respect to transition probabilities $P$.  Functions harmonic on all of $\Omega$ for irreducible chains are simply the constant functions, Theorem \ref{Thm:HarmonicConstant}.  Functions harmonic on a subset $D\subsetneq \Omega$ form a much richer class, and we know that we can build such $h$ with arbitrary ``boundary values'' $f:D^c \rightarrow \mathbb{R}$ through the formula \eqref{Eq:BuildHarmonic}, that is,
\begin{align}\label{Eq:ExpectedPayoutLast}
    h(x) = \nE_x(f(X_\tau))
\end{align} 
for $\tau=\tau_{D^c}$.  Note that we also recover the constant functions with this construction when $f$ is constant on $D^c$.

But does \eqref{Eq:ExpectedPayoutLast} give \emph{all} the harmonic functions on $D$?  Or are there others that we cannot write as the expected ``pay-out'' of our Markov chain at the hitting time?  In this section we bring all of our tools together to answer in the negative: there are no harmonic function on $D$ that are not of the form \eqref{Eq:ExpectedPayoutLast} for some $f:D^c \rightarrow \mathbb{R}$.  This is a beautiful and important result, and is a fitting capstone to our text.

We will need an important piece of machinery, the \emph{optional sampling theorem}, to prove this.  This will be a generalization of the averaging property for martingales to random times.  So while we initially introduced harmonic functions to help us study martingales, in this last section we turn the tables and use properties of martingales to help us classify harmonic functions.

We begin by looking at how simple examples make the universality of \eqref{Eq:ExpectedPayoutLast} very plausible.  The upshot will be that this is a direct consequence of the mean-value property $Ph=h$ of harmonic functions.
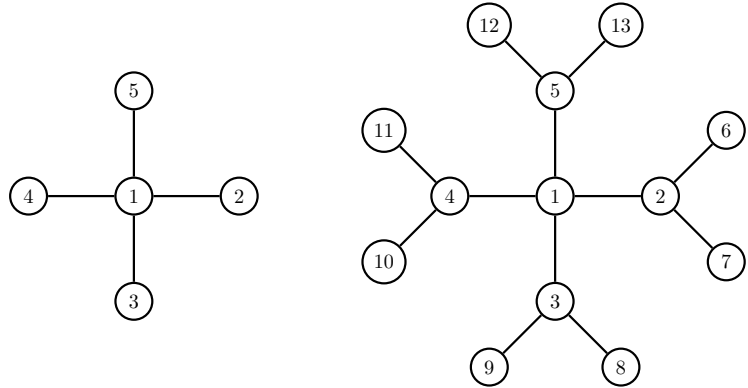
\begin{figure}
    \centering
    \scalebox{0.7}{
        \begin{tikzpicture}
    \tikzset{
            node/.style={circle, draw=black, very thick, minimum size=7mm},
            arrow/.style={->, black, very thick},
            line/.style={black, very thick}
            }
        \node[node] (1) at (-2,0) {1};
        \node[node] (2) at (0,0) {2};
        \node[node] (3) at (-2,-2) {3};
        \node[node] (4) at (-4,0) {4};
        \node[node] (5) at (-2,2) {5};
        
        \node[node] (1b) at (6,0) {1};
        \node[node] (2b) at (8,0) {2};
        \node[node] (3b) at (6,-2) {3};
        \node[node] (4b) at (4,0) {4};
        \node[node] (5b) at (6,2) {5};
        
        \node[node] (6b) at (9.25,1.25) {6};
        \node[node] (7b) at (9.25,-1.25) {7};
        \node[node] (8b) at (7.25,-3.25) {8};
        \node[node] (9b) at (4.75,-3.25) {9};
        \node[node] (10b) at (2.75,-1.25) {10};
        \node[node] (11b) at (2.75,1.25) {11};
        \node[node] (12b) at (4.75,3.25) {12};
        \node[node] (13b) at (7.25,3.25) {13};
        
        \draw[line] (1) -- (2);
        \draw[line] (1) -- (3);
        \draw[line] (1) -- (4);
        \draw[line] (1) -- (5);
        
        \draw[line] (1b) -- (2b);
        \draw[line] (1b) -- (3b);
        \draw[line] (1b) -- (4b);
        \draw[line] (1b) -- (5b);
        \draw[line] (2b) -- (6b);
        \draw[line] (2b) -- (7b);
        \draw[line] (3b) -- (8b);
        \draw[line] (3b) -- (9b);
        \draw[line] (4b) -- (10b);
        \draw[line] (4b) -- (11b);
        \draw[line] (5b) -- (12b);
        \draw[line] (5b) -- (13b);
    \end{tikzpicture}
    }
    \caption{\small Two examples where we can explicitly see that the mean-value property implies the averaging formula $h(x) = \nE_x(f(X_\tau))$.}
    \label{Fig:HarmonicAverage}
\end{figure}

\begin{example}
    Consider the two graphs in Figure \ref{Fig:HarmonicAverage} with transition probabilities given by the simple symmetric random walk.  Start with the graph $G_1$ on the left, and suppose $D_1=\{1\}$, just the center vertex.  If $h$ is harmonic on $D_1$, then 
    \begin{align*}
        h(1) &= \sum_{j=2}^5\frac{1}{4}h(j) = \sum_{j=2}^5 \nP_1(X_\tau =j)h(j) = \nE_1(f(X_\tau))
    \end{align*}
    if $f=h$ on $D_1^c=\{2,3,4\}$ is the boundary-value function.  The second equality is since $\nP_1(X_\tau=j) = 1/4$ for each $j \in D_1^c$.  So we see that the mean value property immediately forces our function to be of the form \eqref{Eq:ExpectedPayoutLast}.
    
    Now expand the graph as in the right of Figure \ref{Fig:HarmonicAverage}; call this $G_2$.  Expand the set of harmonic states to include all the neighbors, $D_2 = \{1,2,3,4,5\}$, and suppose that $h$ is any harmonic function on $D_2$.  Then, as above,
    \begin{align}\label{Eq:HarmonicAvgEg2}
         h(1) &= \frac{1}{4}h(2) + \frac{1}{4}h(3) + \frac{1}{4}h(4) + \frac{1}{4}h(5).
    \end{align}
    Since $h$ is also harmonic at 2, we have $h(2) = \frac{1}{3}(h(1)+h(6)+h(7))$, with similar expressions at vertices 2,4 and 5. Plugging these all back into the right-hand side of \eqref{Eq:HarmonicAvgEg2} yields
    \begin{align*}
        h(1) = \frac{4}{12}h(1) + \frac{1}{12}\sum_{j=6}^{13}h(j) \quad \Leftrightarrow \quad h(1) = \sum_{j=6}^{13}\frac{1}{8} h(j).
    \end{align*}
    By symmetry of $G_2$ with reference to our starting point $x=1$, hitting $D^c$ at any of the vertices $6,7,\ldots,13$ should be equally likely.  Hence the above says
    \begin{align*}
        h(1) = \sum_{j=6}^{13}\nP_1(X_\tau =j) h(j) = \nE_1(f(X_\tau))
    \end{align*}
    for $f=h$ on $D^c$.  Thus we again see that (iterations of) the mean-value property lead to \eqref{Eq:ExpectedPayoutLast}.
\end{example}
Of course, this is no proof that all harmonic functions satisfy \eqref{Eq:ExpectedPayoutLast}.  And we were significantly helped by the symmetry of $G_1$ and $G_2$ in Figure \ref{Fig:HarmonicAverage} to see our expressions gave the average $\nE_x(f(X_\tau))$.  We will need a more robust approach for a general proof, provided via the \emph{optional sampling} (or \emph{optional stopping}) theorem.

\begin{theorem}[Optional sampling]\label{Thm:OptionalSampling}
    Let $(X_n)_{n \geq 0}$ be an irreducible Markov chain on $\Omega=\{1,\ldots, N\}$, and let $(Y_n)_{n \geq 0}$ be a martingale for $(X_n)$.  Then if $\tau$ is the hitting time of any non-empty subset $D^c \subset \Omega$, 
    \begin{align}\label{Eq:OptionalSampling}
        \nE(Y_\tau) = \nE(Y_0).
    \end{align}
\end{theorem}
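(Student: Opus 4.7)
The plan is to reduce optional sampling to the already-established constant-expectation property for deterministic times (Theorem \ref{Thm:MartingaleAverage}) via truncation, and then pass to the limit. First, because $(X_n)$ is irreducible on the finite set $\Omega$ and $D^c$ is non-empty, Theorem \ref{Thm:ExpectedHittingTimes} gives $\nE_x(\tau) < \infty$ for every starting state $x$, so in particular $\tau < \infty$ almost surely and $Y_\tau$ is a well-defined random variable.

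For each deterministic $n \geq 1$, I would introduce the truncated random time $\tau_n := \tau \wedge n$, which is bounded by $n$. The key identity is the telescoping decomposition
$$Y_{\tau_n} - Y_0 \;=\; \sum_{k=0}^{n-1} (Y_{k+1} - Y_k)\, \I_{\{\tau > k\}}.$$
The crucial observation is that $\{\tau > k\} = \{X_0 \in D,\, X_1 \in D,\, \ldots,\, X_k \in D\}$, so the indicator depends only on $X_0, \ldots, X_k$ and can be pulled inside the conditional expectation $\nE(\,\cdot\, | X_0,\ldots,X_k)$. Applying the tower property and the zero-increment identity \eqref{Eq:ConditionalIncrementZero}, each summand has expectation zero, yielding $\nE(Y_{\tau_n}) = \nE(Y_0)$ for every $n$.

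Finally, I would send $n \to \infty$. Since $\tau < \infty$ a.s., $\tau_n \uparrow \tau$ and therefore $Y_{\tau_n} \to Y_\tau$ pointwise. I expect the main obstacle to be justifying that $\nE(Y_{\tau_n}) \to \nE(Y_\tau)$, since \emph{a priori} an adapted process $Y_n = f_n(X_0,\ldots,X_n)$ may grow with $n$ and $Y_\tau$ need not be dominated by a constant. In the canonical target application, however — harmonic-function martingales $Y_n = h(X_n)$ with $\Omega$ finite — the sequence $(Y_{\tau_n})$ is uniformly bounded by $\max_{x \in \Omega}|h(x)|$, and dominated convergence closes the argument immediately. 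For the more general adapted case, I would obtain uniform integrability from a geometric tail bound $\nP_x(\tau > k) \leq C\rho^k$ with $\rho < 1$, of the kind derived inside the proof of Theorem \ref{Thm:ExpectedHittingTimes}, combined with a mild growth bound on the increments $|Y_{k+1}-Y_k|$ (e.g., polynomial in $k$), which together force $\nE(|Y_\tau - Y_{\tau_n}|) \to 0$. Combining this with the equality $\nE(Y_{\tau_n}) = \nE(Y_0)$ valid at every finite $n$ yields $\nE(Y_\tau) = \nE(Y_0)$, completing the proof.
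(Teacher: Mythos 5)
Your proof is correct and rests on exactly the decomposition the paper uses: write $Y_\tau - Y_0$ as a sum of increments $(Y_{k+1}-Y_k)\,\I_{\{\tau>k\}}$, observe that $\{\tau>k\}=\{X_0\in D,\ldots,X_k\in D\}$ is determined by $X_0,\ldots,X_k$ so the indicator pulls out of the conditional expectation, and kill each term with \eqref{Eq:ConditionalIncrementZero}. The only real difference is where the analytic difficulty is parked. The paper sums to infinity first and then asserts, with an explicit disclaimer, that expectation may be interchanged with the infinite sum; you instead take expectations of the finite partial sums, obtain $\nE(Y_{\tau\wedge n})=\nE(Y_0)$ rigorously, and defer the issue to the convergence $\nE(Y_{\tau\wedge n})\to\nE(Y_\tau)$. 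These are the same gap in two guises, and your version has the merit of naming it honestly: some integrability hypothesis really is needed, since for a general adapted martingale the conclusion can fail even on a two-state irreducible chain (a doubling-strategy martingale that bets $2^n$ on the event $\{X_{n+1}\in D^c\}$ satisfies $Y_\tau\equiv 1$ while $Y_0=0$). In the application the paper actually makes --- Theorem \ref{Thm:BuildHarmonic2}, where the martingale is $h(X_{n\wedge\tau})$ on a finite $\Omega$ and hence uniformly bounded --- your bounded case applies and dominated convergence closes the argument exactly as you say; the geometric tail bound on $\tau$ from the proof of Theorem \ref{Thm:ExpectedHittingTimes} together with a growth bound on the increments is the right general-purpose substitute when boundedness is unavailable.
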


We saw in Theorem \ref{Thm:MartingaleAverage} that for any \emph{fixed}, \emph{deterministic} time $n$, $\nE(Y_n) = \nE(Y_0)$.  Theorem \ref{Thm:OptionalSampling} says something much stronger: we may replace $n$ with a random time $\tau$, thus obtaining the composition of random variables $Y_\tau$, and still have the identity \eqref{Eq:OptionalSampling}.  To gain an appreciation for some of the subtlties involved here, we start by considering two examples.

\begin{example}
We first note that we need the assumption of irreducibility in order for the left-hand side of \eqref{Eq:OptionalSampling} to make sense.  For instance, consider a simple symmetric random walk on the reducible graph of Figure \ref{Fig:HexagonPlusLine} (a six-cycle plus a disconnected two-cycle).  Start at a vertex in the six-cycle and let $\tau$ be the hitting time of the two-cycle.  Since we will never reach the two-cycle, $\tau =\infty$ and $Y_\tau$ is not well defined.   You will show in Problem \ref{Prob:taufiniteas} that if $(X_n)$ is an irreducible chain, then $\tau<\infty$ with probability one.  Hence in this case it makes sense to talk about the random variable $Y_\tau$. 
\end{example}

\begin{example}
    We secondly note that the finiteness assumption on $\Omega$ is also necessary.  Indeed, consider the simple symmetric random walk $(X_n)$ on $\mathbb{Z}$, started from $x=0$.  We saw in Example \ref{Eg:FirstMartingale} that $Y_n = X_n$ is a martingale.  Let $\tau = \tau_{\{1\}}$ be the hitting time of the vertex 1.  If \eqref{Eq:OptionalSampling} held in this case, we would have
    \begin{align*}
        1 = \nE(Y_\tau) = \nE(Y_0) = 0,
    \end{align*}
    which is absurd.  The reason for the failure here is that the infiniteness of the state space $\mathbb{Z}$ makes $\tau$ a ``nasty'' stopping time.\footnote{We note that there are versions of Theorem \ref{Thm:OptionalSampling} for infinite state spaces, but they include assumptions on the nature of $\tau$.  For instance, it suffices to have some fixed $M<\infty$ such that $\tau \leq M$ with probability 1, or to have $|Y_{t \wedge \tau}| \leq M$ with probability 1.  That is, the hitting time is bounded or the martingale up until time $\tau$ is bounded.  Both of these fail for the $\tau$ in this example: $\tau$ can be arbitrarily large, and $Y_n$ may become very large negatively before coming back and hitting 1.}  This also helps us appreciate that it is not obvious that \eqref{Eq:OptionalSampling} should hold.  Unlike for deterministic times, this is not always the case, and so this is a non-trivial statement.
\end{example}

\begin{proof}
    Our proof is based on the fact \eqref{Eq:ConditionalIncrementZero} that the conditional increments have zero mean.  Note that for a deterministic time $n$, we can use a telescoping sum to write
    \begin{align*}
        Y_n - Y_0 &= \sum_{j=1}^{n} (Y_{j}-Y_{j-1}) =\sum_{j=1}^\infty (Y_{j}-Y_{j-1}) \I_{\{j \leq n\}}.
    \end{align*}
    Similarly, for our random time $\tau$,
    \begin{align*}
        Y_\tau - Y_0 = \sum_{j=1}^{\tau} (Y_{j}-Y_{j-1}) =\sum_{j=1}^\infty (Y_{j}-Y_{j-1}) \I_{\{j \leq \tau\}}.
    \end{align*}
    Taking expectations on both sides of the above yields
    \begin{align}\label{Eq:OptionalStoppingSetup}
        \nE(Y_\tau) - \nE(Y_0) = \sum_{j=1}^\infty \nE((Y_{j}-Y_{j-1}) \I_{\{j \leq \tau\}}).
    \end{align}
    (In general, some care needs to be taken when moving an expectation through an infinite sum.  Careful treatment of this is beyond the scope of our text, but the interchange is justified here.)  We claim that each of the expectations in the sum is zero,
    \begin{align}\label{Eq:OptionalStoppingClaim}
        \nE((Y_{j}-Y_{j-1}) \I_{\{j \leq \tau\}})=0
    \end{align}
    for all $j \geq 1$.  To see this, note that
    \begin{align*}
        \nE((Y_{j}-Y_{j-1}) \I_{\{j \leq \tau\}}) = \nE\big(\nE((Y_{j}-Y_{j-1}) \I_{\{j \leq \tau\}} \, | \, X_0, \ldots, X_{j-1}) \big),
    \end{align*}
    and so it suffices to show that
    \begin{align*}
        \nE((Y_{j}-Y_{j-1}) \I_{\{j \leq \tau\}} \, | \, X_0, \ldots, X_{j-1}) =0
    \end{align*}
    for each $j$.  If we are given the values of $X_0, \ldots, X_{j-1}$, then we know the value of $\I_{\{j \leq \tau\}}$ (it is 1 if each $X_k \in D$, $0 \leq k \leq j-1$, and 0 otherwise) and may treat it as a constant, pulling it outside the expectation:
    \begin{align*}
        \nE((Y_{j}-Y_{j-1}) \I_{\{j \leq \tau\}} \, | \, X_0, \ldots, X_{j-1}) &=  \I_{\{j \leq \tau\}}\nE(Y_{j}-Y_{j-1} \, | \, X_0, \ldots, X_{j-1})\\
        &=  \I_{\{j \leq \tau\}} \cdot 0 = 0
    \end{align*}
    by the conditional martingale increments, \eqref{Eq:ConditionalIncrementZero}.  We conclude the right-hand side of \eqref{Eq:OptionalStoppingSetup} is zero, completing the proof.
\end{proof}

With the optional stopping theorem in hand, we can prove that all harmonic functions are averaged ``pay outs'' as in \eqref{Eq:ExpectedPayoutLast}.

\begin{theorem}\label{Thm:BuildHarmonic2}
    If $h$ is a function on $\Omega$ that is harmonic on $D \subsetneq \Omega$ with respect to some transition matrix $P$, where $D^c$ includes states in each irreducible component of the chain, then there exists a function $f:D^c \rightarrow \mathbb{R}$ such that
    \begin{align}\label{Eq:BuildHarmonic2}
        h(x) = \nE_x(f(X_\tau))
    \end{align}
    where $\tau=\tau_{D^c}$.
\end{theorem}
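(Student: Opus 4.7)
The plan is to take $f := h|_{D^c}$, the restriction of $h$ to $D^c$. For $x \in D^c$ the identity is trivial, since then $\tau = 0$ and $X_\tau = x$, giving $\nE_x(f(X_\tau)) = f(x) = h(x)$. The work lies in showing $h(x) = \nE_x(f(X_\tau))$ for $x \in D$. My strategy is to stop the walk at $\tau$ to manufacture a bounded martingale, then conclude by taking a limit.

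Specifically, I would define $M_n := h(X_{n \wedge \tau})$ and first verify $(M_n)_{n \geq 0}$ is a martingale adapted to $(X_n)$. On the event $\{\tau \le n\}$ the increment $M_{n+1} - M_n$ is identically zero, so this event contributes nothing to $\nE(M_{n+1} - M_n \mid X_0, \ldots, X_n)$. On $\{\tau > n\}$ we have $X_n \in D$ and $M_{n+1} = h(X_{n+1})$, so by harmonicity of $h$ on $D$,
$$\nE(M_{n+1} \mid X_0, \ldots, X_n) = (Ph)(X_n) = h(X_n) = M_n.$$
Combining the two cases gives the martingale property. The hypothesis that $D^c$ meets every irreducible component of $(X_n)$ guarantees $\tau < \infty$ almost surely (cf.\ Problem \ref{Prob:taufiniteas}), so the path of $M_n$ eventually stabilizes at $h(X_\tau) = f(X_\tau)$.

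To finish I would push the martingale identity through to the random time $\tau$. Theorem \ref{Thm:MartingaleAverage} already gives $\nE_x(M_n) = \nE_x(M_0) = h(x)$ for every deterministic $n$. Because $\Omega$ is finite, $|M_n|$ is uniformly bounded by $\max_{y \in \Omega} |h(y)|$; combined with the a.s.\ convergence $M_n \to f(X_\tau)$, bounded convergence yields
$$h(x) \; = \; \lim_{n \to \infty} \nE_x(M_n) \; = \; \nE_x(f(X_\tau)),$$
as desired. Equivalently, one could directly invoke Theorem \ref{Thm:OptionalSampling}, accommodating the fact that $(X_n)$ need not itself be irreducible by restricting attention to the irreducible component containing $x$, where (by hypothesis) $D^c$ is still met. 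The main conceptual obstacle is exactly this passage from the deterministic-time martingale identity to its random-time analogue; the irreducible-components hypothesis and the finiteness of $\Omega$ are precisely what make that passage legitimate.
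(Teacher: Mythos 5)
Your proof is correct and follows the same architecture as the paper's: both work with the stopped process $h(X_{n\wedge\tau})$, establish that it is a martingale, and then pass from deterministic times to the random time $\tau$, taking $f$ to be the restriction of $h$ to $D^c$. The differences are in execution. The paper routes the martingale verification through the absorbed chain $\tilde{X}_n = X_{n\wedge\tau}$ and Lemma \ref{Lemma:HarmonicEverywhere} (so that Theorem \ref{Thm:HarmonicMartingale} applies), and then cites the Optional Sampling Theorem (Theorem \ref{Thm:OptionalSampling}) for the final step. You instead verify the martingale property by a direct case split on $\{\tau \le n\}$ versus $\{\tau > n\}$, and then re-derive the needed instance of optional stopping by hand: constant deterministic-time expectation from Theorem \ref{Thm:MartingaleAverage}, almost-sure convergence $M_n \to h(X_\tau)$ from $\tau < \infty$, and bounded convergence, which is legitimate since $|h|$ is bounded on the finite $\Omega$. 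Both routes are valid; yours is self-contained and makes transparent exactly where finiteness of $\Omega$ and the hypothesis on $D^c$ enter, while the paper's leverages machinery it has already built (and your closing remark that one could instead invoke Theorem \ref{Thm:OptionalSampling} on the irreducible component containing $x$ is precisely the paper's argument). Your handling of the reducible case matches the paper's as well.
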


There is no mystery about what the function $f$ is; we will see it is simply $h$ on $D^c$.  That is, the theorem is saying that a harmonic function on $D$ is always equal to its expected pay-out upon leaving $D$!

Before the proof, a word about the technical condition on $D^c$: we need a state in each irreducible component because of chains like that in Figure \ref{Fig:HexagonPlusLine}.  Indeed, suppose for that graph we start a random walk in the six-cycle and $\tau$ is the hitting time of the two-cycle.  Then $\tau=\infty$ and it does not make sense to talk about $X_\tau$, and so the right-hand side of \eqref{Eq:BuildHarmonic2} doesn't make sense.  Our assumption on $D^c$ in the theorem statement prevents this; $D^c$ would have to have a vertex in both the six-cycle and the two-cycle in this case (that is, in each of the two irreducible components of the chain).

\begin{proof}
    Fix an initial state $x \in \Omega$.  By considering the irreducible component of the chain containing $x$, we see that it suffices to assume that our chain is irreducible to begin with.  (For example, if $x$ belonged to the six-cycle in Figure \ref{Fig:HexagonPlusLine}, we could proceed by ignoring the two-cycle, as it is impossible to enter.  We would separately consider the two-cycle when we started at one of its two states.)  The point is that we want to be able to apply the optional sampling theorem, and by restricting to the irreducible component containing $x$ we may do so. 
    
    Recall from \S \ref{Sec:HarmonicSubsets} that if $h$ is harmonic on $D$ for $(X_n)$, then $h$ is harmonic on all of $\Omega$ for the chain $(\tilde{X}_n)$ where $D^c$ becomes an absorbing boundary, defined by
    \begin{align*}
        \tilde{X}_n = X_{\tau \wedge n}.
    \end{align*}
    Hence $Y_n := h(\tilde{X}_n)$ is a martingale, Theorem \ref{Thm:HarmonicMartingale}, and we may apply the optional sampling theorem. Starting the chain $(X_n)$ at $X_0=x \in D$, this yields
    \begin{align*}
        \nE(Y_\tau) &= \nE(Y_0)\\
        &= \nE_x(h(\tilde{X}_0)) = \nE_x(h(X_0)) = \nE_x(h(x)) = h(x),
    \end{align*}
    as $h(x)$ is a non-random constant.  On the other hand, however,
    \begin{align*}
        \nE(Y_\tau) = \nE_x(h(\tilde{X}_\tau)) = \nE_x(h(X_\tau)) = \nE_x(f(X_\tau)),
    \end{align*}
    where $f(y):=h(y)$ for $y \in D^c$.  Putting the last two equations together yields \eqref{Eq:BuildHarmonic2}.
\end{proof}

We close with two applications of Theorem \ref{Thm:BuildHarmonic2}.  The first is a new proof of Theorem \ref{Thm:HarmonicConstant}.

\begin{proof}[Alternative proof of Theorem \ref{Thm:HarmonicConstant}] Let $h$ be a function harmonic on all of $\Omega$; we wish to show that $h$ is constant.  Fix an $x \in \Omega$ and let $y$ be any other state; we show $h(x) = h(y)$.  Indeed, we have that $Y_n := h(X_n)$ is a martingale, and for $D^c = \{y\}$, $\tau=\tau_{D^c}$, the optional sampling theorem gives
\begin{align*}
    h(y) = \nE_x(h(X_\tau)) = \nE_x(Y_\tau) = \nE_x(Y_0) = \nE_x(h(X_0)) = h(x),
\end{align*}
and hence $h$ is constant.
\end{proof}

The second application is that harmonic functions with given ``boundary values'' $f$ on a subset $D^c$ are unique. That is, the boundary values of a harmonic function determine its values everywhere.  Initially this seems like a very strong statement, but in light of \eqref{Eq:BuildHarmonic2} we see that it has to be the case.  This formula says that the boundary values ``communicate inwards,'' and thus determine all the other values of the harmonic function.  
\begin{theorem}\label{Thm:HarmonicUnique}
    Let $h_1$ and $h_2$ be functions on $\Omega$ that are harmonic on $D \subsetneq \Omega$.  If $h_1(y) = h_2(y)$ for all $y \in D^c$, then $h_1=h_2$.
\end{theorem}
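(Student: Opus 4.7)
The plan is to reduce the statement to the structural formula in Theorem \ref{Thm:BuildHarmonic2} by considering the difference $h := h_1 - h_2$. The key observation is that harmonicity is a linear condition: since $P(h_1 - h_2) = Ph_1 - Ph_2 = h_1 - h_2$ on $D$, the function $h$ is itself harmonic on $D$. Moreover, by hypothesis $h \equiv 0$ on $D^c$. Thus the problem collapses to the following claim: if a function is harmonic on $D$ and vanishes on $D^c$, it must vanish everywhere.

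The second step is to invoke Theorem \ref{Thm:BuildHarmonic2} applied to $h$. Assuming $D^c$ meets every irreducible component of the chain, that theorem asserts
\begin{equation*}
h(x) = \nE_x(f(X_\tau)), \qquad x \in \Omega,
\end{equation*}
where $f$ is the restriction of $h$ to $D^c$ and $\tau = \tau_{D^c}$. But $f \equiv 0$ by construction, and $X_\tau \in D^c$ almost surely (this is precisely the part of the optional sampling machinery that requires each component to be hit). Hence $h(x) = \nE_x(0) = 0$ for every $x \in \Omega$, so $h_1(x) = h_2(x)$ for every $x$, as required.

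The main subtlety I foresee is the technical hypothesis in Theorem \ref{Thm:BuildHarmonic2} that $D^c$ includes states in every irreducible component. If some irreducible component $C$ of the chain lies entirely inside $D$, then starting inside $C$ the chain never reaches $D^c$, so $\tau = \infty$ and the formula $h(x) = \nE_x(f(X_\tau))$ is meaningless. In that degenerate case one has to argue separately: $h = h_1 - h_2$ is harmonic on all of $C$ with respect to the restriction of $P$ to $C$, and since $C$ viewed on its own is an irreducible chain, Theorem \ref{Thm:HarmonicConstant} forces $h$ to be constant on $C$; however, that constant is not determined by any boundary data, so the theorem as written implicitly presumes that no component of the chain is stranded inside $D$ (equivalently, that $D^c$ is rich enough that $\tau < \infty$ almost surely from every starting state, which is the natural irreducibility-type hypothesis inherited from Theorem \ref{Thm:BuildHarmonic2}).

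Under that hypothesis the argument is essentially a two-line consequence of what has already been built, and the heavy lifting—namely, representing a harmonic function as the expectation of its boundary values via optional sampling—has been done in Theorems \ref{Thm:OptionalSampling} and \ref{Thm:BuildHarmonic2}. The real content of this uniqueness theorem is therefore not combinatorial but probabilistic: boundary data ``diffuses inward'' through the chain, and the expectation representation makes this diffusion deterministic in the mean.
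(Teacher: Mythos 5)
Your proof is correct and is essentially the paper's own argument: both rest entirely on the representation $h(x)=\nE_x(h(X_\tau))$ from Theorem \ref{Thm:BuildHarmonic2}, the paper applying it to $h_1$ and $h_2$ directly while you apply it to the difference $h_1-h_2$ (using the linearity of harmonicity, which is Problem \ref{Ex:LinearCombosHarmonic}). Your side remark about $D^c$ needing to meet every irreducible component is a fair observation about a hypothesis the theorem statement leaves implicit, but it does not change the substance of the argument.
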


\begin{proof}
    We need to show $h_1(x) = h_2(x)$ for all $x \in D$.  Since $X_\tau \in D^c$, \eqref{Eq:BuildHarmonic2} immediately gives
    \begin{equation*}
        h_1(x) = \nE_x(h_1(X_\tau)) = \nE_x(h_2(X_\tau)) = h_2(x).\qedhere
    \end{equation*}
\end{proof}

It is instructive to revisit Example \ref{Eg:Harmonic4Cycle} and Exercise \ref{Ex:Harmonic4Cycle} in light of Theorem \ref{Thm:HarmonicUnique}.  In the exercise, you were asked to compute $\nE_x(f(X_\tau))$ where $f$ has the same values on $D^c$ as $h$ in Example \ref{Eg:Harmonic4Cycle}.  So by Theorem \ref{Thm:HarmonicUnique}, we immediately know these are the same functions.  In the example we found $h(1)=1/3$, and hence
\begin{align*}
    \frac{1}{3} = \nE_1(f(X_\tau)) = 0 \cdot \nP_1(X_\tau = 4) + 1 \cdot \nP_1(X_\tau =3) = \nP_1(X_\tau = 3).
\end{align*}
Similarly, we had $h(2)=2/3$, and the same logic shows that $\nP_2(X_\tau=3)=2/3$.  We can thus see that harmonic functions give us a powerful method to attack the gambler's ruin problem of \S \ref{Sec:GamblersRuin} for general graphs, using boundary values of 0 and 1 and \eqref{Eq:BuildHarmonic2}.  

\vfill
\pagebreak

\section*{Problems for chapter \ref{Ch:Martingales}}
\addcontentsline{toc}{section}{Problems for chapter \ref{Ch:Martingales}}

\begin{problem}\label{Prob:ExpMartingale}
	Let $(X_n)_{n \geq 0}$ be a simple random random walk on $\mathbb{Z}$ with $X_0 = 0$.
	\begin{enumerate}[$(a)$]
	    \item Show that the process
    \begin{align*}
        Y_n := X_n^3 - 3nX_n
    \end{align*}
    is a martingale.  This is the \emph{cubic martingale} for the simple symmetric random walk.
	    \item Show that 
	\begin{align*}
	    Y_n := e^{\alpha X_n - \frac{\alpha^2}{2}n}
	\end{align*}
	is a martingale, where $\alpha \in \nR$ is a constant.  This process $(Y_n)$ is the \emph{exponential martingale} for the simple symmetric random walk. 
	\item For any $n \in \mathbb{N}$ and $t \in \mathbb{R}$, compute the moment generating function $M_{X_n}(t) = \nE(e^{tX_n})$ for the simple walk  after $n$ steps.
	\end{enumerate}
\end{problem}

\begin{problem}
    Fix a proper subset $D \subsetneq \Omega$ of the state space $\Omega$ of a chain and let $\tau = \tau_D$ be the first hitting time of $D$.  We saw in Examples \ref{Eg:NonMartingale} and \ref{Eg:NonAdapted} that the processes defined by $Y_n := \I_{\{\tau \leq n\}}$ and $Z_n := \inf\{m>n \;: \; X_m \in D\}$ are both not martingales.  Show that we can fix this by taking our ``best guess'' for these random variables with the information on hand through step $n$.  That is, show:
    \begin{enumerate}[$(i)$]
        \item The process $(\tilde{Y}_n)_{n \geq 0}$ defined by $\tilde{Y}_n:= \nP(\tau \leq n \; | \; X_0, \ldots, X_n)$ is a martingale.
        \item The process $(\tilde{Z}_n)_{n \geq 0}$ defined by $$\tilde{Z}_n := \nE(\,\inf\{m>n \,: \, X_m \in D \} \; | \; X_0, \ldots, X_n)$$ is a martingale.
    \end{enumerate}
\end{problem}

\begin{problem}\label{Ex:LinearCombosHarmonic}
	Suppose $h_1, \ldots, h_n$ are harmonic functions on $D \subset \Omega$ with respect to $P$.  Show that, for any constants $c_1, \ldots, c_n$, the linear combination $\sum_{j=1}^n c_j h_j$ is likewise harmonic on $D$.
\end{problem}

\begin{problem}\label{Prob:HarmonicZInterval}
In this problem you will characterize all harmonic functions on an interval of integers $D=\{M,M+1,\ldots, M+N\}$ for the simple symmetric random walk on $\mathbb{Z}$.
\begin{enumerate}[$(a)$]
    \item Show that any linear function $h(x) = mx+b$, where $m,b \in \mathbb{R}$, is harmonic on $D$.  Explain intuitively why this is the case.
    \item Conversely, show that if $h$ is harmonic on $D$, then there exist $m,b \in \mathbb{R}$ such that $h(x)=mx +b$.  Hence all harmonic functions for the simple symmetric random walk on an interval of $\mathbb{Z}$ are linear.
\end{enumerate}
{\footnotesize \emph{Comment:} Note that the arguments here do not depend on the interval $\{M,M+1,\ldots, M+N\}$ in question.  That is, linear functions are harmonic on \emph{all} of $\mathbb{Z}$.  If you've been reading the text carefully, this might sound a bit suspicious: didn't we say in Theorem \ref{Thm:HarmonicConstant} that functions $h$ that are harmonic on all of $\Omega$ are constant?  The key here is that $\mathbb{Z}$ is not a \emph{finite} state space, as we were assuming in Theorem \ref{Thm:HarmonicConstant} (and is usually the case for our Markov chains).  In the proof of that theorem, it is instructive to note that we started by choosing a state $x_0$ where $h$ achieves a maximum.  That isn't necessarily possible if $\Omega$ is infinite, as for $\mathbb{Z}$ and (non-constant) linear functions $h$!  So there is no contradiction between that theorem and this problem.} 
\end{problem}

\begin{problem}\label{Prob:AB5.3} Consider the following Markov chain on the state space $\Omega=\{ 2^k, \; k=\ldots, -2,-1,-0,1,2,\ldots\}$. If your current position is $x=2^k$, then the next step you jump to $2x=2^{k+1}$ or $x/2=2^{k-1}$ with equal probabilities. 

\begin{enumerate}[$(a)$]
\item Let $D=\{2^{-N}, 2^{-N+1}, \ldots,1,2,\ldots, 2^N\}$, for some $N\ge 1$. Suppose you absorb the chain when it hits $D^c$. Show that $f(x)=\log(x)$ is a harmonic function at any $x$ in $D$.  
\item Let $\tau$ denote the hitting time of $D^c$. Find $\E_2\left( \tau\right)$ (i.e., the expected hitting time of $D^c$ starting from $2$). 
\end{enumerate}
\end{problem}

\begin{problem}\label{Prob:AB5.1} Consider a connected graph $G=(V,E)$ and a function $f:V\rightarrow \rr$. Assume that the random walk on this graph is aperiodic. Define a sequence of functions $\{f_0, f_1, \ldots\}$ on the vertices in the following way. Start with $f_0(v)=f(v)$ for all vertices $v$. Then, inductively, 
\[
f_{k+1}(v) = \frac{1}{\mathrm{deg}(v)} \sum_{y \sim v} f_k(y), \quad k=0,1,2,\ldots.
\]
That is, at each step replace the value of the function at any vertex by the average value of all its neighbors in the previous step. 
Find $\lim_{k\rightarrow \infty} f_k(v)$ for each $v$.
\end{problem}

\begin{problem}\label{Prob:5Graph1}
    For the graph $G$ in Figure \ref{Fig:MartingaleProbability}, let $D=\{1,2,3,4,5\}$ and let $\tau=\tau_{D^c}$ be the hitting time of $D^c$ for the simple symmetric random walk $(X_n)$ on $G$.  
    \begin{enumerate}[$(a)$]
        \item Find $\nP_x(X_\tau=8)$ for all vertices $x \in D$.
        \item Find $\nP_x(X_\tau \in \{6,7,9,10\})$ for all vertices $x \in D$.
        \item Find $\nP_x(X_\tau \in \{6,7,8\})$ for all vertices $x \in D$.
    \end{enumerate}
\begin{figure}[ht]
    \centering
    \scalebox{0.6}{
        \begin{tikzpicture}
    \tikzset{
            node/.style={circle, draw=black, very thick, minimum size=7mm},
            arrow/.style={->, black, very thick},
            line/.style={black, very thick}
            }
        
        \node[node] (1b) at (6,0) {1};
        \node[node] (2b) at (8,0) {2};
        \node[node] (3b) at (6,-2) {3};
        \node[node] (4b) at (4,0) {4};
        \node[node] (5b) at (6,2) {5};
        
        \node[node] (6b) at (9.25,1.25) {6};
        \node[node] (7b) at (9.25,-1.25) {7};
        \node[node] (8b) at (6,-3.25) {8};
        \node[node] (10b) at (2.75,-1.25) {9};
        \node[node] (11b) at (2.75,1.25) {10};
        \node[node] (12b) at (4.75,3.25) {11};
        \node[node] (13b) at (7.25,3.25) {12};
        
        \draw[line] (1b) -- (2b);
        \draw[line] (1b) -- (3b);
        \draw[line] (1b) -- (4b);
        \draw[line] (1b) -- (5b);
        \draw[line] (2b) -- (6b);
        \draw[line] (2b) -- (7b);
        \draw[line] (3b) -- (8b);
        \draw[line] (4b) -- (10b);
        \draw[line] (4b) -- (11b);
        \draw[line] (5b) -- (12b);
        \draw[line] (5b) -- (13b);
    \end{tikzpicture}
    }
    \caption{\small Find the hitting probabilities for certain subsets of the boundary.}
    \label{Fig:MartingaleProbability}
\end{figure}
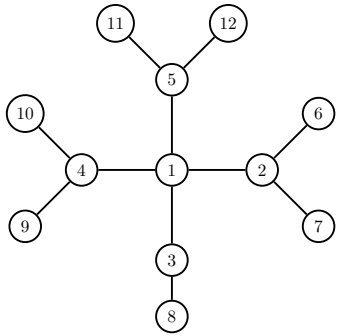
\end{problem}

\begin{problem}\label{Prob:5Opt1}
    Use optional sampling for the simple symmetric random walk $(X_n)$ on integers $\{a,a+1, \ldots, b\}$ to re-derive the gambler's ruin hitting probabilities of Theorem \ref{Thm:GeneralGamblersRuin}.
\end{problem}

\begin{problem}\label{Prob:5Opt2}
    Use optional sampling and the quadratic martingale $Y_n = X_n^2-n$ for the simple symmetric random walk on integers $\{a,a+1, \ldots, b\}$ to re-derive the gambler's ruin expected hitting times of Theorem \ref{Thm:GeneralGamblersRuinTimes}.
\end{problem}

\begin{problem}\label{Prob:taufiniteas}
     Let $(X_n)_{n \geq 0}$ be an irreducible Markov chain on $\Omega=\{1,\ldots, N\}$, and let $\tau$ be the hitting time of any non-empty subset $D^c \subset \Omega$.  Start the chain at some $x \in \Omega$.  Show that, with probability 1, $\tau < \infty$.
\end{problem}

\begin{problem}\label{Prob:5Gauntlet}
    A casino is designing a new game called ``The Gauntlet,'' and, knowing your expertise in Markov chains, hires you as a consultant.  The game is based on the graph in Figure \ref{Fig:Gauntlet} and works as follows.  The player starts at $S$, and in the first step immediately moves right one step.  Then they roll a die.  If they roll a 1,2 or 3, they move one step left.  Getting a 4,5 or 6 moves them one step right.  The same rule applies whenever they are at a vertex with only two neighbors.  If they are at a vertex with three neighbors, then a roll of 1 or 2 moves them left, a 3 or 4 moves them vertically, and a 5 or 6 moves them right.  If they ever return to $S$, they automatically go right one step in the next turn.
    
    The game is free to play.  It ends when the player first reaches a vertex marked $L$ or $W$.  If they first reach $W$, they win \$1,000,000.  If they first reach an $L$ vertex, they must pay the casino some amount of money $x$.  
    \begin{enumerate}[$(a)$]
        \item What is the probability of winning?
        \item The casino would like to make \$1, on average, each time someone plays this game.  As the consultant, what do you tell them for the value of $x$?
        \item The casino is worried that if $x$ is too large then the game will be unpopular.  They think it will make a big difference if they only want to earn $\$0.01$ on average for this game.  What should the new value of $x$ be here?  Does it make a big difference?
    \end{enumerate}
\end{problem}

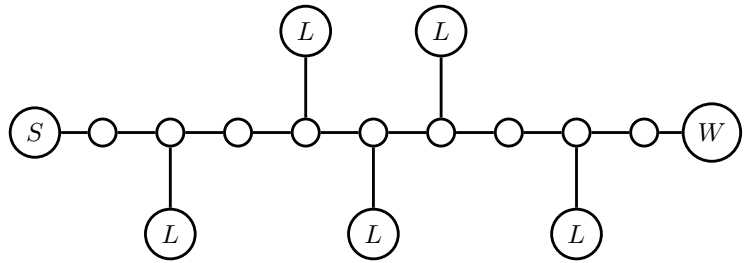
\begin{figure}[ht]
    \centering
    \scalebox{0.9}{
        \begin{tikzpicture}
    \tikzset{
            node/.style={circle, draw=black, very thick, minimum size=4mm},
            arrow/.style={->, black, very thick},
            line/.style={black, very thick}
            }
        
        \node[node] (1) at (1,0) {$S$};
        \node[node] (2) at (2,0) {};
        \node[node] (3) at (3,0) {};
        \node[node] (4) at (4,0) {};
        \node[node] (5) at (5,0) {};
        \node[node] (6) at (6,0) {};
        \node[node] (7) at (7,0) {};
        \node[node] (8) at (8,0) {};
        \node[node] (9) at (9,0) {};
        \node[node] (10) at (10,0) {};
        \node[node] (11) at (11,0) {$W$};
        \node[node] (3L) at (3,-1.5) {$L$};
        \node[node] (5L) at (5,1.5) {$L$};
        \node[node] (6L) at (6,-1.5) {$L$};
        \node[node] (7L) at (7,1.5) {$L$};
        \node[node] (9L) at (9,-1.5) {$L$};
        
        \draw[line] (1) -- (2);
        \draw[line] (2) -- (3);
        \draw[line] (3) -- (4);
        \draw[line] (4) -- (5);
        \draw[line] (5) -- (6);
        \draw[line] (6) -- (7);
        \draw[line] (7) -- (8);
        \draw[line] (8) -- (9);
        \draw[line] (9) -- (10);
        \draw[line] (10) -- (11);
        
        \draw[line] (3) -- (3L);
        \draw[line] (5) -- (5L);
        \draw[line] (6) -- (6L);
        \draw[line] (7) -- (7L);
        \draw[line] (9) -- (9L);
    \end{tikzpicture}
    }
    \caption{\small \emph{The Gauntlet} casino game.}
    \label{Fig:Gauntlet}
\end{figure}

\chapter*{Appendix 1: Notation}
\addcontentsline{toc}{chapter}{Appendix 1: Notation}

\renewcommand{\arraystretch}{1.3}
\begin{longtable}{|c|p{9cm}|c|}
    \hline
    \textit{Notation} & \textit{Description} & \textit{First location in text}\\
    \hline
    $\mathbb{N}$ & The natural numbers $\{1,2,3,\ldots\}$ & Theorem \ref{Thm:PowersOfP}\\
    \hline
    $\mathbb{Z}$ & The integers $\{\ldots,-2,-1,0,1,2,\ldots\}$ & Example \ref{Eg:WalkOn4Cycle}\\
     \hline
    iid & Independent and identically distributed random variables.  That is, $X_1, X_2, \ldots$ are iid if they are mutually independent and each $X_j$ has the same distribution.  &  Review problem \\
    \hline
    Exp$(\lambda)$ & The exponential $\lambda$ distribution.  $X \sim \text{Exp}(\lambda)$ if $X$ has pdf $f(x) = \lambda e^{-\lambda x}$ for $x \geq 0$. &  Review problem\\
    \hline
    Geo$(p)$ & A geometric random variable $X$ with parameter $p$.  $X$ is the number of trials necessary to obtain the first success, where the probability of success in each trial is $p$. $X$ has pmf $p_X(k) = (1-p)^{k-1}p$, $k \in \mathbb{N}$. & Example \ref{Eg:ReturnTime}\\
    \hline
    Unif$(a,b)$ & The uniform distribution on the interval $(a,b)$ for some $a < b$.  $X \sim \Unif(a,b)$ if $X$ has pdf $f(x) = \frac{1}{b-a}$ for $x \in (a,b)$. & Review problem\\
    \hline
    $:=$ & The symbol on the left is \emph{defined} by what follows on the right.  For example, $\nP_{ij}:=\nP(X_1=j \, | \, X_0=i)$. & \S \ref{SSec:SRW}\\
    \hline
    $u \sim v$ & Vertices $u$ and $v$ in a graph are adjacent. & \S \ref{SSec:SRW}\\
    \hline
    $P_{ij}, P(i,j)$ & The probability of moving from state $i$ to $j$. & \\
    \hline
    $\hat{P}_{ij}, \hat{P}(i,j)$ & The probability of moving from state $i$ to $j$ in the time-reversed Markov chain. & \eqref{Eq:TimeReversalTransProb} \\
    \hline
    $\tau_A, \tau_x$ & The hitting time of $A$ and $x$, respectively. This is the first non-negative time $k\geq 0$ at which the Markov chain satisfies $X_k \in A$ or $X_k = x$, respectively & \eqref{Def:tau_x}\\
    \hline
    $\tau_A^+, \tau_x^+$ & The return time of $A$ and $x$, respectively, or the first positive time $k\geq 1$ that the Markov chain is in the set $A$ or state $x$. & \eqref{Def:tau_x+} \\
    \hline
    $\nP_x(A)$ & The conditional probability $\nP(A \, | \, X_0=x)$. & \eqref{Notation:ConditionalSubscriptP}\\
    \hline
    $\nP_\mu(A)$ & The conditional probability of $A$, given $X_0$ is distributed according to the probability distribution $\mu$. & \eqref{Notation:ConditionalSubscriptPiP}\\
    \hline
    $\nE_x(A)$ & The conditional expectation of $A$, given $X_0=x$. & \eqref{Notation:ConditionalSubscriptE}\\
    \hline
    $\nE_\mu(A)$ & The conditional expectation of $A$, given $X_0$ is distributed according to the probability distribution $\mu$. & \eqref{Notation:ConditionalSubscriptPiE}\\
    \hline
    $\partial \Omega$ & The boundary of the set $\Omega$. If $\Omega = \{0,1,\ldots, n\}$, then $\partial \Omega = \{0,n\}$. & \eqref{Eq:Boundary}\\
    \hline
    $a \wedge b$ & The minimum $\min\{a,b\}$ of $a$ and $b$ & \eqref{Eq:AbsorbedChain}\\
    \hline
\end{longtable}
\thispagestyle{plain}

\chapter*{Appendix 2: Suggested homework sets}
\addcontentsline{toc}{chapter}{Appendix 2: Suggested homework sets}

The following are suggestions for (approximately) weekly homework sets.  Problems numbered 6.$x$ are from the review problems in Appendix 3.  

Note that there also a number of exercises within the text of each chapter.  Students should be encouraged to do \emph{all} of these as they are reading to ensure they are internalizing the concepts.  Although they vary in difficulty, most of these exercises are straightforward and give an opportunity to immediately engage with the new ideas.

\thispagestyle{plain}
\begin{itemize}
    \item Homework set 1: Problems \ref{Prob:B1.1}, \ref{Prob:B1.2}, \ref{Prob:B1.3}, \ref{Prob:B1.4}, \ref{Prob:AbsorbingBarriers1}, \ref{Prob:B1.6}.
    
    \item Homework set 2: Problems \ref{Prob:ExpMin2}, \ref{Prob:A1.2}, \ref{Prob:B1.3b}, \ref{Prob:ReflectingBoundary}, \ref{Prob:A1.3}, \ref{Prob:A1.5}, \ref{Prob:A1.6}. 
    
    \item Homework set 3: Problems \ref{Prob:B2.1} to \ref{Prob:B2.6} and \ref{Prob:1Sociologist}.
    
    \item Homework set 4: Problem \ref{Prob:5CycleHittingTimes} and \ref{Prob:2Hitting} to \ref{Prob:A4.5}.
    
    \item Homework set 5: Problems \ref{Prob:B3.1}, \ref{Prob:B3.3}, \ref{Prob:B3.4}, \ref{Prob:B3.5} and \ref{Prob:B3.6}
    
    \item Homework set 6: Problems  \ref{Prob:B4.3}, \ref{Prob:B4.4}, \ref{Prob:A3.2}, \ref{Prob:A3.5} and \ref{Prob:A3.6}
    
    \item Homework set 7: Problems \ref{Prob:A4.4} to \ref{Prob:A3.4}
    
    
    
    
    
    
    
\end{itemize}
\thispagestyle{plain}

\chapter*{Appendix 3: Review problems}
\addcontentsline{toc}{chapter}{Appendix 3: Review problems}
The following problems help you brush up on familiar distributions, conditional expectations, and linear algebra.

\thispagestyle{plain}
\bigskip
\stepcounter{chapter}
\begin{problem}\label{Prob:B1.1} Let $N$ be a geometric random variable with parameter $p$. That is, the p.m.f. of $N$ is given by 
\[
\nP(N=n)=(1-p)^{n-1}p,\quad n=1,2,3,\ldots.
\] 
Given $N=n$, generate $n$ many i.i.d. exponentials with rate $1$: $X_1, X_2, \ldots, X_n \sim \mathrm{Exp}(1)$. Let $W=\min\{X_1, X_2, \ldots, X_n\}$ be their minimum. 
\begin{enumerate}[$(a)$]
\item Find the conditional c.d.f. of $W$ by computing $\nP(W > t \mid N=n)$, n=1,2,\ldots.
\item Use part $(a)$ to compute $\nP(W > t)$.   
\item Compute the conditional probability mass function of $N$, given $\{W>t\}$, and identify it as one of the named distributions (binomial, Poisson, geometric, exponential, normal etc.) and specify all the parameters.
\end{enumerate}
\end{problem}

\begin{problem}\label{Prob:ExpMin2} Let $N$ be a Poisson random variable with mean $\lambda$. Given $N=n$, generate $X_1, X_2, \ldots, X_n, X_{n+1}$  i.i.d. Exp$(1)$ random variables. 
\begin{enumerate}[$(a)$]
	\item Let $Y =\min \{X_1, \ldots, X_{n+1}\}$ when $N=n$. Compute $P(Y > a)$ for all $a>0$.
	\item Find $E(Y)$. (\emph{Hint:} conditional expectation and the tower property.)
\end{enumerate}  
\end{problem}


\begin{problem}\label{Prob:B1.2} Let $X$ be the number of rolls of a fair die until I see the first six. Given $X=x$, I choose a sample, with replacement, of size $x$ from an urn with $5$ red balls and $4$ green balls. Let $Y$ be the number of green balls in my sample. 
\begin{enumerate}[$(a)$]
\item Find $\nE(Y\mid X=x)$.
\item Use part $(a)$ to find $\nE(Y)$.
\end{enumerate}
\end{problem}
\thispagestyle{plain}
\medskip

\begin{problem}\label{Prob:A1.1} Take a stick of length one and denote it by the unit interval $[0,1]$. Pick two i.i.d. Unif$(0,1)$ random variables $U$ and $V$ and mark them on the stick. Break the stick at those two marks so that you get three broken pieces of the stick. Choose one these three pieces uniformly at random. Let $L$ be the length of the chosen piece. What is the expected value of $L$?   
\end{problem}


\begin{problem}\label{Prob:A1.2} Start with an urn with a red ball and a black ball. At each turn, pick a ball at random from the urn, return it to the urn along with another ball of the same color. 
\begin{enumerate}[$(a)$]
\item Find the conditional probability that the second ball picked is red, given that the third ball picked is red.
\item Find the expected number of red balls in the urn after the fourth turn (i.e., when there are six balls in the urn).
\end{enumerate}
\end{problem}

\bibliographystyle{acm}
\bibliography{mcmcbib}

\end{document}